\newcommand{\R}{\mathbb{R}}
\newcommand{\Z}{\mathbb{Z}}
\newcommand{\D}{\mathbb{D}}
\newcommand{\M}{\mathbb{M}}
\newcommand{\N}{\mathbb{N}}
\newcommand{\C}{\mathbb{C}}
\newcommand{\cR}{\mathcal{R}}
\newcommand{\cE}{\mathcal{E}}
\newcommand{\eps}{\varepsilon}
\newcommand{\REMARK}[1]{\marginpar{\tt #1}}
\theoremstyle{Case1}
\theoremstyle{Case2}
\newcommand{\broe}{\mathrm{B}_u}
\newcommand{\cstu}{\mathrm{C}^*_u}
\newtheorem*{rigprob*}{Rigidity Problem for uniform Roe Algebras}
\newtheorem*{rigprobcorona*}{Rigidity Problem for uniform Roe Coronas}
\newcommand{\cstar}{$\mathrm{C}^*$}
\newcommand{\cF}{\mathcal{F}}
\newcommand{\cP}{\mathcal{P}}
\newcommand{\cB}{\mathcal{B}}
\newcommand{\cK}{\mathcal{K}}
\newtheorem{theorem}{Theorem}[section]
\newtheorem*{theorem*}{Theorem}
\newtheorem{proposition}[theorem]{Proposition}
\newtheorem{problem}[theorem]{Problem}
\newtheorem*{proposition*}{Proposition}
\newtheorem{lemma}[theorem]{Lemma}
\newtheorem*{lemma*}{Lemma}
\newtheorem{corollary}[theorem]{Corollary}
\newtheorem*{corollary*}{Corollar}
\newtheorem*{fact*}{Fact}
\theoremstyle{definition}
\newtheorem{definition}[theorem]{Definition}
\newtheorem*{definition*}{Definition}
\newtheorem{claim}[theorem]{Claim}
\newtheorem*{claim*}{Claim}
\newtheorem*{conjecture*}{Conjecture}
\newtheorem*{acknowledgments}{Acknowledgments}
\theoremstyle{remark}
\newtheorem{example}[theorem]{Example}
\newtheorem*{example*}{Example}
\newtheorem{remark}[theorem]{Remark}
\newtheorem*{remark*}{Remark}
\newtheorem*{note*}{Note}
\newtheorem*{question*}{Question}
\DeclareMathOperator{\supp}{supp}
\DeclareMathOperator{\propg}{prop}
\DeclareMathOperator{\diam}{diam}
\DeclareMathOperator{\spann}{span}
\newcommand{\cL}{\mathcal L}
\newcounter{my_enumerate_counter}
\newcommand{\pushcounter}{\setcounter{my_enumerate_counter}{\value{enumi}}}
\newcommand{\popcounter}{\setcounter{enumi}{\value{my_enumerate_counter}}}
\begin{document}

\title[On Banach algebras of band-dominated operators]{On Banach algebras	 of band-dominated operators and their order structure}%

\author[B. M. Braga]{Bruno M. Braga}
\address[B. M. Braga]{University of Virginia, 141 Cabell Drive, Kerchof Hall, P.O. Box 400137, Charlottesville, USA}
\email{demendoncabraga@gmail.com}
\urladdr{https://sites.google.com/site/demendoncabraga}

\subjclass[2010]{}
\keywords{}
\thanks{}
\date{\today}%
\maketitle

\begin{abstract}
The goal of this paper is to   study band-dominated operators on Banach spaces with Schauder basis with respect to uniformly locally finite metric spaces as well as  the Banach algebras generated by them: the so called \emph{uniform Roe algebras}. We investigate several kinds of isomorphisms between those Banach spaces (e.g., isomorphisms preserving norm, order, algebraic structure, etc) and prove several rigidity results on when a certain kind of isomorphism between the uniform Roe algebras implies that the base metric spaces  are (bijectively) coarsely equivalent. 
\end{abstract}
 
\setcounter{tocdepth}{1}
\tableofcontents

\section{Introduction}\label{SectionIntro}
The study of band-dominated operators has gathered considerable attention in the last years (e.g., \cite{Chandler-WildeLindner2008JFA,RabinovichRochRoe2004IntEpOpTh,RabinovichRochSilbermann1998IntEqOpTh,RabinovichRochSilbermann2008,Seidel2014LinAlgAppl,SpakulaWillett2017,Willett2011IntEqOpTh}). Those operators have been extensively studied as operators on $\ell_p(\Z^n)$ and later as operators on $\ell_p(X)$, for arbitrary uniformly locally finite metric spaces $(X,d)$.\footnote{Uniformly locally finite metric spaces are often referred to as metric spaces with \emph{bounded geometry} in the literature.} Band-dominated operators form Banach algebras, called \emph{uniform Roe algebras}, and their study has also being boosted   motivated by their connection with the Baum-Connes conjectures \cite{BragaChungLi2019,HigsonRoe1995,Yu2000}, geometric group theory, (higher) index theory \cite{willettYuBook}, and, more recently,   mathematical physics     \cite{EwertMeyer2019,Kubota2017}.

In this paper, we are interested in extending the study of band-dominated operators and their Banach algebras to the broader context of Banach spaces with a fixed Schauder basis $\cE$ --- band-operators are always considered with respect to a fixed  base metric space. We explore how several properties of their Schauder basis affect those Banach algebras. Moreover,  we investigate how   different types of equivalences between those algebras   affect the coarse type of the base metric spaces.

Let $(X,d)$ be a countable metric space and $E$ be a Banach space. As we are interested in dealing  with Schauder basis  which are not necessarily $1$-symmetric,\footnote{A Schauder basis $(e_n)_n$ is \emph{$\lambda$-symmetric} if it is $\lambda$-equivalent to $(e_{\pi(n)})_n$ for all permutations $\pi:\N\to \N$, and it is \emph{symmetric} if it is $\lambda$-symmetric for some $\lambda\geq 1$ (see Section \ref{SectionDifBases} for details). } it is necessary to fix an enumeration of $X$ in order to define band operators on $\cL(E)$ --- where $\cL(E)$ denotes the space of bounded operators on $E$ --- with respect to a basis $\cE$ and  $(X,d)$. Therefore, in order to simplify notation, we assume throughout the paper that $X=\N$ and that $d$ is a metric on $\N$. In this setting, an operator $a\in \cL(E)$ is a  \emph{band operator with respect to $\cE=(e_n)_n$}  if there is $r>0$ so that 
 \[e^*_m(ae_n)=0\ \text{ for all }\ n,m \in \N \text{ with }\ d(n,m)>r,\]
 where $(e^*_m)_m$ is the sequence of biorthogonal functions of $(e_n)_n$, i.e., $e^*_m(e_n)=\delta_{n,m}$ for all $n,m\in\N$. In this case, we say that $a$ has \emph{propagation at most $r$ with respect to $\cE$} and write  $\propg(a)\leq r$.  When the basis $\cE$ is clear from the context, we omit any reference to it.

\begin{definition} 
Consider a metric space $(\N,d)$ and a Banach space $E$ with basis $\cE$. 
\begin{enumerate}
\item The algebra of band operators with respect to $\cE$ is denoted by $\broe[d,\cE]$.
\item An operator $a\in \cL(E)$ is \emph{band-dominated with respect to $\cE$} if it is the norm limit of band operators.
\item The  \emph{uniform Roe algebra of $(\N,d)$ with respect to $\cE$} is the algebra of all band-dominated operators and we denote it by $\broe(d,\cE)$.
\end{enumerate}
\end{definition}

If $E=\ell_2$ and $\cE$ is the standard unit basis of  $\ell_2$, the uniform Roe algebra of $(d,\cE)$ is a \cstar-algebra and it is  usually denoted by $\cstu(\N,d)$, or by $\cstu(X)$  if the metric space under consideration is $(X,d)$. Moreover, for $p\in [1,\infty)$, $X=\ell_p$ and  $\cE$ the standard unit basis of  $\ell_p$,  the uniform Roe algebra of $(d,\cE)$ is often denoted by $\broe^p(\N,d)$, or $\broe^p(X)$. In recent years,  rigidity questions for uniform Roe algebras have been extensively studied for $\ell_p$, with special emphasis for $p=2$ (e.g., \cite{BragaFarah2018,BragaFarahVignati2020,ChungLi2018,SpakulaWillett2013,WhiteWillett2017}). In layman's term, rigidity questions ask what kind of equivalence notions between two  uniform Roe algebras are strong enough so that their existence implies that  the base spaces of those algebras must be (bijectively) coarsely equivalent. This is the central theme of this piece.    The results in this paper hold for both real and complex Banach spaces/algebras. As it is more common in operator algebra theory, we chose to consider all spaces to be complex  unless otherwise stated.\footnote{For this reason, we deal with complexification of real spaces. The reader only interested in real spaces, can simply ignore all complexifications below.} 

We now describe the main findings of this paper. Firstly,  we notice that, for arbitrary Banach spaces with Schauder basis, the space constructed by S. Argyros and R. Haydon as a solution for the scalar-plus compact problem allows us to prove that rigidity cannot work in general. Indeed, there is a Banach space $E$ with a shrinking basis so that $\broe(d,\cE)=\broe(\partial,\cF)$ for all for all metrics $d$ and $\partial$ on $\N$ and all shrinking bases $\cE$ and $\cF$ of $E$ (see Proposition \ref{PropArgyrosHaydon}). 

In order to avoid such pathological examples, we must impose some restrictions on the basis of the Banach space $E$ and symmetry of the basis $\cE$ will be a common running hypothesis throughout these notes.\footnote{Note to the nonspecialist in Banach space theory: the quintessential examples of symmetric spaces are the so called Orlicz sequence spaces    (see \cite[Chapter 3]{LindenstraussTzafririBookVol1}). Notice also that there are Banach spaces with symmetric basis which contain no isomorphic copy of either $\ell_p$ or $c_0$ \cite{FigielJohnson1974}.} Moreover, a fixed basis $\cE=(e_n)_n$ on $E$ defines  natural order structures on $E$ and $\cL(E)$ which play a fundamental role in our results.  Recall, if $E$ is an ordered Banach space, we write     $E_+=\{x\in E\mid x\geq 0\}$ and say that   a  linear map $a:E\to E$ is \emph{positive} if $a(E_+)\subset E_+$ (see Appendix).  If $\cE$ is a Schauder basis for $E$,  an order on $E$ is defined by setting $\sum_na_ne_n\geq 0$ if and only if $a_n\geq 0$ for all $n\in\N$.    This defines  partial orders on both $E$ and $\cL(E)$ which make those spaces into   ordered Banach spaces. Recall, a linear map between ordered Banach spaces is  an \emph{order isomorphism} if it is a positive  bijection whose inverse is also positive.

We show in Theorem \ref{ThmBanachIsomorNorEnough} that there are uniformly locally finite metrics $d$ and $\partial$ on $\N$ so that $\cstu(\N,d)$ and $\cstu(\N,\partial)$ are simultaneously order and Banach space isomorphic, but $d$ and $\partial$ are not coarsely equivalent metrics. However, the next theorem   shows that this cannot happen if we also demand the isomorphism to be norm  preserving.   Recall, a Banach space $E$ is \emph{strictly convex} if $\|x+y\|<2$ for all distinct unit vectors $x,y\in E$.

\begin{theorem}\label{ThmBanachIsometryOrderIso}
Let $d$ and $\partial$ be uniformly locally finite  metrics on $\N$, and  $E$ and $F$ be strictly convex Banach spaces with   $1$-symmetric    bases   $\cE$ and $\cF$, respectively. If  $\broe(d,\cE)$ and $\broe(\partial,\cF)$ are simultaneously order isomorphic and isometric,  then $(\N,d)$ and $(\N,\partial)$ are bijectively coarsely equivalent. 
\end{theorem}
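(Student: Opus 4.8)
The plan is to read off a bijective coarse equivalence from the way the isomorphism acts on the rank-one ``matrix units''. Write $\cE=(e_n)_n$, $\cF=(f_n)_n$, and for the source algebra set $e_{m,n}:=e_m\otimes e_n^*$ (the operator $x\mapsto e_n^*(x)e_m$), with $f_{i,j}:=f_i\otimes f_j^*$ the matrix units of the target algebra; let $\Phi\colon\broe(d,\cE)\to\broe(\partial,\cF)$ be the given order isomorphism and isometry. Because the order induced by $\cE$ makes $a\ge 0$ equivalent to $e_m^*(ae_n)\ge 0$ for all $m,n$, the positive cone of $\broe(d,\cE)$ consists exactly of the entrywise nonnegative operators, and its extreme rays are precisely the rays spanned by the $e_{m,n}$ (any positive operator whose support has at least two entries is a nontrivial sum of two positive operators of the algebra, obtained by splitting off one entry). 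Since an order isomorphism carries extreme rays of the cone onto extreme rays, $\Phi(e_{m,n})$ is a positive multiple of some $f_{i,j}$; as $\|e_{m,n}\|=\|e_m\|\,\|e_n^*\|=1$ for a $1$-symmetric basis and $\Phi$ is isometric, that multiple is $1$. Hence $\Phi$ permutes matrix units, $\Phi(e_{m,n})=f_{\sigma(m,n)}$ for a bijection $\sigma$ of $\N\times\N$.

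Next I would determine the shape of $\sigma$. A short case analysis, using $1$-symmetry to reduce to a fixed pair of indices, shows that $\|e_{m,n}+e_{k,l}\|$ equals $1$ when $m\ne k$ and $n\ne l$, equals $\|e_1^*+e_2^*\|$ when $m=k$ (a ``row'' coincidence), and equals $\|e_1+e_2\|$ when $n=l$ (a ``column'' coincidence). Strict convexity of $E$ enters decisively here: it gives $\|e_1+e_2\|<2$, whence pairing $e_1^*+e_2^*$ with $e_1+e_2$ forces $\|e_1^*+e_2^*\|>1$, and comparing the unit vectors $e_1\pm e_2$ forces $\|e_1+e_2\|>1$; without strict convexity one of these quantities can equal $1$ (as for $\ell_1$ or $c_0$) and rows or columns become invisible to the norm. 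Thus ``sharing a coordinate'' is exactly the relation ``$\|e_{m,n}+e_{k,l}\|>1$'', which $\Phi$ preserves since it is isometric and $F$ is strictly convex as well. Reconstructing the rows and columns of $\N\times\N$ as the infinite maximal cliques of the resulting collinearity relation, I conclude that $\sigma(m,n)=(\alpha(m),\beta(n))$ for bijections $\alpha,\beta\colon\N\to\N$ (the transposed possibility $\sigma(m,n)=(\alpha(n),\beta(m))$ is handled identically).

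The coarse-geometric heart of the argument is to extract propagation control. For each $r$, uniform local finiteness makes the neighbourhood operator $a_r:=\sum_{d(m,n)\le r}e_{m,n}$ (the $0$--$1$ matrix supported on $\{(m,n):d(m,n)\le r\}$) a bounded band operator, because a $0$--$1$ matrix with uniformly bounded row and column sums splits into finitely many partial permutations, each of norm $1$ by $1$-symmetry; hence $a_r\in\broe(d,\cE)$ and $\Phi(a_r)\in\broe(\partial,\cF)$. For each $(m,n)$ with $d(m,n)\le r$ we have $0\le e_{m,n}\le a_r$, so positivity of $\Phi$ gives $f_{\alpha(m),\beta(n)}=\Phi(e_{m,n})\le\Phi(a_r)$, i.e.\ the $(\alpha(m),\beta(n))$-entry of $\Phi(a_r)$ is $\ge 1$. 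On the other hand $\Phi(a_r)$ is band-dominated, so choosing a band operator $b'$ of propagation $\le R_r$ with $\|\Phi(a_r)-b'\|<\tfrac12$ and using $|f_i^*(\Phi(a_r)f_j)|\le\|\Phi(a_r)-b'\|<\tfrac12$ whenever $\partial(i,j)>R_r$ shows that every entry of $\Phi(a_r)$ at $\partial$-distance $>R_r$ has modulus $<\tfrac12$. Comparing the two statements yields $\partial(\alpha(m),\beta(n))\le R_r$ whenever $d(m,n)\le r$.

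It remains to assemble the coarse equivalence. Taking $n=m$ and $r=0$ gives $\partial(\alpha(m),\beta(m))\le R_0$ for all $m$, so $\alpha$ and $\beta$ are uniformly close; the triangle inequality then turns the bound above into $d(m,n)\le r\Rightarrow\partial(\alpha(m),\alpha(n))\le R_r+R_0$, so $\alpha$ is bornologous. Applying the entire argument to $\Phi^{-1}$, which is again an isometric order isomorphism and whose associated bijection is $\alpha^{-1}$, shows that $\alpha^{-1}$ is bornologous too, so the bijection $\alpha\colon(\N,d)\to(\N,\partial)$ is the desired bijective coarse equivalence. I expect the main obstacle to be precisely the mechanism of the third step---converting the purely algebraic fact that $\Phi(a_r)$ lies in the uniform Roe algebra into a quantitative propagation bound, via positivity together with the off-diagonal decay of band-dominated operators---whereas the role of strict convexity is confined to making the row/column identification of $\sigma$ possible.
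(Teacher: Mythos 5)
Your proposal is correct and follows essentially the same route as the paper's proof: your extreme-ray step is the paper's Proposition~\ref{PropLatticeIsoRankPres}, your propagation-control step (positivity of $\Phi$ applied to the neighbourhood operator $a_r$ combined with the off-diagonal decay of band-dominated operators) is precisely the content and proof of Proposition~\ref{PropStronglyCoarseLike}, and your strict-convexity row/column analysis and closeness argument match the claims inside the paper's own proof of the theorem. The only cosmetic difference is that you extract the full product form $\sigma(m,n)=(\alpha(m),\beta(n))$ before taking distances, whereas the paper works directly with $f(n)=\pi_1(\sigma(n,n))$ and shows $f$ and $f^{-1}$ are coarse.
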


On the other hand, if one forgets about norm preservation, but demands the isomorphism to be an algebra isomorphism instead, then the strong rigidity above still holds:

\begin{theorem}\label{ThmBanachAlgIsomOrderIsoINTRO} 
Let $d$ and $\partial$ be uniformly locally finite  metrics on $\N$, and $E$ and $F$ be Banach spaces with symmetric bases $\cE$ and $\cF$, respectively. If  $\broe(d,\cE)$ and $\broe(\partial,\cF)$ are simultaneously order and Banach algebra isomorphic, then $(\N,d)$ and $(\N,\partial)$ are bijectively coarsely equivalent. 
\end{theorem}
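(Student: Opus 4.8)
The plan is to reconstruct a bijection $\sigma\colon\N\to\N$ from the isomorphism and then use the band structure to upgrade it to a bijective coarse equivalence. Write $\Phi\colon\broe(d,\cE)\to\broe(\partial,\cF)$ for the given map; it is a unital algebra isomorphism, so $\Phi(1)=1$, and an order isomorphism, and I take $\Phi,\Phi^{-1}$ to be bounded (which is either part of the hypothesis or follows from automatic continuity, since these algebras act faithfully on $E$ and $F$). The first task is to pin down the ``points'' $e_{nn}$ purely through the order and algebra structure. The key observation is that a positive idempotent $p$ satisfies $0\le p\le 1$ if and only if $p$ is a diagonal $0/1$ matrix $\sum_{n\in S}e_{nn}$ for some $S\subseteq\N$: positivity forces nonnegative entries, while $1-p\ge 0$ forces the off-diagonal entries to be $\le 0$, hence $0$. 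Thus $\{e_{nn}\mid n\in\N\}$ is exactly the set of order-minimal nonzero elements of $\{p:p^2=p,\ 0\le p\le 1\}$, a set defined solely through the algebra and the order. Since $\Phi$ preserves idempotents, the unit, the positive cone, and the order $\le$ in both directions, it preserves order-minimality and therefore carries $\{e_{nn}\}$ bijectively onto $\{f_{mm}\}$; this defines $\sigma$ by $\Phi(e_{nn})=f_{\sigma(n)\sigma(n)}$. (Passing to the commutant of the atoms also shows $\Phi$ maps the diagonal $\ell_\infty$ onto $\ell_\infty$ as the permutation $\sigma$, but this is not needed below.)

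Next I would track the matrix units. From $e_{mn}=e_{mm}e_{mn}e_{nn}$ and multiplicativity, $\Phi(e_{mn})=f_{\sigma(m)\sigma(m)}\,\Phi(e_{mn})\,f_{\sigma(n)\sigma(n)}$, and since $f_{pp}Xf_{qq}=X_{pq}f_{pq}$ for every operator $X$, this equals $c_{mn}f_{\sigma(m)\sigma(n)}$ for the scalar $c_{mn}:=\bigl(\Phi(e_{mn})\bigr)_{\sigma(m)\sigma(n)}$, which is nonzero by injectivity of $\Phi$. Here symmetry of the bases is crucial: it makes the matrix units uniformly normalized, $\|e_{mn}\|\asymp 1$ and $\|f_{pq}\|\asymp 1$, because the basis vectors and their biorthogonal functionals have uniformly comparable norms. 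Combining $\Phi(e_{mn})=c_{mn}f_{\sigma(m)\sigma(n)}$ and $\Phi^{-1}(f_{\sigma(m)\sigma(n)})=c_{mn}^{-1}e_{mn}$ with the boundedness of $\Phi$ and $\Phi^{-1}$ then yields uniform two-sided bounds $\kappa^{-1}\le|c_{mn}|\le\kappa$ for all $m,n$, where $\kappa$ depends only on $\|\Phi\|,\|\Phi^{-1}\|$ and the symmetry constants.

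The hard part is converting this into coarse control, and I would argue by contradiction using the decay of band-dominated operators. Suppose $\sigma$ were not bornologous: there is $r>0$ and pairs $(m_k,n_k)$ with $d(m_k,n_k)\le r$ but $\partial(\sigma(m_k),\sigma(n_k))\to\infty$. Since $d$ is uniformly locally finite, no index lies in infinitely many such pairs (a fixed index would force all its partners into a single finite $r$-ball), so I may pass to a subfamily whose rows $\{m_k\}$ and columns $\{n_k\}$ are all distinct; then $a:=\sum_k e_{m_k n_k}$ is a genuine partial-permutation operator, bounded by the symmetry constant and of propagation $\le r$, hence $a\in\broe(d,\cE)$. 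Applying $\Phi$ gives the band-dominated operator $\Phi(a)=\sum_k c_{m_k n_k}f_{\sigma(m_k)\sigma(n_k)}$, whose $(\sigma(m_k),\sigma(n_k))$-entry is exactly $c_{m_k n_k}$. But a band-dominated operator is a norm limit of finite-propagation operators, and extracting a single matrix entry is a uniformly bounded functional; hence the entries of $\Phi(a)$ at positions whose $\partial$-distance tends to infinity must tend to $0$, forcing $c_{m_k n_k}\to 0$ and contradicting $|c_{m_k n_k}|\ge\kappa^{-1}$.

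Therefore $\sigma$ is bornologous, and running the identical argument with $\Phi^{-1}$ in place of $\Phi$ shows $\sigma^{-1}$ is bornologous; since $\sigma$ is a bijection, this means $(\N,d)$ and $(\N,\partial)$ are bijectively coarsely equivalent, as required. The only genuinely delicate ingredients are the uniform two-sided bound on the $c_{mn}$, which is precisely where symmetry of the bases is used, and the decay estimate for band-dominated operators; the remaining steps are bookkeeping.
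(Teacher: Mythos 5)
Your proposal is correct, and while its skeleton matches the paper's (identify the diagonal matrix units, track general matrix units, then extract coarse control), the key mechanism is genuinely different. The paper deduces this theorem from Theorem \ref{ThmBanachAlgIsomOrderIso}, whose proof leans on the order structure twice: Proposition \ref{PropLatticeIsoRankPres} shows by a purely order-theoretic argument that \emph{every} matrix unit $e_{n,m}$ is sent to a scalar multiple of a matrix unit, and Proposition \ref{PropStronglyCoarseLike} then applies this to the full band operator $\sum_{d(n,m)\leq r}e_{n,m}$ (bounded by $d$-symmetry, via Proposition \ref{PropSubsymmetric}) to prove that $\Phi$ is \emph{strongly coarse-like}, i.e.\ maps propagation $\leq r$ to propagation $\leq s$ uniformly; multiplicativity only enters at the very end. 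You instead use order-preservation exactly once --- to characterize $\{e_{n,n}\}$ as the minimal nonzero idempotents $p$ with $0\leq p\leq 1$ --- and after that everything is algebraic: the compression identity $\Phi(e_{m,m})\Phi(a)\Phi(e_{n,n})=\Phi(e_{m,m}\,a\,e_{n,n})$ yields $\Phi(e_{m,n})=c_{m,n}f_{\sigma(m),\sigma(n)}$, boundedness of $\Phi^{\pm 1}$ plus seminormalization of symmetric bases gives the two-sided bound on $|c_{m,n}|$, and coarseness of $\sigma$ follows by contradiction from the decay of far-off-diagonal entries of the single partial-permutation band operator $\sum_k e_{m_k,n_k}$ built along a hypothetical bad sequence (this is where basis symmetry does real work for you, playing the role that $d$-symmetry plays in the paper). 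Your route is more elementary --- it avoids the strongly coarse-like machinery entirely and needs no uniform approximation statement, only the trivial fact that a band-dominated operator has entries tending to $0$ at positions whose $\partial$-distance tends to infinity --- and it isolates sharply how little of the order hypothesis is actually needed once multiplicativity is available. What the paper's heavier route buys is Proposition \ref{PropStronglyCoarseLike} itself, a uniform propagation bound established \emph{without} multiplicativity, which is reused in settings where no algebra isomorphism is assumed (notably the isometric rigidity Theorem \ref{ThmBanachIsometryOrderIso}). One presentational caveat: the identity $\Phi(a)=\sum_k c_{m_k,n_k}f_{\sigma(m_k),\sigma(n_k)}$ is not justified as written, since $a$ is only an SOT-convergent sum and nothing guarantees that $\Phi$ commutes with SOT limits; but the only consequence you use --- that the $(\sigma(m_k),\sigma(n_k))$ entry of $\Phi(a)$ equals $c_{m_k,n_k}$ --- follows directly from your compression identity, because once rows and columns are made distinct one has $e_{m_k,m_k}\,a\,e_{n_k,n_k}=e_{m_k,n_k}$, so the overstatement is harmless and easily repaired within your own argument.
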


We point out that Theorem \ref{ThmBanachAlgIsomOrderIsoINTRO} can be slightly strengthened in the sense that we do not need the full power of symmetry of the bases, but only a metric version of it (see Definition \ref{DefiDSubsymm} and  Theorem \ref{ThmBanachAlgIsomOrderIso}).

When working with the classic uniform Roe algebras $\cstu(\N,d)$  (or  $\broe^p(\N,d)$),  one does not usually consider the order on $\cstu(\N,d)$ described above, but instead only looks at \cstar-algebra isomorphism (or Banach algebra isomorphism). We generalize several of the currently known results  to arbitrary symmetric basis. The down side of this approach is that, just as in the case $X=\ell_2$,  further geometric assumptions on the metrics $d$ and $\partial$ are necessary in  order to obtain rigidity results (at least with the current techniques).  For that, the definition of ghost operators is necessary: an operator $a\in \cL(E)$ is a \emph{ghost with respect to $\cE$} if for all $\eps>0$ there is $n_0\in\N$ so that $|e^*_m(ae_n)|\leq \eps$ for all $n,m\geq n_0$.\footnote{See Subsection \ref{SubsectionGhosts} for more details on ghost operators.}

\begin{theorem}\label{ThmIsomorBanAlg}
Let $d$ and $\partial$ be uniformly locally finite  metrics on $\N$, and $E$ and $F$ be Banach spaces with symmetric bases $\cE$ and $\cF$, respectively. Assume that all ghost idempotents  in $\broe(d, \cE)$ and $\broe(\partial, \cF)$ are compact. If $\broe(d,\N)$ and  $ \broe(\partial, \N)$ are  Banach algebra isomorphic, then $(\N,d)$ and  $(\N,\partial)$ are coarsely equivalent.
\end{theorem}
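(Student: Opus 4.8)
The plan is to first show that any Banach-algebra isomorphism $\Phi\colon\broe(d,\cE)\to\broe(\partial,\cF)$ is spatially implemented, and then to extract a coarse equivalence from the implementing operator, using the ghost hypothesis precisely to localise the images of the rank-one idempotents.

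First I would identify the compact operators algebraically. Writing $A=\broe(d,\cE)$ and $B=\broe(\partial,\cF)$, each matrix unit $e_{mn}\colon x\mapsto e_n^*(x)e_m$ lies in $A$ (it has propagation $d(m,n)<\infty$), so every finite-rank operator belongs to $A$; since $E$ has a basis it has the approximation property, whence $\overline{\mathrm{socle}(A)}=\cK(E)$, and likewise for $B$. As the socle is an algebraic invariant and $\Phi$ is bounded with bounded inverse, $\Phi$ restricts to an isomorphism $\cK(E)\to\cK(F)$. The algebra $\cK(E)$ has, up to equivalence, a unique algebraically irreducible representation, namely $E$ itself, so transporting the standard representation of $\cK(F)$ along $\Phi$ yields a linear bijection $u\colon E\to F$ with $\Phi(k)=uku^{-1}$ on $\cK(E)$; a closed-graph argument makes $u$ bounded with bounded inverse. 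Finally, for arbitrary $a\in A$ and every $n$ the product $ap_n$ is finite-rank, so $\Phi(a)\Phi(p_n)=\Phi(ap_n)=u\,ap_n\,u^{-1}$ forces $\Phi(a)(ue_n)=u(ae_n)$; as $(e_n)_n$ spans a dense subspace this gives $\Phi(a)=uau^{-1}$ on all of $A$, so $uAu^{-1}=B$. Setting $\tilde e_n:=ue_n$ and $\tilde e_n^*:=(u^{-1})^*e_n^*$, the sequence $\tilde\cE=(\tilde e_n)_n$ is a basis of $F$ equivalent to $\cE$ (hence unconditional, as $\cE$ is symmetric) and $\broe(d,\tilde\cE)=\broe(\partial,\cF)$.

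The heart of the argument, and the step where the ghost hypothesis enters, is a uniform localisation lemma: for every $\eps>0$ there is $R>0$ so that for each $n$ the vector $\tilde e_n$ is within $\eps$ of its $\cF$-coordinate projection onto some $\partial$-ball of radius $R$. I would prove this by contradiction. If it fails, then for some $\eps_0>0$ one can choose a $d$-separated sequence $(n_j)_j$ (possible since $(\N,d)$ is uniformly locally finite) whose vectors $\tilde e_{n_j}$ carry $\eps_0$-mass escaping every bounded $\partial$-ball; a diagonalisation then arranges that each coordinate $k$ receives a significant contribution from only finitely many $j$. Because $\tilde\cE$ is unconditional, the coordinate projection $\chi_S$ onto $S=\{n_j\}_j$ is a bounded idempotent in $A$, so $Q:=u\chi_S u^{-1}=\sum_j \tilde e_{n_j}\otimes\tilde e_{n_j}^*$ is a bounded idempotent in $B$. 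It has infinite rank, hence is noncompact, while the escaping-mass construction makes all its $\cF$-matrix entries $f_k^*(Qf_l)$ tend to $0$ as $k,l\to\infty$; that is, $Q$ is a noncompact ghost idempotent in $B$, contradicting the hypothesis. I expect this to be the main obstacle, both in making the diagonalisation yield genuine ghost decay and in securing the requisite uniformity in $n$.

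Granting the localisation lemma, I would fix $\eps=\tfrac14$, take the corresponding $R$, and let $\sigma(n)$ be the centre of a radius-$R$ $\partial$-ball capturing $\tilde e_n$ up to $\tfrac14$. To see that $\sigma$ is bornologous I would use partial translations: by uniform local finiteness any $a\in A$ of propagation $\le r$ decomposes into boundedly many operators $a_\pi=\sum_n e_{\pi(n),n}$ with $d(\pi(n),n)\le r$, and $\Phi(a_\pi)=\sum_n \tilde e_{\pi(n)}\otimes\tilde e_n^*\in B$ is band-dominated with respect to $(\partial,\cF)$; combined with localisation this bounds $\partial(\sigma(\pi(n)),\sigma(n))$ uniformly in $n$, so $d(n,m)\le r$ implies $\partial(\sigma(n),\sigma(m))\le\rho(r)$ for a suitable $\rho$. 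Running the same construction for $\Phi^{-1}=\Ad(u^{-1})$ produces a bornologous map $\tau\colon\N\to\N$, and the relations $u^{-1}u=\id$, $uu^{-1}=\id$ together with localisation force $\tau\circ\sigma$ and $\sigma\circ\tau$ to move each point a uniformly bounded distance. Thus $\sigma$ is a coarse equivalence with coarse inverse $\tau$, and $(\N,d)$ and $(\N,\partial)$ are coarsely equivalent, as desired.
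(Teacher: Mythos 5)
Your overall skeleton --- spatially implement $\Phi$ as $\Ad(u)$, use the ghost hypothesis to pin down the images of the diagonal matrix units, then transfer band-domination of $\Phi\bigl(\sum_n e_{\pi(n),n}\bigr)$ along partial translations --- runs parallel to the paper's proof (Lemma \ref{LemmaPhiIsStrongContAndRankPres}, Lemma \ref{LemmaPickDelta}, Theorem \ref{ThmCoarseLike}, Theorems \ref{ThmEmbHer} and \ref{ThmIsomorBanAlgMetricSym}). But the step you yourself flag as the heart of the argument, the uniform localisation lemma, has a genuine gap: failure of localisation does \emph{not} produce a ghost. Suppose along a subsequence $\tilde e_{n_j}=\tfrac{1}{\sqrt 2}(f_{a_j}+f_{b_j})$ and $\tilde e_{n_j}^*=\tfrac{1}{\sqrt 2}(f_{a_j}^*+f_{b_j}^*)$ with $a_j,b_j\to\infty$ and $\partial(a_j,b_j)\to\infty$ (a configuration consistent with biorthogonality and with everything you have established before this step). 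Then each $\tilde e_{n_j}$ has mass about $1/\sqrt 2$ escaping every ball of fixed radius, so localisation fails badly; yet $Q=\sum_j\tilde e_{n_j}\otimes\tilde e_{n_j}^*$ has matrix entries of modulus $\tfrac12$ at positions $(a_j,b_j)$ whose coordinates both tend to infinity, so $Q$ is \emph{not} a ghost and no contradiction with the hypothesis arises. What makes an idempotent of the form $\Phi(\chi_S)$ a ghost is not that the vectors spread out but that their matrix entries are uniformly \emph{flat}. That is exactly the dichotomy the paper runs in Lemma \ref{LemmaPickDelta}: either $\inf_n\sup_{m,k}\|e_{m,m}\Phi(e_{n,n})e_{k,k}\|>0$, or one passes to a subsequence along which all entries of $\Phi(e_{x_n,x_n})$ are $<2^{-n}$, and then (and only then) a summability argument shows $\Phi\bigl(\sum_{n\in\M}e_{x_n,x_n}\bigr)$ is a ghost, while a separate block-vector argument (Proposition \ref{PropSOTConvSeriesRegNew}) shows it stays a fixed distance from $\mathrm M_\infty(\cF)$, i.e.\ is noncompact. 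Note also that the surviving horn of this dichotomy gives only \emph{one} large matrix entry per $n$; it does not localise $\tilde e_n$. So in place of your localisation the paper needs an equi-approximability result (Lemma \ref{Lemma49} and Theorem \ref{ThmCoarseLike}: $\Phi$ is coarse-like) to convert large entries into coarse maps, and your step 3 would need that same input once localisation is unavailable. Whether your uniform localisation statement is even true under the stated hypotheses is unclear; in any case it cannot be obtained by the one-step contradiction you propose.

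A smaller, fixable error occurs in your first step: $\overline{\mathrm{socle}(A)}=\cK(E)$ is false in general, because $\broe(d,\cE)$ contains all compact operators if and only if $\cE$ is shrinking (Proposition \ref{PropShrinkingCompact}), and symmetric bases need not be shrinking (e.g.\ the unit vector basis of $\ell_1$). The closure of the socle is $\mathrm M_\infty(\cE)=\broe(d,\cE)\cap\cK(E)$, and the Eidelheit-type representation argument must be run with this ideal instead of $\cK(E)$; this is precisely what the paper's Lemma \ref{LemmaPhiIsStrongContAndRankPres} (quoted from \cite{BragaVignati2019}) does, and it yields the spatial implementation $\Phi=\Ad(U)$ that you want, so this part of your outline is repairable.
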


As for Theorem \ref{ThmBanachAlgIsomOrderIsoINTRO}, only a metric version of basis' symmetry is needed for Thereom \ref{ThmIsomorBanAlg} (see    Theorem \ref{ThmIsomorBanAlgMetricSym}).

Theorem \ref{ThmIsomorBanAlg} allows us to obtain rigidity results under the assumption that $(\N,d)$ and $(\N,\partial)$ are metric spaces with G. Yu's property A (see Definition \ref{DefiPropA}). For that we introduce the notion of \emph{regular uniform Roe algebra}. Recall, if $\cE$ is $1$-unconditional, then $E$ is a Banach lattice. In this setting,  $\cL(E)_r$ denotes the space of \emph{regular operators} on $E$ and the \emph{regular norm}   $\|\cdot\|_r$ is a norm on $\cL(E)_r$ which makes it into a   Banach lattice (see Appendix and Section \ref{SectionRegularURA} for definitions). Given a uniformly locally finite metric $d$ on $\N$, and a Banach space $E$ with $1$-unconditional basis, we show that all band operators in $\cL(E)$ are regular (see Proposition \ref{PropFinPropImpliesRegular}). Therefore, we can  introduce the notion of \emph{regular uniform Roe algebra}:

\begin{definition}\label{DefiREgURA}
Let $d$ be a uniformly locally finite  metric on $\N$  and $E$ be a Banach space with a $1$-unconditional basis $\cE$. We define the \emph{regular uniform Roe algebra of the pair $(d,\cE)$} as the $\|\cdot\|_r$-closure of all band operators, and denote it by  $\broe^r(d,\cE)$.
\end{definition}

It is well known that if $(\N,d)$ is a uniformly locally finite metric space with  property A, then all ghosts in $\cstu(\N,d)$ are compact (see \cite[Proposition 11.43]{RoeBook}).\footnote{This is actually equivalent to property A for uniformly locally finite metric spaces \cite[Theorem 1.3]{RoeWillett2014}.} The following shows that  the same holds for any $1$-symmetric basis at least for elements in $\broe^r(d,\cE)$.

\begin{theorem}\label{ThmPropertyAGhostsAreComp}
Let $(\N,d)$ be a uniformly locally finite metric space with property A and $E$ be a Banach space with a $1$-unconditional symmetric basis $\cE$. Then all   ghost operators in $\broe^r(d,\cE)$  are compact.
\end{theorem}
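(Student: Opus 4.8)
The plan is to argue by contradiction, showing that if $a\in\broe^r(d,\cE)$ is a ghost with $\operatorname{dist}(a,\cK)=\delta>0$ (here $\cK$ denotes the compact operators on $E$), then the ghost decay of $a$ and property A become incompatible. The first point is that, precisely because $a$ lies in the \emph{regular} uniform Roe algebra, I may truncate its propagation at a \emph{fixed} scale with good control. Writing $P_{\le r}$ for the Schur multiplier given by Hadamard multiplication with the indicator of $\{(m,k):d(m,k)\le r\}$, the $1$-unconditionality of $\cE$ makes $E$ a Banach lattice on which the modulus of a regular operator is computed entrywise, so $\lvert P_{\le r}(T)\rvert\le\lvert T\rvert$ and hence $P_{\le r}$ is contractive for $\|\cdot\|_r$ (using Proposition \ref{PropFinPropImpliesRegular}). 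Since $a$ is a $\|\cdot\|_r$-limit of band operators, this gives $\|a-P_{\le r}(a)\|_r\to0$, and in particular one can fix a single $r$ with $\|a-b\|<\delta/8$ for $b:=P_{\le r}(a)$, which satisfies $\propg(b)\le r$ and whose matrix entries \emph{coincide exactly} with those of $a$ on $\{d(m,k)\le r\}$ and vanish elsewhere. This exact matching is the reason for passing to $\broe^r(d,\cE)$: it transfers the ghost decay of $a$ to $b$ with no entrywise error.

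Next I would extract witnesses of non-compactness that escape to infinity. From $\operatorname{dist}(a,\cK)=\delta$ and the fact that $a$ is band-dominated, a gliding-hump argument with respect to $\cE$ produces normalized, finitely supported vectors $\xi_n$ with $\supp\xi_n\to\infty$ in the enumeration and $\|a\xi_n\|\ge\delta/2$; consequently $\|b\xi_n\|\ge 3\delta/8$. Now I invoke property A (Definition \ref{DefiPropA}) in the guise of the operator norm localization property \emph{at the single scale $r$}: there is one $S=S(\delta,r)$ so that each $b\xi_n$ is almost captured by a localized vector, i.e. there are $\eta_n$ with $\|\eta_n\|\le1$, $\supp\eta_n\subseteq B(\supp\xi_n,S)$, $\diam(\supp\eta_n)\le S$, and $\|b\eta_n\|\ge\tfrac23\|b\xi_n\|\ge\delta/4$. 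The crucial bookkeeping point is that $r$, and therefore $S$, are fixed using only $\delta$ and the band-dominatedness of $a$, \emph{before} any ghost threshold is chosen; this is exactly what avoids the propagation blow-up that dooms any naive ``approximate by finite propagation'' argument (there the approximants have propagation tending to infinity, so the relevant ball sizes are uncontrolled). Since $\supp\xi_n\to\infty$, also $\supp\eta_n\to\infty$.

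The contradiction then comes from bounding $\|b\eta_n\|$ from \emph{above} via the ghost property of $a$. After normalizing $\cE$ so that $\|e_n\|=\|e^*_n\|=1$, put $N_t:=\sup_x\card{B(x,t)}$ (finite by uniform local finiteness), let $\phi$ be the fundamental function of $\cE$ and $\lambda$ its symmetry constant, and set $C:=\lambda\,N_S\,\phi(N_SN_r)$ and $\eps:=\delta/(8C)$. Let $n_0$ be the ghost threshold for $\eps$, and take $n$ large enough that $\supp\eta_n$ is disjoint from the finite set $\bigcup_{j<n_0}B(j,r)$; then $\supp\eta_n\subseteq[n_0,\infty)$, and moreover every coordinate $m$ with $d(m,k)\le r$ for some $k\in\supp\eta_n$ also satisfies $m\ge n_0$. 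For such $m,k$ one has $e^*_m(be_k)=e^*_m(ae_k)$, of modulus at most $\eps$. Since $\propg(b)\le r$, the vector $b\eta_n$ is supported on at most $N_SN_r$ coordinates with coefficients bounded by $\eps N_S$, so $1$-unconditionality (to pull out the maximal coefficient) together with symmetry (giving $\|\sum_{m\in G}e_m\|\le\lambda\,\phi(\card G)$ for finite $G$) yields $\|b\eta_n\|\le\eps N_S\lambda\,\phi(N_SN_r)=\delta/8$, contradicting $\|b\eta_n\|\ge\delta/4$. Hence $a$ is compact.

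The main obstacle is the localization step of the second paragraph. Over a general symmetric Banach lattice there is no inner product and no square-root trick $\sqrt{\xi_x}$ with which to manufacture a positive-type kernel, so the Hilbert-space proof of the operator norm localization property cannot simply be imported. Instead I expect to build, directly from the property A data --- a family of probability vectors in $\ell_1(\N)_+$ supported on balls of radius $S$ with $\|\xi_x-\xi_y\|_1$ small for $d(x,y)\le r$ --- a finite family of positive, finite-propagation averaging operators that are contractive in the regular norm (again via Proposition \ref{PropFinPropImpliesRegular} and symmetry) and whose supports refine that of each $\xi_n$, and then to show that one of the corresponding localized pieces $\eta_n$ retains a definite fraction of $\|b\xi_n\|$ while its support escapes to infinity together with $\supp\xi_n$. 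Establishing this averaging as a regular-norm contraction with controlled support, and the accompanying localization inequality, without Hilbertian tools, is where essentially all of the work lies; the remaining steps are the soft lattice and counting estimates indicated above.
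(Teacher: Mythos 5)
Your proposal, as written, is not a complete proof: the localization step in your second paragraph is precisely the hard point, and you concede this yourself (``where essentially all of the work lies''). Property A in the sense of Definition \ref{DefiPropA} hands you $\ell_2$-type partitions of unity; it does not hand you, over a general symmetric Banach sequence space, the operator-norm-localization statement you invoke (vectors $\eta_n$ of uniformly bounded support diameter capturing a definite fraction of $\|b\xi_n\|$ at a fixed scale). Even in the Hilbert space case the equivalence of property A with operator norm localization is a nontrivial theorem in its own right, and nothing in the paper supplies a lattice version. So paragraphs two through four of your argument rest on an unproven lemma, and the proposal is incomplete as it stands.

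The irony is that your \emph{first} paragraph already contains everything needed, and yields an argument simpler than the one you then attempt. The truncation $P_{\le r}$ is indeed $\|\cdot\|_r$-contractive: on a space with $1$-unconditional basis the modulus of a regular operator is computed entrywise, so $|P_{\le r}(T)|\le |T|$ as positive operators; moreover $P_{\le r}$ fixes every band operator of propagation at most $r$, whence $\|a-P_{\le r}(a)\|_r\le 2\operatorname{dist}_{\|\cdot\|_r}\bigl(a,\{c\in\broe[d,\cE] : \propg(c)\le r\}\bigr)\to 0$ as $r\to\infty$ for any $a\in\broe^r(d,\cE)$. Each $P_{\le r}(a)$ is a ghost of propagation at most $r$, since its entries are a subset of those of $a$. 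Now quote Proposition \ref{PropGhostWithFInProgAreComp}: for a u.l.f.\ space and a symmetric basis, every finite-propagation ghost lies in $\mathrm M_\infty(\cE)\subset\cK(E)$. Since $\|\cdot\|\le\|\cdot\|_r$, the operator $a$ is a norm limit of compact operators, hence compact --- no contradiction argument, no gliding hump, no localization, and in fact no use of property A at all. (This also shows your worry about ``propagation blow-up'' is moot here: the approximants have fixed propagation $r$ by construction.) For comparison, the paper reaches the same intermediate statement --- that $a$ is a norm limit of finite-propagation ghosts --- by a genuinely different route: it uses the property A partitions $(\varphi_{n,k})_n$ to build the averaging maps $M_k(b)=\mathrm{SOT}\text{-}\sum_{n}\varphi_{n,k}b\varphi_{n,k}$, proves via the regular structure that $M_k(b)\to b$ in norm for all $b\in\broe^r(d,\cE)$ (Lemmas \ref{LemmaSeriesSOTconv}, \ref{LemmaSeriesSOTconvCommutator} and \ref{LemmaApproxPosByFinProg}), and then finishes with Proposition \ref{PropGhostWithFInProgAreComp} exactly as above. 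So the correct repair of your proposal is: keep paragraph one, apply Proposition \ref{PropGhostWithFInProgAreComp}, and delete the rest; written out carefully, this would even show that the property A hypothesis is not needed for ghosts lying in the \emph{regular} uniform Roe algebra.
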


Theorem \ref{ThmIsomorBanAlg} and Theorem \ref{ThmPropertyAGhostsAreComp}  allow us to obtain the next result.

\begin{theorem}\label{ThmIsomorBanAlgPropAINTRO}
Let $ d$ and $\partial$ be uniformly locally finite metrics on $\N$,  and $E$ and $F$ be Banach spaces with $1$-unconditional symmetric bases $\cE$ and $\cF$, respectively. If $(\N, \partial)$ has property A and there is a Banach algebra isomorphism $\Phi:\broe(d,\N)\to  \broe(\partial, \N)$ so that $\Phi(\broe^r(d,\cE))=\broe^r(\partial,\cF)$, then $(\N,d)$ and  $(\N,\partial)$ are coarsely equivalent.
\end{theorem}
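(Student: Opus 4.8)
The plan is to derive Theorem~\ref{ThmIsomorBanAlgPropAINTRO} from Theorem~\ref{ThmIsomorBanAlg}: once I know that every ghost idempotent in $\broe(d,\cE)$ and in $\broe(\partial,\cF)$ is compact, Theorem~\ref{ThmIsomorBanAlg} applies verbatim to the Banach algebra isomorphism $\Phi$ and yields that $(\N,d)$ and $(\N,\partial)$ are coarsely equivalent. So the entire task is to verify this ghost-idempotent hypothesis on both sides, and the two supplementary assumptions — property A of $(\N,\partial)$ and the identity $\Phi(\broe^r(d,\cE))=\broe^r(\partial,\cF)$ — are what I would use to handle the $\partial$-side and the $d$-side, respectively.

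On the $\partial$-side I would argue as follows. Since $(\N,\partial)$ has property A and $\cF$ is $1$-unconditional and symmetric, Theorem~\ref{ThmPropertyAGhostsAreComp} shows that every ghost operator belonging to $\broe^r(\partial,\cF)$ is compact; in particular every ghost idempotent of $\broe^r(\partial,\cF)$ is compact. To promote this to all ghost idempotents of the full algebra $\broe(\partial,\cF)$, I would show that, in the $1$-unconditional setting, a ghost idempotent of $\broe(\partial,\cF)$ already lies in the regular subalgebra $\broe^r(\partial,\cF)$, so that Theorem~\ref{ThmPropertyAGhostsAreComp} applies to it. This is the point at which the regular structure enters, and it is exactly the reason the hypothesis is phrased in terms of $\broe^r$.

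On the $d$-side I do not have property A, so I would instead transport compactness through $\Phi$. First, one records that $\Phi$ and $\Phi^{-1}$ preserve the ideal $\cK$ of compact operators (this is standard for isomorphisms of uniform Roe algebras of uniformly locally finite spaces and is already needed in the proof of Theorem~\ref{ThmIsomorBanAlg}), and that, by hypothesis, $\Phi$ restricts to an isomorphism $\broe^r(d,\cE)\to\broe^r(\partial,\cF)$. Given a ghost idempotent $p\in\broe(d,\cE)$ — again placed inside $\broe^r(d,\cE)$ by the same $1$-unconditionality argument as above — its image $\Phi(p)$ is an idempotent of $\broe^r(\partial,\cF)$. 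If I can show that $\Phi(p)$ is again a ghost, then $\Phi(p)$ is compact by the $\partial$-side analysis, and hence $p=\Phi^{-1}(\Phi(p))$ is compact because $\Phi^{-1}$ preserves $\cK$. Thus the two verifications combine to give the hypothesis of Theorem~\ref{ThmIsomorBanAlg} on both algebras.

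The hard part will be the two transfer claims, and especially the second: that $\Phi$ sends ghost idempotents to ghost idempotents. Because Theorem~\ref{ThmIsomorBanAlgPropAINTRO} only assumes $\Phi$ to be a Banach \emph{algebra} isomorphism, with no order- or lattice-preservation (in contrast to Theorem~\ref{ThmBanachAlgIsomOrderIsoINTRO}), the ghost property — defined through the matrix entries relative to the basis — is not manifestly preserved, and this is precisely where the compatibility condition $\Phi(\broe^r(d,\cE))=\broe^r(\partial,\cF)$ must intervene: it guarantees that the whole transfer can be carried out inside the regular algebras, where property A gives control of ghosts. Establishing that the relevant ghost idempotents lie in $\broe^r$, and that their ghostliness survives $\Phi$, are the genuinely nontrivial steps; the remaining bookkeeping (idempotents map to idempotents, compacts map to compacts) is routine.
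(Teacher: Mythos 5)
Your overall strategy---verify the ghost-idempotent hypothesis of Theorem \ref{ThmIsomorBanAlg} on both sides and then invoke that theorem as a black box---does not go through, because both transfer claims you need are precisely the statements that are \emph{not} available. On the $\partial$-side, your key step is that a ghost idempotent of the full algebra $\broe(\partial,\cF)$ ``already lies in the regular subalgebra $\broe^r(\partial,\cF)$'' by $1$-unconditionality. This is unsupported: what is known is only the chain $\broe[\partial,\cF]\subset\broe^r(\partial,\cF)\subset\broe(\partial,\cF)\cap\cL(F)_r$, and membership in the $\|\cdot\|_r$-closure of band operators is not automatic even for operators that \emph{are} regular---this is essentially Problem \ref{ProbAutomatic}, which the paper leaves open. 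Note also that the hypothesis $\Phi(\broe^r(d,\cE))=\broe^r(\partial,\cF)$ cannot rescue this step, since an arbitrary ghost idempotent of $\broe(\partial,\cF)$ need not be the image under $\Phi$ of anything regular. On the $d$-side you need $\Phi$ to carry ghost idempotents to ghosts; you correctly flag this as the hard part, but no argument is offered, and since $\Phi$ is only a Banach algebra isomorphism (no order preservation), none of the paper's tools yields this.

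The correct route, which is the one the paper takes, sidesteps both issues: the ghost-idempotent hypothesis of Theorem \ref{ThmIsomorBanAlg} is used in its proof \emph{only} through Corollary \ref{CorPickDelta2}, i.e.\ only to guarantee that
\[\delta=\inf_{n\in\N}\sup_{m,k\in\N}\|e_{m,m}\Phi(e_{n,n})e_{k,k}\|>0.\]
So one does not need to control all ghost idempotents, only the specific operators produced by the dichotomy of Lemma \ref{LemmaPickDelta}. If $\delta=0$, that lemma (together with Lemma \ref{LemmaPhiIsStrongContAndRankPres}) produces a non-compact ghost of the form $\Phi\bigl(\sum_{n\in\M}e_{x_n,x_n}\bigr)$; since $\sum_{n\in\M}e_{x_n,x_n}$ is a band operator, the hypothesis $\Phi(\broe^r(d,\cE))=\broe^r(\partial,\cF)$ places this ghost inside $\broe^r(\partial,\cF)$, where Theorem \ref{ThmPropertyAGhostsAreComp} (property A) forces it into $\mathrm{M}_\infty(\cF)$---a contradiction. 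Hence $\delta>0$, and the remainder of the proof of Theorem \ref{ThmIsomorBanAlgMetricSym} (coarse-likeness via Theorem \ref{ThmCoarseLike}, closeness of $f\circ h$ and $h\circ f$ to the identities) runs verbatim. The crucial structural point your proposal misses is that the ghost one must tame arises \emph{already as the image of a band operator under $\Phi$}, so no regularity of arbitrary ghost idempotents and no preservation of ghostliness by $\Phi$ is ever required.
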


We point out that the condition  $\Phi(\broe^r(d,\cE))=\broe^r(\partial,\cF)$ in the theorem above can be weakened (as well as the basis' symmetry). For details,  see  Theorem \ref{ThmIsomorBanAlgPropA} below.

The paper is organized as follows: In Section \ref{SectionPrelim}, we introduce the basic definitions and terminologies for this paper --- with the exception of real Banach lattices, whose basic properties we recall  in Appendix. Section \ref{SectionDifBases} deals with several types of basis in Banach spaces (e.g., unconditional, shrinking, symmetric, and subsymmetric) and how those basis' properties manifest in  uniform Roe algebras. 
In Section \ref{SectionRegularURA}, we recall the basics of complex Banach lattices and show that all band operators are regular (see Proposition \ref{PropFinPropImpliesRegular}). Section \ref{SectionRigidityOrder} deals with rigidity results for order isomorphisms between the uniform Roe algebras. We show that ordered Banach space isomorphism is not enough to give us rigidity even when $E=\ell_2$ (see Theorem \ref{ThmBanachIsomorNorEnough}) and we prove Theorem \ref{ThmBanachIsometryOrderIso} and Theorem  \ref{ThmBanachAlgIsomOrderIsoINTRO}. We deal with metric spaces with property A in Section \ref{SectionPropA} and prove Theorem \ref{ThmPropertyAGhostsAreComp}.
The rigidity results for Banach algebra isomorphisms are obtained in Section \ref{SectionRigBanAlgIso}. Moreover, besides the isomorphism results mentioned above, we also obtain certain embedding results (see   Theorem \ref{ThmEmbHer}  and Theorem \ref{ThmEmbHerPropA}). In Section \ref{SectionOpenProb}, we  list some natural problems which are left open in this paper and we finish the paper with a short appendix on real Banach algebras.

\section{Preliminaries}\label{SectionPrelim}

\subsection{Basic terminology on Banach spaces}
The results on this paper hold for both real and complex Banach spaces. As a rule, we chose to follow the common approach used in operator algebra theory, and all the  Banach spaces are assumed to be over the complex field unless otherwise stated. We refer to \cite{LindenstraussTzafririBookVol1} for the basics of Banach space theory.  

Given  Banach spaces $E$ and $F$, $B_E$ denotes the  closed unit ball of $E$  and the space of all bounded operators $a:E\to F$ is denoted by $\cL(E,F)$, if $E=F$ we simply write $\cL(E)$. The  space  of all compact operators in  $\cL(E,F)$   is denoted by $\cK(E,F)$ (if $E=F$, we write $\cK(E)$).

A sequence $(e_n)_n$ in a Banach space $E$ is a \emph{Schauder basis for $E$}, or simply a \emph{basis for $X$}, if each $x\in E$ can be written uniquely as $x=\sum_{n\in\N}\alpha_ne_n$ for some $(\alpha_n)_n\in \C^\N$. If $P_n:E\to \spann\{e_1,\ldots,e_n\}$ denotes the standard projection, then $\sup_n\|P_n\|<\infty$. The basis is called \emph{monotone} if $\sup_n\|P_n\|=1$ and \emph{bimonotone} if it is monotone and $\sup_n\|\mathrm{Id}_E-P_n\|=1$.  A sequence $(x_n)_n$ in $E$ is called a \emph{basic sequence} if it is a basis for the closure of its span. Given $\lambda\geq 1$, two sequences $(x_n)_n $ and $(y_n)_n$ in $E$ are called \emph{$\lambda$-equivalent} if 
\[\frac{1}{\lambda}\Big\|\sum_{n=1}^k\alpha_nx_n\Big\|\leq \Big\|\sum_{n=1}^k\alpha_ny_n\Big\|\leq \lambda \Big\|\sum_{n=1}^k\alpha_nx_n\Big\|\]
for all $k\in\N$ and all $(\alpha_n)_n\in \C^k$. 

Given a sequence $(x_n)_n$ in a Banach space $E$, we say that $(x_n)_n$ is  \emph{semi-normalized} if it is bounded and  $\inf_{n}\|x_n\|>0$. Given another sequence   $(y_n)_n$ in  $E$, we say that $(y_n)_n$ is a \emph{block subsequence of $(x_n)_n$} if there are a strictly increasing sequence $(n_k)_k\in \N^\N$ and $(\alpha_k)_k\in \C^\N$ so that $y_k=\sum_{i=n_k}^{n_{k+1}-1}\alpha_ix_{i}$ for all $k\in\N$.
 
\begin{remark}
Let $d$ me a metric on $\N$,  $(E,\|\cdot\|)$ be a Banach space with a basis $\cE$, and let $\|\cdot\|'$ be  a norm on  $E$ equivalent to $\|\cdot\|$. Consider $\cE'=(e_n/\|e_n\|)_n$ as a basis for $(E,\|\cdot\|')$. Notice that $\broe[d,\cE]=\broe[d, \cE']$, so $\broe(d,\cE)=\broe(d, \cE')$. This simple observation will be used throughout.
\end{remark}

Throughout these notes, given finitely many Banach spaces $(E_i)_{i=1}^k$, we denote their $\ell_\infty$-sum  by  $\bigoplus_{i\in I}E_i$, i.e., the norm of an $I$-tuple $x=(x_i)_{i\in I}$ is given by $\|x\|=\max\{\|x_i\|\mid i\in I\}$.

\subsection{Metric spaces and coarse equivalences}

Let $f:(X,d)\to (Y,\partial)$ be a map. We say that $f$ is \emph{coarse} if for all $r>0$ there exists $s>0$ so that 
\[d(x,y)<r\ \text{ implies }\ \partial(f(x),f(y))<s\]
for all $x,y\in X$. The map $f$ is called \emph{expanding} if for all $s>0$ there exists $r>0$ so that 
\[d(x,y)>r\ \text{ implies } \ \partial(f(x),f(y))>s\]
for all $x,y\in X$. If $f$ is both coarse and expanding, $f$ is called a \emph{coarse embedding}. We say that $f$ is a \emph{coarse equivalence} if $f$ is a coarse embedding which is also \emph{cobounded}, i.e., $\sup_{y\in Y}\partial (y,f(X))<\infty$. In this case, we say that $(X,d)$ and $(Y,\partial)$ are \emph{coarsely equivalent}. Equivalently, $(X,d)$ and $(Y,\partial)$ are coarsely equivalent if there are coarse maps $f:X\to Y$ and $g:Y\to X$ so that $f\circ g $ and $g\circ f$ are close to $\mathrm{Id}_{Y}$ and $\mathrm{Id}_{X}$, respectively.\footnote{Recall, maps $f,h:X\to (Y,\partial)$ are \emph{close} if $\sup_{x\in X}\partial(f(x),h(x))<\infty$.}

A metric space $(X,d)$ is \emph{locally finite}  if $|B_r(x)|<\infty$ for all $r>0$ and all $x\in X$, where $|B_r(x)|$ denotes the cardinality of the closed $d$-ball centered at $x$ of radius $r$. We say that $(X,d)$ is   \emph{uniformly locally finite}, abbreviated by \emph{u.l.f.}, if $\sup_{x\in X}|B_r(x)|<\infty$ for all $r>0$. Clearly, a locally finite metric space must be countable. Hence, fixing an enumeration for an infinite  $X$, it is enough to deal with locally finite metrics defined on $\N$.

The following is well known and its proof   can be found for instance in \cite[Proposition 2.4]{BragaFarah2018}.

\begin{lemma}\label{LemmaULFPartition}
Let $(\N,d)$ be a u.l.f. metric space and $r>0$. Then there exists a partition  
\[\{(n,m)\in \N\times \N\mid d(n,m)\leq r\}=A_1\sqcup A_2\sqcup \ldots \sqcup A_k\]
such that  each $A_i$ is the graph of a partial bijection\footnote{A \emph{partial bijection} of $\N$ is a bijection between subsets of $\N$.} of $\N$. Moreover, the partion can be taken so that  $d(n_1,n_2)>r$ and $d(m_1,m_2)>r$ for all $i\in \{1,\ldots, k\}$ and all distinct $(n_1,m_1),(n_2,m_2)\in A_i$.
\end{lemma}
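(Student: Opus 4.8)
The statement to prove is Lemma~\ref{LemmaULFPartition}: given a u.l.f.\ metric space $(\N,d)$ and $r>0$, the relation $R_r=\{(n,m)\mid d(n,m)\le r\}$ can be partitioned into finitely many graphs of partial bijections, and moreover each piece can be made ``$r$-separated'' in both coordinates.

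\begin{proof}[Proof plan]
The plan is to view $R_r$ as the edge set of a graph and reduce the first assertion to a bounded-degree edge-coloring argument. First I would set $N=\sup_{n\in\N}\card{B_r(n)}$, which is finite by uniform local finiteness; this $N$ bounds how many pairs in $R_r$ share a fixed first coordinate and, symmetrically (since $d$ is a metric and hence symmetric), how many share a fixed second coordinate. Form the graph $G$ on vertex set $\N$ whose edges are the unordered pairs $\{n,m\}$ with $0<d(n,m)\le r$, together with handling the diagonal $\{(n,n)\mid n\in\N\}$ separately as one trivial partial bijection (the identity on $\N$). The graph $G$ has maximum degree at most $N-1$, so by a greedy proper edge-coloring (or Vizing's theorem) its edges can be colored with at most $N$ colors so that edges sharing a vertex get distinct colors. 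Each color class is then a partial matching on $\N$; orienting each matched edge in both directions and distributing the two ordered pairs into two separate color-indexed classes yields a finite partition of $R_r\setminus\text{diagonal}$ into sets $A_i$, each of which meets every fixed first coordinate at most once and every fixed second coordinate at most once --- that is, each $A_i$ is the graph of a partial bijection of $\N$.

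The more delicate ``moreover'' clause asks for the additional separation property: within a single piece $A_i$, distinct pairs $(n_1,m_1),(n_2,m_2)$ should satisfy $d(n_1,n_2)>r$ and $d(m_1,m_2)>r$. To achieve this I would refine the coloring above. The natural device is to color not the edges of the adjacency graph but the edges of a \emph{power} graph: consider the auxiliary graph $H$ on $\N$ in which two distinct pairs $(n_1,m_1),(n_2,m_2)\in R_r$ are declared adjacent whenever $n_1=n_2$, or $m_1=m_2$, or $d(n_1,n_2)\le r$, or $d(m_1,m_2)\le r$. A proper vertex-coloring of $H$ with each color class $A_i$ then simultaneously guarantees both the partial-bijection property and the two separation inequalities. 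So the task becomes bounding the degree of $H$: for a fixed pair $(n,m)$, the number of pairs $(n',m')\in R_r$ conflicting with it through any of the four clauses is at most the number of $n'$ with $d(n,n')\le r$ times the number of admissible $m'$ (namely those with $d(n',m')\le r$), plus the symmetric count, and each factor is bounded by $N$. Hence $H$ has maximum degree bounded by a function of $N$ alone, say $O(N^2)$, and a greedy coloring finishes the partition into finitely many pieces with all required properties.

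The main obstacle I expect is precisely the degree bound for $H$ and keeping the bookkeeping honest: one must verify that each of the four adjacency clauses contributes only finitely many (uniformly boundedly many) neighbors to a fixed vertex of $H$. The clauses $n_1=n_2$ or $m_1=m_2$ are controlled directly by $N$ via the first-coordinate and second-coordinate fiber bounds. The clauses $d(n_1,n_2)\le r$ and $d(m_1,m_2)\le r$ require combining a ball-size bound (at most $N$ choices of the nearby coordinate) with a fiber bound (at most $N$ choices of the partner coordinate subject to the propagation constraint $d\le r$), which is where the triangle inequality and symmetry of $d$ are used implicitly to keep all relevant points within a bounded ball. Once the uniform degree bound for $H$ is in hand, the greedy coloring is routine and the number $k$ of pieces is bounded purely in terms of $r$ through $N$, as desired. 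A citation to \cite[Proposition 2.4]{BragaFarah2018} can stand in for the detailed verification, but the coloring formulation above is the self-contained route I would write out.
\end{proof}
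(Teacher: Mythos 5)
Your proposal is correct, and it is worth noting that the paper itself contains no argument to compare against: it records Lemma \ref{LemmaULFPartition} as well known and cites \cite[Proposition 2.4]{BragaFarah2018}. Your conflict-graph coloring is precisely the standard self-contained route, and it works. In fact only your second construction is needed: vertex-color the graph $H$ whose vertices are the pairs in $R_r=\{(n,m)\mid d(n,m)\le r\}$, with $(n_1,m_1)$ adjacent to $(n_2,m_2)$ when $d(n_1,n_2)\le r$ or $d(m_1,m_2)\le r$. The clauses $n_1=n_2$ and $m_1=m_2$ are subsumed by these (since $d(n,n)=0\le r$), so an independent set in $H$ is automatically the graph of a partial bijection satisfying the $r$-separation in both coordinates; the degree of $H$ is at most $2N^2$ with $N=\sup_n|B_r(n)|$ (at most $N$ choices for the nearby coordinate, then at most $N$ choices for its partner within distance $r$); and greedy coloring of a countable bounded-degree graph with $2N^2+1$ colors is legitimate, giving $k\le 2N^2+1$. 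Your first paragraph (edge-coloring of the adjacency graph plus the diagonal as one identity class) is therefore redundant, and on its own it would not satisfy the ``moreover'' clause --- the identity class is not $r$-separated --- but since the $H$-coloring stands alone this does not affect the argument. Two cosmetic points: greedy edge-coloring needs $2\Delta-1$ colors rather than $\Delta+1$ (only Vizing gives $N$), and the triangle inequality is never actually used; only the uniform ball bound and the symmetry of $d$ enter.
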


Given a Banach space $E$ with basis $\cE$ and an operator $a\in \cL(E)$, we define the \emph{support of $a$} by 
\[\supp(a)=\{(n,m)\in \N\times \N\mid e^*_m(ae_n)\neq 0\}.\]
The previous lemma gives a way to split the support of a finite propagation operator into finitely many pieces, and, if $\cE$ is unconditional, we can write such operators as the finite sum of operators supported by members of this partition (e.g., proof of Proposition \ref{PropGhostWithFInProgAreComp}).

\subsection{Matrix representation and the matrix algebra  $\mathrm M_\infty(\cE)$}\label{SubsectionGhosts}

Given $n,m\in\N$, we write $e_{n,m}$ to denote the infinite $\N$-by-$\N$ matrix so that all of its coordinates are zero except the $(m,n)$ coordinate, which equals 1.  If   $E$ is a Banach space with   basis $\cE=(e_n)_n$, the matrices $(e_{n,m})_n$ induce bounded operators on $E$ in a canonical way: $e_{n,m}e_n=e_m$ for all $n,m\in\N$. Moreover, if $(e_n)_n$ is normalized, then $\|e_{n,m}\|=1$ for all $n,m\in\N$.  If $A\subset \N$, we write  $\chi_A=\sum_{n\in\N}e_{n,n}$.  For $n\in\N$, we write $\chi_{[1,n]}=\chi_{[1,n]\cap \N}$ and $\chi_{[n,\infty)}=\chi_{[n,\infty)\cap \N}$.

Let $E$ be a Banach space with basis $\cE=(e_n)_n$. For each $n\in\N$, we let $ \mathrm M_n(\cE)$ denote the algebra of $n$-by-$n$ matrices operators on $E$. Precisely, 
\[\mathrm M_n(\cE)=\cL(\spann\{e_j\mid j\leq n\}).\]
We view each $\mathrm M_n(\cE)$ as a subalgebra of $\cL(E)$ in  the natural way, i.e., we make the following identification:
\[\mathrm M_n(\cE)=\{a\in \cL(E)\mid e^*_m(ae_k)=0\ \text{ for all }\ m,k>n\}.\]
 Under this identification,\footnote{If $\cE$ is monotone, the identification $\mathrm M_n(\cE)\subset \cL(E)$  is also an isometry. Notice that renorming $E$ so that $\cE$ is  monotone does not affect $\mathrm M_\infty(\cE)$.} we let 
\[\mathrm M_\infty(\cE)=\overline{ \bigcup_{n\in\N}\mathrm M_n(\cE)}^{\cL(E)}.\]

\begin{proposition}
Let $(\N,d)$ be a locally finite metric space and $\cE$ be a basis for a Banach space $E$. Then 
\[\mathrm M_\infty(\cE)=\broe(d,\cE)\cap \cK(E).\]
\end{proposition}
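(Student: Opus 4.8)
The plan is to establish the two inclusions separately, the substantive direction being $\broe(d,\cE)\cap\cK(E)\subseteq \mathrm M_\infty(\cE)$.

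For $\mathrm M_\infty(\cE)\subseteq \broe(d,\cE)\cap\cK(E)$, I first note that each element of $\mathrm M_n(\cE)=\cL(\spann\{e_j\mid j\le n\})$, regarded inside $\cL(E)$, is finite rank (its range lies in an $n$-dimensional subspace) and has $\supp$ contained in the finite set $[1,n]\times[1,n]$. Since $(\N,d)$ is locally finite, every finite set of pairs is $d$-bounded, so a finitely supported operator has finite propagation; hence $\mathrm M_n(\cE)\subseteq \broe[d,\cE]\subseteq\broe(d,\cE)$ and $\mathrm M_n(\cE)\subseteq \cK(E)$. As $\broe(d,\cE)$ and $\cK(E)$ are both norm-closed, so is their intersection, and it contains $\bigcup_n\mathrm M_n(\cE)$, hence its closure $\mathrm M_\infty(\cE)$.

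For the reverse inclusion, fix $a\in\broe(d,\cE)\cap\cK(E)$ and $\eps>0$; I will exhibit an element of some $\mathrm M_m(\cE)$ within $O(\eps)$ of $a$. Let $K=\sup_n\|\chi_{[1,n]}\|$ be the basis constant (recall $\chi_{[1,n]}$ coincides with the basis projection $P_n$). The two one-sided truncations behave very differently, and this asymmetry is the crux. The \emph{left} truncation is governed by compactness alone: since $\chi_{[1,n]}\to\Id$ in the strong operator topology with $\sup_n\|\Id-\chi_{[1,n]}\|<\infty$, this convergence is uniform on the relatively compact set $\overline{a(B_E)}$, so $\|a-\chi_{[1,n]}a\|=\sup_{y\in\overline{a(B_E)}}\|(\Id-\chi_{[1,n]})y\|\to 0$; I fix $n_0$ with $\|a-\chi_{[1,n_0]}a\|<\eps$. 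The \emph{right} truncation cannot be controlled by compactness alone (e.g.\ on $\ell_1$ there are rank-one operators $a$ with $a\chi_{[1,m]}\not\to a$), so here I invoke band-dominatedness: write $a=b+c$ with $b$ a band operator of propagation at most $r$ and $\|c\|<\eps$. A direct matrix-entry computation shows that $\chi_{[1,n_0]}b(\Id-\chi_{[1,m]})$ has support contained in $\{(k,i)\mid k>m,\ i\le n_0,\ d(k,i)\le r\}$. The set $S=\{k\in\N\mid d(k,i)\le r\text{ for some }i\le n_0\}$ is a finite union of $d$-balls, hence finite by local finiteness; with $N=\max S$ (note $N\ge n_0$), the support is empty once $m\ge N$, so $\chi_{[1,n_0]}b(\Id-\chi_{[1,m]})=0$ there.

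Consequently, for $m\ge N$,
\[
\|\chi_{[1,n_0]}a-\chi_{[1,n_0]}a\chi_{[1,m]}\|=\|\chi_{[1,n_0]}c(\Id-\chi_{[1,m]})\|\le K(1+K)\eps,
\]
and combining with the left-truncation estimate yields $\|a-\chi_{[1,n_0]}a\chi_{[1,m]}\|\le\big(1+K(1+K)\big)\eps$. Since $n_0\le N\le m$, the operator $\chi_{[1,n_0]}a\chi_{[1,m]}$ is finite rank with support in $[1,m]\times[1,m]$, hence lies in $\mathrm M_m(\cE)\subseteq\mathrm M_\infty(\cE)$. As $\eps$ was arbitrary, $a\in\mathrm M_\infty(\cE)$. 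The main obstacle is precisely this asymmetry: the naive symmetric truncation $\chi_{[1,n]}a\chi_{[1,n]}\to a$ fails in general, compactness supplies only the left truncation, and the right truncation must be extracted from the finite-propagation structure together with local finiteness of the metric.
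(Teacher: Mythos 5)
Your proof is correct, and it takes a genuinely different (and more explicit) route than the paper's. The paper proves the nontrivial inclusion by contradiction: if $a\in\broe(d,\cE)\cap\cK(E)$ were not in $\mathrm M_\infty(\cE)$, there would be vectors $(\xi_n)_n$ in $B_E$ with $\inf_n\|a\xi_n\|>0$ and either $\xi_n\in\spann\{e_j\mid j>n\}$ for all $n$, or $a\xi_n\in\spann\{e_j\mid j>n\}$ for all $n$; compactness of $a$ rules out the second alternative, and in the first alternative compactness yields a single index $m$ with $\inf_n|e^*_m(a\xi_n)|>0$, which is incompatible with approximating $a$ by band operators, since local finiteness makes the $m$-th row of a band operator $b$ finitely supported, so that $e^*_m(b\xi_n)=0$ for large $n$. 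Your argument is the direct, quantitative counterpart: the same two ingredients appear in the same asymmetric roles --- compactness controls the left truncation (your uniform-on-compacta argument plays the role of the paper's exclusion of the case $a\xi_n\in\spann\{e_j\mid j>n\}$), while the decomposition $a=b+c$ together with local finiteness controls the right truncation (your vanishing of $\chi_{[1,n_0]}b(\Id-\chi_{[1,m]})$ once $m$ exceeds the finite set $\bigcup_{i\le n_0}B_r(i)$ plays the role of the paper's coordinate-functional contradiction). What your route buys is explicitness: it produces concrete approximants $\chi_{[1,n_0]}a\chi_{[1,m]}\in\mathrm M_m(\cE)$ with error at most $(1+K(1+K))\eps$, and it makes transparent a step the paper leaves essentially unjustified, namely the existence of the dichotomy sequence, which really comes from splitting $a-\chi_{[1,n]}a\chi_{[1,n]}=(\Id-\chi_{[1,n]})a+\chi_{[1,n]}a(\Id-\chi_{[1,n]})$ (indeed, as literally stated the paper's second alternative is slightly too strong: one only gets that the tail components of $a\xi_n$ stay large, not that $a\xi_n$ lies exactly in a tail span, though the compactness argument repairs this easily). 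The paper's proof is shorter in exchange. One cosmetic point: in the easy inclusion you do not need local finiteness at all, since a finitely supported operator automatically has finite propagation --- its support realizes only finitely many distances.
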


\begin{proof}
The inclusion $ \mathrm M_\infty(\cE)\subset \broe(d,\cE)\cap \cK(E)$ is clear. For the other inclusion, let $a\in\broe(d,\cE)\cap \cK(E)$ and assume that $a\not\in \mathrm M_\infty(\cE)$. Then there exists $(\xi_n)_n\in B_E$ so that $\inf_n\|a\xi_n\|>0$ and either $\xi_n\in \spann\{e_j\mid j>n\}$ for all $n\in\N$ or $a\xi_n\in \spann\{e_j\mid j>n\}$ for all $n\in\N$. Since $a$ is compact, the latter   cannot happen, so assume that $\xi_n\in \spann\{e_j\mid j>n\}$ for all $n\in\N$. Since $a$ is compact, by going to a subsequence, we can assume that there exists $m\in\N$ so that $\inf_n\|e^*_m(a\xi_n)\|>0$. Since $(\N,d)$ is locally finite, this contradicts the assumption that $a$ can be approximated by finite propagation operators.
\end{proof}

Given a u.l.f. metric space $(\N,d)$, it easily follows that the space of all ghosts operators in $\broe(d,\cE)$ is an ideal of  $\broe(d,\cE)$ (see Section \ref{SectionIntro} for definition of ghosts). Evidently, the ideal of all ghosts always contains $\mathrm M_\infty(\cE)$.  In general, this is a strict inclusion  (see \cite[Page 349]{higson2002counterexamples} or \cite[Theorem 1.3]{RoeWillett2014}). However, under the assumption that the metric space has G. Yu's property A, we can often guarantee that ``reasonable'' ghosts must belong to $\mathrm M_\infty(\cE)$ (see Theorem \ref{ThmPropertyAGhostsAreComp} for a precise statement).

\begin{proposition}\label{PropGhostWithFInProgAreComp}
Let $(\N,d)$ be a u.l.f. metric space and $E$ be a Banach space with a symmetric basis $\cE$. Then all ghost operators with finite propagation belong to $\mathrm{M}_\infty(\cE)$. 
\end{proposition}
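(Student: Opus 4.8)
The plan is to prove the (formally stronger) statement that every ghost $a$ of finite propagation is compact; since an operator of finite propagation is a band operator, hence lies in $\broe(d,\cE)$, and since a u.l.f. space is in particular locally finite, the preceding Proposition ($\mathrm M_\infty(\cE)=\broe(d,\cE)\cap\cK(E)$) then yields $a\in\mathrm M_\infty(\cE)$ for free. Fix $r$ with $\propg(a)\le r$ and apply Lemma \ref{LemmaULFPartition} to partition $\{(n,m):d(n,m)\le r\}$ into graphs $A_1,\ldots,A_k$ of partial bijections $\sigma_1,\ldots,\sigma_k$. Let $a_i$ be the matrix restriction of $a$ to $A_i$, i.e. $a_ie_n=\lambda^i_ne_{\sigma_i(n)}$ with $\lambda^i_n=e^*_{\sigma_i(n)}(ae_n)$ for $n\in\dom\sigma_i$ and $a_ie_n=0$ otherwise. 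Since the $A_i$ partition the support of $a$, we get $a=\sum_{i=1}^k a_i$ on basis vectors, so it suffices to treat a single piece supported on the graph of a partial bijection $\sigma$ with weights $\lambda_n$; I drop the index $i$.

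First I would check that such a piece is bounded and record the estimate I reuse later. For finitely supported $x=\sum_n x_ne_n$, symmetry of $\cE$ lets me pull the injection $\sigma$ back to the identity: extending $\sigma$ on $\supp(x)\cap\dom\sigma$ to a permutation $\pi$ of $\N$ (both complements of the finite image and domain are cofinite, hence in bijection), the symmetry constant $\lambda$ gives $\|\sum_n x_n\lambda_ne_{\sigma(n)}\|\le\lambda\|\sum_n x_n\lambda_ne_n\|$, and unconditionality of $\cE$ (symmetric bases are unconditional) bounds the right-hand side by $C(\sup_n|\lambda_n|)\|x\|$, where $C$ depends only on the basis constants of $\cE$. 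As $|\lambda_n|=|e^*_{\sigma(n)}(ae_n)|\le\|e^*_{\sigma(n)}\|\,\|a\|\,\|e_n\|$ and the biorthogonal functionals are uniformly bounded for a semi-normalized basis, $\sup_n|\lambda_n|\le K\|a\|$; hence each $a_i$ is bounded and the decomposition $a=\sum_i a_i$ holds as bounded operators. Crucially, the same computation restricted to any index set on which $\sup_n|\lambda_n|\le\eps$ yields an operator of norm $\le C\eps$.

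The two substantive steps are the decay of the weights and the passage to compactness. For decay, suppose $|\lambda_n|\not\to0$, so $|\lambda_{n_k}|\ge\delta$ for some $\delta>0$ and some $n_k\to\infty$ in $\dom\sigma$. Being a restriction of the ghost $a$, the piece is itself a ghost, so with $\eps=\delta/2$ there is $n_0$ with $|e^*_m(ae_n)|\le\delta/2$ whenever $n,m\ge n_0$; for large $k$ we have $n_k\ge n_0$, so $|\lambda_{n_k}|\ge\delta>\delta/2$ forces $\sigma(n_k)<n_0$. But $\sigma$ is injective, so the $\sigma(n_k)$ are distinct and cannot all lie in the finite set $\{1,\ldots,n_0-1\}$, a contradiction; hence $\lambda_n\to0$. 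For compactness, given $\eps>0$ the set $F_\eps=\{n:|\lambda_n|>\eps\}$ is finite, the truncation $T_\eps$ keeping the columns indexed by $F_\eps$ is finite rank, and $a_i-T_\eps$ is supported off $F_\eps$ where $\sup_n|\lambda_n|\le\eps$, so $\|a_i-T_\eps\|\le C\eps$ by the estimate above. Thus each $a_i$ is a norm limit of finite-rank operators, hence compact, and so is the finite sum $a=\sum_i a_i$. I expect the main obstacle to be the decay step: it is precisely there that the ghost hypothesis must be combined with the injectivity of the partial bijection and the finiteness of initial segments of $\N$ to upgrade the "eventual smallness in both indices" of a ghost into honest decay of a single generalized diagonal—after which symmetry does the routine work of converting pointwise decay into norm compactness.
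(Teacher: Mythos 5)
Your proof takes essentially the same route as the paper: the same partition from Lemma \ref{LemmaULFPartition}, the same unconditionality-based decomposition $a=\sum_{i}a_i$ into pieces supported on graphs of partial bijections, and the same observation that each piece inherits the ghost property; your decay-of-weights and small-entry norm estimate are precisely the content the paper compresses into the sentence ``symmetry of the basis implies that $a_i\in\mathrm{M}_\infty(\cE)$.'' The only cosmetic difference is that you conclude via compactness and the identity $\mathrm{M}_\infty(\cE)=\broe(d,\cE)\cap\cK(E)$, although your finite-rank truncations already lie in $\mathrm{M}_\infty(\cE)$, so that detour is harmless.
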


\begin{proof}
Let $a$ be ghost with   propagation at most $r$. Let 
\[\{(n,m)\in \N\times \N\mid d(n,m)\leq r\}=A_1\sqcup\ldots \sqcup A_k\] be the partition given by Lemma \ref{LemmaULFPartition} for $r$. Unconditionality of $\cE$ allow us to write $a=\sum_{i=1}^ka_i$ where each $a_i$ has its support contained in $A_i$. As $a$ is a ghost, so is each $a_i$. Hence, symmetry of the basis implies that $a_i\in \mathrm{M}_\infty(\cE)$ for all $i\in \{1,\ldots, k\}$.
\end{proof}

\section{Properties of different bases in uniform Roe algebras}\label{SectionDifBases} In this section, we deal with how different basis' properties affect the uniform Roe algebras.  Recall, given a basis $\cE=(e_n)_n$ for a Banach space $E$ and $\lambda\geq 1$, the basis $\cE=(e_n)_n$ is called \emph{$\lambda$-unconditional} if \[\Big\|\sum_{n=1}^k\alpha_ne_n\Big\|\leq \lambda \Big\|\sum_{n=1}^k\beta_ne_n\Big\|\] for all $k\in\N$ and all $(\alpha_n)_{n=1}^k,(\beta_n)_{n=1}^k\in \C^k$ with $|\alpha_n|\leq |\beta_n|$ for all $n\in\{1,\ldots, k\}$. The smallest such $\lambda$ is called the \emph{unconditional constant of $\cE$}. A basis $\cE$ is \emph{unconditional} if it is $\lambda$-unconditional for some $\lambda\geq 1$. 

Throughout the paper, we consider
\[\ell_\infty=\Big\{a=[a_{i,j}]\in\C^{\N\times \N} \mid \sup_{i,j}|a_{ij}|<\infty\ \text{ and }\  a_{ij}= 0,\ \forall i\neq j\Big\}\]
and  we  view each element in $\ell_\infty$ as a linear map on $\C^\N$ by setting $a \xi=(a_{ii}\xi_i)_i$ for each $\xi=(\xi_i)_i\in \C^\N$. The proof of the next  proposition is completely elementary,  so we omit it.  

\begin{proposition}\label{PropUnconditional}
Consider a  metric space $(\N,d)$ and a Banach space $E$ with basis $\cE$. Then  $\cE$ is unconditional if and only only if $\ell_\infty\subset \broe(d,\cE)$, i.e., if each  linear linear map in $\ell_\infty$ defines a bounded operator on $E$.\qed
\end{proposition}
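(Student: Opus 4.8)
The plan is to prove the biconditional by establishing each direction separately, with the easier implication giving us the structural insight needed for the converse.

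\medskip

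For the forward direction, suppose $\cE$ is $\lambda$-unconditional for some $\lambda \geq 1$. Given any diagonal operator $a = [a_{ij}] \in \ell_\infty$ with $a_{ij} = 0$ for $i \neq j$ and $\sup_i |a_{ii}| = M < \infty$, I would show $a$ defines a bounded operator on $E$. Writing an arbitrary $x = \sum_n \alpha_n e_n \in E$, we have $ax = \sum_n a_{nn}\alpha_n e_n$, and since $|a_{nn}\alpha_n| \leq M|\alpha_n|$ for all $n$, the $\lambda$-unconditionality inequality applied to the partial sums (comparing the coefficients $(a_{nn}\alpha_n)$ against $(M\alpha_n)$) yields $\|\sum_{n=1}^k a_{nn}\alpha_n e_n\| \leq \lambda M \|\sum_{n=1}^k \alpha_n e_n\| \leq \lambda M \|x\|$. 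Letting $k \to \infty$ shows $ax \in E$ with $\|ax\| \leq \lambda M \|x\|$, so $a \in \cL(E)$ with $\|a\| \leq \lambda M$. Since $a$ is diagonal, it trivially has propagation $0$ (as $e^*_m(ae_n) = 0$ whenever $n \neq m$, hence in particular whenever $d(n,m) > 0$), so $a \in \broe[d,\cE] \subset \broe(d,\cE)$. Thus $\ell_\infty \subset \broe(d,\cE)$.

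\medskip

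For the converse, I would prove the contrapositive: if $\cE$ is not unconditional, I will produce a diagonal $\ell_\infty$-matrix that fails to be bounded on $E$. The key observation is that the diagonal matrices in $\ell_\infty$ here are restricted to have entries in $\C$, so I can in particular use the sign multipliers. If $\cE$ is not unconditional, then in fact it is not even suppression-unconditional with any uniform constant, so for every $\lambda$ there exist $k$ and scalars with the unconditionality inequality failing. The cleanest route is to recall the standard equivalence: a basis is unconditional iff $\sup\{\|\sum_n \theta_n \alpha_n e_n\| : |\theta_n| = 1\} < \infty$ whenever $\|\sum_n \alpha_n e_n\| \leq 1$, i.e., iff the diagonal sign operators $D_\theta : \sum \alpha_n e_n \mapsto \sum \theta_n \alpha_n e_n$ are uniformly bounded. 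If the $D_\theta$ are \emph{not} uniformly bounded, I would assemble a single bounded diagonal operator witnessing unboundedness: choose finite blocks on which some sign (or unimodular) multiplier has large norm, and splice these multipliers together into one sequence $(a_{nn})_n$ with $\sup_n|a_{nn}| \leq 1$. A gliding-hump / block argument on normalized vectors supported in successive coordinate blocks then shows the resulting diagonal operator is unbounded, contradicting $a \in \broe(d,\cE) \subset \cL(E)$.

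\medskip

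The main obstacle I anticipate is the converse direction, specifically turning the \emph{non-uniform} boundedness of the sign multipliers into a \emph{single} unbounded diagonal operator. The subtlety is that each finite witness to non-unconditionality lives on finitely many coordinates, and I must concatenate infinitely many such witnesses on disjoint coordinate blocks so that the supremum of the diagonal entries stays bounded by $1$ while the operator norm blows up. This requires care: I need the blocks to be genuinely disjoint in support and I need to verify that a test vector concentrated on a single block has its norm amplified by the prescribed factor, using bimonotonicity (or at least the uniform bound on the basis projections $\sup_n\|P_n\| < \infty$) to control the interaction between blocks. Since the paper labels this proposition's proof as ``completely elementary,'' presumably the intended argument simply reads off the definitions in one line for each direction, treating the diagonal $\ell_\infty$ matrices as exactly the scalar multipliers whose boundedness \emph{is} the definition of unconditionality; the block-gliding machinery above is the rigorous backing for that one-line converse.
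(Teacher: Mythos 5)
The paper offers no proof to compare against (it declares the proposition ``completely elementary'' and omits it), so the only question is whether your argument is complete. Your forward direction is essentially correct: for $\|\theta\|_\infty\le M$ the $\lambda$-unconditionality inequality applied to \emph{segments} gives $\bigl\|\sum_{n=j}^{k}\theta_n\alpha_n e_n\bigr\|\le\lambda M\bigl\|\sum_{n=j}^{k}\alpha_n e_n\bigr\|$, which yields both convergence of $\sum_n\theta_n\alpha_n e_n$ (Cauchy tails) and boundedness; two small repairs are needed, namely that $\bigl\|\sum_{n=1}^{k}\alpha_n e_n\bigr\|\le\|x\|$ requires monotonicity of $\cE$, so in general you get $\|a\|\le\lambda M K$ with $K=\sup_n\|P_n\|$, and the convergence point should be made via segments rather than ``letting $k\to\infty$.'' The observation that a diagonal operator has propagation $0$, hence lies in $\broe[d,\cE]\subset\broe(d,\cE)$ as soon as it is bounded, is exactly right and explains why the metric $d$ is irrelevant.

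The converse contains one genuine gap, and it sits precisely at the step you flag as the main obstacle. You say ``choose finite blocks on which some sign (or unimodular) multiplier has large norm,'' but nothing in your sketch guarantees that witnesses to non-unconditionality can be found supported in pairwise disjoint (equivalently, arbitrarily remote) finite blocks: a priori every bad sign pattern might involve the first few coordinates, and then no splicing is possible. The missing lemma is that if $\cE$ is not unconditional then no tail basis $(e_n)_{n>N}$ is unconditional either. This follows from a splitting argument: if contractive multipliers supported in $(N,\infty)$, acting on vectors supported in $(N,\infty)$, were uniformly bounded by $M$, then since diagonal matrices commute with $P_N$ we would get, for every $\theta$ with $\|\theta\|_\infty\le 1$,
\[
\|D_\theta x\|\le \|D_\theta P_N x\|+\|D_\theta(\mathrm{Id}_E-P_N)x\|\le \Big(\sum_{n\le N}\|e_n^*\|\,\|e_n\|\Big)\|x\|+M(1+K)\|x\|,
\]
making all multipliers uniformly bounded and hence $\cE$ unconditional, a contradiction. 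With that lemma your induction does close: choose finitely supported $\theta^{(k)}$ and unit vectors $x_k$ in successive blocks $(N_{k-1},N_k]$ with $\|D_{\theta^{(k)}}x_k\|\ge k$, and set $\theta^*=\sum_k\theta^{(k)}$; note that once the supports are literally disjoint you need no bimonotonicity at all to control interactions, since a diagonal operator acts coordinatewise and so $D_{\theta^*}x_k=D_{\theta^{(k)}}x_k$ exactly. Alternatively, the entire converse can be done in three lines without any gliding hump: the hypothesis says the linear map $\theta\in\ell_\infty\mapsto D_\theta\in\cL(E)$ is everywhere defined, and it has closed graph (if $\theta^{(j)}\to\theta$ and $D_{\theta^{(j)}}\to T$, comparing coordinates $e_m^*(Tx)=\theta_m\alpha_m$ gives $T=D_\theta$), so it is bounded, and its norm is a uniform unconditionality constant for $\cE$. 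Either route also corrects the conflation in your closing sentence: individual boundedness of each multiplier is \emph{not} the paper's definition of unconditionality, which demands a uniform $\lambda$; passing from pointwise to uniform is exactly the content of this direction.
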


In the setting of the proposition above, we write $\ell_\infty(\cE)$ to denote the vector space $\ell_\infty$ endowed with the norm inherited by $\broe(d,\cE)$.

As mentioned in the introduction, if $(X,d)$ is a u.l.f. metric space and $\cE$ is the standard basis of $\ell_p$, the uniform Roe algebra $\broe(d,\cE)$ is frequently denoted by $\broe^p(X)$ (see \cite{BragaVignati2019,ChungLi2018,SpakulaWillett2017}). In this case,   $\mathrm{B}^p_u(X)$   contains the compact operators if and only if $p\neq 1$. Indeed, if $f\in \ell_\infty$ is the functional on $\ell_1$ so that $f(e_n)=1$ for all $n\in\N$, then $e_1\otimes f$ is compact but its  distance to $\mathrm{B}^1_u(X)$ is $1$. The next proposition generalizes this fact to a broader setting. Recall, a basis $\cE=(e_n)_n$ for a Banach space $E$ is called \emph{shrinking} if the sequence of its biorthogonal functionals $(e_n^*)_n$ is a basis for the dual Banach space $E^*$.

\begin{proposition}\label{PropShrinkingCompact}
Consider a locally finite metric space $(\N,d)$ and a Banach space $E$ with basis $\cE$.  Then $\broe(d,\cE)$ contains the compact operators if and only if $\cE$ is shrinking. 
\end{proposition}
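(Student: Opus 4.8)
The plan is to prove the two implications separately, using the classical characterisation that $\cE=(e_n)_n$ is shrinking precisely when $\|f\rs \overline{\spann\{e_k\mid k>n\}}\|\to 0$ as $n\to\infty$ for every $f\in E^*$ --- equivalently, when $f=\sum_m f(e_m)e_m^*$ converges in $E^*$ --- together with the fact that, since $E$ carries a basis, it has the bounded approximation property, so that $\cK(E)$ is the norm-closure of the finite-rank operators. Throughout I would use that $e_n\otimes e_m^*=e_{m,n}$ and that $\|e_n\otimes g\|=\|e_n\|\,\|g\|$ for $g\in E^*$.

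For the forward implication, suppose $\cE$ is shrinking. As $\broe(d,\cE)$ is norm-closed it suffices to show it contains every finite-rank operator; writing such an operator as a finite sum $\sum_i x_i\otimes f_i$ and expanding each $x_i=\sum_n\alpha_n e_n$ (so that $P_Nx_i\otimes f_i\to x_i\otimes f_i$ in norm), the problem reduces to showing $e_n\otimes f\in\broe(d,\cE)$ for a single $n\in\N$ and $f\in E^*$. Here shrinkingness gives $\sum_{m\le M}f(e_m)e_m^*\to f$ in $E^*$, hence $\sum_{m\le M}f(e_m)\,e_{m,n}=e_n\otimes\big(\sum_{m\le M}f(e_m)e_m^*\big)\to e_n\otimes f$ in operator norm. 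Each $e_{m,n}$ has propagation $d(m,n)<\infty$, so every partial sum is a band operator and the limit lies in $\broe(d,\cE)$.

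For the converse I would argue by contraposition. If $\cE$ is not shrinking, the characterisation above produces $f\in E^*$ and $\delta>0$ with $\|f\rs \overline{\spann\{e_k\mid k>n\}}\|\ge\delta$ for every $n$ (the restriction norms are non-increasing in $n$, so their failure to tend to $0$ forces a uniform lower bound). Consider the rank-one, hence compact, operator $a=e_1\otimes f$, which satisfies $e_1^*\circ a=f$, and let $b$ be an arbitrary band operator of propagation at most $r$. Since $(\N,d)$ is locally finite, $B_r(1)=\{n\mid d(1,n)\le r\}$ is finite with some maximum $N$; the band condition gives $e_1^*(be_n)=0$ for all $n>N$, and since $e_1^*\circ b$ is a continuous functional vanishing on the dense set of finite combinations of $\{e_k\mid k>N\}$, it vanishes on the whole tail $\overline{\spann\{e_k\mid k>N\}}$. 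Consequently $e_1^*\circ(a-b)$ restricts to $f$ on that tail, so $\delta\le\|f\rs\overline{\spann\{e_k\mid k>N\}}\|\le\|e_1^*\circ(a-b)\|\le\|e_1^*\|\,\|a-b\|$. Thus $a$ lies at distance at least $\delta/\|e_1^*\|>0$ from every band operator, so $a\notin\broe(d,\cE)$ while $a\in\cK(E)$; hence $\broe(d,\cE)$ cannot contain all compacts.

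I expect the only delicate step to be in the converse: upgrading the coefficientwise vanishing $e_1^*(be_n)=0$ (for all but finitely many $n$) to the genuine vanishing of $e_1^*\circ b$ on an entire tail subspace. This is exactly where both hypotheses enter --- local finiteness, to convert finite propagation into the vanishing of $e_1^*(be_n)$ on a full tail $\{n>N\}$ of indices, and boundedness of $b$, to pass from these basis vectors to their closed linear span by continuity. The remaining manipulations (the finite-rank reduction and the shrinking expansion) are routine.
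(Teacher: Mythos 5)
Your proof is correct and takes essentially the same route as the paper: the forward direction approximates finite-rank operators by finitely supported matrices (band operators) using the norm convergence $\sum_{m\le M}f(e_m)e_m^*\to f$ granted by shrinkingness, and the converse uses the very same witness $a=e_1\otimes f$ together with local finiteness to show it stays a definite distance from every band operator. The only cosmetic differences are your reduction to rank-one operators $e_n\otimes f$ (the paper truncates the vectors $x_i$ and functionals $f_i$ simultaneously via $P_m$ and $Q_m$) and your use of monotonicity of the tail norms in place of the paper's $\limsup$ and $\delta/2$ bookkeeping.
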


\begin{proof}
Suppose $\cE=(e_n)_n$ is shrinking. Since compact operators are in the closure of the set of finite rank operators for any Banach space with the approximation property, it is enough to notice that $\broe(d,\cE)$ contains the finite rank operators. Let $a$ be a finite rank operator, so $a=\sum_{i=1}^k x_i\otimes f_i$ for some $x_1,\ldots, x_k\in E$ and some $f_1,\ldots, f_k\in E^*$. Let $P_k:E\to E$ and $Q_k:E^*\to E^*$ be the projections onto the subspaces of $E$ and $E^*$ generated by the first $k$ elements of $(e_n)_n$ and $(e^*_n)_n$, respectively. Picking $m\in\N$ large enough, we have that $\sum_{i=1}^k P_m(x_i)\otimes Q_m( f_i)$ is as close to $a$ as we wish. This concludes that $\cK(E)\subset \broe(d,\cE)$.

Suppose $\cE=(e_n)_n$ is not shrinking, and let $f\in E^*$ be a functional so that $\delta=\limsup_m\|f\|_m>0$, where
\[\|f\|_m\coloneqq \{|f(x)|\mid x\in B_{\spann\{e_n\mid n\geq m\}}\}\]
for each $m\in\N$. Without loss of generality, assume that $\cE$ is normalized. Then $a=e_1\otimes f$ is a rank 1 operator which does not belong to $\broe(d,\cE)$. Indeed, let $b\in \broe[d,\cE]$. Then, since $(\N,d)$ is locally finite, there exists $m\in\N$ so that $d(1,n)>\propg(b)$ for all $n>m$. Replacing $m$ by a larger number if necessary, assume that $\|f\|_m>\delta/2$. Pick $x\in B_{\spann\{e_n\mid n\geq m\}}$ 
so that $|f(x)|\geq \delta/2$. Then $e^*_{1}b(x)=0$, and we conclude that
\[\frac{\delta}{2}\leq |f(x)|=|e^*_{1}(a-b)(x)|\leq \|e^*_{1}(a-b)\|.\]
So $a\not\in \overline{\broe[d,\cE]}=\broe(d,\cE)$.\end{proof}

A basic sequence $\cE=(e_n)_n$ is \emph{symmetric} if $(e_n)_n$ is equivalent to $(e_{\pi(n)})_n$   for all bijections  $\pi:\N\to\N$, and it is well known that symmetric basis are automatically unconditional. Moreover, we say that $\cE$ is \emph{$1$-symmetric} if $(e_n)_n$ is both  $1$-unconditional and  $1$-equivalent to $(e_{\pi(n)})_n$   for all bijections $\pi:\N\to\N$.  

\begin{remark}
Notice that, if $\cE=(e_n)_n$ is $1$-symmetric, then there is no need to fix an enumeration of a countable metric space $(X,d)$ in order to define band operators. Therefore, for $1$-symmetric basis, one could should to use the (more common) notation $\broe(X,\cE)$ to denote the uniform Roe algebra of $(d,\cE)$.  
\end{remark}

We now introduce a (nonstandard) metric version of basis' symmetry which gives a characterization of operators in $\broe[d,\cE]$ and will be  very useful later --- this characterization  is also heavily used in the classic $\ell_2$ scenario.

\begin{definition}\label{DefiDSubsymm}
Let $(\N,d)$  be a metric space, and  $\cE$ be an unconditional basis for  a Banach space $E$. We say that $\cE$ is \emph{$d$-symmetric} if for all $r>0$ and all partial bijections $\sigma:A\subset \N\to B\subset \N$ so that  $d(i,\sigma(i))\leq r$ for all $i\in A$, there exists $C>0$ so that $(e_n)_{n\in A}$ is $C$-equivalent to $(e_{\sigma(n)})_{n\in B}$. 
\end{definition}

Clearly, any symmetric sequence is $d$-symmetric regardless of the metric $d$ on $\N$.

\begin{proposition}\label{PropSubsymmetric}
Consider a u.l.f. metric space $(\N,d)$ and a Banach space $E$ with   basis $\cE$. Then the following are equivalent:
\begin{enumerate}
\item\label{ItemSubsymmetric1} The basic sequence $\cE$ is $d$-symmetric.
\item\label{ItemSubsymmetric2} For all  $r>0$ there exists $M>0$ so that for every  matrix $a=[a_{n,m}]\in \C^{\N\times\N}$ with propagation at most $r$, the operator norm of $a$ satisfies
\[\|a\|\leq M\sup_{n,m}|a_{n,m}|.\]
\end{enumerate}
In particular, \eqref{ItemSubsymmetric2} holds if $\cE$ is symmetric.
\end{proposition}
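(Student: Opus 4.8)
The plan is to prove the two implications separately, with Lemma~\ref{LemmaULFPartition} doing the heavy lifting in \eqref{ItemSubsymmetric1}$\Rightarrow$\eqref{ItemSubsymmetric2}; the final ``in particular'' is then immediate from the earlier observation that symmetric bases are $d$-symmetric for any metric.

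For \eqref{ItemSubsymmetric1}$\Rightarrow$\eqref{ItemSubsymmetric2}, I would fix $r>0$ and apply Lemma~\ref{LemmaULFPartition} to partition $\{(n,m)\mid d(n,m)\le r\}=A_1\sqcup\cdots\sqcup A_k$, where each $A_i$ is the graph of a partial bijection $\sigma_i\colon\dom(\sigma_i)\to\ran(\sigma_i)$. Given a matrix $a=[a_{n,m}]$ of propagation at most $r$, I would split it as $a=\sum_{i=1}^k a_i$, where $a_i$ keeps only the entries of $a$ indexed by $A_i$; thus $a_i$ acts as the weighted partial shift $e_n\mapsto a_{n,\sigma_i(n)}e_{\sigma_i(n)}$ for $n\in\dom(\sigma_i)$. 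Writing $S=\sup_{n,m}|a_{n,m}|$ and letting $\lambda$ be the unconditional constant of $\cE$, I would bound each summand by combining $d$-symmetry with unconditionality: since $d(n,\sigma_i(n))\le r$, the family $(e_n)_{n\in\dom(\sigma_i)}$ is $C_i$-equivalent to $(e_{\sigma_i(n)})_n$ for some $C_i>0$, so for $\xi=\sum_n\xi_ne_n$ (all sums over $n\in\dom(\sigma_i)$) one gets
\[\|a_i\xi\|=\Big\|\sum_n a_{n,\sigma_i(n)}\xi_n e_{\sigma_i(n)}\Big\|\le C_i\Big\|\sum_n a_{n,\sigma_i(n)}\xi_n e_n\Big\|\le C_i\lambda^2 S\|\xi\|,\]
where the last step uses unconditionality twice: once to replace the coefficients $a_{n,\sigma_i(n)}\xi_n$ by $S\xi_n$, and once to control the coordinate restriction to $\dom(\sigma_i)$. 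Summing over $i$ gives $\|a\|\le\bigl(\lambda^2\sum_{i=1}^k C_i\bigr)S$, so $M=\lambda^2\sum_{i=1}^k C_i$ works.

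The key point, and the only place where something could go wrong, is the uniformity of $M$: the definition of $d$-symmetry supplies a constant $C$ for each \emph{individual} partial bijection, whereas \eqref{ItemSubsymmetric2} demands a single constant valid for \emph{all} matrices of propagation at most $r$. This is exactly what Lemma~\ref{LemmaULFPartition} resolves: for a fixed $r$ there is one partition, hence one finite family $\sigma_1,\dots,\sigma_k$, and every propagation-$\le r$ matrix is assembled from these same $k$ graphs by a choice of weights. Thus only the finitely many constants $C_1,\dots,C_k$ ever appear, and their sum is a legitimate $r$-dependent constant.

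For \eqref{ItemSubsymmetric2}$\Rightarrow$\eqref{ItemSubsymmetric1}, I would first note that every diagonal matrix has propagation at most $r$ for any $r>0$, so \eqref{ItemSubsymmetric2} forces each element of $\ell_\infty$ to act boundedly on $E$; by Proposition~\ref{PropUnconditional} this makes $\cE$ unconditional, so that $d$-symmetry is even meaningful. Then, given $r>0$ and a partial bijection $\sigma\colon A\to B$ with $d(i,\sigma(i))\le r$, I would apply \eqref{ItemSubsymmetric2} to the two $0/1$ matrices $u$ and $v$ implementing $e_n\mapsto e_{\sigma(n)}$ ($n\in A$) and $e_{\sigma(n)}\mapsto e_n$ ($n\in A$), respectively. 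Both have propagation at most $r$ (using $d(\sigma(i),i)=d(i,\sigma(i))\le r$) and sup-entry equal to $1$, so $\|u\|,\|v\|\le M$; evaluating on $\sum_n\alpha_ne_n$ and $\sum_n\alpha_ne_{\sigma(n)}$ then yields directly that $(e_n)_{n\in A}$ is $M$-equivalent to $(e_{\sigma(n)})_{n\in B}$, i.e.\ $\cE$ is $d$-symmetric. Finally, the ``in particular'' clause is immediate, since a symmetric basis is $d$-symmetric for every metric $d$, whence \eqref{ItemSubsymmetric1} and therefore \eqref{ItemSubsymmetric2} hold.
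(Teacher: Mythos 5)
Your proof is correct and takes essentially the same route as the paper: the same decomposition via Lemma~\ref{LemmaULFPartition} into weighted partial shifts, bounded using $d$-symmetry plus unconditionality, for \eqref{ItemSubsymmetric1}$\Rightarrow$\eqref{ItemSubsymmetric2}, and test matrices supported on graphs of partial bijections for the converse. The only difference is that you flesh out \eqref{ItemSubsymmetric2}$\Rightarrow$\eqref{ItemSubsymmetric1} (including the worthwhile observation that unconditionality itself follows from \eqref{ItemSubsymmetric2} via Proposition~\ref{PropUnconditional}, so that $d$-symmetry is even well-posed), whereas the paper leaves those details to the reader.
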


\begin{proof}
\eqref{ItemSubsymmetric1} $\Rightarrow $ \eqref{ItemSubsymmetric2} Fix $r>0$.  Since $d$ is u.l.f., let
\[\{(n,m)\in \N\times \N\mid d(n,m)\leq r\}= A_1 \sqcup A_2 \sqcup \ldots \sqcup A_k\]
be the partition given by Lemma \ref{LemmaULFPartition} for $r$. Let $C>0$ witnesses that $\cE$ is $d$-symmetric with relation  to $A_1,\ldots, A_k$.  Fix  $a=[a_{n,m}]$ with finite propagation at most $r$.  Since $\cE$ is unconditional, we can write $a=\sum_{i=1}^ka_k$, where each $a_i$ is a bounded operator so that  $\supp(a_i)\subset A_i$ for all $i\in \{1,\ldots, k\}$. 

Fix $i\in \{1,\ldots, k\}$, let $\sigma:A\subset \N\to B\subset \N$ be a partial bijection on $\N$ so that $A_i=\mathrm{graph}(\sigma)$, and  $a_i=[a^i_{n,m}]$. Then
\[a_i\Big( \sum_{n=1}^\infty \alpha_ne_n\Big)=\sum_{n=1}^\infty a^i_{n\sigma(n)}\alpha_ne_{\sigma(n)}\]
for all $\sum_{n=1}^\infty \alpha_ne_n\in E$. Let $L$ be the   unconditional constant of $\cE$. It follows that $\|a_i\|\leq LC\sup_{n,m}|a^j_{n,m}|$. Hence, $\|a\|\leq kLC\sup_{n,m}|a_{n,m}|$, and we are done.

\eqref{ItemSubsymmetric2} $\Rightarrow $ \eqref{ItemSubsymmetric1} This follows  by looking at matrices $a$ whose support are partial bijections $\sigma:A\subset \N\to B\subset\N$ so that $d(n,\sigma(n))\leq r$ for all $n\in A$. We leave the details to the reader.
\end{proof}

\begin{remark}
Notice that  given  a $1$-unconditional basis $\cE$ for a Banach space $E$ and a u.l.f. metric space $(\N,d)$, the assumption $\sup_{i,j}|a_{ij}|<\infty$ is not enough to guarantee that a finite propagation matrix $a=[a_{ij}]$ defines a bounded operator on $E$. Indeed, let $(x_n)_n$ and $(y_n)_n$ denote the standard basis of $\ell_1$ and $\ell_2$,  respectively, and let $e_{2n}=x_n$ and $e_{2n-1}=y_n$ for each $n\in\N$. So $ (e_n)_n$ is a $1$-unconditional basis for $E=\ell_1\oplus \ell_2$. Define $a=[a_{ij}]$ by letting $a_{ij}=1$ if $i=2n-1$ and $j=2n$ for some $n\in\N$ and $a_{ij}=0$ otherwise. Considering $\N$ with its usual metric, $a$ has propagation $1$. However, $a$ is clearly not even a well defined operator. 
\end{remark}

\subsection{Pathological examples}
We start by noticing that  uniform Roe algebra of nonisomorphic Banach spaces can be isomorphic as Banach spaces.

\begin{proposition}\label{PropAlvaroFarmer}
Let $p\in (1,\infty)$. If $d$ and $\partial$ are bounded metrics on $\N$, $\cE$ is a basis for $\ell_p$ and $\cF$ is a basis for $L_p$, then $\broe(d,\cE)$ and $\broe(d,\cF)$ are isomorphic as Banach spaces.
\end{proposition}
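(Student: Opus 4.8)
The plan is to reduce the statement to a known Banach-space isomorphism between full operator algebras, the point being that boundedness of the metric makes the band condition vacuous. Indeed, fix $R>0$ with $\diam(\N,d)\le R$; then for every $a\in\cL(\ell_p)$ and all $n,m\in\N$ we have $d(n,m)\le R$, so the requirement ``$e^*_m(ae_n)=0$ whenever $d(n,m)>R$'' holds trivially. Hence every bounded operator has propagation at most $R$, which gives $\broe[d,\cE]=\cL(\ell_p)$. As $\cL(\ell_p)$ is norm-closed, passing to closures yields $\broe(d,\cE)=\cL(\ell_p)$. The identical argument applied on the $\cF$-side — and the choice of which bounded metric one uses is immaterial, since any bounded metric trivialises the propagation condition — gives $\broe(d,\cF)=\cL(L_p)$, using the given basis $\cF$ of $L_p$.

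With this reduction in hand, the proposition becomes the purely Banach-space-theoretic assertion that $\cL(\ell_p)\cong\cL(L_p)$ for $1<p<\infty$, which is a theorem of Arias and Farmer; I would cite it directly. For orientation, its proof runs the Pe\l czy\'nski decomposition method on the two operator spaces. For any complemented splitting $Y\cong U\oplus V$ one has the Banach-space identification $\cL(Y)\cong\cL(U)\oplus\cL(U,V)\oplus\cL(V,U)\oplus\cL(V)$, so a complemented copy of $U$ in $Y$ promotes to a complemented copy of $\cL(U)$ in $\cL(Y)$. Since $\ell_p$ is complemented in $L_p$ (a normalised, disjointly supported sequence spans a norm-one complemented isometric copy of $\ell_p$), this immediately makes $\cL(\ell_p)$ a complemented subspace of $\cL(L_p)$. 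Together with the self-similarity $\cL(\ell_p)\cong\cL(\ell_p\oplus\ell_p)\cong M_2(\cL(\ell_p))$ coming from $\ell_p\cong\ell_p\oplus\ell_p$ (and its countable form via $\ell_p\cong\ell_p(\ell_p)$), the decomposition method then forces $\cL(\ell_p)\cong\cL(L_p)$.

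The main obstacle is the reverse complementation: showing that $\cL(L_p)$ embeds as a complemented subspace of $\cL(\ell_p)$. This cannot be reached by the naive route above, because $L_p$ is \emph{not} isomorphic to a complemented subspace of $\ell_p$ when $p\neq 2$ (for instance $L_p$ contains $\ell_2$ while $\ell_p$ does not). Resolving this is precisely where the structure theory of the $\mathcal{L}_p$-spaces enters, and it is the technical heart of the Arias--Farmer theorem; as that result is already available in the literature, I would invoke it as a black box rather than reprove it. Everything genuinely specific to the present setting lives in the first paragraph, namely the bounded-metric identifications $\broe(d,\cE)=\cL(\ell_p)$ and $\broe(d,\cF)=\cL(L_p)$.
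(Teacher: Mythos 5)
Your proposal is correct and follows exactly the paper's route: boundedness of the metric makes the propagation condition vacuous, so $\broe(d,\cE)=\cL(\ell_p)$ and $\broe(\partial,\cF)=\cL(L_p)$, after which the isomorphism $\cL(\ell_p)\cong\cL(L_p)$ is quoted from \cite[Theorem 2.1]{AlvaroFarmer1996} (the Arias--Farmer theorem). The additional sketch of the Pe\l czy\'nski decomposition argument behind that citation is accurate but not needed, since the paper likewise invokes the result as a black box.
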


\begin{proof}
As the metrics are bounded, it follows that $\broe(d,\cE)=\cL(E)$ and $\broe(\partial,\cF)=\cL(F)$. Hence, the result   follows from \cite[Theorem 2.1]{AlvaroFarmer1996}.
\end{proof}

Notice that $\cL(E)$ and $\cL(F)$ are never Banach algebra isomorphic if $E$ and $F$ are nonisomorphic as Banach spaces 
\cite[Theorem 2]{Eidelheit1940}. Hence, we cannot use the same approach as in Proposition \ref{PropAlvaroFarmer} in order to find nonisomorphic Banach spaces $E$ and $F$ with Banach algebra isomorphic uniform Roe algebras (Problem \ref{Prob4}).

The scalar-plus compact problem was solved in \cite{ArgyrosHaydon2011}, where S. Argyros and R. Haydon constructed  a real Banach space $\mathfrak X$ which is a predual of $\ell_1(\R)$ and  so that all bounded operators on it are a multiple of the identity plus a compact. As such, $\mathfrak X$ admits a shrinking basis, and it is an interesting source for many pathological examples. For instance, we have the following --- since we chose to deal with complex Banach spaces in this paper, we do the same below, but the  real version of the proposition also holds.

\begin{proposition}\label{PropArgyrosHaydon}
Let $\mathfrak X_\C$ be the standard Banach space complexification of the Argyros-Haydon space. Then $\mathfrak X_\C$ has a shrinking basis and the following holds:
\begin{enumerate}
    \item\label{ItemPropArgyrosHaydon1} $\broe(d,\cE)= \broe(\partial, \cF)$ for all metrics $d$ and $\partial $ on $\N$, and all shrinking bases $\cE$ and $\cF$ of $\mathfrak X_\C$.
\item\label{ItemPropArgyrosHaydon2} $\broe(d,\cE)$ is amenable and separable for all  metrics $d$  on $\N$ and all shrinking basis $\cE$ of $\mathfrak X_\C$. In particular, $\broe(d,\cE)$ is not homeomorphic to any uniform Roe algebra which is given by an unconditional basis.
\end{enumerate} 
\end{proposition}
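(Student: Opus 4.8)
The plan is to build everything on the defining feature of the Argyros--Haydon space: every bounded operator on the real space $\mathfrak X$ is a scalar multiple of the identity plus a compact, i.e. $\cL_\R(\mathfrak X)=\R\,\Id_{\mathfrak X}+\cK_\R(\mathfrak X)$, and $\mathfrak X$ carries a shrinking basis. First I would transfer both facts to the complexification. Writing $\mathfrak X_\C=\mathfrak X\oplus i\mathfrak X$ and decomposing a $\C$-linear $T\in\cL(\mathfrak X_\C)$ as $T(x+iy)=(Ax-By)+i(Bx+Ay)$ for real-linear $A,B\in\cL_\R(\mathfrak X)$, each of $A=\alpha\,\Id+K_A$ and $B=\beta\,\Id+K_B$ (with $\alpha,\beta\in\R$ and $K_A,K_B$ compact) yields $T=(\alpha+i\beta)\,\Id_{\mathfrak X_\C}+K$ with $K$ compact. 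Hence $\cL(\mathfrak X_\C)=\C\,\Id+\cK(\mathfrak X_\C)$, and the complexified basis is again shrinking since shrinkingness passes to complexifications.

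For item \eqref{ItemPropArgyrosHaydon1}, fix any metric $d$ and any shrinking basis $\cE$. The identity is a band operator, as $e_m^*(\Id\, e_n)=\delta_{n,m}$ vanishes whenever $d(n,m)>0$, so $\C\,\Id\subseteq\broe(d,\cE)$; and Proposition \ref{PropShrinkingCompact} gives $\cK(\mathfrak X_\C)\subseteq\broe(d,\cE)$ because $\cE$ is shrinking. Therefore $\cL(\mathfrak X_\C)=\C\,\Id+\cK(\mathfrak X_\C)\subseteq\broe(d,\cE)$, while the reverse inclusion is automatic. Thus $\broe(d,\cE)=\cL(\mathfrak X_\C)$ for every admissible pair, and in particular $\broe(d,\cE)=\broe(\partial,\cF)$ for all such $(d,\cE)$ and $(\partial,\cF)$.

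For item \eqref{ItemPropArgyrosHaydon2}, separability follows since $\mathfrak X_\C$ has a basis (hence the approximation property) and $\mathfrak X_\C^*=\ell_1(\C)$ is separable, so $\cK(\mathfrak X_\C)=\overline{\mathcal F(\mathfrak X_\C)}$ is separable and adjoining $\C\,\Id$ preserves this. For amenability I would use that every Bourgain--Delbaen space, $\mathfrak X$ included, is an $\mathcal L^\infty$-space, so $\mathfrak X_\C$ is a complex $\mathcal L^\infty$-space; by the Gr\o{}nb\ae{}k--Johnson--Willis theorem $\cK(\mathfrak X_\C)$ is then amenable, and since $\cL(\mathfrak X_\C)$ is its unitization (with quotient $\cL(\mathfrak X_\C)/\cK(\mathfrak X_\C)\cong\C$ amenable), amenability lifts to $\broe(d,\cE)=\cL(\mathfrak X_\C)$. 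The final clause is immediate: if $\cF$ is unconditional then $\ell_\infty\subseteq\broe(\partial,\cF)$ by Proposition \ref{PropUnconditional}, so $\broe(\partial,\cF)$ is non-separable, whereas $\broe(d,\cE)$ is separable, and separability is preserved by homeomorphisms.

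The only non-formal input, and hence the step I expect to be the main obstacle, is the amenability claim: it genuinely relies on the external facts that $\mathfrak X$ is an $\mathcal L^\infty$-space and that $\cK(E)$ is amenable for $\mathcal L^\infty$-spaces $E$. Everything else is bookkeeping around the scalar-plus-compact identity together with Propositions \ref{PropShrinkingCompact} and \ref{PropUnconditional}; the one place beyond that needing a little care is the transfer of the scalar-plus-compact property to the complexification, which is routine as sketched above.
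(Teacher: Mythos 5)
Your proposal is correct, and its skeleton is the same as the paper's: transfer scalar-plus-compact to $\mathfrak X_\C$, observe $\C\,\mathrm{Id}\subset\broe(d,\cE)$ and $\cK(\mathfrak X_\C)\subset\broe(d,\cE)$ by Proposition \ref{PropShrinkingCompact} so that $\broe(d,\cE)=\cL(\mathfrak X_\C)$, and kill homeomorphism with unconditional-basis Roe algebras by the separable/nonseparable dichotomy via Proposition \ref{PropUnconditional}. You diverge from the paper on the two ``external input'' steps. For the shrinking basis of $\mathfrak X_\C$, the paper does not complexify a basis of $\mathfrak X$ at all: it notes that $\mathfrak X_\C$ is an $\ell_1(\C)$-predual (citing Mu\~noz--Sarantopoulos--Tonge), so $\mathfrak X_\C^*$ has a basis, and then invokes Johnson--Rosenthal--Zippin to get a shrinking basis of $\mathfrak X_\C$; your route instead complexifies a shrinking basis of $\mathfrak X$ directly, which is more elementary but requires the (routine) facts that the complexification of a basis is a basis and that $(\mathfrak X_\C)^*\cong(\mathfrak X^*)_\C$, whereas the paper's route never needs to know that $\mathfrak X$ itself has a shrinking basis. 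For amenability, the paper simply cites Argyros--Haydon's Proposition 10.6 (amenability of $\cL(\mathfrak X)$), while you in effect re-prove that proposition for the complexification: Gr\o{}nb\ae{}k--Johnson--Willis for $\cK$ of an $\mathcal L^\infty$-space plus stability of amenability under the extension $0\to\cK\to\cL\to\C\to 0$. Your version needs the additional (true but unremarked) fact that $\mathfrak X_\C$ is a complex $\mathcal L^\infty$-space; conversely, the paper's bare citation concerns the \emph{real} algebra $\cL(\mathfrak X)$, so strictly speaking it leaves implicit the passage of amenability to the complexification, a point your argument handles head-on. Both routes are sound; yours is more self-contained, the paper's is shorter.
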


\begin{proof}
Let $\mathfrak X$ be the Argyros-Haydon space, i.e., $\mathfrak{X}$ is a real Banach space which was precisely constructed so that all real-linear bounded operators on $\mathfrak X$ are of the form $\alpha\cdot\mathrm{Id}_\mathfrak{X}+K$ for some $\alpha\in \R$ and some real-linear compact operator $K$ on $\mathfrak X$  (see \cite{ArgyrosHaydon2011}). Let $\mathfrak{X}_\C$  be the standard Banach space complexification of $\mathfrak{X}$, i.e, $\mathfrak{X}_\C$ is real-linear isomorphic to $\mathfrak{X}\oplus \mathfrak{X}$, $(\alpha+i\beta)(x,y)=(\alpha x-\beta y,\beta x+\alpha y)$, and 
\[\|(x,y)\|=\sup_{t\in [0,2\pi]}\|\cos(t)x+\sin(t)y\|\] for all $x,y\in \mathfrak X$ and all $\alpha,\beta\in \R$ (see \cite{MunozSarantopoulosAndrew1999Studia} for details on this complexification). As $\mathfrak{X}$ is an $\ell_1(\R)$-predual, $\mathfrak{X}_\C$ is an $\ell_1(\C)$ predual (see \cite[Proposition 7]{MunozSarantopoulosAndrew1999Studia}). In particular,  $\mathfrak{X}_\C^*$ has a basis, which implies that $ \mathfrak{X}_\C$ has a shrinking basis (see \cite[Theorem 1.4]{JohnsonRosenthalZippin1971}).

\eqref{ItemPropArgyrosHaydon1}  It follows straightforwardly that all operators on  $\mathfrak{X}_\C$ are also a (complex) scalar multiple of the identity plus a compact,  so $\cL(\mathfrak X_\C)=\cK(\mathfrak X_\C)+\C \cdot \mathrm{Id}_{\mathfrak{X}_\C}$. Hence, $\broe(d,\cE)=\cL(\mathfrak{X}_\C)$  for all metric $d$ on $\N$, and all shrinking bases $\cE$ of  $\mathfrak X_\C$. Indeed, $\C \cdot \mathrm{Id}_{\mathfrak{X}_\C}\subset \broe(d,\cE)$ always holds and Proposition \ref{PropShrinkingCompact} implies that $\cK(\mathfrak X_\C)\subset \broe(d,\cE)$. 

\eqref{ItemPropArgyrosHaydon2} Separability  follows since $\cL(\mathfrak X_\C)=\cK(\mathfrak X_\C)+\C \cdot \mathrm{Id}_{\mathfrak{X}_\C}$ and amenability follows from \cite[Proposition 10.6]{ArgyrosHaydon2011}. For the  last statement, notice that   Proposition \ref{PropUnconditional} implies that $\broe(\partial,\cF)$ it not separable for all unconditional basis $\cF$.
\end{proof}

\section{Regular uniform Roe algebra  }\label{SectionRegularURA}

In order to avoid anomalies as   Proposition \ref{PropArgyrosHaydon}, we will restrict ourselves to Banach spaces with  unconditional basis. One of the main benefits of that is the fact that Banach spaces with $1$-unconditional basis are Banach lattices --- very simple Banach lattices. In this subsection, we use this lattice structure in order to define the regular uniform Roe algebra and we show that operators of finite propagation are regular. We start this section with a quick review of complex Banach lattices and  refer the reader to Appendix  for a review of real Banach lattices, as well as some basic results which we use throughout these notes. 
 
Let $F$ be a real Banach lattice.  Then $F_\C$ denotes the sum $F\oplus F$ and we write $x+iy$ for  an element $(x,y)\in F_\C$. We endow $F_\C$ with the modulus 
\[|x+iy|=\sup_{t\in [0,2\pi]}|\cos(t)x+\sin(t)y|,\]
where the supremum above is taken in $F$, and endow $F_\C$ with the norm \[\|x+iy\|=\||x+iz|\|,\]
for all $x+iy\in F_\C$ (see \cite[Section II.11]{SchaeferBook1974} for details). The space $F_\C$ is called a \emph{complex Banach lattice} --- and the \emph{lattice complexification of $F$}. Clearly, every (complex) Banach space $E$ with a $1$-unconditional basis is a complex Banach lattice. Given an operator $a\in \cL(F_\C)$, there are operators $a_r,a_c\in \cL(F)$ so that \[a(x)=a_r(x)+ia_c(x)\] for all $x\in F$. Those operators are the \emph{real} and \emph{complex parts} of $a$, respectively. We say that $a$ is \emph{positive} if $a_r$ is (real) positive and $a_c=0$,  and we say that $a$ is \emph{regular} if both $a_r$ and $a_c$ are (real) regular  (see  Appendix). We denote the positive operators on $F_\C$ by $\cL(F_\C)_+$ and the set of regular operators by $\cL(F_\C)_r$. 
 
\begin{proposition} \cite[Proposition 2.2.6]{Meyer-NiebergBook1991}
Let $F$ be a Dedekind complete real Banach lattice. Then $|a\xi|\leq |a||\xi|$ for all $a\in \cL(F_\C)_r$ and all $\xi\in F_\C$.\label{PropDedekindBound}
\end{proposition}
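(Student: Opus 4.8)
The plan is to reduce the complex inequality to a family of real ones by rotating, and then to invoke the Riesz--Kantorovich description of the modulus of a real regular operator (recalled in the Appendix). Throughout I would write $b_t = \cos t\, a_r + \sin t\, a_c \in \cL(F)_r$, so that by definition $|a| = \sup_{t\in[0,2\pi]}|b_t|$, the supremum being taken in the Dedekind complete lattice $\cL(F)_r$. Since $|b_t|\le |a|$ and $|a|$ is a positive operator, the real inequality $|b_t w|\le |b_t|\,|w|$ together with $w\le|w|$ yields $b_t(w)\le |a|(|w|)$ for every real $w\in F$; in particular $b_t(w)\le |a|(w)$ whenever $w\ge 0$. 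This is the only way in which the operator $a$ enters the estimate.

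Next I would set up the reduction. For any $\zeta\in F_\C$ one has $|\zeta| = \sup_{u}\mathrm{Re}(e^{iu}\zeta)$, which is just the defining formula for the modulus after the substitution $u\mapsto -u$. Applying this to $\zeta = a\xi$ and using that $a$ is complex linear, I get $|a\xi| = \sup_u \mathrm{Re}(a\eta_u)$, where $\eta_u = e^{iu}\xi$. Writing $\eta_u = p_u + iq_u$ with $p_u,q_u\in F$, a direct computation gives $\mathrm{Re}(a\eta_u) = a_r p_u - a_c q_u$, while $|\eta_u| = |\xi|$ because the modulus of the complex lattice is rotation invariant. Thus it suffices to prove, for a fixed $u$ and with $p=p_u$, $q=q_u$, $\rho = |\xi| = |p+iq|$, the single real inequality $a_r p - a_c q \le |a|\rho$; taking the supremum over $u$ then gives the proposition.

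The content of this last inequality is a lattice form of the Cauchy--Schwarz estimate $a_r p - a_c q\le (a_r^2+a_c^2)^{1/2}(p^2+q^2)^{1/2}$, with $(a_r^2+a_c^2)^{1/2}$ playing the role of $|a|$ and $(p^2+q^2)^{1/2}$ that of $\rho$. It is transparent in the degenerate case where $\eta_u$ has constant argument $\phi$, i.e. $p = \cos\phi\,\rho$ and $q=\sin\phi\,\rho$ with $\rho\ge 0$: then $a_r p - a_c q = b_{-\phi}(\rho)\le |a|(\rho)$ by the first paragraph. The plan for the general case is to reduce to this one by a band decomposition: using that $\rho = |p+iq| = \sup_\theta(\cos\theta\,p+\sin\theta\,q)$ and the Dedekind completeness of $F$, I would, for each $\eps>0$, choose a finite grid $\theta_1,\dots,\theta_N$ and pairwise disjoint band projections $P_1,\dots,P_N$ summing to the band projection onto the carrier of $\rho$, arranged so that on the range of $P_j$ the element $\eta_u$ has argument within $\eps$ of $\theta_j$; applying the constant-argument estimate on each band, summing, and letting $\eps\to 0$ would yield $a_r p - a_c q\le |a|\rho$.

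The main obstacle is exactly this last step: making the informal ``argument is locally constant'' precise in an abstract Dedekind complete lattice, and controlling the error terms as the mesh of the grid shrinks. The cleanest way to discharge it is to pass through a representation theorem (Kakutani/Krivine) realizing the principal band generated by $\rho$ as a space of continuous functions on a compact space, where $\eta_u$ becomes a genuine complex-valued function, the band decomposition becomes an honest partition of the base space according to the argument of $\eta_u$, and the desired inequality is literally the pointwise scalar Cauchy--Schwarz inequality; order-continuity of the lattice operations then transfers the result back to $F$. I would also note that if the Appendix already records the Riesz--Kantorovich formula $|a|\,w = \sup\{\,|a z| : z\in F_\C,\ |z|\le w\,\}$ for the complexified operator, then the entire argument collapses to the single choice $z=\xi$, and I would use that shortcut instead.
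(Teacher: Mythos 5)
The paper itself offers no proof of this proposition: it is imported verbatim from Meyer--Nieberg \cite[Proposition 2.2.6]{Meyer-NiebergBook1991}, so your argument has to stand on its own rather than be measured against an in-paper proof. Your reduction is correct and is the natural one. Writing $b_t=\cos t\,a_r+\sin t\,a_c$, the identity $|\zeta|=\sup_u\mathrm{Re}(e^{iu}\zeta)$, complex linearity of $a$, and rotation invariance of the modulus do reduce the statement to the single real inequality $a_rp-a_cq\le |a|\rho$, where $\rho=|p+iq|$; and your first paragraph correctly isolates the only input from $a$, namely $b_t(w)\le |b_t|(|w|)\le |a|(|w|)$, the supremum defining $|a|$ existing because $|b_t|\le |a_r|+|a_c|$ and $\cL(F)_r$ is Dedekind complete.

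The final step, which you yourself flag as the obstacle, needs three repairs, one of them substantive. (a) Kakutani/Krivine representation applies to the principal \emph{ideal} $I_\rho=\{x\in F:|x|\le\lambda\rho \text{ for some }\lambda>0\}$, an AM-space with unit $\rho$, not to the principal band; this is harmless since $|p|,|q|\le\rho$. (b) Substantive: after representing $I_\rho\cong C(K)$, the target inequality is \emph{not} ``literally the pointwise scalar Cauchy--Schwarz inequality,'' because $a_r$, $a_c$ and $|a|$ are abstract operators on $F$ and do not act on $C(K)$; the inequality $a_rp-a_cq\le|a|\rho$ has no pointwise meaning there. The representation should be used only on the vectors: in $C(K)$ one has $\hat p^{\,2}+\hat q^{\,2}=\mathbf{1}$ (the pointwise supremum defining $\rho$ is continuous, hence is the lattice supremum), so one can choose a partition of unity $(\psi_j)_{j=1}^N$ subordinate to the preimages of arcs of length less than $\eps$ centered at angles $\theta_j$, and let $\rho_j\in I_\rho$ correspond to $\psi_j$. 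Then $\bigl|p-\sum_j\cos\theta_j\,\rho_j\bigr|\le\eps\rho$ and $\bigl|q-\sum_j\sin\theta_j\,\rho_j\bigr|\le\eps\rho$, each piece is exactly your constant-argument case, so $\sum_j b_{-\theta_j}(\rho_j)\le|a|(\rho)$, and hence $a_rp-a_cq\le |a|\rho+\eps(|a_r|+|a_c|)\rho$; the Archimedean property (not ``order-continuity of the lattice operations,'' which a general Banach lattice need not have, and which is not needed) then finishes. If you prefer clopen partitions to partitions of unity, you must justify their existence: $I_\rho$ inherits Dedekind completeness from $F$, so $K$ is Stonean; the sets where ``the argument lies in an arc'' are not clopen by themselves. (c) Your closing shortcut is circular: the formula $|a|w=\sup\{|az|:|z|\le w\}$ is at least as strong as the proposition being proved and is essentially the content of the cited result; the paper's Appendix records only the real-lattice facts, so it cannot be invoked for free. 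With (a)--(c) incorporated, your argument becomes a correct, self-contained proof of a statement the paper only quotes.
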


 The next proposition shows that the elements of $\broe[d,\cE]$ are all regular if $\cE$ is $1$-unconditional. This will be essential for Section \ref{SectionPropA} in the proofs of Lemma \ref{LemmaSeriesSOTconv} and Lemma \ref{LemmaSeriesSOTconvCommutator}.

\begin{proposition}\label{PropFinPropImpliesRegular}
Let $(\N,d)$ be a u.l.f. metric space and $E$ be a Banach space with an $1$-unconditional basis $\cE$. Then $\broe[d,\cE]\subset \cL(E)_r$.
\end{proposition}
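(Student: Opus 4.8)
The goal is to show that every band operator $a \in \broe[d,\cE]$ is regular, i.e. that both its real and complex parts lie in $\cL(E)_r$. The plan is to reduce to band operators that are supported on the graph of a single partial bijection, where the lattice structure makes regularity transparent, and then to reassemble using finiteness of the partition coming from u.l.f.

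First I would invoke Lemma \ref{LemmaULFPartition}: given $a$ with propagation at most $r$, I partition the set $\{(n,m) : d(n,m) \le r\}$ into finitely many pieces $A_1 \sqcup \cdots \sqcup A_k$, each of which is the graph of a partial bijection $\sigma_i$. Since $\cE$ is $1$-unconditional, I can split $a = \sum_{i=1}^k a_i$ with $\supp(a_i) \subseteq A_i$, exactly as in the proof of Proposition \ref{PropGhostWithFInProgAreComp}; concretely, $a_i$ is obtained from $a$ by multiplying on the left and right by the diagonal projections $\chi_{\ran \sigma_i}$ and $\chi_{\dom \sigma_i}$, which are contractions on a space with $1$-unconditional basis. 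Because $\cL(E)_r$ is a (complex) vector space, it suffices to prove that each $a_i$ is regular.

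So the core step is to show that a single-partial-bijection operator is regular. Fix $i$ and write $\sigma = \sigma_i$, $a_i\big(\sum_n \alpha_n e_n\big) = \sum_{n \in \dom\sigma} a^i_{n,\sigma(n)} \alpha_n e_{\sigma(n)}$. The key observation is that for such an operator the entrywise modulus already defines a bounded positive operator: let $b_i$ be the operator with matrix entries $|a^i_{n,m}|$, which also has support in $A_i$ and hence propagation at most $r$. By Proposition \ref{PropSubsymmetric} (more precisely, its proof, which only uses $1$-unconditionality together with the single-partial-bijection structure, and in the $1$-symmetric case gives a clean norm bound) the matrix $b_i$ defines a bounded operator, and it is manifestly positive since all its entries are nonnegative and $\cE$ is $1$-unconditional. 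I would then verify that $b_i$ is a lattice upper bound for the real and imaginary parts in the sense required: writing $a_i = (a_i)_r + i(a_i)_c$, each of $(a_i)_r$ and $(a_i)_c$ has matrix entries bounded in modulus by those of $b_i$, so $|(a_i)_r| \le b_i$ and $|(a_i)_c| \le b_i$ in the lattice of regular operators, whence $(a_i)_r$ and $(a_i)_c$ are regular by definition. This uses that for operators supported on a partial bijection, the modulus $|\cdot|$ in $\cL(E)_r$ is computed entrywise — which is where Proposition \ref{PropDedekindBound} and the explicit action on basis vectors come in.

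The main obstacle I anticipate is the last point: justifying that for these single-partial-bijection operators the regular modulus is genuinely given by the entrywise absolute value, rather than merely dominated by something. The subtlety is that $\cL(E)_r$ need not be Dedekind complete and the modulus $|a_i|$ is an infimum of upper bounds in $\cL(E)_r$ that may not exist a priori; the clean way around this is to exhibit the concrete positive operator $b_i$ and check directly from the definition of regularity (existence of a positive operator dominating $\pm(a_i)_r$ and $\pm(a_i)_c$ pointwise on $E_+$) rather than trying to compute a lattice supremum. Once $b_i \ge \pm(a_i)_r$ and $b_i \ge \pm(a_i)_c$ hold as positive operators — which follows because $b_i \mp (a_i)_r$ again has support in $A_i$ with nonnegative entries and so is positive — regularity of each $a_i$, and hence of $a$, follows immediately, completing the proof.
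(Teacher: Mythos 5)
Your proof has the right architecture, and it is in fact close to the paper's own argument: both reduce, via Lemma \ref{LemmaULFPartition}, to operators supported on graphs of partial bijections and then exhibit an explicit bounded positive operator dominating the real and imaginary parts (the paper simply performs the entrywise positive-part decomposition \emph{before} the partition rather than after, and concludes via Proposition \ref{PropPositiveAreBounded}). However, there is a genuine gap at the step you yourself call the key observation. The boundedness of the entrywise modulus $b_i$ does \emph{not} follow from Proposition \ref{PropSubsymmetric}, and your parenthetical claim that its proof ``only uses $1$-unconditionality together with the single-partial-bijection structure'' is inaccurate: that proof estimates $\big\|\sum_n a^i_{n\sigma(n)}\alpha_n e_{\sigma(n)}\big\|\leq LC\sup_{n,m}|a^i_{n,m}|\,\big\|\sum_n\alpha_n e_n\big\|$ using the $C$-equivalence of $(e_n)_{n\in\dom\sigma}$ and $(e_{\sigma(n)})_{n\in\dom\sigma}$, which is exactly the $d$-symmetry hypothesis --- a hypothesis absent from Proposition \ref{PropFinPropImpliesRegular}. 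Worse, under $1$-unconditionality alone the statement you are invoking is false: the remark immediately following Proposition \ref{PropSubsymmetric} exhibits, on $\ell_1\oplus\ell_2$ with the interleaved $1$-unconditional basis, a propagation-$1$ matrix with entries in $\{0,1\}$, supported on the graph of a partial bijection, which is not even a well-defined operator. So ``bounded nonnegative entries on a partial bijection'' cannot, by itself, yield boundedness of $b_i$.

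The gap is repairable, but the repair must use an ingredient your justification never mentions: the boundedness of $a_i$ itself. Since $a_i(\sum_n\alpha_n e_n)=\sum_{n\in\dom\sigma_i}c_n\alpha_n e_{\sigma_i(n)}$ with $c_n=a^i_{n,\sigma_i(n)}$, pick unimodular scalars $\theta_n$ with $\theta_nc_n=|c_n|$; then for every finitely supported $\xi=\sum_n\alpha_ne_n$ one has $b_i\xi=a_i\xi'$, where $\xi'=\sum_n\theta_n\alpha_ne_n$ satisfies $\|\xi'\|=\|\xi\|$ by $1$-unconditionality, so $\|b_i\xi\|\leq\|a_i\|\|\xi\|$ and $b_i$ extends to a bounded positive operator with $\|b_i\|\leq\|a_i\|$. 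With this in place, your final paragraph --- checking $b_i\pm(a_i)_r\geq 0$ and $b_i\pm(a_i)_c\geq 0$ directly from the signs of the entries, and deliberately avoiding lattice moduli and Dedekind completeness --- is correct and is the right way to finish; it parallels the paper, which instead bounds its nonnegative-entry pieces by comparing with $|a(\xi_A)|$, using disjointness of the $r$-balls around distinct range points. On that note, one smaller fix: for your corner formula $a_i=\chi_{\ran\sigma_i}a\chi_{\dom\sigma_i}$ to have support exactly $A_i$ (and for $\sum_ia_i=a$ with no double counting) you need that stronger, ball-disjointness form of the separation (distinct domain points, and distinct range points, more than $2r$ apart), which is also what the paper's proof uses; with the separation $>r$ as literally stated in Lemma \ref{LemmaULFPartition}, the corner can pick up spurious entries $(n,\sigma_i(n'))$ with $n\neq n'$.
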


\begin{proof}
Let $a\in \broe[d,\cE]$. Write $a=b_1-b_2 +i(b_3-b_4)$ where each $b_i$ has only positive entries and all of their nonzero entries  belong to $\supp(a)$.  Since all positive operators on a real Banach lattice are bounded (Proposition \ref{PropPositiveAreBounded}), it is enough to show that each $b_i$ is a well defined positive linear operator.

Let $b=b_1$. Since $(\N,d)$ is a u.l.f. and $\propg(b)\leq \propg(a)<\infty$, there exists a partition 
\[\supp(b)=C_1\sqcup\ldots \sqcup C_k\]
so that each $C_i$ is the graph of a partial bijection, say $C_i=\{(x_i^\ell,y_i^\ell)\}_\ell$, and $B_r(y^\ell_i)\cap B_r(y^m_i)=\emptyset$ for all $i\in \{1,\ldots, k\}$ and all  $\ell\neq m$ (Lemma \ref{LemmaULFPartition}).  For each $i\in \{1,\ldots, k\}$, let $c_i$ be the restriction of $b$ to $C_i$, i.e., $\supp(c_i)=C_i$ for all $i\in \{1,\ldots, k\}$ and $b=\sum_{i=1}^kc_i$.  

In order to show that $b$ is a well defined positive linear operator, it is enough that the same holds for each $c_i$. Fix $i\in\{1,\ldots, k\}$, and let $c=c_i$ and $C=C_i$. To simplify notation, let $C=\{(x_n,y_n)\in \N\times \N\mid n\in\N\}$.  Let us observe that $c\xi$ is well defined for each $\xi\in E$. Indeed, this follows since, for any $\xi\in E$ with finite support $c\xi$ is well defined and we have that 
\[c\xi\leq b(\xi_{\{x_n\mid n\in\N\}})\]
(here we use that all the entries of $b$ are positive and that $B_r(y^\ell_i)\cap B_r(y^m_i)=\emptyset$ for all $i\in \{1,\ldots, k\}$). Similarly, we get have that 
\[b(\xi_{\{x_n\mid n\in\N\}})\leq |a(\xi_{\{x_n\mid n\in\N\}})|.\]
Moreover, since $\cE$ is $1$-unconditional, $\|\zeta\|=\||\zeta|\|$ for all $\zeta\in E$. Hence, his shows that 
\[\|c\xi\|\leq \|a(\xi_{\{x_n\mid n\in\N\}})\|\leq \|a\|\|\xi_{\{x_n\mid n\in\N\}}\|\leq \|a\|\|\xi\|.\]
Since $\xi\in E$ is an arbitrary element of $E$ with finite support, this shows that $c\xi$ is a well defined element of $E$ for all $\xi\in E$. Since $c$ is clearly linear and positive, Proposition \ref{PropPositiveAreBounded} implies that $c\in \cL(E)$. 

Since $i\in \{1,\ldots,k\}$ was arbitrary, this shows that $b\in \cL(E)_r$. Analogously, this argument shows that $b_2,b_3,b_4\in \cL(E)_r$, and we conclude that $\broe[d,\cE]\subset \cL(E)_r$. 
\end{proof}

If $F$ is a real Banach lattice, then $\cL(F_\C)$ is isomorphic to $\cL(F)_\C$. If moreover, $F$ is Dedekind complete, then every regular operator $a\in \cL(F_\C)$ has a well defined modulus $|a|$ given by 
\[|a|=\sup_{t\in [0,2\pi]}|\cos(t)a_r+\sin(t)a_c|\]
(see \cite[Theorem 1.8]{SchaeferBook1974}). Moreover, the regular operators $\cL(F_\C)_r$ can be endowed with the \emph{regular norm} (or \emph{$r$-norm}) given by 
\[\|a\|_r=\||a|\|\]
for all $a\in \cL(F_\C)_r$. Hence,  $(\cL(F_\C)_r, \|\cdot\|_r)$ is a (complex) Banach lattice (see \cite[Corollary IV.1.1]{SchaeferBook1974}).

Let $E$ be a (complex) Banach space with a $1$-unconditional basis $\cE$. Since $\broe[d,\cE]\subset \cL(E)_r$ (Proposition \ref{PropFinPropImpliesRegular}), one can return to the definition of the uniform Roe algebra of the pair $(d,\cE)$ and take the norm closure  of $\broe[d,\cE]$ with respect to the regular norm $\|\cdot\|_r$. This defines the regular uniform Roe algebra of $(d,\cE)$,  $\broe^r(d,\cE)$, which we defined in Section \ref{SectionIntro} (see Definition \ref{DefiREgURA}). Notice that, in general, we have the following inclusions:
\[\broe[d,\cE]\subset \broe^r(d,\cE)\subset \broe(d,\cE)\cap \cL(E)_r.\]
Also, the lattice structure of $\cL(E)_r$ makes $\broe^r(d,\cE)$ into a Banach lattice as well:

\begin{proposition}
Let $(\N,d)$ be a u.l.f. metric space, and $E$ be a Banach space with an $1$-unconditional basis $\cE$. Then $\broe^r(d,\cE)$ is a Banach sublattice of $\cL(E)_r$.
\end{proposition}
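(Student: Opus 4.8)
The plan is to verify the two defining features of a Banach sublattice of the complex Banach lattice $(\cL(E)_r,\|\cdot\|_r)$: that $\broe^r(d,\cE)$ is a norm-closed (complex-)linear subspace, and that it is stable under the modulus $a\mapsto|a|$. The first is immediate from Definition \ref{DefiREgURA}, since $\broe^r(d,\cE)$ is by construction the $\|\cdot\|_r$-closure of the complex vector space $\broe[d,\cE]$, and $\broe[d,\cE]\subset\cL(E)_r$ by Proposition \ref{PropFinPropImpliesRegular}. Once stability under $|\cdot|$ is known, stability under the binary lattice operations on the real part follows from the identities $a\vee b=\tfrac12(a+b+|a-b|)$ and $a\wedge b=\tfrac12(a+b-|a-b|)$, so it is enough to treat the modulus.

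The key step, which I would isolate as a lemma, is that the modulus of a band operator is again a band operator of no larger propagation; equivalently, that $\supp(|a|)\subset\supp(a)$ for every $a\in\broe[d,\cE]$. First I would record the entrywise description of the modulus. Since $\cE$ is $1$-unconditional, each basis vector $e_n$ is an atom of the lattice $E$, so the order interval $\{\eta:|\eta|\le e_n\}$ consists exactly of the scalar multiples $te_n$ with $|t|\le1$; the Riesz–Kantorovich formula then gives, for a real regular $b$, that $|b|e_n=\sup\{|b\eta|:|\eta|\le e_n\}=|be_n|$, whence $e^*_m(|b|e_n)=|e^*_m(be_n)|$. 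Writing a complex band operator as $a=a_r+ia_c$ and using the modulus formula $|a|=\sup_{t\in[0,2\pi]}|\cos(t)a_r+\sin(t)a_c|$ recalled above, the same reasoning applied coordinatewise yields $e^*_m(|a|e_n)=\sup_t|\cos(t)e^*_m(a_re_n)+\sin(t)e^*_m(a_ce_n)|=|e^*_m(ae_n)|$. In particular $e^*_m(|a|e_n)=0$ whenever $e^*_m(ae_n)=0$, so $\propg(|a|)\le\propg(a)$ and $|a|\in\broe[d,\cE]$.

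With the lemma in hand, the passage to the closure is routine. Given an arbitrary $a\in\broe^r(d,\cE)$, choose band operators $a_k\to a$ in $\|\cdot\|_r$; the lattice inequality $\bigl||a_k|-|a|\bigr|\le|a_k-a|$ gives $\||a_k|-|a|\|_r\le\|a_k-a\|_r\to0$. Each $|a_k|$ lies in $\broe[d,\cE]\subset\broe^r(d,\cE)$, and the latter is $\|\cdot\|_r$-closed, so $|a|\in\broe^r(d,\cE)$. This establishes stability under the modulus and finishes the proof.

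The hard part will be the entrywise identity for $|a|$, or more precisely the justification that the operator supremum $\sup_{t}|\cos(t)a_r+\sin(t)a_c|$ is computed coordinatewise, i.e.\ that $e^*_m\bigl((\sup_t|b_t|)e_n\bigr)=\sup_t e^*_m(|b_t|e_n)$ for $b_t=\cos(t)a_r+\sin(t)a_c$. This is where Dedekind completeness of $E$ and order-continuity of the coordinate functionals $e^*_m$ enter, both available in the present setting of sequence lattices with a $1$-unconditional basis. An alternative that sidesteps the exact entries is to use the triangle inequality $|a|\le\sum_{i=1}^k|a_i|$ for the decomposition $a=\sum_i a_i$ supplied by Lemma \ref{LemmaULFPartition} (each $a_i$ supported on a graph of a partial bijection $A_i$, so that $|a_i|$ is again supported on $A_i$) together with the observation that $0\le|a|\le\sum_i|a_i|$ forces $\supp(|a|)\subset\bigcup_iA_i\subset\{(n,m):d(n,m)\le\propg(a)\}$.
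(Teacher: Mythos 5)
Your proof is correct, and it rests on the same two pillars as the paper's: lattice operations send band operators to band operators, and $\|\cdot\|_r$-continuity of the lattice operations transfers this to the closure. The execution, however, is genuinely different. The paper reduces to the real case: writing $E=F_\C$, it shows that the $\|\cdot\|_r$-closure of the \emph{real} band operators is a real Banach sublattice of $\cL(F)_r$ --- using continuity of $\wedge$ and $\vee$ in the $r$-norm together with the bare \emph{assertion} that $a_n\wedge b_n$ and $a_n\vee b_n$ are band operators whenever $a_n,b_n$ are --- and then verifies that $\broe^r(d,\cE)$ is the lattice complexification of that real algebra, since $\|\cdot\|_r$-convergence of $a_n+ib_n$ forces $\|\cdot\|_r$-convergence of $a_n$ and of $b_n$ separately. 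You instead treat the complex modulus head-on and, unlike the paper, you actually \emph{prove} the key stability fact: the atomic computation $\{\eta:|\eta|\le e_n\}=\{te_n:|t|\le 1\}$ combined with Riesz--Kantorovich, giving $e^*_m(|a|e_n)=|e^*_m(ae_n)|$ and hence $\supp(|a|)\subseteq\supp(a)$, is precisely the justification the paper omits (at an atom the same convexity argument gives $(a\wedge b)e_n=ae_n\wedge be_n$, which is what the paper's assertion needs). What your route costs is the interchange of the supremum over $t$ with evaluation at $e_n$, which you rightly flag as the delicate point; your fallback via domination --- $|a|\le |a_r|+|a_c|$, or $|a|\le\sum_i|a_i|$ along the decomposition of Lemma \ref{LemmaULFPartition}, together with the observation that $0\le c\le d$ with $\supp(d)\subseteq\{(n,m)\mid d(n,m)\le\propg(a)\}$ forces the same for $\supp(c)$ --- settles the propagation bound while avoiding that interchange altogether, so I would promote it to the primary argument rather than an alternative. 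One definitional caveat: the paper's notion of complex Banach sublattice is ``lattice complexification of a real sublattice,'' so to match it exactly you should add the one-line remark that $a\in\broe^r(d,\cE)$ implies $a_r,a_c\in\broe^r(d,\cE)$ (same closure argument, using $|(a_k-a)_r|\le |a_k-a|$); under your modulus-closure formulation of ``sublattice'' this step is not recorded, though it is immediate.
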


\begin{proof}
As $E$ is a (complex) Banach space with $1$-unconditinal basis, there exists a real Banach space $F$ with an unconditional basis so that $E=F_\C$. By abuse of notation, we still denote the $1$-unconditional basis of $F$ by $\cE$. Let  $\mathrm{B}_{u,\R}^r(d,\cE)$ denote the  $\|\cdot\|_r$-closure of all (real-linear) operators $a\in \cL(F)$ with finite propagation. In order to show that $\broe^r(d,\cE)$ is a Banach sublattice of $\cL(E)_r$, we need to show that $\mathrm{B}_{u,\R}^r(d,\cE)$ is a real Banach sublattice of the real Banach lattice $\cL(F)_r$ and notice that $\broe^r(d,\cE)$ is the lattice complexification of $\mathrm{B}_{u,\R}^r(d,\cE)$.

The fact that  $\mathrm{B}_{u,\R}^r(d,\cE)$ is a real Banach sublattice of  $\cL(F)$   follows straightforwardly from the continuity of the lattice operations. Indeed, if $a,b\in \mathrm{B}_{u,\R}^r(d,\cR)$ and $(a_n)_n$ and $(b_n)_n$ are sequences of real band operators converging in the $r$-norm to $a$ and $b$, respectively, then $a\wedge b=\|\cdot\|_r\text{-}\lim_na_n\wedge b_n$ and $a\vee b=\|\cdot\|_r\text{-}\lim_na_n\vee b_n$. Since $a_n$ and $b_n$ are band operators, so are $a_n\wedge b_n$ and $a_n\vee b_n$, and we are done.

By the definition of the modulus, for any sequences $(a_n)_n$ and $(b_n)_n$ in $\cL(F)$ so that $\|\cdot\|_r\text{-}\lim_n(a_n+ib_n)=a+ib$ we must have that $\|\cdot\|_r\text{-}\lim_na_n=a$ and $\|\cdot\|_r\text{-}\lim_na_n=a$. Therefore, if $a\in \broe^r(d,\cE)$, then    $a_r,a_c\in \mathrm{B}_{u,\R}^r(d,\cE)$. So the standard isomorphism between $\cL(F_\C)$ and $\cL(F)_\C$ restricts to an isomorphism between $\broe^r(d,\cE)$ and  $\mathrm{B}_{u,\R}^r(d,\cE)_\C$, and we are done.
\end{proof}

\section{Rigidity for order preserving equivalences}\label{SectionRigidityOrder}
In this section,   we investigate what kind of order preserving equivalences between two given uniform Roe algebras $\broe(d,\cE)$ and $\broe(\partial,\cF)$ are strong enough in order to guarantee that the base metric spaces are coarsely equivalent to each other.  

We start this section showing that an order preserving Banach space isomorphism is not enough for rigidity to hold (Theorem \ref{ThmBanachIsomorNorEnough}). Then,   we show that order preserving Banach space \emph{isometries} and  order preserving Banach \emph{algebra} isomorphisms do give us rigidity under reasonable  conditions on $\cE$ and $\cF$  (Theorem \ref{ThmBanachIsometryOrderIso} and Theorem \ref{ThmBanachAlgIsomOrderIso}).

\subsection{Banach space isomorphism and nonrigidity}
In this subsection, we show that order preserving  Banach space isomorphism is too weak of a property to give us rigidity.  For that, it will  be useful to  work with metric spaces admitting infinite-valued metrics. Precisely, if $(X,d)$ is an infinite-valued metric space (i.e., $d:X\times X\to [0,\infty]$ satisfies all the metric axioms but it may have infinite values), then we define $\cstu(X)$ in the exactly same way as for standard metric spaces.\footnote{The reader familiar with coarse spaces can also see an infinite-valued metric space $(X,d)$ as a coarse space, and $\cstu(X)$ as the uniform Roe algebra of this coarse space. We refer to \cite{RoeBook} for details on coarse spaces and \cite{BragaFarah2018} for the uniform Roe algebra of arbitrary coarse spaces}  

Given metric spaces $(X_1,d_2)$ and $(X_2,d_2)$, we write $X=X_1\sqcup X_2$ to denote the coarse disjoint union of $X_1$ and $X_2$, i.e., $X$ is the infinite-valued metric space endowed with the metric $d$ defined by \[d(x,y)=\left\{\begin{array}{ll}
d_1(x,y),     &\text{ if }(x,y)\in X_1\times X_1  \\
d_2(x,y),     & \text{ if }(x,y)\in X_2\times X_2\\
\infty,    & \text{ otherwise. }
\end{array}\right.\]
If $(X_i)_{i=1}^k$ are metric spaces, the sum $\bigsqcup_{i=1}^kX_i$ is defined analogously.

\begin{proposition}\label{PropN4AndN}
There exists an ordered Banach space isomorphism between $\cstu(\N)$ and $\bigoplus_{i=1}^4\cstu(\N) $. In particular, $\cstu(\N)$ and $\cstu(\bigsqcup_{i=1}^4\N)$ are isomorphic as ordered Banach spaces, but $\N$ and $\bigsqcup_{i=1}^4\N$ are not coarsely equivalent.
\end{proposition}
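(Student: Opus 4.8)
The plan is to exploit that the order in play is the entrywise order induced by the standard basis of $\ell_2$: for $E=\ell_2$ an operator is positive exactly when all of its matrix entries are nonnegative. This is far weaker than the $C^*$-order and, unlike it, makes matrix blocks split additively — which is precisely what makes nonrigidity possible. First I would record the routine identity $\cstu(\bigsqcup_{i=1}^4\N)=\bigoplus_{i=1}^4\cstu(\N)$ as ordered Banach spaces: in the coarse disjoint union the four copies lie at infinite distance, so every finite-propagation operator is block-diagonal for $\ell_2(\bigsqcup_{i=1}^4\N)=\bigoplus_{i=1}^4\ell_2(\N)$, and passing to norm limits identifies the left-hand side with the $\ell_\infty$-sum of four copies of $\cstu(\N)$, carrying the entrywise order on each summand.

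The content is then the isomorphism $\cstu(\N)\cong\bigoplus_{i=1}^4\cstu(\N)$. Split $\N=\N_0\sqcup\N_1$ into evens and odds, let $P_0,P_1\in\cstu(\N)$ be the coordinate projections (these are diagonal, hence band operators), and decompose each $T\in\cL(\ell_2(\N))$ into blocks $T_{ij}=P_iTP_j$ for $i,j\in\{0,1\}$. Since $P_0,P_1$ are band operators, $T\in\cstu(\N)$ iff each block is band-dominated; and because $2k\mapsto k$ and $2k+1\mapsto k$ are coarse equivalences of $\N_0,\N_1$ onto $\N$, each corner space is, via the induced relabeling of the standard basis, isomorphic as an ordered Banach space to $\cstu(\N)$ (a permutation of the basis preserves both the operator norm and entrywise positivity). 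I would then check that $T\mapsto(T_{ij})_{i,j}$ is an ordered Banach space isomorphism onto $\bigoplus_{(i,j)\in\{0,1\}^2}\cstu(\N)$: it is bounded since $\|T_{ij}\|=\|P_iTP_j\|\le\|T\|$, its inverse (reassembling the four blocks) is bounded by $4$, and it is an order isomorphism because $T$ has all entries nonnegative iff each block does. As $\{0,1\}^2$ has four elements this yields $\cstu(\N)\cong\bigoplus_{i=1}^4\cstu(\N)$, and composing with the identity of the first paragraph gives the ordered Banach space isomorphism $\cstu(\N)\cong\cstu(\bigsqcup_{i=1}^4\N)$.

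For the final clause I would use that the number of coarse connected components — the classes of the equivalence relation $d(x,y)<\infty$ — is a coarse invariant: a coarse map sends finite-distance pairs to finite-distance pairs, so a coarse equivalence (together with the compositions with its coarse inverse being close to the identities) induces a bijection of coarse components. Since $\N$ has a single coarse component while $\bigsqcup_{i=1}^4\N$ has four, the two spaces are not coarsely equivalent.

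I expect the main (and only nonformal) obstacle to be the middle paragraph: verifying that the four corner spaces are genuinely ordered-Banach-space copies of $\cstu(\N)$ (this needs only the elementary metric comparison $d(2a,2b+1)\asymp|a-b|$) and that the block map is bi-bounded. The conceptual crux — that in the entrywise order $M_2(\cstu(\N))$ splits as the $\ell_\infty$-sum of its four blocks, something the $C^*$-order forbids — is exactly what drives this nonrigidity and distinguishes it from the algebra-isomorphism rigidity of Theorem \ref{ThmBanachAlgIsomOrderIsoINTRO}.
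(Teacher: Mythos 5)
Your proof is correct and takes essentially the same route as the paper's: the paper also splits $\N$ into evens and odds, sends $a\in\cstu(\N)$ to its four corners conjugated by the canonical relabelings $n\mapsto 2n$ and $n\mapsto 2n-1$, and then identifies $\bigoplus_{i=1}^4\cstu(\N)$ with $\cstu(\bigsqcup_{i=1}^4\N)$. Your explicit checks (bi-boundedness of the block map, entrywise order preservation, and the coarse-component argument for non-equivalence) merely fill in steps the paper declares straightforward.
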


\begin{proof}
Let $I$ and $P$ denote the subsets of odd and even natural numbers, respectively.  Let $U_I:\ell_2(\N)\to \ell_2(I)$ and $U_P:\ell_2(\N)\to \ell_2(P)$ be the canonical isometries, i.e., $U_I\delta_n=\delta_{2n-1}$ and $U_P\delta_n=\delta_{2n}$ for all $n\in\N$.     It is straightforward that
\[a\in \cstu(\N)\mapsto (U_I^{-1}\chi_Ia U_I,U_P^{-1}\chi_P a U_P,U_I^{-1}\chi_I a U_P,U_P^{-1}\chi_Pa U_I)\in \bigoplus_{i=1}^4\cstu(\N)\]
is a Banach space isomorphism. Indeed, this can be seen by looking at the maps $\cstu(\N)\to \cstu(\N)$ induced by the bijections
\begin{align*}
    (n,m)\in \N\times\N &\mapsto(2n,2m)\in P\times P,\\ (n,m)\in \N\times\N &\mapsto(2n-1,2m-1)\in I\times I,\\ (n,m)\in \N\times\N &\mapsto(2n,2m-1)\in P\times I,  
    \text{ and}\\
    (n,m)\in \N\times\N &\mapsto(2n-1,2m)\in I\times P.
\end{align*}
 
Since $\bigoplus_{i=1}^4\cstu(\N)$ is canonically isomorphic (even as a \cstar-algebra) to $\cstu(\bigsqcup_{i=1}^4\N)$, the result follows.
\end{proof}

We now show that the (finite-valued) metric space version of Proposition \ref{PropN4AndN} if also true. Precisely, we have the following:

\begin{theorem}\label{ThmBanachIsomorNorEnough}
Let $X=(\{0\}\times \Z)\cup(\Z\times \{0\})\cup\{(n,\pm n)\mid n\in\Z\}$. Then $\N$ and $X$ are not coarsely equivalent, but there  exists an ordered Banach space isomorphism between  $\cstu(\Z)$ and $\cstu(X)$. In particular, $\mathrm{C}^{*,r}_u(\N)$ and $\mathrm{C}^{*,r}_u(X)$ are isomorphic as Banach lattices.
\end{theorem}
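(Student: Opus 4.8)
The plan is to reduce the theorem to a counting/combinatorial observation about how the metric space $X$ decomposes, mirroring the structure of Proposition~\ref{PropN4AndN}. The space $X$ is an ``X-shaped'' (or asterisk-shaped) union of four rays through the origin inside $\Z\times\Z$: the horizontal line $\Z\times\{0\}$, the vertical line $\{0\}\times\Z$, and the two diagonals $\{(n,\pm n)\}$. First I would establish that $\N$ and $X$ are not coarsely equivalent. The quickest route is a growth/volume invariant: in $X$, a ball of radius $R$ around the origin meets all four lines and so contains on the order of $8R$ points, but away from the origin the space looks locally like a disjoint pair of lines, whereas $\N$ with its usual metric has linear growth with a fixed ``degree'' structure. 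More robustly, one can observe that $X$ has a point (the origin) of coarse degree $4$ in the sense that removing a bounded ball around it leaves four unbounded components, each coarsely a half-line, while $\N$ cannot be disconnected into more than two unbounded pieces by removing a bounded set. This is a coarse invariant and distinguishes the two spaces.

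Next I would produce the order-preserving Banach space isomorphism. The idea is that $X$, away from the single branch point at the origin, is coarsely a disjoint union of finitely many copies of $\Z$ (half-lines), and the finitely many ``exceptional'' pairs near the origin only perturb the uniform Roe algebra by a finite-rank (hence compact) summand that does not affect the isomorphism type as an ordered Banach space. Concretely, I would partition $X\setminus\{0\}$ into its four punctured rays $R_1,\dots,R_8$ (each $\{(n,0):n>0\}$, $\{(n,0):n<0\}$, etc.), each coarsely equivalent to $\N$, and use canonical isometries $U_i:\ell_2(\N)\to\ell_2(R_i)$ to identify the ``blocks'' $\chi_{R_i}a\,\chi_{R_j}$ of an operator $a\in\cstu(X)$ with elements of $\cstu(\N)$ or $\mathrm M_\infty$. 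Because the metric on $X$ is infinite-valued between distinct rays except for the bounded interaction localized at the origin, only the diagonal blocks $\chi_{R_i}a\chi_{R_i}$ survive as genuine uniform-Roe-algebra elements, and the cross terms are compact. This realizes $\cstu(X)$ as an $\ell_\infty$-sum of finitely many copies of $\cstu(\N)$ modulo a compact (equivalently modulo $\mathrm M_\infty$) correction. One then invokes Proposition~\ref{PropN4AndN}, which already gives $\cstu(\N)\cong\bigoplus_{i=1}^4\cstu(\N)$ as ordered Banach spaces, together with the analogous splitting for an odd number of copies, to match the number of blocks coming from $X$ against powers available from $\N$. The maps are order isomorphisms because each $U_i^{-1}(\cdot)U_i$ sends positive diagonal entries to positive diagonal entries, and $\ell_\infty$-sums and the compact corrections respect the coordinatewise order induced by the bases.

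The main obstacle, and the step requiring genuine care, is controlling the \emph{branch point} at the origin: because $X$ is a finite-valued metric space (unlike the coarse disjoint union in Proposition~\ref{PropN4AndN}), operators with small propagation genuinely connect the four lines within a bounded neighborhood of $0$, so the decomposition $\cstu(X)\cong\bigoplus\cstu(\N)$ cannot be exact but only holds modulo $\mathrm M_\infty$. I would handle this by first proving that the ideal of compact operators $\mathrm M_\infty$ (which here coincides with $\cstu(X)\cap\cK(\ell_2(X))$ by the proposition identifying $\mathrm M_\infty(\cE)=\broe(d,\cE)\cap\cK(E)$) splits off compatibly with the order structure, so that an order isomorphism on the ``corona'' level $\cstu(X)/\mathrm M_\infty$ lifts to one on $\cstu(X)$ itself; the localization at $0$ contributes only finitely many matrix entries and hence lands entirely in $\mathrm M_\infty$. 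The final sentence of the statement, that $\mathrm C^{*,r}_u(\N)$ and $\mathrm C^{*,r}_u(X)$ are isomorphic as Banach lattices, then follows because for the standard $\ell_2$-basis the ordered Banach space isomorphism we constructed is implemented blockwise by conjugations $U_i^{-1}(\cdot)U_i$, which are lattice homomorphisms on the regular operators (they permute matrix entries and preserve their moduli), so the same map is automatically a lattice isomorphism once restricted to the regular uniform Roe algebras.
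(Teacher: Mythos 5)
Your reduction has a genuine gap at its central step: the treatment of the off-diagonal corners. First, your picture of them is wrong. For two distinct punctured rays $R_i\neq R_j$ of $X$ (note there are eight of them, not four), the corner $\chi_{R_i}\cstu(X)\chi_{R_j}$ is not ``finitely many matrix entries'' localized at the branch point: band operators do have cross-corners supported near the origin (distinct rays diverge), but the norm closure of these is the \emph{entire} space $\cK(\ell_2(R_j),\ell_2(R_i))$ of compact operators, since any operator supported on a finite rectangle $F\times G\subset R_i\times R_j$ has finite propagation. So $X$ yields $8$ diagonal corners isomorphic to $\cstu(\N)$ plus $56$ full copies of $\cK(\ell_2(\N))$, while $\Z$ yields $2$ copies of $\cstu(\N)$ plus $2$ copies of $\cK(\ell_2(\N))$; these compact summands are not a negligible ``correction'' and must be matched. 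Second, and more seriously, your proposed way of discarding them --- proving that $\mathrm{M}_\infty$ ``splits off compatibly with the order structure'' and lifting an order isomorphism from the corona $\cstu(X)/\mathrm{M}_\infty$ --- cannot work as stated, because $\mathrm{M}_\infty$ is \emph{not} complemented in a uniform Roe algebra: composing a putative bounded projection $P:\cstu(X)\to\mathrm{M}_\infty$ with the contractive diagonal expectation (which maps $\cstu(X)$ onto its diagonal copy of $\ell_\infty$ and maps compacts into $c_0$) would produce a bounded projection of $\ell_\infty$ onto $c_0$, contradicting Phillips' theorem. Lifting isomorphisms from uniform Roe coronas is exactly the kind of nontrivial rigidity problem this literature treats; nothing in your sketch supplies such a lifting.

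The repair is to notice that no quotient is needed, because the corner decomposition is \emph{exact}: the map $a\mapsto(\chi_{R_i}a\chi_{R_j})_{i,j}$ is already an ordered Banach space isomorphism of $\cstu(X)$ onto the $\ell_\infty$-sum of all $64$ corners (injective since $a=\sum_{i,j}\chi_{R_i}a\chi_{R_j}$, surjective since each corner lies in $\cstu(X)$, and positive in both directions for the entrywise order), and likewise $\cstu(\Z)\cong\cstu(\N)^2\oplus\cK(\ell_2(\N))^2$. This is precisely the paper's route. The remaining work, which your sketch omits entirely, is matching the summands: $\cstu(\N)^2\cong\cstu(\N)^8$ follows from Proposition \ref{PropN4AndN}, and $\cK(\ell_2(\N))\cong\cK(\ell_2(\N))^k$ for every $k$ follows by splitting $\N$ into $k$ infinite sets $I_0,\dots,I_{k-1}$ and sending $a\mapsto(a\chi_{I_i})_i$, each $\cK(\ell_2(I_i),\ell_2(\N))$ being order-isomorphic to $\cK(\ell_2(\N))$; hence $\cK^2\cong\cK^{56}$. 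Two smaller points: the ends count you use for non-coarse-equivalence is the right invariant, but $X$ minus a ball has eight unbounded components and $\N$ minus a bounded set has one; and the final Banach lattice assertion is obtained in the paper by restricting the constructed positive isomorphism to the regular subalgebras and invoking Proposition \ref{PropPositiveAreBounded} together with the fact that a positive bijection with positive inverse between Banach lattices is a lattice isomorphism, not by inspecting the blockwise conjugations alone.
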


\begin{proof}
Throughout this proof, we use the word ``isomorphism'' meaning a Banach space isomorphism which is also an order isomorphism. 

Firstly, notice that $\cstu(\Z)$ is isomorphic to $\cstu(\N)^2\oplus \cK(\ell_2(\N))^2$. Indeed, let $A=\N$ and $B=\{n\in \Z\mid n\leq 0\}$. Then
\[a\in \cstu(\Z)\mapsto (\chi_Aa\chi_A,\chi_Ba\chi_B, \chi_Ba\chi_A,\chi_Aa\chi_B)\in \bigoplus_{i=1}^4\cstu(\Z)\]
defines an isomorphic embedding onto the sum of $\chi_A\cstu(\Z)\chi_A$, $\chi_B\cstu(\Z)\chi_B$, $\chi_B\cstu(\Z)\chi_A$ and $\chi_A\cstu(\Z)\chi_B$. Since $\chi_A\cstu(\Z)\chi_A$ and  $\chi_B\cstu(\Z)\chi_B$ are isomorphic to $\cstu(\N)$, and $\chi_B\cstu(\Z)\chi_A$ and  $\chi_A\cstu(\Z)\chi_B$ are isomorphic to $\cK(\ell_2(\N))$, we conclude that $\cstu(\Z)$ is isomorphic to $\cstu(\N)^2\oplus \cK(\ell_2(\N))^2$.

Similarly, $\cstu(X)$ is isomorphic to $\cstu(\N)^8\oplus \cK(\ell_2(\N))^{56}$. Indeed, write $X=\bigcup_{i=1}^4\Z_i$, where each $\Z_i$ contains $(0,0)$ and it is coarsely equivalent to $\Z$ by a map sending $(0,0)$ to $0$.  Under this coarse equivalence, it makes sense to talk about the positive and negative elements of each $\Z_i$. For each $i$, let $A_i$ be the positive elements of $\Z_i$ and  $B_i$ be the nonpositive elements of   $\Z_i$. Then the number 8 in $\cstu(\N)^8\oplus \cK(\ell_2(\N))^{56}$ comes from the corners $\chi_{A_i}\cstu(X)\chi_{A_i}$ and $\chi_{B_i}\cstu(X)\chi_{B_i}$, and the number 56 from all the possible combinations of $\chi_{C}\cstu(X)\chi_{D}$ where $C$ and $D$ are distint elements of $\{A_i,B_i\}_{i=1}^4$. 

By Proposition \ref{PropN4AndN}, $\cstu(\N)^2$ is isomorphic to $\cstu(\N)^8$. Hence, we only need to notice that $\cK(\ell_2(\N))$ is isomorphic to $\cK(\ell_2(\N))^k$ for any $k\in\N$. For that, given $k\in\N$, let $I_i=\{kn-i\mid n\in\N\}$ for each $i\in\{0,\ldots, k-1\}$. Then notice that 
\[a\in \cK(\ell_2(\N))\mapsto (a\chi_{I_i})_{i=0}^k\in \bigoplus_{i=0}^k \cK(\ell_2(I_i),\ell_2(\N))\]
is an isomorphism and that each $\cK(\ell_2(I_i),\ell_2(\N))$ is isomorphic to $\cK(\ell_2(\N))$. This proves the first assertion.

For the last statement, notice that  isomorphism $\Phi:\cstu(\Z)\to \cstu(X)$ constructed above restricts to a positive linear map $\Psi:(\mathrm{C}^{*,r}_u(\Z),\|\cdot\|_r)\to (\mathrm{C}^{*,r}_u(X),\|\cdot\|_r)$ whose inverse is also positive. Therefore, by Proposition \ref{PropPositiveAreBounded}, these restrictions are bounded. Hence $\Psi:(\mathrm{C}^{*,r}_u(\Z),\|\cdot\|_r)\to (\mathrm{C}^{*,r}_u(X),\|\cdot\|_r)$ is both a Banach space isomorphism and an order isomorphism. By \cite[Exercise 1.3.E2]{Meyer-NiebergBook1991}, $\Psi$ is a Banach lattice isomorphism.
\end{proof}

\subsection{Rigidity for lattice isomorphisms}  In this subsection, we show that if $\Phi:\broe(d,\cE)\to \broe(\partial,\cF)$ is either a  \emph{isometric} order isomorphism or a Banach \emph{algebra} lattice isomorphism, then the base spaces are often bijectively coarsely equivalent --- the word ``often'' regards conditions on the   basis of $\cE$ and $\cF$, see Theorem \ref{ThmBanachIsometryOrderIso} and Theorem \ref{ThmBanachAlgIsomOrderIso}.  All the results in this section hold with no extra   geometric assumption on the metric spaces.

\begin{proposition}\label{PropLatticeIsoRankPres}
Let $(\N,d)$ and $(\N,\partial)$ be metric spaces, and let $E$ and $F$ be Banach spaces with  normalized bimonotone bases  $\cE$ and $\cF$, respectively. Let $\Phi:\broe(d,\cE)\to \broe (\partial, \cF)$ be an ordered Banach space isomorphism. Then for all $n,m\in\N$ there are $\ell,k\in\N$ and $\lambda \in[\|\Phi^{-1}\|^{-1}, \|\Phi\|]$ so that $\Phi(e_{n,m})=\lambda e_{\ell,k}$.
\end{proposition}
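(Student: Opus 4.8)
The plan is to give an intrinsic, order-theoretic description of the positive scalar multiples of matrix units and then transport it across $\Phi$. First I would record that each $e_{n,m}$ is a positive element of $\broe(d,\cE)$: the order is entrywise, so $e_{n,m}\xi=e^*_n(\xi)e_m\ge 0$ whenever $\xi\ge 0$. More generally, for $\lambda>0$ the order interval $[0,\lambda e_{\ell,k}]:=\{b\in\broe(\partial,\cF):0\le b\le\lambda e_{\ell,k}\}$ is exactly the line segment $\{t\lambda e_{\ell,k}:t\in[0,1]\}$; this is immediate by comparing matrix entries, since $\lambda e_{\ell,k}$ has a single nonzero entry, so any $b$ with $0\le b\le\lambda e_{\ell,k}$ can be nonzero only in that coordinate, with value in $[0,\lambda]$.

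Second, I would prove the converse: if $a\ge 0$ is nonzero and $[0,a]$ equals a segment $\{ta:t\in[0,1]\}$, then $a$ is a positive multiple of a single matrix unit. The key device is truncation to one coordinate. If $a$ had two distinct support points $(n_1,m_1),(n_2,m_2)\in\supp(a)$, then $b:=e_{m_1,m_1}\,a\,e_{n_1,n_1}=e^*_{m_1}(ae_{n_1})\cdot e_{n_1,m_1}$ is a nonzero rank-one operator lying in $\mathrm M_\infty(\cF)\subset\broe(\partial,\cF)$, it satisfies $0\le b\le a$ entrywise, and it is \emph{not} a scalar multiple of $a$ (it vanishes at $(n_2,m_2)$ while $a$ does not). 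This contradicts that $[0,a]$ is a segment, so $\supp(a)$ is a singleton and $a=\lambda e_{\ell,k}$ for some $\lambda>0$.

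Third, I would use that $\Phi$, being a positive bijection with positive inverse, maps order intervals onto order intervals: $0\le b\le a$ holds iff $0\le\Phi(b)\le\Phi(a)$, so $\Phi([0,a])=[0,\Phi(a)]$, and by linearity $\Phi$ carries $\{ta:t\in[0,1]\}$ to $\{t\Phi(a):t\in[0,1]\}$. Applying this to $a=e_{n,m}$, whose interval is a segment by the first step, shows $[0,\Phi(e_{n,m})]$ is a segment; since $\Phi$ is injective, $\Phi(e_{n,m})\ne 0$, so by the second step (now read inside $\broe(\partial,\cF)$ with basis $\cF$) there are $\ell,k\in\N$ and $\lambda>0$ with $\Phi(e_{n,m})=\lambda e_{\ell,k}$.

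Finally, to locate $\lambda$ I would use bimonotonicity: every interval projection has norm $1$, in particular $\|e_{j,j}\|=1$, whence $\|e^*_j\|=1$ and $\|e_{n,m}\|=\|e^*_n\|\,\|e_m\|=1$ for the normalized bases (and likewise $\|e_{\ell,k}\|=1$). Then $\lambda=\|\lambda e_{\ell,k}\|=\|\Phi(e_{n,m})\|\le\|\Phi\|$, while from $e_{n,m}=\lambda\,\Phi^{-1}(e_{\ell,k})$ we get $1=\|e_{n,m}\|\le\lambda\|\Phi^{-1}\|$, i.e. $\lambda\ge\|\Phi^{-1}\|^{-1}$, giving $\lambda\in[\|\Phi^{-1}\|^{-1},\|\Phi\|]$. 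I expect the main obstacle to be the converse characterization in the second step: one must be careful that the single-coordinate truncation $e_{m_1,m_1}\,a\,e_{n_1,n_1}$ is a genuine bounded operator inside the algebra and genuinely lies in the order interval, and that the whole characterization is phrased using only the positive cone, so that it is automatically preserved by the order isomorphism $\Phi$.
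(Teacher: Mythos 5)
Your proposal is correct and follows essentially the same route as the paper: characterize positive multiples of matrix units by the order-theoretic property that every $c$ with $0\le c\le a$ is a scalar multiple of $a$, observe that this property transfers through $\Phi$ because both $\Phi$ and $\Phi^{-1}$ are positive, and conclude. The paper's own proof is a terse version of exactly this; your entrywise truncation argument for the converse characterization and the bimonotonicity computation pinning $\lambda\in[\|\Phi^{-1}\|^{-1},\|\Phi\|]$ are details the paper leaves implicit.
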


\begin{proof}
Fix $n,m\in\N$ and let   $a=e_{n,m}$.  Notice that if $0\leq c\leq a$, then  $c$ is a scalar multiple of $a$. Hence, as both $\Phi$ and $\Phi^{-1}$ are positive, this implies that the positive operator $b=\Phi(a)$ satisfies the same property, i.e., if $0\leq c\leq b$, then  $c$ is a scalar multiple of $b$. Hence, $b$ must have the required property.
\end{proof}

The notion of \emph{coarse-like maps} between uniform Roe algebras was introduced in \cite{BragaFarah2018,BragaFarahVignati2018}, and it has been an important tool in the study of rigidity. The next is a stronger version of it. The definition of coarse-like maps is  given in Definition \ref{DefiCoarseLike}.

\begin{definition}\label{DefiStrongCoarseLike}
Let $(\N,d)$ and $(\N,\partial)$ be  u.l.f. metric spaces, and $E$ and $F$ be Banach spaces with bases  $\cE$ and $\cF$, respectively.  A map $\Phi:\broe(d,\cE)\to \broe(\partial,\cF)$ is \emph{strongly coarse-like} if for all   $r>0$, there exists $s>0$ so that  $\propg(\Phi(a))<s$   for all $a\in \broe(d,\N)$ with   $\propg(a)<r$.
  \end{definition}

\begin{proposition}\label{PropStronglyCoarseLike}
Let $(\N,d)$ and $(\N,\partial)$ be metric spaces,   $E$ and $F$ be Banach spaces with unconditional bases $\cE$ and $\cF$, respectively, and let $\Phi:\broe(d,\cE)\to \broe (\partial, \cF)$ be an ordered Banach space isomorphism. If $\cE$ is $d$-subsymmetric, then $\Phi$ is strongly coarse-like.
\end{proposition}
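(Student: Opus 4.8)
The plan is to first reduce to $1$-unconditional normalized bases, then extract from the order structure that $\Phi$ merely permutes the matrix units up to uniformly bounded positive scalars, and finally combine an order-theoretic ``squeeze'' with the $d$-symmetric norm estimate of Proposition \ref{PropSubsymmetric} to pin down how supports move.

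\textbf{Reduction and the induced bijection.} First I would renorm $E$ and $F$ so that $\cE$ and $\cF$ become $1$-unconditional and normalized. A $1$-unconditional basis is automatically bimonotone, and passing to an equivalent norm changes neither the order defined by the basis nor the algebras $\broe(d,\cE),\broe(\partial,\cF)$ (cf.\ the Remark after the basic terminology), so $\Phi$ is still an ordered Banach space isomorphism, now with $c:=\|\Phi^{-1}\|^{-1}>0$ and $C:=\|\Phi\|<\infty$. Proposition \ref{PropLatticeIsoRankPres} then produces, for every $(n,m)$, a pair $f(n,m)\in\N\times\N$ and a scalar $\lambda_{n,m}\in[c,C]$ with
\[\Phi(e_{n,m})=\lambda_{n,m}\,e_{f(n,m)}.\]
Applying the same statement to the ordered isomorphism $\Phi^{-1}$ shows that $f\colon\N\times\N\to\N\times\N$ is a bijection and that every $\lambda_{n,m}>0$.

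\textbf{Supports are transported by $f$.} The heart of the argument is the inclusion $\supp(\Phi(a))\subseteq f(\supp(a))$, which must be obtained \emph{without} any multiplicativity of $\Phi$. For $j\in\N$ write $p_j=e_{j,j}$ for the diagonal rank-one idempotent. If $a\geq 0$ then $\Phi(a)\geq 0$, and for any $n',m'$ the operator $p_{m'}\Phi(a)p_{n'}$ is the single entry $e^*_{m'}(\Phi(a)e_{n'})\,e_{n',m'}$; since all entries of $\Phi(a)$ are nonnegative, removing this one entry leaves a positive operator, whence
\[0\le p_{m'}\Phi(a)p_{n'}\le \Phi(a).\]
Applying the positive map $\Phi^{-1}$ and using $\Phi^{-1}(e_{n',m'})=\mu\,e_{f^{-1}(n',m')}$ with $\mu>0$ gives $0\le \mu\,e^*_{m'}(\Phi(a)e_{n'})\,e_{f^{-1}(n',m')}\le a$, so a nonzero $(n',m')$-entry of $\Phi(a)$ forces $f^{-1}(n',m')\in\supp(a)$. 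For a general $a$ with $\propg(a)\le r$ I would split its matrix into its four entrywise positive/negative, real/imaginary parts; each part has propagation $\le r$ and, by Proposition \ref{PropSubsymmetric}, is a bounded positive operator with support inside $\supp(a)$, so the inclusion persists.

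\textbf{Uniform bound on the transported neighbourhood.} It then remains to show
\[s_r:=\sup\bigl\{\partial(\alpha,\beta)\ \big|\ (\alpha,\beta)\in f(\{(n,m)\mid d(n,m)\le r\})\bigr\}<\infty,\]
for then $\supp(\Phi(a))\subseteq f(\supp(a))$ yields $\propg(\Phi(a))\le s_r$ whenever $\propg(a)\le r$, which is exactly the claim. Suppose instead $s_r=\infty$ and pick $(n_t,m_t)$ with $d(n_t,m_t)\le r$ and $\partial(f(n_t,m_t))\to\infty$. Using the partition of $\{d\le r\}$ from Lemma \ref{LemmaULFPartition} I pass to a subsequence lying in a single partial bijection, so that $a:=\sum_t e_{n_t,m_t}$ has propagation $\le r$ and entries in $\{0,1\}$; by Proposition \ref{PropSubsymmetric} this $a$ is a bounded positive band operator. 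Since $a\ge e_{n_t,m_t}$ for each $t$, positivity of $\Phi$ gives $\Phi(a)\ge \lambda_{n_t,m_t}e_{f(n_t,m_t)}$, so $\Phi(a)$ has entries of modulus $\ge c$ at the positions $f(n_t,m_t)$, whose $\partial$-distance tends to infinity. As $\Phi(a)\in\broe(\partial,\cF)$ is band-dominated, it lies within $c/2$ in norm of some finite-propagation $b$; for large $t$ the position $f(n_t,m_t)$ is outside $\supp(b)$, and since each matrix entry is dominated by the operator norm ($\cF$ being now $1$-unconditional and normalized) the $f(n_t,m_t)$-entry of $\Phi(a)$ has modulus $<c/2$, contradicting that it is $\ge c$. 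Hence $s_r<\infty$, completing the proof. The step I expect to be the main obstacle is the support transport: because $\Phi$ is only an order and Banach-space isomorphism, one cannot push it through products, and the squeeze $0\le p_{m'}\Phi(a)p_{n'}\le\Phi(a)$ combined with positivity of $\Phi^{-1}$ is precisely what substitutes for multiplicativity; one must also check, via Proposition \ref{PropSubsymmetric}, that the hand-built matrices (the four positive parts and $a=\sum_t e_{n_t,m_t}$) are genuine bounded operators.
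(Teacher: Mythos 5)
Your proposal is correct and takes essentially the same approach as the paper: both rest on Proposition \ref{PropLatticeIsoRankPres}, the order squeeze $0\le\lambda e_{k,\ell}\le\Phi(a)$ pushed through the positive map $\Phi^{-1}$ to transport supports, Proposition \ref{PropSubsymmetric} (with Lemma \ref{LemmaULFPartition}) to manufacture bounded band test operators, band-domination of the image plus the uniform lower bound $\lambda_{n,m}\ge\|\Phi^{-1}\|^{-1}$ on transported entries to force the finite propagation bound, and a decomposition into positive parts for general finite-propagation operators. The only difference is organizational: the paper tests against the single full-band operator $\sum_{\{(n,m)\,:\,d(n,m)\le r\}}e_{n,m}$ and computes its image's support outright, whereas you run a sequence-based contradiction along one partial bijection --- the same mechanism either way.
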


\begin{proof}
Without loss of generality, assume  that $\cE$ and $\cF$ are normalized and bimonotone. Let $r>0$ and write $A=\{(n,m)\in \N\times \N\mid d(n,m)\leq r\}$. Since $\cE$ is $d$-symmetric, Proposition \ref{PropSubsymmetric} implies that $a=\sum_{(n,m)\in A}e_{n,m}$ is a well defined operator in $\broe(d,\cE)$. Hence  Proposition \ref{PropLatticeIsoRankPres} implies that for each $(n,m)\in A$, there exists $\lambda_{n,m}\in [\|\Phi^{-1}\|^{-1},\|\Phi\|]$ and $\sigma(n,m)\in \N\times\N$ so that $\Phi(e_{n,m})=\lambda_{n,m}e_{\sigma(n,m)}$.

\begin{claim}
$\supp(a)=\bigcup_{(n,m)\in A}\{\sigma(n,m)\}$.
\end{claim} 
\begin{proof}
Since $\lambda_{n,m}e_{\sigma(n,m)}\leq \Phi(a)$ for all $(n,m)\in A$, it is clear that \[\bigcup_{(n,m)\in A}\{\sigma(n,m)\}\subset \supp(\Phi(a)).\] For the other inclusion, let $(k,\ell)\in \supp(\Phi(a))$ and pick $\lambda>0$ so that $\lambda e_{k,\ell}\leq \Phi(a)$. Then $\Phi^{-1}(\lambda e_{k,\ell})\leq a$. By Proposition \ref{PropLatticeIsoRankPres}, $\Phi^{-1}(\lambda e_{k,\ell})$ must equal $\alpha e_{n,m}$ for some $(n,m)\in \N\times \N$ and $\sigma(n,m)=(k,\ell)$. 
\end{proof}

\begin{claim}
Let $(n,m)\in A$ and $(k,\ell)=\sigma(n,m)$. Then $e^*_\ell(\Phi(a)e_k)=\lambda_{n,m}$.
\end{claim}

\begin{proof}
Since $\lambda_{n,m}e_{\sigma(n,m)}\leq \Phi(a)$, we must have that $e^*_\ell(\Phi(a)e_k)\geq\lambda_{n,m}$. Let $\lambda=e^*_\ell(\Phi(a)e_k)-\lambda_{n,m}$. Then  $e_{n,m}+\lambda\Phi^{-1}(e_{k,\ell})\leq a$. So $\lambda=0$.
\end{proof}

Since $\lambda_{n,m}>\|\Phi^{-1}\|^{-1}>0$ for all $(n,m)\in \N\times \N$, the previous claims imply that  \[s=\sup\{\partial(k,\ell) \mid (k,\ell)\in \supp(\Phi(a))\}<\infty.\] 
This implies that $\propg(\Phi(b))<s$   for all positive $b\in \broe(d,\N)$ with   $\propg(b)\leq r$. By Proposition \ref{PropFinPropImpliesRegular} (see Appendix), every $b\in \broe[d,\cE]$ can be written as a finite linear combination of positive  operators whose support are contained in the one of $b$, so the general result follows.  
\end{proof}

Notice that every separable Banach space can be (easily) renormed to be strictly convex (see  Section \ref{SectionIntro} for the definition of strict convexity and   \cite[Page 33]{JohnsonLindenstrauss2001Handbook} for this renorming result). If $\cE=(e_n)_n$ is a normalized monotone basis of a strictly convex space $E$, then $\|e_n+e_m\|>1$ for all $n,m\in\N$. Hence, the operator $e_{n,\ell}+e_{n,m}\in \cL(E)$ has norm greater than 1. Also, given $n\neq m$, there exists $\lambda>1/2$ so that $\|\lambda(e_n+e_m)\|\leq 1$.  Hence, the operator $e_{n,n}+e_{m,n}\in \cL(E)$ has norm greater than 1. Those   observations will be used in the following proof.

\begin{proof}[Proof of Theorem \ref{ThmBanachIsometryOrderIso}]
Without loss of generality, assume $\cE$ and $\cF$ are normalized. Given $n,m\in X$, let  $\sigma(n,m)\in Y\times Y$ be given by Proposition \ref{PropLatticeIsoRankPres}, i.e., $\Phi(e_{n,m})= e_{\sigma(n,m)}$ for all $n,m\in X$ --- here we use that $\Phi$ is an isometry, so $\lambda=1$. Clearly, $\sigma$ is a bijection and $\Phi^{-1}(e_{k,\ell})=e_{\sigma^{-1}(k,\ell)}$ for all $k,\ell\in Y$.  Let $\pi_1,\pi_2:Y\times Y\to Y\times Y$ denote the projections onto the first and second coordinates, respectively.

\begin{claim}
There exists $i\in \{1,2\}$ so that  $\pi_i(\sigma(n,n))=\pi_i(\sigma(n,m))$ for all $n,m\in X$.
\end{claim}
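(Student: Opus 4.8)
The plan is to use that $\Phi$ is an isometry to force $\lambda=1$ in Proposition~\ref{PropLatticeIsoRankPres}, so that $\Phi(e_{n,m})=e_{\sigma(n,m)}$ for a genuine bijection $\sigma$ of $\N\times\N$ whose inverse is induced by $\Phi^{-1}$. Since both $\Phi$ and $\Phi^{-1}$ are isometries, for any two distinct pairs $(n,m),(n',m')$ we get $\|e_{n,m}+e_{n',m'}\|=\|e_{\sigma(n,m)}+e_{\sigma(n',m')}\|$. The whole argument then reduces to (i) computing exactly which sums of two matrix units have norm $1$, and (ii) a combinatorial analysis of the bijection $\sigma$ preserving this data.

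For (i), I would prove the dichotomy that, for distinct $(n,m)$ and $(n',m')$, one has $\|e_{n,m}+e_{n',m'}\|=1$ if the pairs differ in both coordinates and $\|e_{n,m}+e_{n',m'}\|>1$ if they agree in some coordinate. If they share the first index, evaluating $e_{n,m}+e_{n,m'}$ at $e_n$ yields $\|e_m+e_{m'}\|$, which is $>1$ by strict convexity together with $1$-unconditionality (otherwise $e_m=\tfrac12(e_m+e_{m'})+\tfrac12(e_m-e_{m'})$ would be a nontrivial midpoint of unit vectors). If they share the second index, the observation recorded before this proof gives a scalar $\lambda>1/2$ with $\|\lambda(e_n+e_{n'})\|\le1$, so that $e_{n,m}+e_{n',m}$ has norm at least $2\lambda>1$. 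Finally, if the pairs are disjoint, then $1$-symmetry together with $1$-unconditionality show that $e_{n,m}+e_{n',m'}$ acts isometrically on $\spann\{e_n,e_{n'}\}$ and vanishes elsewhere (coordinate projections being contractive), so its norm is exactly $1$. Running the same computation for $\cF$ in $\broe(\partial,\cF)$ and using the displayed equality of norms, I conclude that $\sigma$ is an automorphism (in both directions) of the \emph{infinite rook's graph} on $\N\times\N$, in which two distinct cells are adjacent precisely when they share a coordinate.

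Next I would read off the line structure. For $n\in\N$ write $\cC_n=\{(n,m)\mid m\in\N\}$ and for $m\in\N$ write $\cR_m=\{(n,m)\mid n\in\N\}$. These ``columns'' and ``rows'' are exactly the maximal cliques of the rook's graph: any clique with at least three vertices must lie inside a single $\cC_n$ or $\cR_m$, and since each line is infinite no finite clique is maximal. Hence $\sigma$ permutes the set of lines and preserves whether two lines are disjoint or meet in a single point. As distinct columns are pairwise disjoint (likewise distinct rows) while every column meets every row in exactly one cell, the image of the pairwise-disjoint family $\{\cC_n\}_n$ must again be pairwise disjoint, hence consist either entirely of rows or entirely of columns; surjectivity of $\sigma$ on lines then forces one of the two global alternatives, namely (A) $\sigma$ sends columns to columns and rows to rows, or (B) $\sigma$ sends columns to rows and rows to columns.

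Finally I would compute $\sigma$ on cells using $\{(n,m)\}=\cC_n\cap\cR_m$ and the injectivity of $\sigma$, which gives $\{\sigma(n,m)\}=\sigma(\cC_n)\cap\sigma(\cR_m)$. In case (A), writing $\sigma(\cC_n)=\cC_{\tau(n)}$ and $\sigma(\cR_m)=\cR_{\rho(m)}$ yields $\sigma(n,m)=(\tau(n),\rho(m))$, so $\pi_1(\sigma(n,m))=\tau(n)$ is independent of $m$ and the claim holds with $i=1$; in case (B) the same computation gives $\pi_2(\sigma(n,m))=\tau(n)$ independent of $m$, and the claim holds with $i=2$. In either case a single $i$ works for all $n,m$, which is the assertion. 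The main obstacle I anticipate is securing part (i) in full: the equality $\|e_{n,m}+e_{n',m'}\|=1$ in the disjoint case genuinely needs both strict convexity and $1$-symmetry, and the clean identification of the maximal cliques of the infinite rook's graph—together with the surjectivity step forcing the global dichotomy between (A) and (B)—is where the combinatorics must be handled with care rather than glossed over.
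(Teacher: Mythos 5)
Your proof is correct, and its analytic core is exactly the paper's: both arguments rest on the dichotomy that, for distinct index pairs, the sum of two matrix units has norm $>1$ when the pairs share a coordinate (strict convexity) and norm exactly $1$ when they are disjoint ($1$-symmetry plus $1$-unconditionality), and both transfer this dichotomy through the isometry $\Phi$ to conclude that $\sigma$ preserves ``sharing a coordinate'' in both directions. Where you genuinely diverge is the combinatorial endgame. The paper works locally: fixing $n$, it shows the images of $(n,n)$ and $(n,m)$ share a coordinate slot, rules out a change of slot within the row by a three-unit contradiction (two images that disagreed in slot would be disjoint, forcing a norm-$1$ sum where the isometry demands norm $>1$), and then obtains uniformity of the slot across different $n$ by the terse remark that the same proof applies to the transposed map $\tau(n,m)=\sigma(m,n)$. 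You instead globalize: you recast the dichotomy as saying that $\sigma$ is an automorphism of the rook's graph on $\N\times\N$, identify the maximal cliques as precisely the rows and columns, and use preservation of intersection patterns (disjoint versus meeting in a single cell) to force the dichotomy that $\sigma$ either preserves or swaps the two families of lines, whence $\sigma(n,m)=(\tau(n),\rho(m))$ or $\sigma(n,m)=(\rho(m),\tau(n))$. Your route costs more setup but buys more: it yields the full product structure of $\sigma$, not just the claim, and it makes the uniformity of $i$ across all $n,m$ automatic, precisely the point where the paper's argument is most compressed. One small expository caveat: at the step where columns map to columns (or all to rows), the mechanism that forces the rows to then behave in the complementary way is the singleton-intersection property you stated just before, not surjectivity of $\sigma$ on lines; as written that sentence leans on the wrong lever, though the needed fact is already in your hands.
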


\begin{proof}
First notice that the claim holds for a fixed $n\in\N$. Indeed, fix $n\in\N$. Given $m\neq n$,  the comments preceding this theorem say that $e_{n,n}+e_{n,m}$ has norm greater than 1, hence so does  $\Phi(e_{n,n}+e_{n,m})=e_{\sigma(n,n)}+e_{\sigma(n,m)}$. As $\cF$ is $1$-symmetric, either  $\pi_1(\sigma(n,n))=\pi_1(\sigma(n,m))$ or $\pi_2(\sigma(n,n))=\pi_2(\sigma(n,m))$, and only one of those options can happen. Assume the former (if the latter holds, the proof proceeds analogously). If $\ell\neq m$, the same arguments give that either  $\pi_1(\sigma(n,n))=\pi_1(\sigma(n,\ell))$ or $\pi_2(\sigma(n,n))=\pi_2(\sigma(n,\ell))$. Hence, if $\pi_1(\sigma(n,n))\neq \pi_1(\sigma(n,\ell))$, then $e_{\sigma(n,m)}+e_{\sigma(n,\ell)}$ has norm 1; contradiction. 

This shows that for each $n\in X$, there exists $i_n\in \{1,2\}$ so that \[\pi_{i(n)}(\sigma(n,n))=\pi_{i(n)}(\sigma(n,m))\] for all $m\in X$. In order to notice that $i_n=i_m$ for all $n,m\in X$, one only needs to notice that the same proof holds for the map $\tau(n,m)=\sigma(m,n)$. 
\end{proof}

To simplify notation, assume the previous claim is satisfied with $i=1$. Define a map $f:X\to Y$ by letting $f(n)=\pi_1(\sigma(n,n))$ for all $n\in\N$. Since $\sigma$ is a bijection, the previous claim implies that $f$ is surjective. If $n\neq m$, then $\|e_{n,n}+e_{m,m}\|=1$. Hence $e_{\sigma(n,n)}+e_{\sigma(m,m)}$ also has norm 1, which implies that  $\pi_{1}(\sigma(n,n))\neq\pi_{1}(\sigma(m,m)) $. So, $f$ is also injective.

\begin{claim}
 $f$ and $f^{-1}$ are  coarse.
\end{claim}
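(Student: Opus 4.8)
The plan is to exploit that $\Phi$ is strongly coarse-like, together with a complete factorization of the induced bijection $\sigma$. The previous claim records that $\pi_1(\sigma(n,m))$ depends only on $n$, namely $\pi_1(\sigma(n,m))=f(n)$; first I would prove the symmetric statement that $\pi_2(\sigma(n,m))$ depends only on $m$. Fix $m$ and distinct $n\neq k$. The operator $e_{n,m}+e_{k,m}$ maps $\spann\{e_n,e_k\}$ onto $\C e_m$ and, exactly as in the norm computations preceding the theorem (so using strict convexity, since $\|e_n+e_k\|<2$), it has norm $2/\|e_n+e_k\|>1$. As $\Phi$ is an isometry with $\Phi(e_{n,m})=e_{\sigma(n,m)}$ (Proposition \ref{PropLatticeIsoRankPres}, with $\lambda=1$), the sum $e_{\sigma(n,m)}+e_{\sigma(k,m)}$ also has norm $>1$; since $\cF$ is $1$-symmetric, two matrix units with entirely disjoint supports would sum to an operator of norm $1$, so $\sigma(n,m)$ and $\sigma(k,m)$ must share a coordinate. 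Their first coordinates are $f(n)\neq f(k)$ (as $f$ is injective), so they agree in the second coordinate. Hence $g(m):=\pi_2(\sigma(n,m))$ is well defined and independent of $n$, $g$ is a bijection, and $\sigma(n,m)=(f(n),g(m))$.

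Next I would extract two estimates from Proposition \ref{PropStronglyCoarseLike}, which applies because a $1$-symmetric basis is $d$-symmetric and $\Phi$ is an ordered Banach space isomorphism: for each $r>0$ there is $s=s(r)>0$ so that $\propg(a)\le r$ forces $\propg(\Phi(a))<s$. Applying this to $e_{n,m}$, whose support is $\{(n,m)\}$ so that $\propg(e_{n,m})=d(n,m)$, and using $\Phi(e_{n,m})=e_{(f(n),g(m))}$, I obtain that $d(n,m)\le r$ implies $\partial(f(n),g(m))<s(r)$. Specializing to $n=m$ (so $d(n,n)=0$) yields a single constant $s_0$ with $\sup_n\partial(f(n),g(n))\le s_0<\infty$; that is, $f$ and $g$ are close. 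Combining these, whenever $d(n,m)\le r$ I get $\partial(f(n),f(m))\le \partial(f(n),g(m))+\partial(g(m),f(m))< s(r)+s_0$, so $f$ is coarse.

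For $f^{-1}$ I would run the identical argument for $\Phi^{-1}\colon\broe(\partial,\cF)\to\broe(d,\cE)$, which is again an isometric order isomorphism and is strongly coarse-like since $\cF$ is $1$-symmetric, hence $\partial$-symmetric. From $\sigma(n,m)=(f(n),g(m))$ one reads off $\sigma^{-1}(k,\ell)=(f^{-1}(k),g^{-1}(\ell))$, and the two bounds above (now for $\Phi^{-1}$) show $f^{-1}$ is coarse. This proves the claim; together with $f$ being a bijection it gives that $f$ is expanding and cobounded, i.e.\ a bijective coarse equivalence, which is the conclusion of the theorem.

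The step I expect to demand the most care is the second-coordinate factorization: one must cleanly separate the two mechanisms by which $1$-symmetry can make $\|e_\alpha+e_\beta\|>1$ (shared first versus shared second coordinate), and the dichotomy ``shares a coordinate'' versus ``disjoint supports, norm $1$'' rests essentially on strict convexity and $1$-symmetry of $\cF$. Once the clean form $\sigma(n,m)=(f(n),g(m))$ is available, the coarseness estimates are routine triangle-inequality bookkeeping, the only remaining subtlety being to record the uniform closeness of $f$ and $g$ from the finite propagation of the images of the diagonal matrix units.
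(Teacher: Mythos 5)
Your proof is correct and takes essentially the same route as the paper's: both rest on Proposition \ref{PropStronglyCoarseLike} applied to the matrix units $e_{n,m}$ together with the strict-convexity/$1$-symmetry dichotomy (a sum of two distinct matrix units has norm $1$ exactly when they share neither source nor target coordinate), and both finish with a triangle inequality using the propagation bound on $\Phi(e_{n,n})$ and $\Phi(e_{n,m})$. Your intermediate factorization $\sigma(n,m)=(f(n),g(m))$ is a clean repackaging of the paper's coordinate-matching case analysis (given the previous claim and injectivity of $f$, the paper's cases collapse to exactly this factorization), so the two arguments coincide in substance.
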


\begin{proof}
Let $r>0$. Since $\Phi$ is strongly coarse-like (Proposition \ref{PropStronglyCoarseLike}), there exists $s>0$ so that $\partial(\sigma(n,m))\leq s$ for all $n,m\in X$ with $d(n,m)\leq r$. Fix distinct $n,m\in X$ with $d(n,m)\leq r$. By the comments preceding this theorem,   both $e_{n,n}+e_{n,m}$ or $e_{m,m}+e_{n,m}$ must have norm greater than $1$. Hence, as $\Phi$ is an isometry, both $e_{\sigma(n,n)}+e_{\sigma(n,m)}$ and $e_{\sigma(m,m)}+e_{\sigma(n,m)}$ must have norm greater than $1$ as well. Again by the isometric property of $\Phi$,  $e_{\sigma(n,n)}+e_{\sigma(m,m)}$ has norm 1. 

Therefore, as $\cF$ is $1$-symmetric, we must have that either $\pi_1(\sigma(n,n))=\pi_1(\sigma(n,m))$ and $\pi_2(\sigma(m,m))=\pi_2(\sigma(n,m))$ or $\pi_2(\sigma(n,n))=\pi_2(\sigma(n,m))$ and $\pi_1(\sigma(m,m))=\pi_1(\sigma(n,m))$. Without loss of generality, suppose the latter holds. Then 
\begin{align*}\partial(f(n),f(m))&=\partial(\pi_1(\sigma(n,n)),\pi_1(\sigma(m,m)))\\
&\leq \partial(\pi_1(\sigma(n,n)),\pi_2(\sigma(n,m)))+\partial(\pi_2(\sigma(n,m)),\pi_1(\sigma(n,m)))\\
&\leq \partial(\pi_1(\sigma(n,n)),\pi_2(\sigma(n,n)))+s\\
&\leq 2s.
\end{align*}
This concludes that $f$ is coarse.  

Notice that $f^{-1}(k)=\pi_1(\sigma^{-1}(k,k))$. Hence $f^{-1}$ is coarse by the same arguments applied to $\sigma^{-1}$ and $\Phi^{-1}$.
\end{proof}
This concludes the proof. 
\end{proof}

\begin{theorem}\label{ThmBanachAlgIsomOrderIso} 
Let $(\N,d)$ and $(\N,\partial)$ be u.l.f. metric spaces, and $E$ and $F$ be Banach spaces with $d$-symmetric and $\partial$-symmetric bases $\cE$ and $\cF$, respectively.
If  $\broe(d,\cE)$ and $\broe(\partial,\cF)$ are simultaneously order and Banach algebra isomorphic, then $(\N,d)$ and $(\N,\partial)$ are bijectively coarsely equivalent. 
\end{theorem}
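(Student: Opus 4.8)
The plan is to recover a single point bijection $g:\N\to\N$ from $\Phi$ by combining the rigidity of matrix units under an order isomorphism with the multiplicative structure, and then to show that $g$ is a bijective coarse equivalence using that $\Phi$ is strongly coarse-like. First I would renorm $E$ and $F$ so that $\cE$ and $\cF$ are normalized and bimonotone; by the renorming remark this changes neither $\broe(d,\cE)$ nor $\broe(\partial,\cF)$ except up to equivalence of norms, so the given $\Phi$ remains simultaneously an order isomorphism and a Banach algebra isomorphism. Then Proposition \ref{PropLatticeIsoRankPres} applies and yields, for every $n,m$, a scalar $\lambda_{n,m}\in[\|\Phi^{-1}\|^{-1},\|\Phi\|]$ and a pair $\sigma(n,m)\in\N\times\N$ with $\Phi(e_{n,m})=\lambda_{n,m}e_{\sigma(n,m)}$.

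The key step, and the one where multiplicativity does the work that isometry did in Theorem \ref{ThmBanachIsometryOrderIso}, is to pin down $\sigma$. Since each $e_{n,n}$ is a nonzero idempotent and $\Phi$ is injective and multiplicative, $\Phi(e_{n,n})=\lambda_{n,n}e_{\sigma(n,n)}$ must be a nonzero idempotent; as $e_{k,\ell}^2=\delta_{k,\ell}e_{k,\ell}$, this forces $\sigma(n,n)$ to lie on the diagonal and $\lambda_{n,n}=1$, so I may write $\Phi(e_{n,n})=e_{g(n),g(n)}$, defining $g:\N\to\N$. This $g$ is injective because $\Phi$ is, and running the identical construction on $\Phi^{-1}$ produces a two-sided inverse of $g$, so $g$ is a bijection. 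Finally, using the factorization $e_{n,m}=e_{m,m}\,e_{n,m}\,e_{n,n}$ and applying $\Phi$ gives $\lambda_{n,m}e_{\sigma(n,m)}=e_{g(m),g(m)}\big(\lambda_{n,m}e_{\sigma(n,m)}\big)e_{g(n),g(n)}$; computing the right-hand product shows it vanishes unless $\sigma(n,m)=(g(n),g(m))$, and since the left-hand side is nonzero we conclude $\Phi(e_{n,m})=\lambda_{n,m}e_{g(n),g(m)}$ for all $n,m$.

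It then remains to read off the coarse geometry. Because $\cE$ is $d$-symmetric, Proposition \ref{PropStronglyCoarseLike} shows $\Phi$ is strongly coarse-like, so for each $r>0$ there is $s>0$ with $\propg(\Phi(a))<s$ whenever $\propg(a)\le r$. Applying this to $a=e_{n,m}$ with $d(n,m)\le r$, and noting that $\Phi(e_{n,m})$ is supported on the single entry $(g(n),g(m))$ so that $\propg(\Phi(e_{n,m}))=\partial(g(n),g(m))$, I obtain $\partial(g(n),g(m))<s$; hence $g$ is coarse. Since $\cF$ is $\partial$-symmetric, $\Phi^{-1}$ is likewise strongly coarse-like, and the same argument (now yielding $\Phi^{-1}(e_{k,\ell})$ as a scalar multiple of $e_{g^{-1}(k),g^{-1}(\ell)}$) shows $g^{-1}$ is coarse. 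A bijection between u.l.f.\ spaces that is coarse with coarse inverse is automatically cobounded and expanding, hence a coarse equivalence, so $g:(\N,d)\to(\N,\partial)$ is the desired bijective coarse equivalence.

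The main obstacle I anticipate is purely the bookkeeping in the passage $\sigma(n,m)=(g(n),g(m))$: it is the only place that genuinely uses the algebra structure, and it is sensitive to fixing the matrix-unit multiplication convention $e_{n,m}e_{p,q}=\delta_{n,q}e_{p,m}$ consistently. Everything downstream (bijectivity of $g$, coarseness of $g$ and $g^{-1}$) is then a routine application of the two cited propositions, so once the matrix units are shown to transform through a single point map the theorem follows with no additional geometric hypotheses.
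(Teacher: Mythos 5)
Your proposal is correct and follows essentially the same route as the paper: Proposition \ref{PropLatticeIsoRankPres} plus multiplicativity of $\Phi$ pins the diagonal matrix units to diagonal matrix units (yielding the bijection $g$), the identity $e_{m,m}e_{n,m}=e_{n,m}=e_{n,m}e_{n,n}$ (your factorization $e_{n,m}=e_{m,m}e_{n,m}e_{n,n}$ is the same computation) locates $\supp(\Phi(e_{n,m}))$ at $(g(n),g(m))$, and Proposition \ref{PropStronglyCoarseLike} applied to $\Phi$ and $\Phi^{-1}$ gives that $g$ and $g^{-1}$ are coarse. The only cosmetic difference is that you spell out the normalization/bimonotonicity renorming and the identification $\sigma(n,m)=(g(n),g(m))$ in more detail than the paper does.
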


\begin{proof} 
 Since $\Phi$ is a homomorphism, $\Phi(e_{n,n})$ must be an idempotent for all $n\in\N$.   So Proposition \ref{PropLatticeIsoRankPres} gives a map $f:\N\to \N$ so that $\Phi(e_{n,n})=e_{f(n),f(n)}$ for all $n\in \N$.  Clearly, $f$ is a bijection. We are left to notice that $f$ is a coarse equivalence. For that, it is enough to show that both $f$ and $f^{-1}$ are coarse. Fix $r>0$ and let $s>0$ be so that $\partial(k,\ell)\leq s$ for all $n,m\in\N$ with $d(n,m)\leq r$ and $(k,\ell)=\supp(\Phi(e_{n,m}))$  (Proposition \ref{PropStronglyCoarseLike}). Since $\Phi(e_{m,m})\Phi(e_{n,m})=\Phi(e_{n,m})\Phi(e_{n,n})$, we must have that $\partial (f(n),f(m))\leq s$ for all $n,m\in\N$ with $d(n,m)\leq r$. So $f$ is coarse. Analogous arguments show that $f^{-1}$ is coarse, so we are done.
\end{proof}

\begin{proof}[Proof of Theorem \ref{ThmBanachAlgIsomOrderIsoINTRO}]
As symmetric basis are  $d$-symmetric, this  is a particular case of  Theorem \ref{ThmBanachAlgIsomOrderIso}.
\end{proof}

\section{Property A and ghost operators}\label{SectionPropA}

Section \ref{SectionRigidityOrder} dealt with rigidity for order preserving isomorphisms. For now on, we forget order preservation, and our goal is to obtain rigidity for some uniform Roe algebras under Banach algebra isomorphisms. For that, extra geometric conditions on our metric spaces will be needed. Precisely, we will need to assume that the ghost operators in $\broe^r(d,\cE)$ belong to $\mathrm M_\infty(\cE)$. Since this is a technical condition, in order to obtain examples, we start by showing in this section that  metric spaces with property A satisfy this technical condition (see Theorem \ref{ThmPropertyAGhostsAreComp}).

\begin{definition}\label{DefiMetricPartition}
Let $d$ be a metric on $\N$ and $p\in (1,\infty)$. A sequence $(\varphi_n:\N\to [0,1])_{n\in \N}$ is called a \emph{$d$-partition of unit} if the following holds.
\begin{enumerate}
    \item $\sup_{m\in\N}|\{n\in \N\mid \varphi_n(m)\neq 0\}|<\infty$,
    \item $\sup_{n\in\N}\diam(\supp(\varphi_n))<\infty$, and 
    \item $\sum_{n\in\N}\varphi_n(m)^2=1$ for all $m\in\N$.
\end{enumerate}
Moreover, if $r,\eps>0$, we say that the sequence  $(\varphi_n:\N\to [0,\infty))_{n\in \N}$ has \emph{$(r,\eps)$-variation} if $d(k,m)<r$ implies
\[ \sum_{n\in\N}|\varphi_n(k)- \varphi_n(m)|^2<\eps^2.\]
\end{definition}

\begin{definition}\label{DefiPropA}
A u.l.f.  metric space $(\N,d)$ has \emph{property A} if for all $r,\eps>0$ there exists a $d$-partition of unit with $(r,\eps)$-variation.
\end{definition}

The definition above is not the original definition of property A given by G. Yu in \cite{Yu2000}, but it is equivalent to it by \cite[Theorem 1.2.4]{Willett2009}.\footnote{Notice also that the number $2$ could be replaced by any $p\in (1,\infty)$ in the definition of $d$-partition of unit and of property A \cite[Theorem 1.2.4]{Willett2009}.}

The next result is the version of \cite[Lemma 6.3]{SpakulaWillett2017} for our Banach lattice setting.

\begin{lemma}\label{LemmaSeriesSOTconv}
Let $(\N,d)$ be a u.l.f.  metric space,  and $(\varphi_n:\N\to [0,1])_{n\in \N}$ be a $d$-partition of unit. Let $E$ be a Banach space with a $1$-unconditional basis $\cE$. Then 
\begin{align*}
M:(\cL(E)_r,\|\cdot\|_r)&\to (\broe[d,\cE],\|\cdot\|_r)\\
b&\mapsto \mathrm{SOT}\text{-}\sum_{n\in \N}\varphi_nb\varphi_n
\end{align*}
is a well defined positive operator with norm at most $4$.
\end{lemma}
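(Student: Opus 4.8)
The plan is to view $M$ as a Schur multiplier and reduce the whole statement to the case of a single positive operator. First I would record that, since $\cE$ is $1$-unconditional, each $\varphi_n$ (regarded as a diagonal operator via Proposition \ref{PropUnconditional}) is a positive contraction, so every partial sum $M_N(b)=\sum_{n=1}^N\varphi_n b\varphi_n$ is a well-defined bounded operator whose matrix entries are
\[e^*_\ell\big(M_N(b)e_k\big)=\Big(\sum_{n=1}^N\varphi_n(\ell)\varphi_n(k)\Big)\,e^*_\ell(be_k).\]
Writing $c_{\ell,k}=\sum_n\varphi_n(\ell)\varphi_n(k)$, Cauchy--Schwarz together with condition (3) of Definition \ref{DefiMetricPartition} gives $0\le c_{\ell,k}\le 1$, and condition (2) gives $c_{\ell,k}=0$ whenever $d(\ell,k)>S:=\sup_n\diam(\supp(\varphi_n))$. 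Thus the formal limit $M(b)$ is the entrywise product of $b$ with the band multiplier $[c_{\ell,k}]$; in particular any limit will have propagation at most $S$, hence lie in $\broe[d,\cE]$ (and be regular by Proposition \ref{PropFinPropImpliesRegular}, so that the target norm $\|\cdot\|_r$ is defined on it).

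Second I would treat a positive operator $a\in\cL(E)_+$, which is the heart of the argument. Here the entrywise picture immediately yields the order estimate $0\le M_N(a)\le a$: the entries of $a-M_N(a)$ are $a_{\ell,k}\big(1-\sum_{n\le N}\varphi_n(\ell)\varphi_n(k)\big)\ge 0$, and for a $1$-unconditional basis nonnegativity of all entries is equivalent to positivity of the operator. Monotonicity of the operator norm then gives the uniform bound $\|M_N(a)\|\le\|a\|$. For convergence I would use condition (1): each $k$ lies in only finitely many supports, so on the dense subspace $\spann\{e_k\}$ the sequence $M_N(a)$ is eventually constant; combined with the uniform bound, a standard $3\eps$ argument upgrades this to $\mathrm{SOT}$-convergence on all of $E$. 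The limit $M(a)$ is positive (an $\mathrm{SOT}$-limit of positive operators), has propagation $\le S$, and satisfies $\|M(a)\|_r=\|M(a)\|\le\sup_N\|M_N(a)\|\le\|a\|=\|a\|_r$, using that the regular and operator norms agree on positive operators.

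Finally, for general $b\in\cL(E)_r$ I would account for the factor $4$ by decomposition. Writing $b=b_r+ib_c$ for the real and complex parts and then $b_r=b_r^+-b_r^-$, $b_c=b_c^+-b_c^-$ for the lattice positive and negative parts in $\cL(F)_r$, each of the four pieces is dominated by $|b|$ and hence has $\|\cdot\|_r\le\|b\|_r$. Linearity of $M$ together with the positive case give $\|M(b)\|_r\le\|M(b_r^+)\|_r+\|M(b_r^-)\|_r+\|M(b_c^+)\|_r+\|M(b_c^-)\|_r\le 4\|b\|_r$, and positivity of $M$ is already contained in the positive case. The main obstacle I anticipate is not any single inequality but making the $\mathrm{SOT}$-convergence rigorous while simultaneously controlling the norm: one must exploit the finite-overlap condition (1) and the bounded-support condition (2) in tandem with the order estimate $M_N(a)\le a$, and verify that the resulting limit is genuinely a finite-propagation (hence regular) operator lying in $\broe[d,\cE]$.
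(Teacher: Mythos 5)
Your proof is correct, but it takes a genuinely different route from the paper's. Both arguments obtain the constant $4$ the same way (decomposing $b$ into four positive parts, each dominated by $|b|$ and hence of $r$-norm at most $\|b\|_r$), and both get $\mathrm{SOT}$-convergence from the finite-overlap condition plus a uniform bound on the partial sums. The difference is in how the uniform bound is proved for a positive operator $a$. The paper argues by duality: it pairs $\sum_n\varphi_na\varphi_n\xi$ against a finitely supported functional $\zeta$, invokes the lattice H\"older inequality (Proposition \ref{PropHolderForLattices}) together with Proposition \ref{PropDedekindBound} and \cite[Proposition 1.d.9]{LindenstraussTzafririVol2}, and then establishes the key identity $\bigl(\sum_n|\varphi_n\xi|^2\bigr)^{1/2}=|\xi|$ via the square-function calculus. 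You instead work entrywise: scalar Cauchy--Schwarz and $\sum_n\varphi_n(m)^2=1$ give $0\le\sum_n\varphi_n(\ell)\varphi_n(k)\le 1$, whence $0\le M_N(a)\le a$ in the operator order (nonnegative entries being equivalent to positivity for a bounded operator when $\cE$ is $1$-unconditional), and the standard Banach lattice fact that $0\le S\le T$ implies $\|S\|\le\|T\|$ yields $\|M_N(a)\|\le\|a\|$. Your route is more elementary --- it bypasses the square-function machinery entirely --- and it gives the slightly stronger order-theoretic conclusion $0\le M(a)\le a$ for positive $a$. What the paper's template buys is reusability: the same duality/H\"older scheme is applied almost verbatim in Lemma \ref{LemmaSeriesSOTconvCommutator}, where the summands $\varphi_n[\varphi_n,a]$ are not order-dominated by an operator handed to you for free; your entrywise argument would there require the extra step of dominating the entries by $\eps$ times those of the modulus operator $|a|$ and then controlling $\||a|\|=\|a\|_r$ separately. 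Finally, note that your two implicit appeals --- Dedekind completeness of the underlying lattice (needed for $b_r^{\pm}$, $b_c^{\pm}$ and $|b|$ to exist) and norm-monotonicity on the positive cone --- are exactly the standard facts the paper also uses, so they do not constitute a gap.
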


\begin{proof}
Fix $b\in \cL(E)_r$. By the definition of a $d$-partition of unit, it follows that   $\{n\in\N\mid \varphi_n\xi\neq 0\}$ is finite  if $\xi\in E$ has finite support.  Hence, in order to show that $\sum_{n\in\N}\varphi_nb\varphi_n\xi$  converges strongly for any $\xi\in E$, and that $\sum_{n\in \N}\varphi_nb\varphi_n$ is an operator  with $r$-norm at most $4\|b\|_r$, it is enough to show that 
\[\Big\|\sum_{n\in \N}\varphi_nc\varphi_n\xi\Big\|\leq \|b\|_r\|\xi\|\]
for all $\xi\in E$ with finite support, and all $c\in\{b_{1},b_{2},b_{3}, b_{4}\}$, where $b=b_{1}-b_2+i(b_3-b_4)$ is the canonical decomposition of $b$ into positive elements.   For that, we  fix  $\xi\in E$ with finite support, $c\in\{b_{1},b_{2},b_{3}, b_{4}\}$,  an arbitrary  $\zeta\in E^*$ with finite support, and show that 
\[\Big|\zeta\Big(\sum_{n\in\N}\varphi_nc\varphi_n\xi\Big)\Big|\leq \|b\|_r\|\xi\|\|\zeta\|.\]

First notice that the adjoint of the diagonal operator  $\varphi_n \in \cL(E)$ is the diagonal operator $\varphi_n\in \cL(E^*)$. So,  
\begin{align*}
\Big|\zeta\Big(\sum_{n\in\N}\varphi_nc\varphi_n\xi\Big)\Big|=\Big|\sum_{n\in \N}\big(\varphi_n \zeta \big)(c\varphi_n\xi )\Big|.
\end{align*}
Clearly,  each $\varphi_n\zeta$ is regular. Hence,   Proposition \ref{PropDedekindBound} and Proposition \ref{PropHolderForLattices} give that  
\[ \Big|\sum_{n\in\N}\big(\varphi_n \zeta \big)(c\varphi_n\xi ) \Big|\leq\sum_{n\in\N}\big|\varphi_n \zeta \big||c\varphi_n\xi  | \leq \Big(\sum_{n\in\N}|\varphi_n  \zeta |^2\Big)^{1/2}\Big(\sum_{n\in\N}|c\varphi_n\xi |^2\Big)^{1/2}.\]
(see Appendix for the definition of $(\sum_{i}^n|x_i|^2)^{1/2}$).

Since $c$  is positive, Proposition \ref{PropDedekindBound} implies that $|c\varphi_n\xi |\leq |c||\varphi_n\xi |$. As $c\leq |b|$, we have that $\|c\|\leq \|b\|_r$. Hence,  using positivity of $c$ once again,  we have that  
\begin{equation}\label{EqBla1}
\Big\|\Big(\sum_{n\in\N}|c\varphi_n\xi |^2\Big)^{1/2}\Big\|\leq \|b\|_r\Big\|\Big(\sum_{n\in\N}|\varphi_n\xi |^2\Big)^{1/2}\Big\|
\end{equation}
(see \cite[Propostion 1.d.9]{LindenstraussTzafririVol2}).

\begin{claim}
We have that \[\Big(\sum_{n\in\N}|\varphi_n\xi |^2\Big)^{1/2}= |\xi|\ \text{ and }\ \Big(\sum_{n\in\N}|\varphi_n \zeta |^2\Big)^{1/2}=|\zeta| .\]
\end{claim}
\begin{proof}
Without loss of generality, assume all coordinates of $\xi$ are real. Let us show that  $(\sum_{n\in\N}|\varphi_n\xi |^2)^{1/2}= |\xi|$. First notice that
\begin{align*}
\Big(\sum_{n\in\N}|\varphi_n  \xi |^2\Big)^{1/2}&= \Big(\sum_{n\in\N}\Big|\sum_{k\in\N}\varphi_n(k)\xi(k)e_k\Big|^2\Big)^{1/2}\\
&=\bigvee\Big\{\sum_{n\in\N}\beta_n\sum_{k\in\N}\varphi_n(k)\xi(k)e_k\mid \sum_{n}\beta^2_n=1\Big\}\\
&=\bigvee\Big\{\sum_{k\in\N}\Big(\sum_{n\in\N}\beta_n\varphi_n(k)\Big)\xi(k)e_k\mid \sum_{n}\beta^2_n=1\Big\}.
    \end{align*}
Given $(\beta_n)_n$ with $\sum_n\beta_n^2=1$, the classic H\"older's inequality gives that \[\Big|\sum_{n\in\N}\beta_n\varphi_n(k)\Big|\leq \Big(\sum_n\beta_n^2\Big)^{1/2}\Big(\sum_{n}\varphi_n(k)^2\Big)^{1/2}=1\] 
for all $k\in\N$.  This implies that
\[\Big(\sum_{n\in\N}|\varphi_n  \xi |^2\Big)^{1/2}\leq \sum_{k\in\N}|\xi(k)|e_k=|\xi|\]
In order to get equality, fix $k\in\N$, let $\eps=\xi(k)/|\xi(k)|$ (let $\eps=0$ if $\xi(k)=0$), and let $(\beta_n)_n=(\eps\varphi_n(k))_n$. This  implies that  $|\xi(k)|e_k\leq(\sum_{n\in\N}|\varphi_n\xi |^2)^{1/2}$ for each $k\in\N$. So, $|\xi|\leq (\sum_{n\in\N}|\varphi_n\xi |^2)^{1/2}$.  

The equality $(\sum_{n\in\N}|\varphi_n\zeta |^2)^{1/2}= |\zeta|$ follows completely analogously. 
\end{proof}

The previous claim and  \eqref{EqBla1}  imply that
\[\Big|\zeta\Big(\sum_{n\in\N}\varphi_nc\varphi_n\xi\Big)\Big|\leq \|b\|_r\big\||\xi|\big\|\big\||\zeta|\big\|=\|b\|_r\|\xi\|\|\zeta\|.\]
This completes the proof that $\sum_{n\in \N}\varphi_nb\varphi_n$ defines an operator on $\cL(E)$ with $r$-norm at most $4\|b\|_r$. The definition of $d$-partition of unit clearly implies that this operator has finite propagation and it is positive. 
\end{proof}

Given a metric space $(\N,d)$ and a Banach space $E$ with basis $\cE$, we say that  an operator $v\in \cL(E)$ is a \emph{partial translation} if there exists a bijection $f:A\subset \N\to B\subset \N$ so that (1) $\sup_nd(n,f(n))<\infty$ and (2)  $ve_n=e_{f(n)}$ for all $n\in\ A$, and $ve_n=0$ for all $n\not\in A$. Notice that if $\cE$ is $d$-symmetric, then $\broe(d,\cE)$ contains all partial translations in $\cL(E)$.  The next simple result can be obtained by Lemma \ref{LemmaULFPartition} (or \cite[Lemma 2.4]{SpakulaWillett2017}) and we omit the details.

\begin{lemma}\label{LemmaDecompositionFinPropOperator}
Let $(\N,d)$ be a u.l.f  metric space, $E$ be a Banach space with normalized symmetric   basis $\cE$ and let $a\in \cL(E)$ be an operator with propagation at most $r$. There is $N\in\N$,   $f_1,\ldots, f_N\in \ell_\infty(\cE)$ with $\|f_i\|\leq \|a\|$ and partial translations $v_1,\ldots, v_N\in \broe(d,\cE)$ so that $a=\sum_{i=1}^Nf_iv_i$. 
\end{lemma}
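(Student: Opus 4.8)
The plan is to express the matrix of $a$ as a finite sum of ``diagonal operator times partial translation'' pieces, one piece for each graph of a partial bijection in a partition of the $r$-neighborhood of the diagonal. First I would note that $\propg(a)\le r$ gives $\supp(a)\subseteq\{(n,m)\in\N\times\N\mid d(n,m)\le r\}$, and then apply Lemma~\ref{LemmaULFPartition} to $r$ to obtain a partition
\[
\{(n,m)\in\N\times\N\mid d(n,m)\le r\}=A_1\sqcup\cdots\sqcup A_N,
\]
where each $A_i$ is the graph of a partial bijection $\sigma_i$ of $\N$, say $A_i=\{(n,\sigma_i(n))\mid n\in\dom(\sigma_i)\}$. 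Here $N$ depends only on $r$ and the u.l.f.\ constant of $d$.

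Next I would construct the two families directly rather than through an abstract restriction argument. For each $i$, the inequality $d(n,\sigma_i(n))\le r$ shows that the assignment $v_ie_n=e_{\sigma_i(n)}$ for $n\in\dom(\sigma_i)$ and $v_ie_n=0$ otherwise defines a partial translation; since a symmetric basis is $d$-symmetric, each $v_i$ belongs to $\broe(d,\cE)$. I then let $f_i$ be the diagonal operator whose entry at $m=\sigma_i(n)\in\ran(\sigma_i)$ is $e^*_{m}(ae_n)$ and whose remaining diagonal entries vanish; by Proposition~\ref{PropUnconditional}, unconditionality of $\cE$ guarantees $f_i\in\ell_\infty(\cE)\subseteq\broe(d,\cE)$. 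These choices are made so that $f_iv_ie_n=e^*_{\sigma_i(n)}(ae_n)\,e_{\sigma_i(n)}$, i.e.\ $f_iv_i$ is precisely the ``restriction'' of $a$ to the entries indexed by $A_i$.

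To verify $a=\sum_{i=1}^N f_iv_i$ I would evaluate both sides on a basis vector $e_n$. For fixed $n$, as $i$ ranges over $\{1,\dots,N\}$ the points $\sigma_i(n)$ run over exactly those $m$ with $d(n,m)\le r$, each such pair $(n,m)$ lying in a unique $A_i$; hence
\[
\sum_{i=1}^N f_iv_ie_n=\sum_{m:\,d(n,m)\le r} e^*_m(ae_n)\,e_m=ae_n,
\]
the final equality holding because $\supp(a)$ is contained in the $r$-neighborhood of the diagonal. As both sides are bounded operators agreeing on the dense span of $\cE$, the identity holds on all of $E$.

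The only delicate point is the estimate $\|f_i\|\le\|a\|$, and this is where the normalization and symmetry of $\cE$ enter. For a normalized $1$-unconditional basis the coordinate functionals satisfy $\|e^*_m\|=1$, and the operator norm of a diagonal operator equals the supremum of the moduli of its diagonal entries; therefore
\[
\|f_i\|=\sup_{n\in\dom(\sigma_i)}|e^*_{\sigma_i(n)}(ae_n)|\le\sup_{n}\|ae_n\|\le\|a\|.
\]
For a general symmetric basis one first passes to its $1$-symmetric renorming (cf.\ the renorming remark in Section~\ref{SectionPrelim}), in which both of these facts are available. I expect this last inequality to be the main obstacle: the combinatorial decomposition is routine, but dominating the operator norm of a diagonal by the supremum of its entries genuinely requires $1$-unconditionality, so the reduction to the $1$-symmetric normalization is the step that must be carried out with care.
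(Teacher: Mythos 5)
Your construction is exactly the one the paper has in mind: the paper omits the proof, pointing at Lemma~\ref{LemmaULFPartition}, and your decomposition of the matrix of $a$ along the graphs $A_1,\dots,A_N$ of partial bijections, with $v_i$ the translation along $\sigma_i$ and $f_i$ the diagonal operator carrying the entries $e^*_{\sigma_i(n)}(ae_n)$, together with the verification of $a=\sum_{i=1}^N f_iv_i$ on each basis vector (a finite sum at each $e_n$, by uniform local finiteness), is the intended filling-in of the details. For a normalized $1$-unconditional (in particular $1$-symmetric) basis your norm estimate is also complete and correct: there $\|e^*_m\|=1$ and the operator norm of a diagonal operator is the supremum of the moduli of its entries, whence $\|f_i\|\le\|a\|$.

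The genuine gap is your final reduction step. Passing to the $1$-symmetric renorming $\|\cdot\|'$ does not prove the inequality as stated, because $\|f_i\|\le\|a\|$ is an inequality between \emph{operator} norms and is not invariant under equivalent renormings of $E$: the renormed argument yields $\|f_i\|'\le\|a\|'$, and translating back through $\|x\|\le\|x\|'\le C\|x\|$ only gives $\|f_i\|\le C^2\|a\|$ in the original norm. For a normalized symmetric basis whose unconditional constant exceeds $1$, neither $\|e^*_m\|\le 1$ nor the identity ``norm of a diagonal equals the sup of its entries'' is available, so the clean constant does not follow from this route; what one actually gets in general is $\|f_i\|\le C\|a\|$ with $C$ depending on the unconditional and symmetric constants of $\cE$. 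To be fair, this looseness is already present in the paper's own statement: the lemma is only ever invoked (in the proof of Lemma~\ref{LemmaSeriesSOTconvCommutator}) for normalized $1$-unconditional symmetric bases, where your direct argument is complete, and in the general case a constant $C$ would propagate harmlessly into the bound $\eps N\|a\|$ there. But as a proof of the lemma as literally stated, the renorming paragraph should be replaced either by the additional hypothesis of $1$-unconditionality or by the weaker conclusion $\|f_i\|\le C\|a\|$.
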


The next result is the version of \cite[Lemma 6.4]{SpakulaWillett2017} for our Banach lattice setting. 

\begin{lemma}\label{LemmaSeriesSOTconvCommutator}
Let $(\N,d)$ be a u.l.f.  metric space  and $(\varphi_n:\N\to [0,1])_{n\in \N}$ be a $d$-partition of unit with $(r,\eps)$-variation, for some $r,\eps>0$. Let $E$ be a Banach space with a normalized $1$-unconditional symmetric basis $\cE$   and $a\in \cL(E)$   with $\propg(a)\leq r$. Then 
\[\sum_{n\in \N}\varphi_n[\varphi_n,a]\]
converges in the strong topology to a finite propagation operator with operator norm is at most $ \eps N\|a\| $, where $N=\sup_{n\in\N}|B_r(n)|$.
\end{lemma}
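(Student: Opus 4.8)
The plan is to identify the candidate limit operator $b$ entrywise, read off from the band structure that it has propagation at most $r$, bound its coefficients via the $(r,\eps)$-variation, and only then control the operator norm by a partial-translation decomposition; strong convergence will then follow from the partial sums acting as genuine finite sums on finitely supported vectors, together with a uniform norm bound. Concretely, writing $a_{m,n}=e^*_m(ae_n)$ and viewing each $\varphi_k$ as the diagonal operator $e_n\mapsto\varphi_k(n)e_n$, I would compute $([\varphi_k,a])_{m,n}=(\varphi_k(m)-\varphi_k(n))a_{m,n}$, hence $(\varphi_k[\varphi_k,a])_{m,n}=\varphi_k(m)(\varphi_k(m)-\varphi_k(n))a_{m,n}$. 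Summing in $k$ and using the normalization $\sum_k\varphi_k(m)^2=1$, the candidate limit $b$ has entries
\[ b_{m,n}=\Big(1-\sum_{k}\varphi_k(m)\varphi_k(n)\Big)a_{m,n}. \]
Since $\propg(a)\le r$ forces $a_{m,n}=0$ whenever $d(m,n)>r$, the operator $b$ is supported on $\{(m,n):d(m,n)\le r\}$ and therefore has propagation at most $r$.

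Next I would bound the coefficient $c_{m,n}=\sum_k\varphi_k(m)(\varphi_k(m)-\varphi_k(n))$. By Cauchy--Schwarz together with $\sum_k\varphi_k(m)^2=1$, we get $|c_{m,n}|\le(\sum_k(\varphi_k(m)-\varphi_k(n))^2)^{1/2}$; for $(m,n)\in\supp(a)$, where $d(m,n)\le r$, the $(r,\eps)$-variation makes the right-hand side smaller than $\eps$. Moreover, since $\cE$ is normalized and $1$-unconditional, $|a_{m,n}|=|(ae_n)_m|\le\|ae_n\|\le\|a\|$. Thus every entry of $b$ satisfies $|b_{m,n}|\le\eps\|a\|$, with nonzero entries only over the band $\{(m,n):d(m,n)\le r\}$.

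The core quantitative step is the norm estimate $\|b\|\le\eps N\|a\|$. Here I would decompose $b$ into partial translations exactly as in Lemma \ref{LemmaDecompositionFinPropOperator}, partitioning the band into graphs of partial bijections to obtain $b=\sum_i g_iw_i$, where each $g_i\in\ell_\infty(\cE)$ collects the entries $b_{m,n}$ along one partial bijection and each $w_i$ is a partial translation. By the entry bound $\|g_i\|\le\eps\|a\|$, and because $\cE$ is $1$-symmetric each $w_i$ is contractive, so each summand has norm at most $\eps\|a\|$. The delicate point --- and the step I expect to be the main obstacle --- is that to obtain the \emph{sharp} constant $N=\sup_n|B_r(n)|$ one must partition the band into at most $N$ matchings, i.e.\ use the optimal bipartite edge-coloring (K\"onig's theorem for the locally finite bipartite graph whose edges are the pairs at distance $\le r$), rather than a crude greedy coloring that would only yield about $2N-1$ pieces. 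This is precisely the Banach-lattice replacement for the Schur test available in the Hilbert space argument of \cite[Lemma 6.4]{SpakulaWillett2017}; regularity of band operators (Proposition \ref{PropFinPropImpliesRegular}) and the estimate $|b\xi|\le|b||\xi|$ provide an equivalent route, reducing the bound to $\||b|\,|\xi|\|\le\eps N\|a\|\,\|\xi\|$ after dominating $|b|$ entrywise by $\eps\|a\|$ times the $0/1$ band matrix.

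Finally, for strong convergence I would note that for $\xi$ with finite support only finitely many $\varphi_k$ are nonzero on $\supp(\xi)\cup\supp(a\xi)$, so each partial sum $\sum_{k\le K}\varphi_k[\varphi_k,a]\xi$ is eventually equal to $b\xi$. Moreover the entry computation applies verbatim to the partial sums: the truncated coefficient still obeys $|c^{(K)}_{m,n}|\le\eps$, since both truncated sums are dominated by the full ones, so the partial sums are uniformly bounded by $\eps N\|a\|$. Convergence on the dense set of finitely supported vectors together with this uniform bound yields strong convergence of $\sum_k\varphi_k[\varphi_k,a]$ to the finite-propagation operator $b$, completing the argument.
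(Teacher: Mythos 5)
Your proof is correct, but it takes a genuinely different route from the paper's. The paper follows the Hilbert-space argument of \cite[Lemma 6.4]{SpakulaWillett2017} in its Banach-lattice incarnation: it fixes finitely supported $\xi\in E$, $\zeta\in E^*$, decomposes $a$ itself as $\sum_{i=1}^N f_iv_i$ via Lemma \ref{LemmaDecompositionFinPropOperator}, and then runs the square-function calculus --- Proposition \ref{PropHolderForLattices} together with the lattice supremum formula for $\bigl(\sum_n|\cdot|^2\bigr)^{1/2}$ --- to bound $\bigl(\sum_n|[\varphi_n,a]\xi|^2\bigr)^{1/2}$ by $\eps N\|a\|$ times a permuted copy of $|\xi|$, with $1$-symmetry controlling that permutation. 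You instead work entrywise: you identify the matrix of the limit, bound its coefficients by scalar Cauchy--Schwarz plus the $(r,\eps)$-variation, and only then convert the entry bound into an operator-norm bound by splitting the band into $N$ partial bijections, using $1$-unconditionality for the diagonal factors and $1$-symmetry for the partial translations; strong convergence follows from stabilization on finitely supported vectors together with your (correct) observation that the truncated coefficients obey the same $\eps$-bound, so all partial sums satisfy the same norm estimate. What each approach buys: yours is more elementary --- it needs no regularity, no Dedekind completeness, and no Proposition \ref{PropHolderForLattices} --- and it isolates exactly where symmetry of the basis is used; the paper's square-function method is the one that also proves the companion Lemma \ref{LemmaSeriesSOTconv}, where no smallness of entries is available and an entrywise argument via Proposition \ref{PropSubsymmetric} would only give a constant depending on the partition of unity (the number of matchings for a band of width $\sup_n\diam(\supp\varphi_n)$) rather than the universal constant $4$. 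Finally, note that both proofs rest on the same combinatorial fact, namely that the band $\{(m,n)\mid d(m,n)\leq r\}$ splits into exactly $N=\sup_n|B_r(n)|$ matchings: the paper inherits it from Lemma \ref{LemmaDecompositionFinPropOperator} (i.e.\ \cite[Lemma 2.4]{SpakulaWillett2017}), while you re-derive it from K\"onig's edge-colouring theorem in its infinite locally finite version; your point that a greedy colouring would only give roughly $2N-1$ pieces is well taken, but you could shortcut this step by citing Lemma \ref{LemmaDecompositionFinPropOperator} applied to the band matrix, since the matching argument is already contained in its proof.
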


\begin{proof}
Following the approach of Lemma \ref{LemmaSeriesSOTconv}, we only need to show that 
\[\Big|\zeta\Big(\sum_{n\in\N}\varphi_n [\varphi_n,a]\xi\Big)\Big|\leq \eps N\|a\| \|\xi\|\|\zeta\|.\]
for any $\xi\in E$ and $\zeta\in E^*$ with finite support. Proceeding as in Lemma \ref{LemmaSeriesSOTconv}, Proposition \ref{PropHolderForLattices} gives us that   
\[\Big|\zeta\Big(\sum_{n\in\N}\varphi_n [\varphi_n,a]\xi\Big)\Big|\leq \Big(\sum_{n\in\N}|\varphi_n  \zeta |^2\Big)^{1/2}\Big(\sum_{n\in\N}|[\varphi_n,a]\xi |^2\Big)^{1/2},\]
and analogous computations as in  Lemma \ref{LemmaSeriesSOTconv} imply that
\[\Big\|\Big(\sum_{n\in\N}|\varphi_n \zeta |^2\Big)^{1/2}\Big\|\leq  \|\zeta\|.\]

We now show that  $(\sum_{n\in\N}|[\varphi_n,a]\xi |^2)^{1/2}\leq \eps N\|a\||\xi|$. As  $\propg(a)\leq r$, write $a=\sum_{i=1}^Nf_iv_i$, where each $f_i$ is a   diagonal operator in $\ell_\infty(\cE)$ with norm at most $\|a\|$ and each $v_i$ is a partial translation with propagation at most $r$ (see Lemma \ref{LemmaDecompositionFinPropOperator}). Using that diagonal operators commute with each other, we have that 
 \[|[\varphi_n,a]\xi |=\Big|\sum_{i=1}^Nf_i[\varphi_n,v_i]\xi \Big|\leq \|a\|\sum_{i=1}^N|[\varphi_n,v_i]\xi |  \]
 for each $n\in\N$.  
 
Let $n\in\N$ and $i\in \{1,\ldots, N\}$. Since $v_i$ is a partial translation with propagation at most $r$, let $t_i:R_i\subset \N\to D_i\subset \N$ be a bijection so that $v_i(e_{t_i(k)})=e_{k}$ and $d(k,t_i(k))\leq r$ for all $k\in R_i$ and $v_i(e_m)=0$ for all $k\not\in D_i$. Then one can easily see that 
\[\Big([\varphi_n,v_i]\xi\Big)(k)=\left\{\begin{array}{ll}
(\varphi_n(k)-\varphi_n(t_i(k)))\xi(t_i(k)),&   \  \text{ if } \  k\in R_i  \\
0,&   \  \text{ if } \  k\not\in R_i.
\end{array}\right.\]
This gives us that  
\begin{align*}
\Big(&\sum_{n\in\N}|[\varphi_n,a]\xi |^2\Big)^{1/2}\leq \|a\|\Big(\sum_{n\in\N}\sum_{i=1}^N|[\varphi_n,v_i]\xi|^2\Big)^{1/2}\\
&=\|a\|\bigvee\Big\{   \sum_{n\in\N}\sum_{i=1}^N\beta_n\sum_{k\in R_i} (\varphi_n(k)-\varphi_n(t_i(k)))\xi(t_i(k))   \mid \sum_n\beta_n^2=1\Big\}\\
&=\|a\|\bigvee\Big\{\sum_{i=1}^N   \sum_{k\in R_i}\Big( \sum_{n\in\N}\beta_n(\varphi_n(k)-\varphi_n(t_i(k)))\Big)\xi(t_i(k))   \mid \sum_n\beta_n^2=1\Big\}.
\end{align*}
Hence, using classic H\"older's inequality just as in the proof of Lemma \ref{LemmaSeriesSOTconv} and the fact that $(\varphi_n)_n$ has $(r,\eps)$-variation, we obtain that 
\[\Big(\sum_{n\in\N}|[\varphi_n,a]\xi |^2\Big)^{1/2}\leq \eps \|a\|\sum_{i=1}^N\sum_{k\in R_i}|\xi(t_i(k))|e_k\leq \eps N\|a\||\xi|.\]
 
The estimates above gives us that $\sum_{n\in \N}\varphi_n [\varphi_n,a]$ is well defined and it has norm no greater than $\eps N\|a\|$. Since $(\varphi_n)_n$ has $(r,\eps)$-variation, it is clear that this operator has finite propagation. We leave the details to the reader.
\end{proof}

\begin{lemma}\label{LemmaApproxPosByFinProg}
Let $E$ be a Banach space with a $1$-unconditional symmetric basis $\cE$. Let $(\N,d)$ be a uniformly locally finite metric space with property A,   and for each $k\in\N$, let $(\varphi_{n,k})_n $ be a $d $-partition of unit with $(k,1/k)$-variation. For each $k\in\N$ let 
\begin{align*}
M_k:\broe^r(d,\cE) &\to \broe[d,\cE]\\
b&\mapsto\mathrm{SOT}\text{-}\sum_{n\in\N}\varphi_{n,k}b\varphi_{n,k}.
\end{align*}
Then  $b=\|\cdot\|\text{-}\lim_kM_k(b)$ for all $b\in \broe^r(d,\cE)$.
\end{lemma}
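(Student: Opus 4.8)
The plan is to reduce to band operators by a density-and-uniform-boundedness argument and then dispatch the band-operator case through a commutator estimate supplied by Lemma \ref{LemmaSeriesSOTconvCommutator}. The identity driving everything is algebraic: since property (3) of a $d$-partition of unit says $\sum_n \varphi_{n,k}^2 = \mathrm{Id}$ as a diagonal operator, for any $a$ one may write $a = \sum_n \varphi_{n,k}^2 a$ (the series converging strongly, as only finitely many terms act nontrivially on a finitely supported vector). Subtracting this from the definition of $M_k$ gives
\[
M_k(a) - a = \sum_n \varphi_{n,k} a \varphi_{n,k} - \sum_n \varphi_{n,k}^2 a = -\sum_n \varphi_{n,k}[\varphi_{n,k}, a],
\]
so that $M_k(a) - a$ is exactly the operator controlled by Lemma \ref{LemmaSeriesSOTconvCommutator}.

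First I would settle the case of a band operator $a$ with $\propg(a) \leq r$. For every $k \geq r$, the $(k,1/k)$-variation of $(\varphi_{n,k})_n$ entails $(r,1/k)$-variation, so Lemma \ref{LemmaSeriesSOTconvCommutator} applied with $\eps = 1/k$ yields
\[
\|M_k(a) - a\| = \Big\|\sum_n \varphi_{n,k}[\varphi_{n,k}, a]\Big\| \leq \frac{N}{k}\|a\|,
\]
where $N = \sup_n |B_r(n)| < \infty$ by uniform local finiteness. Hence $M_k(a) \to a$ in operator norm for every band operator $a$.

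Next I would pass to a general $b \in \broe^r(d,\cE)$. Given $\delta > 0$, the definition of $\broe^r(d,\cE)$ as the $\|\cdot\|_r$-closure of the band operators provides a band operator $a$ with $\|b - a\|_r < \delta$. Using $\|\cdot\| \leq \|\cdot\|_r$ on $\cL(E)_r$ together with the uniform bound $\|M_k\|_r \leq 4$ from Lemma \ref{LemmaSeriesSOTconv}, I would estimate, for all $k \geq \propg(a)$,
\[
\|M_k(b) - b\| \leq \|M_k(b-a)\| + \|M_k(a) - a\| + \|a-b\| \leq 4\|b-a\|_r + \frac{N}{k}\|a\| + \|a-b\|_r.
\]
Taking $\limsup_k$ leaves $\limsup_k \|M_k(b) - b\| \leq 5\delta$, and since $\delta$ was arbitrary, the desired norm convergence $b = \|\cdot\|\text{-}\lim_k M_k(b)$ follows.

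The step requiring the most care is the interface between the two norms: the maps $M_k$ are uniformly bounded only in $\|\cdot\|_r$ (Lemma \ref{LemmaSeriesSOTconv}), the target convergence is in $\|\cdot\|$, and the band-operator estimate lives in $\|\cdot\|$ (Lemma \ref{LemmaSeriesSOTconvCommutator}). Threading these together—while matching the propagation bound $r$ of the chosen band operator against the variation parameter $k$ so that the $N/k$ term genuinely vanishes—is where the argument must be assembled carefully, though no single computation is deep.
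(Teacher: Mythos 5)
Your proof is correct and takes essentially the same route as the paper's: the same commutator identity $M_k(a)-a=\sum_n\varphi_{n,k}[a,\varphi_{n,k}]$ controlled by Lemma \ref{LemmaSeriesSOTconvCommutator} for band operators, followed by the same density-plus-uniform-boundedness argument using the bound from Lemma \ref{LemmaSeriesSOTconv} and $\|\cdot\|\leq\|\cdot\|_r$. If anything you are slightly more careful than the paper, which writes $\|M_k(a-b)\|\leq\|a-b\|_r$ where Lemma \ref{LemmaSeriesSOTconv} really gives the constant $4$, and which leaves implicit the reduction from $(k,1/k)$-variation to $(\propg(a),1/k)$-variation that you spell out.
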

 
\begin{proof}
Without loss of generality,   assume $\cE$ to be normalized.  Fix $b\in \broe^r(d,\cE)$ and $\eps>0$. Pick $a\in \broe[d,\cE]$ so that $\|b-a\|_r<\eps$. 

Since $\broe[d,\cE]\subset \cL(E)_r$ (Proposition \ref{PropFinPropImpliesRegular}), $M_k(a)$ is well defined. Moreover, notice that
\begin{align*}
M_k(a)&=\sum_{n\in\N}\varphi_{n,k}a\varphi_{n,k}\\
&=\sum_{n\in\N}\varphi^{2}_{n,k}a+\sum_{n\in\N}\varphi_{n,k}[a,\varphi_{n,k}]\\
&=a+\sum_{n\in\N}\varphi_{n,k}[a,\varphi_{n,k}].
\end{align*}
By Lemma \ref{LemmaSeriesSOTconvCommutator}, the operator norm of
$\sum_{n\in\N}\varphi_{n,k}[a_i,\varphi_{n,k}]$ is bounded by $ N^2\|a\|/k$, where $N=\sup_{n\in\N}|B_{\propg(a)}(n)|$. So we can pick  $k_0\in\N$ large enough so that $\|a-M_k(a)\|<\eps$ for all $k>k_0$. Therefore, we obtain that
\begin{align*}
\|b-M_k(b)\|&\leq \|b-a\|+\|a-M_k(a)\|+\|M_k(a-b)\|\\
&\leq \eps+\eps+\|a-b\|_r\\
&\leq 3\eps
\end{align*}
for all $k>k_0$. Since $\eps>0$ was arbitrary, this finishes the proof.
\end{proof}

\begin{proof}[Proof of Theorem \ref{ThmPropertyAGhostsAreComp}]
Fix a  ghost operator $a\in \broe^r(d,\cE)$. Let $(M_k)_k$ be as in Lemma \ref{LemmaApproxPosByFinProg}. The formula of $M_k$ clearly implies that each  $M_k(a)$ is also a ghost and has  finite propagation.  Hence, by Proposition \ref{PropGhostWithFInProgAreComp}, $M_k(a)\in \mathrm M_\infty(\cE)$ for all $k\in\N$. As $a=\lim_kM_k(a)$, this shows that $a\in \mathrm M_\infty(\cE)$. In particular, $a$ is compact.
\end{proof}

\section{Rigidity for Banach algebra isomorphisms}\label{SectionRigBanAlgIso}

In this section, we prove rigidity results for Banach algebra isomorphisms. Unlike the rigidity results in  Section \ref{SectionRigidityOrder}, the results in this section only hold under some technical geometric conditions on the u.l.f. metric spaces --- on the bright side, no order preservation is needed.  We first present our results under the technical  condition that ghost idempotents belong to $\mathrm{M}_\infty(\cE)$. In Subsection \ref{SubsectionPropARig}, we apply Theorem \ref{ThmPropertyAGhostsAreComp} in order to obtain results for spaces with property A.

\subsection{Choosing the coarse equivalence}\label{SubsectionChoosingMaps}
We start by presenting the method of choosing the map which will be later shown to be a coarse equivalence between the base metric spaces of the uniform Roe algebras. This is the only step in our proof  that uses  the extra geometric condition of the space.

\begin{proposition}\label{PropSOTConvSeriesRegNew}
 Let $(\N,\partial)$ be a u.l.f. metric space and   $F$ be a Banach space with a bimonotone  basis  $\cF$. Let $(p_n)_n$ be a sequence of rank 1 operators so that $\mathrm{SOT}\text{-}\sum_{n\in\M}p_n\in \broe(\partial,\cF)$ for all $\M\subset \N$ and $\inf_n\|p_n\|>0$. Given an infinite $\M_0\subset \N$ and $\eps,\rho\in (0,1)$, there exist an infinite $\M\subset \M_0$ and a normalized block subsequence of $\cF$  so that  $\|p_n\eta_n\|\geq \rho\|p_n\|$ and 
     $\|p_m\eta_n\|< 2^{-m}\eps$ for all $m\neq n$ in $\N$.
\end{proposition}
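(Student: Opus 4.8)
The plan is to factor each rank-one operator as $p_n=x_n\otimes g_n$ (so $p_n\xi=g_n(\xi)x_n$), normalized so that $\|x_n\|=1$; then $\|p_n\|=\|g_n\|$, and $c:=\inf_n\|g_n\|>0$. First I would record two consequences of the standing hypotheses. For a fixed $\xi$ the series $\sum_n g_n(\xi)x_n$ has all of its subseries convergent (that is what $a_\M:=\mathrm{SOT}\text{-}\sum_{n\in\M}p_n\in\broe(\partial,\cF)$ means evaluated at $\xi$), hence converges unconditionally, so $\{a_\M\xi\mid\M\subseteq\N\}$ is bounded; the uniform boundedness principle then yields $C:=\sup_\M\|a_\M\|<\infty$, and in particular $\|g_n\|=\|p_n\|=\|a_{\{n\}}\|\le C$. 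Moreover, convergence of $a_\N\xi$ forces $g_n(\xi)x_n\to 0$, hence $g_n(\xi)\to 0$; thus $g_n\to 0$ in the weak$^*$ topology of $F^*$, and since the basis projection $P_N$ onto $\spann\{f_1,\dots,f_N\}$ has finite rank, $\|P_N^*g_n\|\to 0$ as $n\to\infty$ for each fixed $N$.

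The second preliminary step is to localize each $p_n$. Taking $\M=\{n\}$ shows each $p_n$ itself lies in $\broe(\partial,\cF)$, so after fixing the summable tolerances $\delta_n:=2^{-n}\eps/(2C)$ I can choose finite-propagation operators $b_n$ with $\propg(b_n)\le r_n$ and $\|p_n-b_n\|<\delta_n$. As $\cF$ may be taken normalized and bimonotone, matrix entries are controlled by the operator norm, so each $p_n=x_n\otimes g_n$ is $\delta_n$-supported within $\partial$-distance $r_n$ of the diagonal; together with $\|g_n\|\ge c$ this pins $p_n$ to a bounded $\partial$-region, and the weak$^*$-null conclusion of the first step forces those regions to escape to infinity.

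Next I would run a gliding-hump induction producing $\M=\{n_1<n_2<\cdots\}\subseteq\M_0$, disjoint enumeration-intervals $I_1<I_2<\cdots$, and normalized blocks $\eta_k$ supported in $I_k$. At stage $k$, with $\eta_1,\dots,\eta_{k-1}$ supported in $[1,N_{k-1}]$ already fixed, I first pick $n_k\in\M_0$ so large that $\|P_{N_{k-1}}^*g_{n_k}\|$ is negligible (possible by the first step). This guarantees that almost all of the norm of $g_{n_k}$ sits beyond $N_{k-1}$, so a normalized near-norming vector for $g_{n_k}$ can be found in the tail and then truncated (basis tails vanish) to a genuine block $\eta_k$ supported in a finite interval $I_k=(N_{k-1},N_k]$ with $|g_{n_k}(\eta_k)|\ge\rho\|g_{n_k}\|$; the same smallness of $\|P_{N_{k-1}}^*g_{n_k}\|$ gives $|g_{n_k}(\eta_j)|$ small for $j<k$, settling one of the two families of cross terms (the new functional on the earlier blocks).

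The main obstacle, and the point where band-dominatedness is essential, is the reverse family $\|p_m\eta_n\|=|g_m(\eta_n)|$ with $m<n$: here $g_m$ is fixed while $\eta_n$ is a far-out block, and weak$^*$-nullity says nothing about the tail of a \emph{fixed} functional (for a non-shrinking basis $g_m$ may have large tail norm). This is exactly where the individual approximations $b_m$ and the freely assigned errors $\delta_m=2^{-m}\eps/(2C)$ are used: writing $p_m\eta_n=(p_m-b_m)\eta_n+b_m\eta_n$ bounds the first term by $\delta_m$, while the $\partial$-localization of $p_m$ (hence of $b_m$), combined with a preliminary thinning of $\M_0$ arranging that the block supports lie in mutually $\partial$-separated regions — feasible since the regions escape to infinity and $(\N,\partial)$ is u.l.f. — forces $b_m\eta_n$ to vanish. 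This gives $\|p_m\eta_n\|\le \delta_m\cdot\mathrm{const}\le 2^{-m}\eps$ for all $m\ne n$, the decay $2^{-m}$ arising from the error assigned to $p_m$ rather than from any rate of weak$^*$ convergence. I expect the genuinely delicate point to be reconciling this metric ($\partial$-)separation with the requirement that the $\eta_n$ be honest blocks of $\cF$, which I would handle by interleaving the choice of the intervals $I_k$ with the $\partial$-separation throughout the induction; indexing the resulting block sequence by $\M$ then yields the statement.
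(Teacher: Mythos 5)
Your skeleton matches the paper's: factor $p_n=x_n\otimes g_n$, deduce weak$^*$-nullity of $(g_n)$ from the SOT convergence, make new functionals small on old blocks via the finite-rank projections $P_N$, norm $g_{n_k}$ by a truncated vector in the tail beyond the previous blocks, and you correctly isolate the crux: controlling $|g_m(\eta_n)|$ for a \emph{fixed} $m$ and a later, far-out block $\eta_n$, which weak$^*$-nullity cannot touch when $\cF$ is not shrinking. The gap is in your mechanism for that crux. Approximating $p_m$ by a band operator $b_m$ gives only \emph{entrywise} smallness off the band $\{(i,j):\partial(i,j)\le r_m\}$, and entrywise smallness does not ``pin $p_m$ to a bounded $\partial$-region'' in the norm sense you need: to pass from ``the coordinates of $g_m$ off a finite set $R_m$ are small'' to ``$|g_m(\eta_n)|$ is small whenever $\supp(\eta_n)\cap R_m=\emptyset$'' you must restrict $g_m$ to a coordinate set that is not an interval of the enumeration, and such restrictions are not uniformly bounded for a merely bimonotone basis ($\cF$ is not assumed unconditional in this proposition). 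Likewise ``$b_m\eta_n=0$'' is not available: a band approximant has columns $b_me_j$ supported in $B_{r_m}(j)$ but in general nonzero for \emph{every} $j$, so no $\partial$-separation of $\supp(\eta_n)$ from a bounded region kills $b_m\eta_n$; and if you try to force vanishing columns by replacing $b_m$ with $b_m\chi_{[1,N]}$, the incurred error is $\|p_m(\mathrm{Id}-\chi_{[1,N]})\|=\|g_m\circ\chi_{[N,\infty)}\|$, which is exactly the quantity whose smallness is at stake. So your bound $\|p_m\eta_n\|\le\delta_m\cdot\mathrm{const}$ for $m\ne n$ does not go through as argued.

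The paper's mechanism, which repairs your outline with minimal change, is to prove that each individual functional has norm-vanishing tails \emph{in the basis enumeration}: $\lim_N\|g_m\circ\chi_{[N,\infty)}\|=0$. This follows from the rank-one structure plus local finiteness: choose $k$ with $e_k^*(x_m)\ne 0$, so that $g_m$ is a scalar multiple of the $k$-th row $e_k^*p_m$; if $b$ is a band operator with $\propg(b)\le r$ and $\|p_m-b\|<\eps'$, then $e_k^*b\,\chi_{[N,\infty)}=0$ as soon as $N>\max B_r(k)$ (this ball is finite), whence $\|g_m\circ\chi_{[N,\infty)}\|\le \|e_k^*\|\,\eps'/|e_k^*(x_m)|$, and $\eps'$ was arbitrary; bimonotonicity keeps $\|\chi_{[N,\infty)}\|=1$ throughout. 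With this lemma in hand, the case ``$m$ old, $\eta_n$ new'' is handled simply by starting each new block beyond a threshold past which the finitely many previously chosen $g_m$'s have tail norm below $2^{-m}\eps$ --- no $\partial$-separation, no surgery on the $b_m$'s, and no unconditionality. One further caveat, which applies to your write-up as well as to a literal reading of the statement: in the opposite direction (``$m$ chosen after $\eta_n$''), the decay $2^{-m}\eps$ with $m$ the actual element of $\M$ is a moving target that weak$^*$-nullity cannot hit, since it comes with no rate; what the gliding hump yields is $2^{-k}\eps$ with $k$ the position in the enumeration of $\M$, and this summable bound is all that is used in the application (Lemma \ref{LemmaPickDelta}).
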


\begin{proof}
For each $n\in\N$, pick a unit vector $\xi_n\in F$ and $f_n\in F^*$ so that  $\|f_n\|=\|p_n\|$  and $p_n=\xi_n\otimes f_n$. Since  $\delta=\inf_n\|p_n\|>0$,  $(f_n)_n$ is semi-normalized. As  $\sum_{n\in\N}p_n$ converges in the strong operator topology, the sequence $(f_n)_n$ must be weak$^*$-null. Since $p_n\in \broe(\partial, \cF)$, each $p_n$ can be approximated arbitrarily well by band operators. Hence, it follows that \[\lim_{m\to \infty}\chi_{[m,\infty]}f_n=0\]
for each $n\in\N$. The result now follows from the bimonotonicity of the basis. 
\end{proof}

The following can be compared with \cite[Lemma 5.1]{BragaFarahVignati2019}.

\begin{lemma}\label{LemmaPickDelta}
Let $(\N,d)$ and $(\N,\partial)$ be u.l.f. metric spaces, and $E$ and $F$ be Banach spaces with $1$-unconditional bases $\cE$ and $\cF$, respectively. Let $\Phi:\broe(d,\cE)\to \broe(\partial,\cF)$ be a strongly continuous  rank preserving Banach space embedding.  Then  either 
\[\delta=\inf_{n\in\N }\sup_{m,k\in\N}\|e_{m,m}\phi(e_{n,n})e_{k,k}\|>0\]
or there exists an infinite $\M_0\subset \N$ so that $\Phi(\sum_{n\in\M}e_{n,n})$ is a ghost which does not belong to  $\mathrm{M}_\infty(\cF)$ for all infinite $\M\subset \M_0$.
\end{lemma}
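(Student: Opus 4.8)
The plan is to assume $\delta=0$ and construct $\M_0$. Fix $\kappa>0$ with $\|\Phi(a)\|\ge\kappa\|a\|$ for all $a$ (this exists since $\Phi$ is an embedding) and normalise $\cE,\cF$. As $\Phi$ is rank preserving, each $p_n:=\Phi(e_{n,n})$ has rank one, so I may write $p_n=\xi_n\otimes f_n$ with $\|\xi_n\|=1$ and $\|f_n\|=\|p_n\|\ge\kappa$; the $(m,k)$ matrix coefficient of $p_n$ is then $\xi_n(m)f_n(k)$, so
\[\sup_{m,k}\|e_{m,m}p_ne_{k,k}\|=\Big(\sup_m|\xi_n(m)|\Big)\Big(\sup_k|f_n(k)|\Big)=:\delta_n .\]
From $\delta=\inf_n\delta_n=0$ I first choose $n_1<n_2<\cdots$ with $\delta_{n_j}\le 2^{-j}$ and set $\M_1=\{n_j\}$. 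Since $\Phi$ is strongly continuous and $\chi_\M=\mathrm{SOT}\text{-}\sum_{n\in\M}e_{n,n}$, for every $\M$ we have $\Phi(\chi_\M)=\mathrm{SOT}\text{-}\sum_{n\in\M}p_n\in\broe(\partial,\cF)$; and since $\mathrm M_\infty(\cF)=\broe(\partial,\cF)\cap\cK(F)$, the assertion ``$\Phi(\chi_\M)\notin\mathrm M_\infty(\cF)$'' is exactly non-compactness of $\Phi(\chi_\M)$.

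For the ghost property I fix an infinite $\M\subseteq\M_1$ and compute, evaluating the strongly convergent series at $e_k$ and applying $e_m^*$,
\[e_m^*\big(\Phi(\chi_\M)e_k\big)=\sum_{n\in\M}\xi_n(m)f_n(k),\qquad\text{so}\qquad\big|e_m^*\big(\Phi(\chi_\M)e_k\big)\big|\le\sum_{n\in\M}|\xi_n(m)|\,|f_n(k)| .\]
Given $\eps>0$, summability of $(2^{-j})_j$ provides a finite $\M'\subseteq\M$ with $\sum_{n\in\M\setminus\M'}\delta_n<\eps/2$, which bounds the tail uniformly in $m,k$; for the finitely many $n\in\M'$ one has $|f_n(k)|\le\|\Phi\|$ while $|\xi_n(m)|\to 0$ as $m\to\infty$ because $\xi_n\in F$. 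Hence there is $N$ with $|e_m^*(\Phi(\chi_\M)e_k)|<\eps$ whenever $m\ge N$, for all $k$; in particular $\Phi(\chi_\M)$ is a ghost. This step uses only summability of the $\delta_n$ and is uniform over all infinite $\M\subseteq\M_1$.

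The substance is non-compactness. Here I use that, since $\cF$ is bimonotone, a compact $a\in\cL(F)$ satisfies $\|\chi_{[N,\infty)}a\|\to 0$ (because $\chi_{[N,\infty)}\to 0$ strongly and strong convergence is uniform on the relatively compact set $a(B_F)$); so it suffices to exhibit, for each $N$, a unit vector whose image under $\Phi(\chi_\M)$ has tail of norm bounded below. The mechanism is range escape: if $\|\chi_{[1,N]}\xi_n\|\to 0$ for each fixed $N$, then $\|\chi_{[N,\infty)}\xi_n\|\to 1$. Applying Proposition \ref{PropSOTConvSeriesRegNew} to $(p_n)_n$ with $\rho=\tfrac12$ and a small $\eps$ yields (after passing to an infinite $\M_0\subseteq\M_1$, preserving everything above) a normalised block sequence $(\eta_n)$ with $\|p_n\eta_n\|\ge\tfrac\kappa2$ and $\sum_{m\ne n}\|p_m\eta_n\|<\eps$, whence $\|\Phi(\chi_\M)\eta_n-p_n\eta_n\|<\eps$ and $p_n\eta_n=f_n(\eta_n)\xi_n$ with $|f_n(\eta_n)|\ge\kappa/2$. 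Then for each $N$, choosing $n\in\M$ large with $\|\chi_{[N,\infty)}\xi_n\|\ge\tfrac12$ gives
\[\big\|\chi_{[N,\infty)}\Phi(\chi_\M)\big\|\ge\big\|\chi_{[N,\infty)}\Phi(\chi_\M)\eta_n\big\|\ge\frac{\kappa}{2}\cdot\frac12-\eps>0 ,\]
uniformly in $N$, so $\Phi(\chi_\M)$ is non-compact; and this holds for every infinite $\M\subseteq\M_0$, since only the existence of arbitrarily large $n\in\M$ is used.

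It remains to arrange that the ranges escape. Passing to a subsequence, $(\xi_n)$ either is separated, i.e. $\inf_{n\ne m}\|\xi_n-\xi_m\|>0$, or converges in norm to some $\xi$. In the separated case range escape holds automatically: were $\|\chi_{[N_0,\infty)}\xi_n\|$ small for all large $n$, the heads $\chi_{[1,N_0]}\xi_n$ would lie in a fixed finite-dimensional space and cluster, contradicting separation; so the argument above applies. \textbf{The main obstacle is the convergent case $\xi_n\to\xi$}, where the ranges collapse onto a single direction and the left-hand compactness criterion detects nothing — indeed a direct computation shows that on a sufficiently sparse subsequence $\Phi(\chi_\M)$ would then be compact. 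The plan to rule this case out is to invoke the \emph{global} boundedness below of $\Phi$: since $\|\sum_{n\in\M}\alpha_np_n\|=\|\Phi(\sum_{n\in\M}\alpha_ne_{n,n})\|\ge\kappa\sup_n|\alpha_n|$ for every bounded scalar sequence, $\xi_n\to\xi$ together with $\|f_n\|\ge\kappa$ forces $(f_n)$ to be equivalent to the unit vector basis of $c_0$, which is incompatible with the strong convergence of $\mathrm{SOT}\text{-}\sum_{n\in\M}p_n$ coming from band-domination. Making this incompatibility precise — equivalently, showing that a non-surjective embedding still reflects enough of the ideal $\mathrm M_\infty$ even when $\cF$ is not shrinking — is the crux of the proof.
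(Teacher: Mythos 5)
Your reduction, your use of Proposition \ref{PropSOTConvSeriesRegNew}, and your ghost argument all match the paper's proof, and the ghost half is correct as written. The genuine gap is exactly where you flag it: non-membership in $\mathrm M_\infty(\cF)$. By identifying $\mathrm M_\infty(\cF)$-membership with compactness and then testing compactness with the \emph{left}-sided criterion $\|\chi_{[N,\infty)}a\|\to 0$, you force yourself to need mass of the image vectors $\xi_n$ far out in the basis (``range escape''), and the convergent case $\xi_n\to\xi$ defeats this --- you acknowledge you cannot close it, and your proposed fix (a $c_0$-incompatibility for $(f_n)$) is not carried out. Note also that even your separated case has a quantitative slip: separation alone cannot produce tail mass $\ge\tfrac12$ (take $\xi_n=\cos\theta\,e_1+\sin\theta\,e_{n+1}$ with $\theta$ small, which is separated but has tails of norm $\sin\theta$); the threshold must depend on the separation constant.

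The paper's proof avoids all of this by never mentioning compactness and never using the vectors $\xi_n$. The point is that $\mathrm M_\infty(\cF)$ has a \emph{right}-sided structure: every $a\in \mathrm M_m(\cF)$ satisfies $a\chi_{(m,\infty)}=0$, so $a$ annihilates every vector supported past $m$. The block vectors $\eta_n$ you already extracted from Proposition \ref{PropSOTConvSeriesRegNew} satisfy
\[\big\|\Phi(\chi_\M)\eta_n\big\|\ \ge\ \|p_n\eta_n\|-\sum_{i\in\M\setminus\{n\}}\|p_i\eta_n\|\ \ge\ \frac{\kappa}{2}-\eps,\]
an estimate that uses only the functionals $f_n$ (since $\|p_i\eta_n\|=|f_i(\eta_n)|$), not the location of the $\xi_i$. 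Since $(\eta_n)$ is a block sequence, for any fixed $m$ and any $a\in\mathrm M_m(\cF)$ one has $a\eta_n=0$ for all large $n$, hence
\[\|\Phi(\chi_\M)-a\|\ \ge\ \big\|(\Phi(\chi_\M)-a)\eta_n\big\|\ =\ \big\|\Phi(\chi_\M)\eta_n\big\|\ \ge\ \frac{\kappa}{2}-\eps,\]
so $\operatorname{dist}\big(\Phi(\chi_\M),\mathrm M_\infty(\cF)\big)>0$ for every infinite $\M\subset\M_0$, with no case distinction. With this criterion the dichotomy separated/convergent never arises; incidentally, your observation that the convergent case would make $\Phi(\chi_\M)$ compact, combined with the correct argument and the identity $\mathrm M_\infty(\cF)=\broe(\partial,\cF)\cap\cK(F)$, shows that case is in fact vacuous --- but nothing in the proof requires knowing that.
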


\begin{proof}
Suppose $\delta=0$ and let us show that the second condition must hold. Since $\delta=0$,   pick a sequence $(x_n)_n$ in $\N$ so that $\|e_{m,m}\phi(e_{x_n,x_n})e_{k,k}\|<2^{-n}$ for all $m,k\in\N$. Since $\Phi$ is injective, by going to a subsequence, we can assume that $(x_n)_n$ is a sequence of distinct natural numbers. 

\begin{claim}
There exists  an infinite $\M\subset \N$ so that if $p=\Phi(\sum_{n\in\M}e_{x_n,x_n})$, then for all $\rho\in (0,1)$ and all $m\in\N$, there exists a unit vector $\xi\in \overline{\spann}\{f_i\mid i> m\}$ so that $\|p\xi\|\geq \rho \|\Phi^{-1}\|^{-1}$.
\end{claim}

\begin{proof} Fix $\eps>0$. By Proposition \ref{PropSOTConvSeriesRegNew},   there exists an infinite $\M\subset \N$ and a sequence $(\xi_n)_n$ of unit vectors so that $\xi_n\in \overline{\spann}\{f_i\mid i> n\}$, $\|\Phi(e_{x_n,x_n})\xi_n\|\geq \rho \|\Phi^{-1}\|^{-1}$ and $\|\Phi(e_{x_i,x_i})\xi_n\|<\eps \rho 2^{-i}\|\Phi^{-1}\|^{-1}$ for all $n\in\N$ and all $i\neq n$. Hence, since  $p=\mathrm{SOT}\text{-}\sum_{n\in\M}\Phi(e_{x_n,x_n})$, we have that
\[\|p\xi_n\|\geq  \|\Phi(e_{x_n,x_n})\xi_n\|-\sum_{i\in \M\setminus\{n\}}\|\Phi(e_{x_i,x_i})\xi_n\|\geq \rho(1-\eps)\|\Phi^{-1}\|^{-1},\]
and the claim follows.
\end{proof}

Let $\M_0$ be given by the previous claim, fix an infinite $\M\subset \M_0$  and set  \[p=\Phi\Big(\sum_{n\in\M}e_{x_n,x_n}\Big).\]  For any $m\in\N$, $\rho\in (0,1)$ and $a\in \mathrm{M}_m(\cE)$, the previous claim gives   a unit vector $\xi\in \overline{\spann}\{f_i\mid i> m\}$ so that $\|p\xi\|\geq \rho \|\Phi^{-1}\|^{-1}$. Therefore,
\[\|(p-a)\xi\|=\|p\xi\|\geq   \rho\|\Phi^{-1}\|^{-1},\]
so $p$  is at least $\|\Phi^{-1}\|^{-1}$ apart from  $\mathrm{M}_\infty(\cF)$.

\begin{claim}
The operator $p$ is a ghost. 
\end{claim}

\begin{proof}
Let $\eps>0$. Pick $n_0\in\N$ so that $2^{n_0}\leq \eps/2$. Since each $\Phi(e_{x_n,x_n})$ is a ghost, fix a finite $A\subset \N$ so that $\|e_{m,m}\phi(e_{x_n,x_n})e_{k,k}\|<\eps/(2n_0)$ for all $m,k\not\in A$ and all $n\leq n_0$. Then, using again that  $p=\mathrm{SOT}\text{-}\sum_{n\in\M}\Phi(e_{x_n,x_n})$, we have that
\[\|e_{m,m}pe_{k,k}\|\leq \sum_{i=1}^{n_0}\|e_{m,m}\phi(e_{x_n,x_n})e_{k,k}\|+\sum_{i=n_0+1}^{\infty}\|e_{m,m}\phi(e_{x_n,x_n})e_{k,k}\|\leq \eps\]
for all $m,k\not\in A$, and the claim is proved.
\end{proof}
This concludes our proof.
\end{proof}

Let   $B\subset A$ be Banach algebras.
Inspired by \cstar-algebra theory, we say that   $B$ is a \emph{hereditary subalgebra of $A$} if  $BAB \subset B$. The following was proved in \cite[Lemma 3.11]{BragaVignati2019} for $\cE$ being the standard basis of $\ell_p$.  Since the exactly same proof holds for any basis,  we omit its proof here. 
 
\begin{lemma}\label{LemmaPhiIsStrongContAndRankPres}
Let $(\N,d)$ and $(\N,\partial)$ be u.l.f. metric spaces, and $E$ and $F$ be Banach spaces with bases $\cE$ and $\cF$, respectively. If $\Phi:\broe(d,\cE)\to \broe(\partial,\cF)$ is a Banach algebra embedding onto a hereditary Banach subalgebra of $\broe(\partial,\cF)$, then  there exists a surjective isomorphism $U: E\to \mathrm{Im}(\Phi(1))$ so that 
\[\Phi(a)=UaU^{-1}\Phi(1)\]
for all $a\in \broe(d,\cE)$. Moreover, if $\Phi$ is an isometry, so is $U$. In particular,  $\Phi$ is  strongly continuous and rank preserving.
\end{lemma}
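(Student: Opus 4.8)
The plan is to recover a spatial implementation of $\Phi$ from the images of the matrix units. Write $q=\Phi(\mathrm{Id}_E)$; since $\Phi$ is multiplicative and $\mathrm{Id}_E$ is idempotent, $q$ is an idempotent and is the unit of the range $B:=\Phi(\broe(d,\cE))$ (indeed $q\Phi(a)=\Phi(a)=\Phi(a)q$ for all $a$). The hereditary hypothesis is used at once: taking both outer factors in $B\,\broe(\partial,\cF)\,B\subset B$ to be $q$ gives $q\,\broe(\partial,\cF)\,q\subset B$, while $b=qbq$ for every $b\in B$ gives the reverse inclusion, so $\Phi$ is a Banach algebra isomorphism of $\broe(d,\cE)$ onto the corner $q\,\broe(\partial,\cF)\,q$, and $\mathrm{Im}(q)=qF$ is a complemented subspace of $F$. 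Because each $e_{n,m}\in\broe(d,\cE)$, the operators $p_{n,m}:=\Phi(e_{n,m})$ form a system of matrix units in $B$ (they obey the same multiplicative relations, $\Phi$ being a homomorphism); in particular the $p_{n,n}$ are mutually orthogonal idempotents and $p_{n,n}p_{1,n}=p_{1,n}$.

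First I would build the intertwiner on the level of vectors and functionals. Fix a nonzero $w_1\in\mathrm{Im}(p_{1,1})$ (possible since $\Phi$ is injective, so $p_{1,1}\neq 0$) and set $w_n:=p_{1,n}w_1\in\mathrm{Im}(p_{n,n})$; a direct matrix-unit computation gives $p_{n,m}w_k=\delta_{kn}w_m$. Dually, choose $g\in F^*$ with $p_{1,1}^*g=g$ and $g(w_1)=1$ (Hahn--Banach inside the corner) and set $g_n:=g\circ p_{n,1}$; the same computation yields the biorthogonality $g_n(w_m)=\delta_{nm}$. Now define $Ue_n:=w_n$ and extend linearly. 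For a finitely supported $x=\sum_n\alpha_n e_n$ one has $Ux=\Phi(x\otimes e_1^*)w_1$, where $x\otimes e_1^*=\sum_n\alpha_n e_{1,n}$ is a finite-rank, finite-propagation operator with $\|x\otimes e_1^*\|\le\|x\|\,\|e_1^*\|$; hence $\|Ux\|\le\|\Phi\|\,\|e_1^*\|\,\|w_1\|\,\|x\|$, so $U$ extends to a bounded operator $E\to qF$. By construction $\Phi(e_{n,m})U=Ue_{n,m}$ on basis vectors, and by linearity, density of the finite-propagation operators, and continuity, $\Phi(a)U=Ua$ for every $a\in\broe(d,\cE)$.

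The main obstacle is the invertibility of $U$ onto $\mathrm{Im}(q)$, and here the hereditary hypothesis pays off a second time: applying the identical construction to the inverse isomorphism $\Phi^{-1}\colon q\,\broe(\partial,\cF)\,q\to\broe(d,\cE)$ (which sends $q$ to $\mathrm{Id}_E$) produces a bounded operator $U'\colon qF\to E$ with $\Phi^{-1}(b)U'=U'b$ for all $b$ in the corner. Then $U'U\in\cL(E)$ satisfies $U'Ua=U'\Phi(a)U=\Phi^{-1}(\Phi(a))U'U=aU'U$, so $U'U$ lies in the commutant of all $e_{n,m}$; commuting with the $e_{n,n}$ forces $U'U$ to be diagonal, and commuting with the $e_{1,n}$ forces its diagonal entries to coincide, whence $U'U=\lambda\,\mathrm{Id}_E$. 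Symmetrically $UU'$ acts as a scalar $\mu$ on $\mathrm{Im}(q)$, and comparing $U(U'U)=(UU')U$ gives $\lambda=\mu\neq 0$; thus $U$ is an isomorphism of $E$ onto $\mathrm{Im}(q)$ with inverse $\lambda^{-1}U'$, and the identity $\Phi(a)=UaU^{-1}q$ follows from $\Phi(a)U=Ua$. Finally, this formula makes strong continuity and rank preservation immediate, since $U$ is an isomorphism and $a\mapsto UaU^{-1}q$ is strongly continuous on bounded sets. I expect the two delicate points to be precisely the invertibility argument just sketched and, in the isometric case, the extra bookkeeping: there one chooses $w_1$ and $g$ to be norming and tracks the constants in $\|Ux\|=\|\Phi(x\otimes e_1^*)w_1\|$ to upgrade $U$ to an isometry, while boundedness and the intertwining relation remain routine.
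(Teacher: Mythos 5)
Your construction of $U$ is correct and is essentially the one in the proof the paper defers to (\cite[Lemma 3.11]{BragaVignati2019}): the matrix units $p_{n,m}=\Phi(e_{n,m})$, the vectors $w_n=p_{1,n}w_1$, the biorthogonal functionals $g_n$, and the formula $Ux=\Phi(x\otimes e_1^*)w_1$ with its norm bound all check out. One slip even in this half: your justification of the intertwining (``by linearity, density of the finite-propagation operators, and continuity'') has a hole, because $\spann\{e_{n,m}\}$ is \emph{not} norm-dense in the set of finite propagation operators (the identity is at distance $1$ from every finite-rank operator), so the relation $\Phi(e_{n,m})U=Ue_{n,m}$ does not pass to general $a$ by density. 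The repair is to observe that your formula holds for \emph{every} $y\in E$, i.e.\ $Uy=\Phi(y\otimes e_1^*)w_1$ (since $y\otimes e_1^*=\lim_k (P_ky)\otimes e_1^*$ lies in $\broe(\partial,\cF)$... in $\broe(d,\cE)$), and then $\Phi(a)Uy=\Phi(a(y\otimes e_1^*))w_1=\Phi((ay)\otimes e_1^*)w_1=U(ay)$ directly, using only multiplicativity and norm continuity of $\Phi$.

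The genuine gap is the invertibility step. ``Applying the identical construction to $\Phi^{-1}$'' is not available: the forward construction used, essentially, that the domain algebra $\broe(d,\cE)$ visibly contains the rank-one operators $x\otimes e_1^*$ for a dense set of vectors $x$. The domain of $\Phi^{-1}$ is the corner $B=q\,\broe(\partial,\cF)\,q$ acting on $qF$, which is \emph{not} presented as a uniform Roe algebra over a based space; the only vectors $\eta\in qF$ for which you know a priori that $\eta\otimes g\in B$ are those in $\spann\{w_n\}$ (these give combinations of the $p_{1,n}$), and the density of $\spann\{w_n\}$ in $qF$ is exactly the surjectivity of $U$ you are trying to prove --- the argument is circular. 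The same missing ingredient undermines ``$UU'$ acts as a scalar $\mu$ on $\mathrm{Im}(q)$'': that needs the commutant of $B$ acting on $qF$ to be trivial, which again requires $B$ to contain enough rank-one operators (note you never establish that the $p_{n,n}$ have rank one). The fix uses heredity in a more substantive way, and it salvages your $U'$ strategy: fix a \emph{finitely supported} $h\in F^*$ with $h(w_1)=1$ (possible since $w_1\neq 0$). Then for every $\eta\in F$ one has $\eta\otimes h=\lim_k(P_k\eta)\otimes h\in\broe(\partial,\cF)$, because each $(P_k\eta)\otimes h$ has finite rank and finite support, hence finite propagation; so by heredity $q(\eta\otimes h)q\in B$. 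Now define $U'\eta:=\Phi^{-1}\bigl(q(\eta\otimes h)q\bigr)e_1$. This is bounded and satisfies $U'b=\Phi^{-1}(b)U'$ for $b\in B$; a matrix-unit computation (using $h\circ p_{1,1}=g$, so that $q(w_1\otimes h)q\,p_{1,1}=p_{1,1}$) gives $U'Ue_n=e_n$, i.e.\ $U'U=\mathrm{Id}_E$, and for $\eta\in\mathrm{Im}(q)$ the intertwining gives $UU'\eta=q(\eta\otimes h)q\,w_1=\eta$, i.e.\ $UU'=\mathrm{Id}$ on $\mathrm{Im}(q)$. Hence $U$ is an isomorphism of $E$ onto $\mathrm{Im}(q)$ with inverse $U'$, and $\Phi(a)=UaU^{-1}\Phi(1)$ follows as you say. (For the isometric case, the identity $\Phi(x\otimes e_1^*)=(Ux)\otimes g$ gives $\|Ux\|\,\|g\|=\|x\|\,\|e_1^*\|$, so $U$ becomes an isometry after rescaling $w_1$ so that $\|g\|=\|e_1^*\|$.)
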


The next corollary spells out our technical geometric condition: \emph{all   ghost idempotents in the uniform Roe algebra $\broe(\partial,\cE)$  belong to  $\mathrm M_\infty(\cE)$}.

\begin{corollary}\label{CorPickDelta2}
Let $(\N,d)$ and $(\N,\partial)$ be metric spaces and $E$ and $F$ be Banach spaces with $1$-unconditional bases $\cE$ and $\cF$, respectively.  Let $\Phi:\broe(d,\cE)\to \broe(\partial,\cF)$ be a Banach algebra embedding onto a hereditary subalgebra of $\broe(\partial,\cF)$.  If  all   ghost idempotents in $\broe(\partial,\cF)$ belong to $\mathrm M_\infty(\cF)$, then  
\[\delta=\inf_{n\in\N }\sup_{m,k\in\N}\|e_{m,m}\phi(e_{n,n})e_{k,k}\|>0.\]
\end{corollary}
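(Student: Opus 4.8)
The plan is to chain together the two preceding lemmas and then use the multiplicativity of $\Phi$ to kill the bad alternative. First I would observe that the hypotheses of this corollary are exactly set up to feed into Lemma \ref{LemmaPhiIsStrongContAndRankPres}: $\Phi$ is a Banach algebra embedding onto a hereditary subalgebra of $\broe(\partial,\cF)$, so that lemma guarantees $\Phi$ is strongly continuous and rank preserving. In particular $\Phi$ is a strongly continuous, rank preserving Banach space embedding, which is precisely the running assumption of Lemma \ref{LemmaPickDelta}.

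Next I would invoke Lemma \ref{LemmaPickDelta}, which presents the dichotomy: either the desired inequality $\delta>0$ holds, or there is an infinite $\M_0\subset\N$ such that $\Phi\big(\sum_{n\in\M}e_{n,n}\big)$ is a ghost lying outside $\mathrm{M}_\infty(\cF)$ for \emph{every} infinite $\M\subset\M_0$. The whole task then reduces to ruling out the second branch. To that end I would argue by contradiction: assume $\delta=0$, fix such an $\M_0$, and take any infinite $\M\subset\M_0$ (for instance $\M=\M_0$), setting $p=\Phi\big(\sum_{n\in\M}e_{n,n}\big)$.

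The key structural point I would exploit is that $\sum_{n\in\M}e_{n,n}=\chi_\M$ is a diagonal $0$--$1$ matrix, hence an idempotent in $\broe(d,\cE)$. Since $\Phi$ is a Banach algebra homomorphism, $p^2=\Phi(\chi_\M)^2=\Phi(\chi_\M^2)=\Phi(\chi_\M)=p$, so $p$ is itself an idempotent. But the second branch of Lemma \ref{LemmaPickDelta} asserts that $p$ is a ghost not belonging to $\mathrm{M}_\infty(\cF)$, so $p$ is a ghost idempotent outside $\mathrm{M}_\infty(\cF)$, directly contradicting the standing hypothesis that all ghost idempotents in $\broe(\partial,\cF)$ belong to $\mathrm{M}_\infty(\cF)$. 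Hence the second branch is impossible and $\delta>0$. I do not expect a genuine obstacle here: the substantive work is already carried out in Lemmas \ref{LemmaPhiIsStrongContAndRankPres} and \ref{LemmaPickDelta}, and the only thing to verify in the corollary itself is the elementary fact that multiplicativity of $\Phi$ turns the projection $\chi_\M$ into an idempotent, so that the geometric hypothesis on ghost idempotents can be applied.
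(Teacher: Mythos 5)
Your proposal is correct and is exactly the argument the paper intends: the paper's proof of this corollary is a one-line citation of Lemma \ref{LemmaPickDelta} and Lemma \ref{LemmaPhiIsStrongContAndRankPres}, leaving implicit precisely the step you spell out. Namely, Lemma \ref{LemmaPhiIsStrongContAndRankPres} puts $\Phi$ under the hypotheses of Lemma \ref{LemmaPickDelta}, and multiplicativity of $\Phi$ makes $\Phi\big(\sum_{n\in\M}e_{n,n}\big)$ an idempotent, so the second branch of the dichotomy would produce a non-compact ghost idempotent in $\broe(\partial,\cF)$, contradicting the standing hypothesis and forcing $\delta>0$.
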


\begin{proof}
This follows from Lemma \ref{LemmaPickDelta} and  Lemma \ref{LemmaPhiIsStrongContAndRankPres}.  
\end{proof}

\subsection{Coarse-like maps}\label{SubsectionCoarseLike} Coarse-like maps between uniform Roe algebras were introduced in \cite{BragaFarah2018,BragaFarahVignati2018} for $\ell_2$. The next definition generalizes it to our setting (cf. Definition \ref{DefiStrongCoarseLike}).

\begin{definition}\label{DefiCoarseLike}
Let $(\N,d)$ and $(\N,\partial)$ be u.l.f. metric spaces, and let $E$ and $F$ be Banach spaces with bases $\cE$ and $\cF$,  respectively. 
\begin{enumerate}
    \item Let $\eps,r>0$. An operator $a\in \broe(d, \cE)$ is \emph{$\eps$-$r$-approximable} if there exists $b\in \broe(X,\cE)$ with $\propg(b)\leq r$ and $\|a-b\|\leq\eps$.
    \item A map $\Phi:\broe(d,\cE)\to \broe(\partial,\cF)$ is \emph{coarse-like} if for all $\eps>0$ and all $r>0$, there exists $s>0$ so that  $\Phi(a)$ is $\eps$-$s$-approximable for all $a\in \broe(d,\N)$ with $\|a\|\leq 1$ and $\propg(a)\leq r$.
\end{enumerate}
\end{definition}

The following follows straightforwardly from the definition of coarse-like maps (cf. \cite[Lemma 3.2]{BragaVignati2019}). 

\begin{lemma}\label{LemmaCloseMapsAndUnfFiniteToOne}
Let $(\N,d)$ and $(\N,\partial)$ be metric spaces, and $E$ and $F$ be Banach spaces with   bases $\cE$ and $\cF$, respectively. Let $\Phi:\broe(d,\cE)\to \broe(\partial,\cF)$ be a coarse-like map,   let $f,g:\N\to \N$ be functions and $\delta>0$. If \[\|e_{f(n),f(n)}\Phi(e_{n,n})e_{g(n),g(n)}\|\geq \delta\] for all $n\in\N$, then $f$ and $g$ are close.\qed
\end{lemma}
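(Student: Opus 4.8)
The plan is to extract a single, $n$-independent bound $s$ with $\partial(f(n),g(n))\le s$ for all $n$ directly from the coarse-like constant attached to the propagation-zero operators $e_{n,n}$; this is exactly what ``close'' demands. As a preliminary normalization I would renorm $E$ and $F$ to equivalent norms so that $\cE$ and $\cF$ become normalized and bimonotone. This changes neither $\broe(d,\cE)$ and $\broe(\partial,\cF)$ nor whether $\Phi$ is coarse-like (the propagation filtration is untouched and an equivalent renorming merely rescales the $\eps$ in the definition of $\eps$-$s$-approximability), and it only replaces $\delta$ by a possibly smaller but still strictly positive constant. Under this normalization every coordinate projection $e_{m,m}$ acts as a contraction, which is all that the estimates below require; in general one simply carries the basis constants of $\cE$ and $\cF$ through.

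First I would unwind the corner operators. For any $T\in\cL(F)$ and any indices one has
\[
e_{f(n),f(n)}\,T\,e_{g(n),g(n)}=e^*_{f(n)}(Te_{g(n)})\cdot\big(e_{f(n)}\otimes e^*_{g(n)}\big),
\]
so, since $\cF$ is normalized and bimonotone, the norm of this rank-one operator equals $|e^*_{f(n)}(Te_{g(n)})|$. Consequently the hypothesis states precisely that $|e^*_{f(n)}(\Phi(e_{n,n})e_{g(n)})|\geq\delta$ for every $n$.

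Next I would invoke coarse-likeness. Each $e_{n,n}$ has $\|e_{n,n}\|=1$ and $\propg(e_{n,n})=0$, so applying Definition \ref{DefiCoarseLike} with $\eps=\delta/2$ and $r=1$ yields one $s>0$ such that each $\Phi(e_{n,n})$ is $(\delta/2)$-$s$-approximable: there is $b_n\in\broe(\partial,\cF)$ with $\propg(b_n)\leq s$ and $\|\Phi(e_{n,n})-b_n\|\leq\delta/2$. Since the coordinate functionals and vectors are contractive, the preceding computation gives
\[
|e^*_{f(n)}(b_ne_{g(n)})|\geq|e^*_{f(n)}(\Phi(e_{n,n})e_{g(n)})|-\|\Phi(e_{n,n})-b_n\|\geq\delta-\tfrac{\delta}{2}=\tfrac{\delta}{2}>0 .
\]
Thus the $(f(n),g(n))$-entry of $b_n$ is nonzero, and because $\propg(b_n)\leq s$ the definition of propagation forces $\partial(g(n),f(n))\leq s$. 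As $s$ is independent of $n$, this yields $\sup_n\partial(f(n),g(n))\leq s<\infty$, i.e. $f$ and $g$ are close.

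I do not expect a genuinely hard step: the whole content is that finite propagation of the approximant $b_n$ transfers a nonvanishing matrix entry into a metric bound on its indices. The only point needing care is the bookkeeping of constants, ensuring that a single $\eps$ (hence a single $s$) serves all $n$ simultaneously; this is exactly where the \emph{uniformity in $n$} of the hypothesis (one $\delta$ for all $n$) is essential, upgrading ``each $\partial(f(n),g(n))$ is finite'' to a uniform bound. If one prefers not to renorm, the identical argument goes through after rescaling $\eps$ by the product of the basis constants of $\cE$ and $\cF$.
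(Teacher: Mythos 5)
Your proof is correct and is precisely the ``straightforward'' argument the paper intends: the paper states this lemma without proof (deferring to \cite[Lemma 3.2]{BragaVignati2019}), and the intended argument is exactly yours --- apply coarse-likeness with $\eps=\delta/2$ to the norm-one, propagation-zero operators $e_{n,n}$ to obtain a single $s$, then use the nonvanishing $(f(n),g(n))$-entry of the finite-propagation approximant $b_n$ to force $\partial(f(n),g(n))\le s$. The only point to keep in mind is that your renorming step (and the no-renorming alternative) tacitly uses linearity of $\Phi$ to transfer coarse-likeness, since Definition \ref{DefiCoarseLike} only constrains $\Phi$ on contractions; this is harmless here, as every map the paper feeds into this lemma is linear.
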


\begin{theorem}\label{ThmCoarseLike}
Let $(\N,d)$ and $(\N,\partial)$ be u.l.f. metric spaces, and let $E$ and $F$ be Banach spaces with unconditional bases $\cE$ and $\cF$, respectively. If   $\cE$ is $d$-symmetric, then every strongly continuous rank preserving linear map $\Phi:\broe(d,\cE)\to \broe(\partial, \cF)$ is coarse-like. 
\end{theorem}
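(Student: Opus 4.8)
The plan is to argue by contradiction, after first reducing, on the source side, to the case of partial translations. Fix $\eps>0$ and $r>0$. Applying Lemma~\ref{LemmaULFPartition} to the $r$-band $\{(n,m)\mid d(n,m)\le r\}$ and using that $\cE$ is unconditional, any $a\in\broe(d,\cE)$ with $\propg(a)\le r$ splits as a sum of boundedly many operators, each supported on the graph of a single partial bijection $\sigma$ with $d(n,\sigma(n))\le r$; such an operator is a weighted partial translation $\sum_n c_n e_{n,\sigma(n)}$. Since $\cE$ is $d$-symmetric, Proposition~\ref{PropSubsymmetric} bounds the operator norm of any propagation-$\le r$ matrix by a constant times the supremum of its entries; hence, partitioning the closed unit disc into finitely many small pieces, I can approximate the weights $(c_n)_n$ by finitely-valued ones and thereby write each such operator, up to a norm error that I can make as small as I like, as a finite linear combination $\sum_j c_j v_j$ of genuine partial translations $v_j$ with $d(n,\sigma_j(n))\le r$. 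As $\Phi$ is linear and norm-bounded (which holds in all the applications; cf.\ Lemma~\ref{LemmaPhiIsStrongContAndRankPres}), it therefore suffices to prove the following uniform statement for partial translations: for every $\eps''>0$ there is $s>0$ so that $\Phi(v)$ is $\eps''$-$s$-approximable for every partial translation $v$ with $d(n,\sigma(n))\le r$.

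Suppose this fails. Then there are $\eps_0>0$ and, for each $\ell\in\N$, a partial translation $v_\ell$ with $d(n,\sigma_\ell(n))\le r$ such that $\Phi(v_\ell)$ is not $\eps_0$-$\ell$-approximable. Since $\Phi$ maps into $\broe(\partial,\cF)$, each $\Phi(v_\ell)$ is itself a norm-limit of finite-propagation operators, so the failure is purely one of uniformity: the minimal propagation needed to approximate $\Phi(v_\ell)$ within $\eps_0$ tends to infinity with $\ell$. Writing $v_\ell=\mathrm{SOT}\text{-}\sum_n e_{n,\sigma_\ell(n)}$ and using strong continuity, $\Phi(v_\ell)=\mathrm{SOT}\text{-}\sum_n\Phi(e_{n,\sigma_\ell(n)})$ is an $\mathrm{SOT}$-convergent sum of rank-one operators (rank preservation). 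The key extraction step is to convert the non-approximability of $\Phi(v_\ell)$ into a localized witness: I claim one can find a finite set $S_\ell\subset\dom\sigma_\ell$ and finite sets $F_\ell,G_\ell\subset\N$ with $\partial(F_\ell,G_\ell)>\ell$ such that $\norm{\chi_{F_\ell}\Phi(v_\ell\chi_{S_\ell})\chi_{G_\ell}}\ge\eps_0/C$, where $C$ depends only on the unconditional constant of $\cF$. Indeed, if every rank-one summand $\Phi(e_{n,\sigma_\ell(n)})$ were supported within target-distance $\ell$ of the diagonal, then $\Phi(v_\ell)$ would be $\eps_0$-$C'\ell$-approximable; so some finite partial sum must carry non-negligible mass between two sets at distance exceeding $\ell$, and the corner $\chi_{F_\ell}(\cdot)\chi_{G_\ell}$ isolates it. Each such corner is finite-rank, so by the $\mathrm{SOT}$-to-norm continuity of $T\mapsto\chi_{F_\ell}T\chi_{G_\ell}$ it is stable under replacing $v_\ell$ by a large enough finite restriction $v_\ell\chi_{S_\ell}$.

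To finish, I assemble a single bad operator. Because each $\Phi(v_\ell)$ lies in $\broe(\partial,\cF)$, no fixed partial translation can fail at every level; hence, passing to a subsequence and choosing the index $\ell$ large enough at each stage, I can arrange inductively that the finite source chunks $v_\ell\chi_{S_\ell}$ have pairwise disjoint domains and ranges and that the target corners $F_\ell\times G_\ell$ are pairwise disjoint with $\partial(F_\ell,G_\ell)\to\infty$. The part of $v_\ell$ that must be discarded to avoid the finitely many already-used coordinates is finitely supported, so its $\Phi$-image is finite-rank and hence approximable at some finite scale, shifting the required distance only by a finite amount that is absorbed by taking $\ell$ large. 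The disjointness of domains and ranges means $w:=\sum_\ell v_\ell\chi_{S_\ell}$ is again a partial translation whose associated partial bijection $\tau$ satisfies $d(n,\tau(n))\le r$, which is a bounded operator of propagation at most $r$ by Proposition~\ref{PropSubsymmetric}; thus $w\in\broe(d,\cE)$ and $w\chi_{S_\ell}=v_\ell\chi_{S_\ell}$. By strong continuity $\Phi(w)=\mathrm{SOT}\text{-}\sum_\ell\Phi(v_\ell\chi_{S_\ell})$, and since the corners are disjoint, $\norm{\chi_{F_\ell}\Phi(w)\chi_{G_\ell}}\ge\eps_0/2C$ for all large $\ell$. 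As $\partial(F_\ell,G_\ell)\to\infty$, this shows $\Phi(w)$ has corners of fixed size arbitrarily far from the diagonal and so cannot be approximated in norm by any finite-propagation operator, contradicting $\Phi(w)\in\broe(\partial,\cF)$.

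The main obstacle is the extraction step: passing from the global norm condition that $\Phi(v_\ell)$ is not $\eps_0$-$\ell$-approximable to a finite, far-apart corner of controlled size. Here the asymmetry of the hypotheses bites, since $\cF$ is only assumed unconditional (not $\partial$-symmetric), so band truncations on the target need not be bounded and non-approximability cannot be read off a single Schur-multiplier estimate; this is why I exploit the special structure of $v_\ell$ as a partial translation, so that $\Phi(v_\ell)$ is an $\mathrm{SOT}$-sum of rank-one operators whose localization can be analyzed one corner at a time. Once the localized witnesses are in hand, the diagonalization is routine, with $d$-symmetry (Proposition~\ref{PropSubsymmetric}) ensuring that the assembled partial translation $w$ is genuinely a bounded propagation-$\le r$ operator and the uniform local finiteness of $(\N,\partial)$ keeping the corner sizes under control.
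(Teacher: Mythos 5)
Your reduction to partial translations, and the final observation that an element of $\broe(\partial,\cF)$ cannot have far corners of uniformly positive norm (for unconditional $\cF$ the diagonal cut-downs $\chi_F\cdot\chi_G$ are uniformly bounded, so band-dominated operators are quasi-local), are both fine. The proof breaks at exactly the point you call the key extraction step, and the justification you offer there is not valid. You claim that if $\Phi(v_\ell)$ is not $\eps_0$-$\ell$-approximable, then some finite corner $\chi_{F_\ell}\Phi(v_\ell\chi_{S_\ell})\chi_{G_\ell}$ with $\partial(F_\ell,G_\ell)>\ell$ has norm at least $\eps_0/C$ with $C$ \emph{uniform in} $\ell$, arguing via the dichotomy ``either every rank-one summand $\Phi(e_{n,\sigma_\ell(n)})$ is supported within target-distance $\ell$ of the diagonal, or some partial sum carries mass $\geq\eps_0/C$ far from the diagonal.'' This is not a dichotomy: a rank-one operator $\xi\otimes f$ has support $\supp(f)\times\supp(\xi)$, which is typically all of $\N\times\N$, so the first alternative is essentially never satisfied, and its failure yields only some nonzero far entry, with no lower bound on its size. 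What your contradiction actually requires is the contrapositive: if every corner of $b\in\broe(\partial,\cF)$ at distance $>\ell$ has norm $<\eps_0/C$, then $b$ is $\eps_0$-$O(\ell)$-approximable, with constants independent of $\ell$ and $b$. That is precisely the quantitative ``quasi-local implies band-dominated'' problem, which is not available in this generality (it is known for $\ell_p$ over property A spaces, but the whole point of this theorem is that $\cF$ is merely unconditional, so band truncations on the target are not bounded Schur multipliers). A second gap sits in the assembly: ``since the corners are disjoint, $\norm{\chi_{F_\ell}\Phi(w)\chi_{G_\ell}}\geq\eps_0/2C$'' does not follow, because disjointness of the rectangles $F_\ell\times G_\ell$ in no way prevents the other chunks $\Phi(v_{\ell'}\chi_{S_{\ell'}})$, $\ell'\neq\ell$, from contributing mass to the corner $F_\ell\times G_\ell$; future-on-past contributions can be killed by SOT arguments, but past-on-future control forces you to push new corners far away, which circles back to the broken extraction step. (Minor point: norm-boundedness of $\Phi$ is not a hypothesis; it does follow from strong continuity by the closed graph theorem, but you should prove that rather than import it from ``the applications.'')

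For comparison, the paper's proof avoids localizing non-approximability altogether. By $d$-symmetry (Proposition \ref{PropSubsymmetric}), every contraction of propagation at most $r$ has the form $\sum_{(n,m)}\alpha_{n,m}e_{n,m}$, the sum running over the $r$-band with coefficients $\alpha_{n,m}\in\D$; strong continuity and rank preservation show that the finite-rank operators $\Phi(e_{n,m})$ indexed by the band satisfy the hypothesis of Lemma \ref{Lemma49}, namely every such coefficient combination of them lands in $\broe(\partial,\cF)$. Lemma \ref{Lemma49} --- a Baire-category uniformity statement over the compact coefficient space, adapted from Braga--Farah, with Lemma \ref{Lemma47} reproved to handle non-reflexive $E$ --- then produces a single scale $s$ valid for all coefficient choices, which is exactly coarse-likeness. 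Any gliding-hump repair of your argument would in effect have to reprove Lemmas \ref{Lemma47} and \ref{Lemma49}; as written, your proof substitutes an unavailable quasi-locality principle for them.
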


Theorem \ref{ThmCoarseLike} is the  version of  \cite[Proposition 3.3]{BragaFarahVignati2019} to our settings. In order to prove it, we need the next lemma
 (c.f \cite[Lemma 4.9]{BragaFarah2018}).  We use the notation $\D=\{z\in \C\mid |z|\leq 1\}$.

\begin{lemma}\label{Lemma49}
Let $(\N,d)$   be a u.l.f. metric space, and let $E$  be a Banach space  with an unconditional basis $\cE$. Let $(a_n)_n$ be a sequence of finite rank operators in $\broe(d,\cE)$ so that $\sum_{n}\alpha_na_n$ converges in the strong operator topology to an operator in $\broe(d,\cE)$ for all $(\alpha_n)_n\in \D^\N$.  Then for every $\eps>0$ there exists $r>0$ so that $\sum_{n}\alpha_na_n$ can be $\eps$-$r$-approximated for all $(\alpha_n)_n\in \D^\N$.
\end{lemma}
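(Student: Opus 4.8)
The plan is to argue by contradiction, following the gliding-hump scheme of \cite[Lemma 4.9]{BragaFarah2018}, but replacing the off-diagonal \emph{operator norm} used there by a \emph{long-range pairing} quantity, since on a general Banach space there is no uniformly bounded band projection playing the role of the conditional expectation onto band operators available on $\ell_2$. For $a\in\cL(E)$ and $r>0$ I set $\mu_r(a)$ to be the supremum of $|\zeta(a\eta)|$ over all finitely supported $\eta\in E$ and $\zeta\in E^*$ with $\|\eta\|,\|\zeta\|\le 1$ and $d(\supp\eta,\supp\zeta)>r$. Two soft properties drive the argument: $\mu_r$ is nonincreasing in $r$, and if $a$ is $\eps$-$r$-approximable, say $\|a-b\|\le\eps$ with $\propg(b)\le r$, then $\zeta(b\eta)=0$ whenever $d(\supp\eta,\supp\zeta)>r$, so $\mu_r(a)\le\eps$. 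In particular, since every element of $\broe(d,\cE)$ is $\eps'$-$R$-approximable for some $R$, we get $\inf_r\mu_r(c)=0$ for all $c\in\broe(d,\cE)$. This gives a clean, norm-constant-free final contradiction.

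Next I would record the two decay inputs. Each $a_n\in\broe(d,\cE)$, hence $\mu_r(a_n)\to 0$ as $r\to\infty$, and the same holds for any fixed finite linear combination; this erases the contribution of already-chosen blocks once the working scale is large. On the other hand, since $\sum_n\alpha_n a_n$ converges in the strong operator topology for every $(\alpha_n)\in\D^\N$, each column series $\sum_n a_n e_k$ is unconditionally convergent, so $a_ne_k\to 0$ and $\sup_{\alpha\in\D^\N}\|\sum_{n>N}\alpha_na_n\eta\|\to 0$ for every fixed finitely supported $\eta$; this erases the contribution of future blocks when testing against a fixed vector/functional pair.

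The core is the gliding hump. Assuming the conclusion fails, fix $\eps>0$ so that for every $r$ there is $\alpha^{(r)}\in\D^\N$ with $\sum_n\alpha^{(r)}_na_n$ not $\eps$-$r$-approximable. I would build inductively scales $r_1<r_2<\cdots\to\infty$, disjoint index blocks $I_1<I_2<\cdots$, finitely supported pairs $(\eta_j,\zeta_j)$ concentrated in pairwise far-apart regions going to infinity, and a single $\beta\in\D^\N$ coinciding with $\alpha^{(r_j)}$ on $I_j$. At stage $j$ I would pick $r_j$ large enough that the finitely many earlier blocks contribute at most $c/4$ to any $r_j$-long-range pairing (first decay input), extract from non-$\eps$-$r_j$-approximability of $\sum_n\alpha^{(r_j)}_na_n$ a long-range witness pair at scale $\approx r_j$ of size at least a fixed $c=c(\eps)>0$, localize it far out, truncate the index range to a finite block $I_j$ (controlling the dropped tail), and push the next region beyond everything used so far; future blocks are then neutralized on the fixed pair $(\eta_j,\zeta_j)$ by the second decay input. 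Writing $T=\mathrm{SOT}\text{-}\sum_n\beta_na_n\in\broe(d,\cE)$, this yields $\mu_{r_j}(T)>c/2>0$ for all $j$ with $r_j\to\infty$, contradicting $\inf_r\mu_r(T)=0$.

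The main obstacle is the extraction step: producing at each stage a long-range pairing bounded below by a constant \emph{independent of the scale}. On $\ell_2$ this is immediate, since the band truncation $Q_r$ is a contraction and non-approximability gives $\|(I-Q_r)(a)\|>\eps$ directly; here $Q_r$ is only bounded — it can be realized by averaging conjugations over the unimodular diagonal group, with norm controlled by the unconditional constant times the number of partial-bijection graphs covering $\{d\le r\}$, and that number grows with $r$ — so the naive route loses a factor that degrades with the scale. To keep the constant uniform I would localize: using Lemma \ref{LemmaULFPartition} to split $\{d\le r\}$ into partial bijections whose domains and ranges are $r$-separated, together with a finite-coloring argument from uniform local finiteness, I would relocate an off-diagonal vector witness onto a small-diameter region, where ``far from some input'' upgrades to ``far from the whole support'', thereby turning it into a genuine long-range pair while losing only a factor depending on $\eps$ and the uniform local finiteness bound, not on $r$. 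Making this relocation quantitative, with all constants independent of the a priori approximation scale, is the delicate heart of the proof, and is precisely where the u.l.f.\ geometry and the unconditionality of $\cE$ enter.
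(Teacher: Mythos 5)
You have isolated the right difficulty, but the proposal does not overcome it, and as stated it cannot: the extraction step is a uniform, quantitative form of the quasi-locality problem. Concretely, you need a constant $c(\eps)>0$, \emph{independent of the scale}, such that whenever an operator is not $\eps$-$r$-approximable there exist finitely supported norm-one $\eta\in E$, $\zeta\in E^*$ with $d(\supp\eta,\supp\zeta)$ comparable to $r$ and $|\zeta(a\eta)|\ge c(\eps)$. The contrapositive says: if all far-separated pairings of $a$ at scale $r$ are below $c(\eps)$, then $a$ is $\eps$-$r'$-approximable; running this over all $\eps$ would show that every quasi-local operator is band-dominated, with uniform constants. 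Even for $E=\ell_2$ and an arbitrary u.l.f. metric space this is a well-known open problem (it is settled only under extra hypotheses such as property A), so it cannot serve as an ingredient of a lemma asserted for arbitrary u.l.f. $(\N,d)$ and arbitrary unconditional $\cE$. Your claim that on $\ell_2$ the step is immediate ``since the band truncation $Q_r$ is a contraction'' is false, and you in fact contradict it in the same sentence: truncation to $\{d\le r\}$ costs a factor of the number of partial bijections in Lemma \ref{LemmaULFPartition}, which grows with $r$; there is no $r$-uniformly bounded band truncation, on $\ell_2$ or elsewhere, and that unboundedness is exactly why quasi-locality is hard. Nor does it help that your operators $\mathrm{SOT}\text{-}\sum_n\beta_na_n$ are band-dominated by hypothesis: band-dominatedness gives $\inf_r\mu_r=0$ but with no rate, while your contradiction needs the lower bound $\mu_{r_j}>c/2$ with a single constant along scales $r_j\to\infty$. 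The proposed ``relocation'' repair is only sketched, and the obstruction it must defeat is global (ghost-like operators on expander-like geometry), not the position of one witness; nothing in the sketch produces a scale-free constant.

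The paper's proof sidesteps pairings entirely, and this is the essential difference. It runs the gliding hump of \cite[Lemma 4.9]{BragaFarah2018} verbatim; the only ingredient that breaks in the Banach setting is \cite[Lemma 4.7]{BragaFarah2018} (non-approximability of $a$ passes to the truncations $a\chi_{[1,n]}$ for all large $n$), whose Hilbert-space proof uses WOT-compactness of the unit ball, unavailable when $E$ is not reflexive. The paper replaces it by Lemma \ref{Lemma47}: if each $a\chi_{[1,n]}$ had an approximant $b_n$ with $\propg(b_n)\le s$, then $\sup_n\|b_n\|<\infty$, and since $(\N,d)$ is u.l.f. every column of $b_n$ has boundedly many nonzero entries; a diagonal argument over matrix entries then yields an SOT-convergent subsequence whose limit has propagation at most $s$ and approximates $a$, a contradiction. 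This compactness statement for norm-bounded families of uniformly finite propagation is far weaker than the uniform pairing estimate your plan requires, and it suffices to make the Braga--Farah hump go through. If you wish to keep the $\mu_r$-based contradiction, you must either prove the quantitative quasi-locality statement (open in this generality) or restructure the argument so that only the easy direction (approximable $\Rightarrow$ small pairings) is used --- which is, in effect, what the truncation-plus-compactness route accomplishes.
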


\begin{proof}[Sketch of the proof]
The proof follows the proof of  \cite[Lemma 4.9]{BragaFarah2018} almost verbatim. The only difference being that since the unit ball of $\cL(E)$ does not need to be compact in the weak operator topology (indeed, this holds if and only if $E$ is reflexive),  the proof of  \cite[Lemma 4.7]{BragaFarah2018}, which is used in  the proof of \cite[Lemma 4.9]{BragaFarah2018}, does not hold. We present a proof of \cite[Lemma 4.7]{BragaFarah2018} which holds in our setting:

\begin{lemma}\label{Lemma47}
Let $(\N,d)$ be a u.l.f. metric space,  $E$ be a Banach space with an unconditional basis $\cE=(e_n)_n$,  $a\in\cL(E)$, and $\eps,s>0$. If $a$ is not $\eps$-$s$-approximable, then $a\chi_{[1,n]}$ is not  $\eps$-$s$-approximable for all large enough $n\in\N$.
\end{lemma}

\begin{proof}
Without loss of generality, assume $\cE$ is $1$-unconditional. If the lemma fails, then for each $n\in\N$ pick $b_n\in \cL(E)$ with $\propg(b_n)\leq s$ and $\|a\chi_{[1,n]}-b_n\|\leq \eps$ (cf. \cite[Proposition 4.6(i)]{BragaFarah2018}). Clearly,  $\sup_{n}\|b_n\|<\infty$.

\begin{claim}
The sequence $(b_n)_n$ has a subsequence which converges in the strong operator topology.
\end{claim}

\begin{proof}
For each $n\in\N$, let $b_n=[b_{i,j}^n]$ be the matrix representation of $b_n$, i.e., $b_{i,j}^n=e_j^*(b_ne_i)$. Going to a subsequence, assume that  $\lim_nb_{i,j}^n$ exists for all $i,j\in\N$.  Given $\xi=\sum_i\xi(i)e_i\in E$,  $b_n\xi(j)=\sum_{i}b_{i,j}^n\xi(i)$ for all $j\in\N$. Moreover, as $(\N,d)$ is u.l.f. and $\propg(b_n)\leq s$ for all $n\in\N$, there is $k\in\N$ so that for each $j\in\N$, there is a subset $A_j\subset \N$ with $k$ elements so that $b_{i,j}^n= 0$ for all $i\not\in A$. Hence, $\sum_{i}b_{i,j}^n\xi(i)=\sum_{i\in A_j}b_{i,j}^n\xi(i)$ and $\lim_n\sum_{i}b_{i,j}^n\xi(i)$ exists for all $\xi\in E$ and all $j\in\N$. 

We now define a bounded operator $b$ on $E$ by defining  the $j$-th coordinate $b\xi(j)$ for each $\xi\in E$. Precisely,    for  $j\in\N$ and  $\xi=\sum_i\xi(i)e_i\in E$, let $b\xi(j)=\lim_n\sum_{i}b_{i,j}^n\xi(i)$. Notice that, bimonotonicity of the basis implies that
\[
\Big\|\sum_{j=N}^M b\xi(j)e_j\Big\|=\lim_n\Big\| \sum_{j=N}^M\Big(\sum_{i\in A_j}b_{i,j}^n\xi(i)\Big)e_j\Big\|
\leq  \lim_n\Big\| \sum_{j=N}^\infty\Big(\sum_{i\in A_j}b_{i,j}^n\xi(i)\Big)e_j\Big\|.\]
As $d$ is u.l.f., there exists an increasing sequence $(l_N)_N$ so that 
\[ \Big\| \sum_{j=N}^\infty\Big(\sum_{i\in A_j}b_{i,j}^n\xi(i)\Big)e_j\Big\|=\|(\chi_{[N,\infty)}b_n)\xi\|\leq \|b_n\chi_{[l_N,\infty)}\xi\|.\]
This shows that given any $\xi\in E$, the sum  $\sum_{j}b\xi(j)e_j$ converges in $E$, so   $b\xi$ is well defined. Moreover,    $b$ is clearly  bounded and $b_n$ converges to it in the strong operator topology.
\end{proof}

By the previous claim, going to a subsequence if necessary, let  $b=\text{SOT-}\lim_nb_n$.  Clearly, $\propg(b)\leq s$. As $a$ is not $\eps$-$s$-approximable, there exists a unit vector $\xi\in E$ so that  $\|(a-b)\xi\|> \eps$. As $\xi=\lim_n\chi_{[1,n]}\xi$, this contradicts that $\|a\chi_{[1,n]}-b_n\|\leq \eps$ for all $n\in\N$.
\end{proof}

 The rest of the proof follows the proof of \cite[Lemma 4.9]{BragaFarah2018} with Lemma \ref{Lemma47} substituting \cite[Lemma 4.7]{BragaFarah2018}.
\end{proof}

\begin{proof}[Proof of Theorem \ref{ThmCoarseLike}]
Let $\eps, r>0$ and $\cE=(e_n)_n$. Without loss of generality, assume that $\cE$ is normalized. Let 
\[E=\{(n,m)\in \N\times \N\mid d(n,m)\leq r\}.\]
 By Proposition \ref{PropSubsymmetric}, we have that $\sum_{(n,m)\in E}\alpha_{n,m}e_{n,m}$ converges in the strong operator topology to an operator in $\broe(d,\cE)$ for all $(\alpha_{n,m})_{(n,m)\in E}\in \D^E$. 

The hypothesis on $\Phi$ imply that $\sum_{(n,m)\in E}\alpha_{n,m}\Phi(e_{n,m})$ converges in the strong operator topology for all $(\alpha_{n,m})_{(n,m)\in E}\in \D^E$. By Lemma \ref{Lemma49}, there exists $s>0$ so that $\Phi(\sum_{(n,m)\in E}\alpha_{n,m}e_{n,m})=\sum_{(n,m)\in E}\alpha_{n,m}\Phi(e_{n,m})$ can be $\eps$-$s$-approximated for all $(\alpha_{n,m})_{(n,m)\in E}\in \D^E$. Since every operator in $B_{\cL(E)}$ with propogation at most $r$ is of the form $\sum_{(n,m)\in E}\alpha_{n,m}e_{n,m}$ for some $ (\alpha_{n,m})_{(n,m)\in E}\in \D^E$, the result follows. 
\end{proof}

\subsection{Obtaining coarse equivalences}
In Subsection \ref{SubsectionChoosingMaps}, we presented a method of  choosing candidates for being coarse equivalences and in Subsection \ref{SubsectionCoarseLike} we show that some maps between uniform Roe algebras have a coarse-like behavior. In this subsection, we put those together and obtain the desired coarse equivalences.

The next lemma  is a more general version of  \cite[Lemma 3.3]{BragaVignati2019} to our setting. Since its proof translates almost verbatim, we omit it here  (cf. \cite[Lemma 3.4]{BragaFarahVignati2018}). If $(\N,d)$ is a  metric space, $E$ is a Banach space with an unconditional  basis $\cE$,   $a=\sum_n\alpha_ne_{n,n}\in \ell_\infty(\cE)$, and $A\subset \N$, we write 
$a_F=\sum_{n\in A}\alpha_ne_{n,n}$.

\begin{lemma}
Let $(\N,d)$ and $(\N,\partial)$ be metric spaces and $E$ and $F$ be Banach spaces with unconditional bases $\cE$ and $\cF$, respectively. Let $\Phi:\ell_\infty(\cE)\to \broe(\partial,\cF)$ be a strongly continuous linear map. Then for every $b\in \mathrm M_\infty(\cF)$ and every $\eps>0$ there exists a finite $A\subset \N$ so that 
\[\max\{\|b\Phi(a_A)\|,\|\Phi(a_A)b\|\}<\eps\]
for all contractions $a\in \ell_\infty(\cE)$.
\end{lemma}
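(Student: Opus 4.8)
The plan is to reduce everything to the assertion that the $\Phi$-images of diagonal contractions supported off a large finite set are asymptotically orthogonal to the fixed compact operator $b$; concretely, I would produce a finite $A\subset\N$ so that $\|b\Phi(a_{\N\setminus A})\|<\eps$ and $\|\Phi(a_{\N\setminus A})b\|<\eps$ for every contraction $a=\sum_n\alpha_ne_{n,n}$, where $a_{\N\setminus A}=\sum_{n\notin A}\alpha_ne_{n,n}$ is the restriction of $a$ to the complement of $A$ (this tail is what the displayed estimate is about). Passing to adjoints makes the two estimates symmetric, so I would only treat $\|b\Phi(a_{\N\setminus A})\|$. After the harmless renorming making $\cF$ normalized and bimonotone, I note that since $b\in\mathrm{M}_\infty(\cF)=\broe(\partial,\cF)\cap\cK(F)$ is compact one has $\chi_{[1,N]}b\chi_{[1,N]}\to b$ in norm; fixing $N$ with $\|b-\chi_{[1,N]}b\chi_{[1,N]}\|<\eps/(2C)$, where $C=\sup\{\|\Phi(a)\|:a\in\ell_\infty(\cE),\ \|a\|\le1\}$ is finite by strong continuity and the uniform boundedness principle, it suffices to make $\|\chi_{[1,N]}\Phi(a_{\N\setminus A})\|$ small. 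As $\cF$ is unconditional this is dominated by $\sum_{j\le N}\|\Phi(a_{\N\setminus A})^{*}f^{*}_{j}\|$, so the statement reduces to finitely many fixed functionals.

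Next I would extract the summability hidden in strong continuity. Since $\Phi$ is strongly continuous, $\Phi(\sum_n\alpha_ne_{n,n})=\mathrm{SOT}\text{-}\sum_n\alpha_n\Phi(e_{n,n})$ for every $\alpha\in\D^\N$; pairing against a fixed $\xi\in F$ and $g\in F^{*}$, the scalar series $\sum_n\alpha_n\,g(\Phi(e_{n,n})\xi)$ converges for all bounded $\alpha$, whence $\sum_n|g(\Phi(e_{n,n})\xi)|<\infty$. Thus $W_g\colon\xi\mapsto(g(\Phi(e_{n,n})\xi))_n$ is a well-defined operator $F\to\ell_1$, bounded by $\|g\|\,C$, and $\|\Phi(a_{\N\setminus A})^{*}f^{*}_{j}\|\le\|P_{\N\setminus A}W_{f^{*}_{j}}\|$, where $P_{\N\setminus A}$ is the coordinate projection of $\ell_1$ onto the indices outside $A$. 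Hence it is enough to prove that $\|P_{\N\setminus A}W_{f^{*}_{j}}\|\to0$ as $A\uparrow\N$ for the finitely many $j\le N$, i.e. that each $W_{f^{*}_{j}}$ is compact; the analogous reduction for $\|\Phi(a_{\N\setminus A})b\|$ uses the columns of $\Phi(e_{n,n})$ in place of its rows.

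The hard part will be precisely this last compactness/uniformity. The bounded-multiplier argument only yields, for each individual $\xi$, that the tails $\sum_{n\notin A}|g(\Phi(e_{n,n})\xi)|$ vanish, i.e. weak$^{*}$ decay; upgrading this to decay of $\|P_{\N\setminus A}W_g\|$ uniform over the unit ball of $F$ cannot be obtained by discarding the finite-rank cut-off of $b$ at the outset, since compactness of $b$ does not survive the maneuver — a compact operator composed with a bounded $\mathrm{SOT}$-null net need not be norm-null. The route I would take is to keep the norm-compact set $K=\overline{b^{*}(B_{F^{*}})}$ in play and write $\|b\Phi(a_{\N\setminus A})\|=\sup_{h\in K}\|\Phi(a_{\N\setminus A})^{*}h\|$; covering $K$ by finitely many small balls reduces matters to a single $h\in K$, and one then runs the strong-operator extraction of Lemma \ref{Lemma47} — every bounded sequence of the relevant finite-propagation compressions admits an $\mathrm{SOT}$-convergent subsequence whose limit is again a bona fide bounded operator, and the existence of such a limit forces the offending tails to zero. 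Carried out uniformly in the contraction $a$, this mirrors the proof of \cite[Lemma 4.9]{BragaFarah2018} (our Lemma \ref{Lemma49}); I expect this extraction to be the only genuinely delicate point, the remaining bookkeeping (choosing $N$, and taking the union of the finite sets produced on the left and on the right) being routine.
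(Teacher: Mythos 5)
Your first two paragraphs are a correct and genuinely clean reduction: after cutting $b$ down to $\chi_{[1,N]}b\chi_{[1,N]}$, the column estimate $\|\Phi(a_{\N\setminus A})b\|$ follows from unconditional convergence of the series $\sum_n\Phi(e_{n,n})f_j$ alone, and the row estimate $\|b\Phi(a_{\N\setminus A})\|$ is, as you say, exactly the statement that each $W_{f_j^*}\colon F\to\ell_1$ is compact (your identity $\sup_{\alpha\in\D^\N}\|\sum_{n\notin A}\alpha_n f_j^*\circ\Phi(e_{n,n})\|=\|P_{\N\setminus A}W_{f_j^*}\|$ is exactly right, and it is what encodes uniformity over contractions). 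Two caveats: ``passing to adjoints'' is not a legitimate symmetry here, since $\Phi(c)^*$ acts on $F^*$, which need not have a basis and carries no Roe structure --- harmless only because the side you discard is the easy column side; and you correctly read the paper's $a_A$ as the tail $a_{\N\setminus A}$. The genuine gap is in your third paragraph, i.e.\ precisely at the step you flag as hard. The mechanism you propose --- a Lemma~\ref{Lemma47}-style extraction of SOT-convergent subsequences --- cannot close it: Lemma~\ref{Lemma47} needs a \emph{uniform} propagation bound on the sequence, and no such bound is available for the family $\{\Phi(c)\colon c\ \text{a contraction}\}$ at this stage (that uniformity is what a coarse-likeness statement would give, and it is not a hypothesis); worse, as your own remark about SOT-null nets concedes, the existence of an SOT limit never controls row norms, so ``the existence of such a limit forces the offending tails to zero'' is an assertion with no mechanism behind it.

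What your outline never engages is the hypothesis that $\Phi$ takes values in $\broe(\partial,\cF)$ rather than in $\cL(F)$, and this hypothesis is not cosmetic: the map $\Phi\colon\ell_\infty\to\cL(\ell_1)$ defined by $\Phi(a)\xi=\big(\sum_n a_{nn}\xi_n\big)e_1$ is linear, bounded, strongly continuous, and all of its bounded-multiplier SOT-sums exist, yet $W_{f_1^*}=\mathrm{Id}_{\ell_1}$ is not compact and the conclusion fails for $b=e_{1,1}$, since $\|b\Phi(\chi_{\N\setminus A})\|=1$ for every finite $A$; the only hypothesis this example violates is that the range lie in a uniform Roe algebra. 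So any correct proof of the row estimate must use band-dominatedness, and this is what the proof the paper defers to (\cite[Lemma 3.3]{BragaVignati2019}, cf.\ \cite[Lemma 3.4]{BragaFarahVignati2018}) does: for $T\in\broe(\partial,\cF)$ and finite $G$, a finite-propagation approximation gives $\|\chi_G T(\mathrm{Id}-\chi_B)\|\to 0$ along finite sets $B\supseteq B_s(G)$ (rows of band-dominated operators are almost finitely supported, by uniform local finiteness); one then argues by contradiction with a gliding hump, producing disjointly and finitely supported contractions $c_k$ with $\|f_j^*\circ\Phi(c_k)\|\geq\eps$ and witness vectors pushed off increasing finite sets $B_k$ (your uniform column estimate is what makes this pushing possible), so that the single contraction $c=\sum_k c_k$ gives an operator $\Phi(c)\in\broe(\partial,\cF)$ violating the row-truncation property above. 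Your $W_g$ reformulation survives intact; what must replace the SOT extraction is this band-dominatedness argument --- equivalently a Lemma~\ref{Lemma49}-type uniform $\eps$-$r$-approximability statement for the sums $\sum_n\alpha_n\Phi(e_{n,n})$, noting that Lemma~\ref{Lemma49} as stated requires finite-rank summands, which $\Phi(e_{n,n})$ need not be here since $\Phi$ is not assumed rank preserving.
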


\begin{theorem}\label{ThmEmbHer} 
Let $(\N,d)$ and $(\N,\partial)$ be u.l.f. metric spaces, and $E$ and $F$ be Banach spaces with bases $\cE$ and $\cF$, respectively. Assume that $\cE$ is $d$-symmetric, $\cF$ is $\partial$-symmetric, and that all ghost idempotents in $\broe(\partial, \cF)$ belong to $\mathrm M_\infty(\cF)$. If $\broe(d,\cE)$ embeds as a Banach algebra onto a hereditary Banach  subalgebra of $ \broe(\partial, \cF)$, then $(\N,d)$ coarsely embeds into $(\N,\partial)$.
\end{theorem}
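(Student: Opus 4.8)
The plan is to manufacture a coarse embedding $f\colon(\N,d)\to(\N,\partial)$ out of the idempotents $\Phi(e_{n,n})$, where $\Phi\colon\broe(d,\cE)\to\broe(\partial,\cF)$ is the given embedding. After renorming via the Remark in Section~\ref{SectionPrelim}, I may assume $\cE,\cF$ are normalized, bimonotone and $1$-unconditional, which changes neither algebra. Since $\Phi$ is a Banach algebra embedding onto a hereditary subalgebra, Lemma~\ref{LemmaPhiIsStrongContAndRankPres} makes $\Phi$ strongly continuous and rank preserving; Theorem~\ref{ThmCoarseLike} (this is where $d$-symmetry of $\cE$ is used) then shows $\Phi$ is coarse-like, and Corollary~\ref{CorPickDelta2} (using that all ghost idempotents in $\broe(\partial,\cF)$ lie in $\mathrm{M}_\infty(\cF)$) gives $\delta=\inf_n\sup_{m,k}\|e_{m,m}\Phi(e_{n,n})e_{k,k}\|>0$. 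For each $n$ I choose $(f(n),g(n))$ with $\|e_{f(n),f(n)}\Phi(e_{n,n})e_{g(n),g(n)}\|\ge\delta$, and Lemma~\ref{LemmaCloseMapsAndUnfFiniteToOne} immediately gives that $f$ and $g$ are close, say $\sup_n\partial(f(n),g(n))\le R_0$.

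The backbone is a rank-one description of the image. Writing $p_n=\Phi(e_{n,n})=\eta_n\otimes h_n$ (rank one and idempotent, so $h_n(\eta_n)=1$; normalize $\|\eta_n\|=1$, whence $\|h_n\|=\|p_n\|\in[\|\Phi^{-1}\|^{-1},\|\Phi\|]$), the identities $e_{m,m}e_{n,m}=e_{n,m}=e_{n,m}e_{n,n}$ together with multiplicativity of $\Phi$ force $\Phi(e_{n,m})=\lambda_{n,m}\,\eta_m\otimes h_n$ for a nonzero scalar $\lambda_{n,m}$. A short estimate from $\|\Phi(e_{n,m})\|\ge\|\Phi^{-1}\|^{-1}$ and the bounds $|\eta_n(f(n))|\ge\delta/\|\Phi\|$ and $|h_n(e_{g(n)})|\ge\delta$ coming from the choice of $f,g$ then shows that, for all $n,m$, the operator $\Phi(e_{n,m})$ has an entry of modulus at least a fixed $\delta'>0$ at the matrix position $(g(n),f(m))$. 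Bornologousness of $f$ follows at once: if $d(n,m)\le r$, coarse-likeness produces $b$ with $\propg(b)\le s=s(r)$ and $\|\Phi(e_{n,m})-b\|<\delta'$, so $(g(n),f(m))\in\supp(b)$, giving $\partial(g(n),f(m))\le s$ and hence $\partial(f(n),f(m))\le s+2R_0$.

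I then record that $f$ is finite-to-one. If some fibre $f^{-1}(y)$ were infinite, containing distinct $n_1,n_2,\dots$, then, since $g$ is close to $f$ and $(\N,\partial)$ is u.l.f., I may pass to a subsequence with $g(n_i)=z$ constant and $|h_{n_i}(e_z)|\ge\delta$. Because $\chi_{\{n_i\}}\in\broe(d,\cE)$ and $\Phi$ is strongly continuous, the partial sums of $\sum_i h_{n_i}(e_z)\eta_{n_i}=\Phi(\chi_{\{n_i\}})e_z$ converge in norm, contradicting $\|h_{n_i}(e_z)\eta_{n_i}\|\ge\delta$.

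The crux is expansiveness, and this is the step I expect to be the main obstacle: coarse-likeness controls propagation only from above, so the lower bound has to be produced by hand. Supposing $f$ is not expanding, I obtain pairs $(n_j,m_j)$ with $d(n_j,m_j)\to\infty$ but $\partial(f(n_j),f(m_j))\le s$. Finiteness of fibres lets me discard the case where $f(m_j)$ stays bounded and then pass to a subsequence along which $n_j,m_j$ are all distinct and the points $f(n_j),g(m_j)$ are pairwise $\partial$-far. I set $w=\sum_j e_{f(n_j),g(m_j)}$: this is a matrix of propagation $\le s+R_0$ with spread-out, disjoint support, so $\partial$-symmetry of $\cF$ (Proposition~\ref{PropSubsymmetric}) makes it a bounded element of $\broe(\partial,\cF)$. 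Hereditariness gives $W=\Phi(1)w\Phi(1)\in\Phi(\broe(d,\cE))$, so $a=\Phi^{-1}(W)\in\broe(d,\cE)$ with $\|a\|$ bounded independently of $j$. Using $p_{m_j}\Phi(1)=p_{m_j}$, $\Phi(1)p_{n_j}=p_{n_j}$ and the rank-one formulas, the $(n_j,m_j)$-entry of $a$ equals $\lambda_{n_j,m_j}^{-1}\sum_i\eta_{n_j}(f(n_i))h_{m_j}(e_{g(m_i)})$, whose diagonal term $i=j$ has modulus $\ge\delta^2/\|\Phi\|$ while $|\lambda_{n_j,m_j}|\le\|\Phi\|^2\|\Phi^{-1}\|$. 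Choosing the subsequence recursively so that the off-diagonal terms (which, for fixed $j$, tend to $0$ as the relevant points escape to infinity) are summably small keeps these entries bounded below by a fixed $\delta''>0$. Thus $a$ is band-dominated yet has entries of modulus $\ge\delta''$ at positions $(n_j,m_j)$ with $d(n_j,m_j)\to\infty$, which is impossible. The two delicate points in this last step are exactly the boundedness of $w$ (handled by $\partial$-symmetry once the supports are spread out) and the suppression of the cross terms in the entry computation (handled by the diagonal choice of subsequence); with expansiveness established, $f$ is a coarse embedding and the proof is complete.
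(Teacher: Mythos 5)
Your proof is correct and follows essentially the same route as the paper's: the same reduction via Lemma \ref{LemmaPhiIsStrongContAndRankPres}, Theorem \ref{ThmCoarseLike}, Corollary \ref{CorPickDelta2} and Lemma \ref{LemmaCloseMapsAndUnfFiniteToOne} to produce the maps $f,g$, the same approximability argument for coarseness, and the same hereditariness trick with $w=\sum_j e_{f(n_j),g(m_j)}$ (bounded thanks to $\partial$-symmetry via Proposition \ref{PropSubsymmetric}) to rule out failure of expansiveness. The only difference is expository: where the paper invokes the spatial implementation $U$ and cites Claims 3.13--3.16 of \cite{BragaVignati2019} for the entry estimates, the finite-to-one property, and the recursive subsequence choice, you carry out those computations explicitly through the rank-one factorization $\Phi(e_{n,m})=\lambda_{n,m}\,\eta_m\otimes h_n$.
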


\begin{proof}
Without loss of generality, assume the bases are normalized and $1$-unconditional.
Let $\Phi:\broe(d,\cE)\to \broe(\partial, \cF)$ be a Banach algebra embedding onto a hereditary subalgebra of $\broe(\partial, \cF)$. By Corollary \ref{CorPickDelta2}, there are $\delta>0$ and maps $f,g:(\N,d)\to (\N, \partial)$ so that 
\[\|e_{g(n),g(n)}\Phi(e_{n,n})e_{f(n),f(n)}\|\geq \delta\]
for all $n\in\N$. By Lemma \ref{LemmaPhiIsStrongContAndRankPres} and Theorem \ref{ThmCoarseLike}, the map $\Phi$ is coarse-like. Hence,  Lemma \ref{LemmaCloseMapsAndUnfFiniteToOne}  implies that $f$ and $g$ are close and we can  fix  $N>0$ so that $\partial(f(n),g(n))<N$ for all $n\in\N$.

Let us notice that $f$ is coarse, for that fix  $r>0$. As  there is a surjective isomorphism $U:E\to \mathrm{Im}(\Phi(1))$ so that $\Phi(a)=UaU^{-1}\Phi(1)$ (Lemma \ref{LemmaPhiIsStrongContAndRankPres}), a simple computation gives that there exists $\theta>0$ so that
\begin{align}\label{EqClaim313}
\|e_{g(n),g(n)}\Phi(e_{m,n}) & e_{f(m),f(m)}\| \\
&> \theta
\|e_{g(n),g(n)}\Phi(e_{n,n})\|\|\Phi(e_{m,m})e_{f(m),f(m)}\|\notag
\end{align}
for all $n,m\in\N$ (see \cite[Claim 3.13]{BragaVignati2019} for similar computation). Therefore, since our choice of $f$ and $g$ imply that \[\|e_{g(n),g(n)}\Phi(e_{n,n})\|\geq \delta \ \text{ and }\ \|\Phi(e_{n,n})e_{f(n),f(n)}\|\geq \delta,\]
 we have that $\|e_{g(n),g(n)}\Phi(e_{m,n})e_{f(m),f(m)}\|> \theta^2\delta^2$ for all $n,m\in\N$. 

As $\Phi$ is coarse-like, there exists $s>0$ so that $\Phi(e_{n,m})$ is $\theta^2\delta^2$-$s$-approximable for all $n,m\in\N$ with $d(n,m)<r$. So, $\|e_{k,k}\Phi(e_{n,m})e_{\ell,\ell}\|\leq \theta^2\delta^2$ if $\partial(k,\ell)>s$ and $d(n,m)<r$. Therefore, $\partial (f(n),g(m))\leq s$ for all $n,m\in\N$ with $d(n,m)<r$, which implies that $\partial(f(n),f(m))\leq r+N$ for all $n,m\in\N$ with $d(n,m)<r$.
 
We now follow  \cite[Lemma 3.14]{BragaVignati2019} and sketch the proof that $f$ is expanding. Suppose $f$ is not expanding, so there is $s>0$ and sequences $(x^n_1)_n$ and 
 $(x^n_2)_n$ in $\N$ so that $d(x^n_1,x^n_2)\geq n$ and $\partial (f(x^n_1),f(x^n_2))\leq s$ for all $n\in\N$. As  $d(x^n_1,x^n_2)\geq n$ for all $n\in\N$, by going to a subsequence, we can assume $(x^n_1)_n$ is a sequence of distinct elements. Moreover, as $\partial (f(x^n_1),f(x^n_2))\leq s$ for all $n\in\N$, proceeding as in \cite[Claim 3.15]{BragaVignati2019}, we can go to a further subsequence and assume that both $(f(x^n_1))_n$ and $(f(x^n_2))_n$ are sequences of distinct elements. 
 
 As $\cF$ is $\partial$-subsymmetric and $\partial (f(x^n_1),f(x^n_2))\leq s$ for all $n\in\N$, the sum $\sum_{n}e_{f(x^n_1),f(x^n_2)}$ converges in the operator topology to an operator with finite propagation, hence in $\broe(\partial,\cF)$. Therefore, as $\Phi(\broe(d, \cE))$ is a hereditary Banach subalgebra of $\broe(\partial, \cF)$, there is $a\in \broe(d,\cE)$ so that 
 \[\Phi(a)=\Phi(1)\Big(\sum_{n\in \N}e_{f(x^n_1),f(x^n_2)}\Big)\Phi(1).\]
Proceeding as in \cite[Claim 3.16]{BragaVignati2019}, it follows that $\inf_n\|e_{x^n_2,x^n_2}ae_{x^n_1,x^n_1}\|>0$. This gives a contradiction since $\lim_nd(x^n_1,x^n_2)=\infty$. So $f$ is expanding and we are done. 
\end{proof}

\begin{theorem}\label{ThmIsomorBanAlgMetricSym}
Let $(\N,d)$ and $(\N,\partial)$ be u.l.f. metric spaces, and $E$ and $F$ be Banach spaces with $d$-symmetric basis $\cE$ and $\partial$-symmetric  basis $\cF$, respectively. Assume that all ghost idempotents  in $\broe(d, \cE)$ and $\broe(\partial, \cF)$ are compact. If $\broe(d,\N)$ and  $ \broe(\partial, \N)$ are  Banach algebra isomorphic, then $(\N,d)$ and  $(\N,\partial)$ are coarsely equivalent.
\end{theorem}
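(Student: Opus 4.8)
The plan is to deduce the theorem from Theorem~\ref{ThmEmbHer}, applied to the isomorphism in both directions, and then to upgrade the two resulting coarse embeddings to a coarse equivalence using the fact that the two maps come from mutually inverse isomorphisms. Let $\Phi\colon\broe(d,\cE)\to\broe(\partial,\cF)$ be a Banach algebra isomorphism. Being an algebra isomorphism it is automatically unital, $\Phi(\mathrm{Id}_E)=\mathrm{Id}_F$, and it realizes $\broe(d,\cE)$ as a hereditary Banach subalgebra of $\broe(\partial,\cF)$, namely the whole algebra. Since a compact band-dominated operator lies in $\mathrm M_\infty$ (by the identification $\mathrm M_\infty(\cF)=\broe(\partial,\cF)\cap\cK(F)$), the hypothesis that all ghost idempotents are compact says precisely that all ghost idempotents of $\broe(\partial,\cF)$ lie in $\mathrm M_\infty(\cF)$, and symmetrically for $\broe(d,\cE)$. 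Thus Theorem~\ref{ThmEmbHer} applies to $\Phi$ and yields a coarse embedding $f\colon(\N,d)\to(\N,\partial)$, while applying it to $\Phi^{-1}$ yields a coarse embedding $h\colon(\N,\partial)\to(\N,d)$.

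First I would record the concrete form of these maps. After renorming we may assume $\cE,\cF$ are normalized and bimonotone. By Lemma~\ref{LemmaPhiIsStrongContAndRankPres}, since $\Phi(\mathrm{Id}_E)=\mathrm{Id}_F$ there is a surjective isomorphism $U\colon E\to F$ with $\Phi(a)=UaU^{-1}$ for all $a$, and correspondingly $\Phi^{-1}(b)=U^{-1}bU$. In particular $\Phi(e_{n,n})=(Ue_n)\otimes\big((U^{-1})^{*}e_n^{*}\big)$ and $\Phi^{-1}(e_{m,m})=(U^{-1}e_m)\otimes\big(U^{*}e_m^{*}\big)$ are rank one, and both $\Phi$ and $\Phi^{-1}$ are coarse-like (Lemma~\ref{LemmaPhiIsStrongContAndRankPres} and Theorem~\ref{ThmCoarseLike}, using $d$- and $\partial$-symmetry). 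Corollary~\ref{CorPickDelta2} provides $\delta>0$ and maps $f,g$ (close, by Lemma~\ref{LemmaCloseMapsAndUnfFiniteToOne}) with $\|e_{g(n),g(n)}\Phi(e_{n,n})e_{f(n),f(n)}\|\ge\delta$, and symmetrically $\delta'>0$ and $h,h'$ with $\|e_{h'(m),h'(m)}\Phi^{-1}(e_{m,m})e_{h(m),h(m)}\|\ge\delta'$.

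The heart of the argument, and the step I expect to be the main obstacle, is to show that $h\circ f$ is close to $\mathrm{Id}_{(\N,d)}$ and $f\circ h$ is close to $\mathrm{Id}_{(\N,\partial)}$. I would do this by coordinate bookkeeping on the rank-one operator $\Phi^{-1}(e_{f(n),f(n)})=y\otimes\psi$, where $y=U^{-1}e_{f(n)}$ and $\psi=U^{*}e_{f(n)}^{*}$ satisfy $\psi(y)=1$. Computing the matrix entry in the lower bound for $f$ gives $|y(n)|\,|(Ue_n)(g(n))|\ge\delta$, hence $|y(n)|\ge\delta/(C\|U\|)$ for a fixed $C$ coming from bimonotonicity; the lower bound for $h$ gives $|\psi(e_{h(f(n))})|\,|y(h'(f(n)))|\ge\delta'$, hence $|\psi(e_{h(f(n))})|\ge\delta'/(C\|U^{-1}\|)$. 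Consequently the $(n,h(f(n)))$ entry of $\Phi^{-1}(e_{f(n),f(n)})$, which equals $\psi(e_{h(f(n))})\,y(n)$, has modulus bounded below by a fixed positive constant $c$, uniformly in $n$. Since $\Phi^{-1}$ is coarse-like, the operator $\Phi^{-1}(e_{f(n),f(n)})$ is $\varepsilon$-$s$-approximable for an $s$ depending only on $\varepsilon$ (here the input $e_{f(n),f(n)}$ has propagation $0$ and norm $1$); choosing $\varepsilon$ so small that $C\varepsilon<c$ forces this significant entry to lie inside the $s$-band, that is $d(n,h(f(n)))\le s$. Thus $\sup_n d(n,h(f(n)))<\infty$, and exchanging the roles of the two algebras gives $\sup_m\partial(m,f(h(m)))<\infty$ by the same computation.

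Finally, $f$ and $h$ are coarse maps (indeed coarse embeddings) whose composites in both orders are close to the respective identities, so by the characterization of coarse equivalence recalled in Section~\ref{SectionPrelim} the map $f$ is a coarse equivalence; hence $(\N,d)$ and $(\N,\partial)$ are coarsely equivalent. The only delicate points are the uniform control of the approximation radius $s$ and the entrywise lower bound $c$: the former is exactly the content of coarse-likeness of $\Phi^{\pm1}$, and the latter is packaged in the constants $\delta,\delta'$ furnished by Corollary~\ref{CorPickDelta2} together with the explicit form $\Phi(a)=UaU^{-1}$, so no geometric input beyond Theorem~\ref{ThmEmbHer} is needed.
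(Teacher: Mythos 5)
Your proposal is correct and takes essentially the same route as the paper's own proof: the paper also extracts the maps $f$ and $h$ from the machinery of Theorem~\ref{ThmEmbHer} and Corollary~\ref{CorPickDelta2}, and then shows $h\circ f$ and $f\circ h$ are close to the identities by bounding the $(n,h(f(n)))$-corner of $\Phi^{-1}(e_{f(n),f(n)})$ from below by a uniform constant and contradicting the coarse-likeness of $\Phi^{-1}$ (and symmetrically for $\Phi$). Your explicit rank-one bookkeeping via the spatial implementation $\Phi(a)=UaU^{-1}$ is exactly the computation the paper packages as inequality~\eqref{EqClaim313}, citing \cite[Claim 3.13]{BragaVignati2019}, so the two arguments differ only in whether that estimate is unfolded or quoted.
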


\begin{proof}
 Let $\Phi:\broe(d,\cE)\to \broe(\partial, \cF)$ be a Banach algebra isomorphism. Proceeding as in the proof of Theorem \ref{ThmEmbHer}, there are coarse maps $f:(\N,d)\to (\N,\partial)$ and $h:(\N,\partial)\to (\N,d)$ so that 
 \[\|\Phi(e_{n,n})e_{f(n),f(n)}\|\geq \delta \ \text{ and }\  \|\Phi^{-1}(e_{n,n})e_{h(n),h(n)}\|\geq \delta\]
 for all $n\in\N$. We only need to notice that $f\circ h$ and $h\circ f$ are close to $\mathrm{Id}_{(\N,\partial)}$ and $\mathrm{Id}_{(\N,d)}$, respectively. 
 
 Suppose $h\circ f$ is not closed to $\mathrm{Id}_{(\N,d)}$. Then there exists a sequence $(n_k)_k$ in $(\N, d)$ so that $d(n_k,h(f(n_k)))>k$. For brevity, let $m_k=f(n_k)$ and $\ell_k=h(m_k)$ for all $k\in\N$. By \eqref{EqClaim313} above, there exists $\theta>0$ so that 
 $\|e_{n_k,n_k}\Phi^{-1}(e_{m_k,m_k})e_{\ell_k,\ell_k}\|\geq \theta$ for all $k\in\N$ (cf. \cite[Claim 3.13]{BragaVignati2019}). Since $\lim_kd(n_k,\ell_k)=\infty$, this contradicts the fact that $\Phi^{-1}$ is coarse-like. Hence $h\circ f$ is close to $\mathrm{Id}_{(\N,d)}$. The proof that $f\circ h$ is close to $\mathrm{Id}_{(\N,\partial)}$ is completely analogous, so we are done.
\end{proof}

\begin{proof}[Proof of Theorem \ref{ThmIsomorBanAlg}]
This is a particular case of Theorem \ref{ThmIsomorBanAlgMetricSym}.
\end{proof}

\subsection{Results for spaces with property A}\label{SubsectionPropARig}
By Theorem \ref{ThmPropertyAGhostsAreComp}, it is clear that the infimum in  Corollary \ref{CorPickDelta2} is greater than zero as long as the embedding $\Phi:\broe(d,\cE)\to \broe(\partial,\cF)$ satisfies $\Phi(\broe[d,\cE])\subset \broe^r(\partial,\cF)$.

\begin{theorem}\label{ThmEMBBanAlgPropAINTRO}
Let $ d$ and $\partial$ be uniformly locally finite metrics on $\N$,  and $E$ and $F$ be Banach spaces with $1$-unconditional bases $\cE$ and $\cF$ which are $d$-symmetric and $\partial$-symmetric, respectively. If $(\N, \partial)$ has property A and there is a Banach algebra embedding $\Phi:\broe(d,\N)\to  \broe(\partial, \N)$ onto a hereditary subalgebra of $\broe(\partial, \N)$ so that $\Phi(\broe[d,\cE])\subset \broe^r(\partial,\cF)$, then $(\N,d)$ coarsely embeds into   $(\N,\partial)$.
\end{theorem}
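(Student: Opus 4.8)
The plan is to run the argument of Theorem~\ref{ThmEmbHer} with a single modification, namely the verification that
\[
\delta=\inf_{n\in\N }\sup_{m,k\in\N}\|e_{m,m}\Phi(e_{n,n})e_{k,k}\|>0,
\]
where now property A of $(\N,\partial)$ plays the role that the hypothesis ``all ghost idempotents of $\broe(\partial,\cF)$ lie in $\mathrm M_\infty(\cF)$'' played there. Let $\Phi:\broe(d,\cE)\to\broe(\partial,\cF)$ be the given embedding onto a hereditary subalgebra. First I would apply Lemma~\ref{LemmaPhiIsStrongContAndRankPres} to conclude that $\Phi$ is strongly continuous and rank preserving, so that Lemma~\ref{LemmaPickDelta} becomes available.

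The heart of the matter is ruling out the second alternative of Lemma~\ref{LemmaPickDelta}. Suppose for contradiction that $\delta=0$. That lemma then furnishes an infinite $\M_0\subset\N$ such that, for every infinite $\M\subset\M_0$, the operator $p=\Phi(\sum_{n\in\M}e_{n,n})$ is a ghost not lying in $\mathrm M_\infty(\cF)$. Fix one such $\M$. The key observation is that $\sum_{n\in\M}e_{n,n}=\chi_\M$ is a diagonal projection, hence has propagation $0$ and lies in $\broe[d,\cE]$; consequently the standing hypothesis $\Phi(\broe[d,\cE])\subset\broe^r(\partial,\cF)$ forces $p\in\broe^r(\partial,\cF)$. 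Thus $p$ is a ghost idempotent belonging to the \emph{regular} uniform Roe algebra, and Theorem~\ref{ThmPropertyAGhostsAreComp} now applies to give $p\in\mathrm M_\infty(\cF)$. (Theorem~\ref{ThmPropertyAGhostsAreComp} is stated for symmetric $\cF$, but its proof uses symmetry only through the finite-propagation estimates of Proposition~\ref{PropSubsymmetric} and the ensuing decomposition into partial translations, all of which are available for the $\partial$-symmetric basis $\cF$ here.) This contradicts $p\notin\mathrm M_\infty(\cF)$, so in fact $\delta>0$; that is, the conclusion of Corollary~\ref{CorPickDelta2} holds in the present setting.

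With $\delta>0$ established, the remainder of the proof is literally that of Theorem~\ref{ThmEmbHer}. One extracts maps $f,g:(\N,d)\to(\N,\partial)$ satisfying $\|e_{g(n),g(n)}\Phi(e_{n,n})e_{f(n),f(n)}\|\geq\delta$ for all $n$; Theorem~\ref{ThmCoarseLike} shows that $\Phi$ is coarse-like, whence Lemma~\ref{LemmaCloseMapsAndUnfFiniteToOne} makes $f$ and $g$ close; and then coarse-likeness together with the multiplicativity estimate \eqref{EqClaim313} shows that $f$ is simultaneously coarse and expanding, i.e.\ a coarse embedding of $(\N,d)$ into $(\N,\partial)$. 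I expect the only genuinely new step --- and hence essentially the sole obstacle --- to be the reduction in the middle paragraph: recognizing that one never needs \emph{all} ghost idempotents of $\broe(\partial,\cF)$ to be compact, but only those produced by the dichotomy of Lemma~\ref{LemmaPickDelta}, and that these are automatically regular, being $\Phi$-images of finite-propagation diagonal projections, so that property A alone suffices to eliminate them.
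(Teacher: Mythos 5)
Your proposal is correct and is essentially the paper's own proof: the paper likewise establishes that the infimum $\delta$ of Corollary \ref{CorPickDelta2} is positive by combining Lemma \ref{LemmaPhiIsStrongContAndRankPres} and Lemma \ref{LemmaPickDelta} with the hypothesis $\Phi(\broe[d,\cE])\subset\broe^r(\partial,\cF)$ (applied to the propagation-zero projections $\chi_\M$) and Theorem \ref{ThmPropertyAGhostsAreComp}, and then repeats the proof of Theorem \ref{ThmEmbHer} verbatim. Your parenthetical verification that Theorem \ref{ThmPropertyAGhostsAreComp} still works when symmetry of $\cF$ is weakened to $\partial$-symmetry is a detail the paper passes over in silence, but it does not constitute a different route.
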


\begin{proof}[Proof of Theorem \ref{ThmIsomorBanAlgPropAINTRO} and Theorem \ref{ThmEMBBanAlgPropAINTRO}]
Since  the  infimum in  Corollary \ref{CorPickDelta2} is greater than zero,   the proofs of Theorem \ref{ThmEmbHer} and Theorem \ref{ThmIsomorBanAlgMetricSym} give us the desired results.
\end{proof}

As mentioned in the introduction, we can obtain a slightly stronger result. Precisely, we do not need to require that  $\Phi(\broe[d,\cE])\subset \broe^r(\partial,\cF)$ in order for the  infimum in  Corollary \ref{CorPickDelta2} to be greater than zero, but only that \[\Phi(\broe[d,\cE])\subset \broe^r(\partial,\cF)\cup \cL(E)^\complement_r,\] i.e., if $a$ is band-$r$-dominated and $\Phi(a)$ is regular, than $\Phi(a)$ is band-$r$-dominated.\footnote{We do not know if this is something automatic, see Problem \ref{ProbAutomatic}.} For that, we need the following complement to Proposition \ref{PropSOTConvSeriesRegNew}.

\begin{proposition}\label{PropSOTConvSeriesReg}
 Let $(\N,\partial)$ be a u.l.f. metric space and   $F$ be a Banach space with $1$-unconditional basis  $\cF$. Let $(p_n)_n$ be a sequence of rank 1 operators so that $\mathrm{SOT}\text{-}\sum_{n\in\M}p_n\in \broe(\partial,\cF)$ for all $\M\subset \N$ and $\inf_n\|p_n\|>0$. Given an infinite $\M_0\subset \N$  there exist an infinite $\M\subset \M_0$  so that 
   $\mathrm{SOT}\text{-} \sum_{n\in\M}p_n$ is regular.
\end{proposition}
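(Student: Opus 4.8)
The plan is to realize each $p_n$ as a rank-one operator $p_n=\xi_n\otimes f_n$ with $\|\xi_n\|=1$ and $\|f_n\|=\|p_n\|\in[\delta,M]$, where $\delta=\inf_n\|p_n\|>0$ and $M=\sup_n\|p_n\|<\infty$ (the latter holds because SOT-convergence of $\sum_n p_n$ forces $(f_n)_n$ to be weak$^*$-null, hence bounded). As in the proof of Proposition~\ref{PropSOTConvSeriesRegNew}, weak$^*$-nullity together with band-dominatedness gives $\lim_m\|\chi_{[m,\infty)}f_n\|=0$ for each $n$, while $\|\chi_{[1,m]}f_n\|\to 0$ as $n\to\infty$ for each $m$. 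Since each $p_n$ is rank one it is regular, with $|p_n|=|\xi_n|\otimes|f_n|$ and $\|p_n\|_r=\|\xi_n\|\,\|f_n\|$. The whole strategy is to choose $\M$ so that the positive operator $\sum_{n\in\M}|p_n|$ is bounded: granting this, Proposition~\ref{PropDedekindBound} gives $|Px|\le\sum_{n\in\M}|f_n|(|x|)\,|\xi_n|=\big(\sum_{n\in\M}|p_n|\big)(|x|)$ for $P=\mathrm{SOT}\text{-}\sum_{n\in\M}p_n$, so $|P|$ exists, is bounded, and $P$ is regular.

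First I would disjointify the functionals. By a standard gliding-hump argument based on the two decay properties above, I can pass to an infinite $\M_1\subseteq\M_0$ and successive intervals $(I_n)_{n\in\M_1}$ of $\N$ with $\sum_{n\in\M_1}\|f_n-\chi_{I_n}f_n\|<\infty$. The error $\sum_{n}\xi_n\otimes(f_n-\chi_{I_n}f_n)$ is $\|\cdot\|_r$-summable, hence regular, so it suffices to treat $\widetilde P=\sum_{n}\xi_n\otimes\widetilde f_n$ with $\widetilde f_n=\chi_{I_n}f_n$ genuinely disjointly supported (and still semi-normalized).

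Next I would pass to the coordinatewise limit: after a further diagonal extraction assume $\xi_n(k)\to a_k$ for every $k$. If $a_k=0$ for all $k$, then $(\xi_n)$ is coordinatewise null and semi-normalized, so a second gliding hump disjointifies the $\xi_n$ too (up to a $\|\cdot\|_r$-summable, regular error), giving successive intervals $J_n$ supporting $\xi_n$. With both sides disjoint I exploit $1$-unconditionality: for $x\ge0$, choosing unimodular signs inside each $I_n$ produces $z$ with $|z|\le x$, $\|z\|\le\|x\|$ and $\widetilde f_n(z)=|\widetilde f_n|(\chi_{I_n}x)=:c_n\ge0$, whence
\[\Big\|\sum_{n}c_n|\xi_n|\Big\|=\Big\|\sum_{n}c_n\xi_n\Big\|=\|\widetilde Pz\|\le\|\widetilde P\|\,\|x\|,\]
the first equality using disjointness of the $\xi_n$. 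This is exactly the bound $\|(\sum_n|p_n|)x\|\le\|\widetilde P\|\,\|x\|$.

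The hard part is the remaining case $a_{k_0}\neq 0$, since the range vectors $\xi_n$ are \emph{not} weakly null and cannot be disjointified unconditionally. Here I would use that all subsums $\sum_{n\in\M}p_n$ are bounded: for each $x$ the scalar series $\sum_n f_n(x)\xi_n(k_0)$ converges unconditionally, hence absolutely, and since $|\xi_n(k_0)|\to|a_{k_0}|>0$ this forces $\sum_n|f_n(x)|<\infty$; by the closed graph theorem $T\colon x\mapsto(f_n(x))_n$ is bounded from $F$ into $\ell_1$. Then, for $x\ge0$, the same sign-choice $z$ (legitimate because the $\widetilde f_n$ are disjoint) yields $\sum_{n}|\widetilde f_n|(\chi_{I_n}x)=\sum_n \widetilde f_n(z)\le\|Tz\|_{\ell_1}\le\|T\|\,\|x\|$, so $\|(\sum_n|p_n|)x\|=\big\|\sum_n|\widetilde f_n|(\chi_{I_n}x)\,|\xi_n|\big\|\le\sum_n|\widetilde f_n|(\chi_{I_n}x)\le\|T\|\,\|x\|$, now using only $\|\xi_n\|=1$. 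In both cases $\sum_{n\in\M}|p_n|$ is bounded, which finishes the proof. The dichotomy on the coordinatewise limit $a$ — gliding hump when $a=0$, factorization through $\ell_1$ when $a\neq0$ — is exactly what overcomes the absence of weak-nullity on the range side, and is the only genuinely nonroutine point.
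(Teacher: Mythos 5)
Your proof is correct, and at its crux it takes a genuinely different route from the paper's. The two arguments share their first half: the paper also writes $p_n=\xi_n\otimes f_n$, extracts weak$^*$-nullity and the tail decay $\lim_m\|\chi_{[m,\infty)}f_n\|=0$ from band-dominatedness, and glide-humps the functionals into a disjointly supported block sequence $(g_n)$ up to a $\|\cdot\|_r$-summable (hence regular) error; and your Case (a) --- a second gliding hump on the $\xi_n$, then boundedness of the positive candidate modulus --- is exactly the paper's endgame (the paper gets the boundedness from Proposition \ref{PropPositiveAreBounded} rather than from your explicit sign-choice estimate). The divergence is that the paper proves your Case (b) never occurs: since $b=\mathrm{SOT}\text{-}\sum_n\xi_n\otimes g_n$ again lies in $\broe(\partial,\cF)$ and $\partial(\{j\},\supp(g_n))\to\infty$ for each fixed $j$ (the $g_n$ are successive blocks and the metric space is locally finite), the identity $\chi_{\{j\}}\,b\,\chi_{\supp(g_n)}=\xi_n(j)\,e_j\otimes g_n$ together with approximation of $b$ by band operators forces $\xi_n(j)\to 0$ for every $j$; here semi-normalization of the $g_n$, i.e.\ the hypothesis $\inf_n\|p_n\|>0$, is what lets one divide out $\|g_n\|$. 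So in the paper the range vectors are automatically coordinatewise null and only your Case (a) arises. Your treatment of Case (b) --- subseries convergence of $\sum_n f_n(x)\xi_n(k_0)$ for every $\M$, hence absolute convergence, hence by the closed graph theorem a bounded factorization $x\mapsto(f_n(x))_n$ through $\ell_1$, hence a direct bound on the modulus using only $\|\xi_n\|=1$ --- is correct and does not appear in the paper. Roughly: you spend the ``for all $\M\subset\N$'' hypothesis arithmetically and keep that step free of geometry (your proof in fact barely uses $\inf_n\|p_n\|>0$), while the paper spends Roe-algebra membership of the re-summed operator geometrically, which is shorter and yields the cleaner structural fact that, up to an absolutely $\|\cdot\|_r$-convergent perturbation, the operator is a sum of disjointly supported rank-one operators $\zeta_n\otimes g_n$.

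Two small points to make explicit. First, in Case (b) the inequality $\sum_n\widetilde f_n(z)\le\|Tz\|_{\ell_1}$ is false if $T$ is built from the original $f_n$, because $\widetilde f_n(z)=f_n(\chi_{I_n}z)$ need not be dominated by $|f_n(z)|$; you must define $T$ from the truncated functionals $\widetilde f_n$, which is legitimate since $\sum_{n\in\M}\xi_n\otimes\widetilde f_n$ differs from $\sum_{n\in\M}p_n$ by an absolutely norm-convergent series, so all its subsums still SOT-converge and the subseries-plus-closed-graph argument applies verbatim. Second, your appeal to Proposition \ref{PropDedekindBound} at the outset is really just the triangle inequality along a convergent series plus $|f_n(x)|\le|f_n|(|x|)$, and the concluding step (domination $|Px|\le Q|x|$ by a bounded positive $Q$ implies $P$ regular) should be spelled out by splitting $P$ into real and imaginary parts and writing each as a difference of positives using $Q$; both are routine.
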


\begin{proof}
For each $n\in\N$, pick a unit vector $\xi_n\in F$ and $f_n\in F^*$ so that  $\|f_n\|=\|p_n\|$  and $p_n=\xi_n\otimes f_n$. Since  $\delta=\inf_n\|p_n\|>0$,  $(f_n)_n$ is semi-normalized. As  $\sum_{n\in\N}p_n$ converges in the strong operator topology, the sequence $(f_n)_n$ must be weak$^*$-null. Since $p_n\in \broe(\partial, \cF)$, each $p_n$ can be approximated arbitrarily well by band operators. Hence, it follows that \[\lim_{m\to \infty}\chi_{[m,\infty]}f_n=0.\]
Therefore, going to a subsequence if necessary, we can pick a   sequence $(g_n)_n$ in $F^*$ which is a block subsequence of $(e^*_n)_n$ and  so that $\|f_n-g_n\|<2^{-n}$ for all $n\in\N$. Clearly, $\mathrm{SOT}\text{-}\sum_{n\in\M}\xi_n\otimes g_n$ converges to an operator in $\broe(\partial, \cF)$ for all $\M\subset \N$. This implies  that $\lim_n\xi_n(j)=0$ for all $j\in\N$. Indeed, this follows since, for each $j\in\N$, we have that 
\[\chi_{\{j\}}\Big(\sum_{j\in\M}\xi_j\otimes g_j\Big)\chi_{\supp(g_n)}=\xi_n(j)e_j\otimes g_n\]
and $\lim_n\partial (\{j\},\supp(g_n))=\infty$. Hence, going to a further subsequence, pick a sequence $(\zeta_n)_n$ in $B_F$ which is a block subsequence of $(e_n)_n$ and so that $\|\xi_n-\zeta_n\|<2^{-n}$ for all $n\in\N$. Clearly, $\mathrm{SOT}\text{-}\sum_{n\in\M}\zeta_n\otimes g_n$ converges to an operator in $\broe(\partial, \cF)$ for all $\M\subset \N$.

Since $(\zeta_n)_n$ and $(g_n)_n$ are block sequences in $(e_n)_n$ and $(e^*_n)_n$, respectively, the unconditionality of $(e_n)_n$ implies that $\sum_{n\in\N}|\zeta_n|\otimes|g_n|$ is a well defined positive linear function on $E$. Hence, by Proposition \ref{PropPositiveAreBounded}, $\sum_{n\in\N}|\zeta_n|\otimes|g_n|$ is bounded and we must have that $|\sum_{n\in\N}\zeta_n\otimes g_n|$ exists and equals $\sum_{n\in\N}|\zeta_n|\otimes|g_n|$.

In order to conclude that $\sum_{n\in\N}p_n$ is regular, notice that, letting $K=\max\{1,\sup_n\|f_n\|\}<\infty$, we have
\begin{align*}
\|p_n-\zeta_n\otimes g_n\|_r& \leq \|(\xi_n-\zeta_n)\otimes f_n\|_r +\|\zeta_n\otimes (f_n-g_n)\|_r\\
&\leq K\||\xi_n-\zeta_n|\|+\||f_n-g_n|\|\\
&\leq 2^{-n+1}K
\end{align*}
for all $n\in\N$. Hence $\sum_n(p_n-\zeta_n\otimes g_n)$ is an absolutely $\|\cdot\|_r$-converging series in $\broe^r(\partial, \cF)$, so $\sum_n(p_n-\zeta_n\otimes g_n)\in \broe^r(\partial,\cF)$. Since \[\sum_np_n=\sum_n(p_n-\zeta_n\otimes g_n)+\sum_n\zeta_n\otimes g_n,\] this shows that $\sum_{n\in\N}p_n$ is regular.
\end{proof}

Corollary \ref{CorPickDelta2} then becomes:

\begin{corollary}\label{CorPickDelta2new}
Let $(\N,d)$ and $(\N,\partial)$ be metric spaces, $E$ and $F$ be Banach spaces with $1$-unconditional basis $\cE$ and $\cF$, and  let $\Phi:\broe(d,\cE)\to \broe(\partial,\cF)$ be an embedding onto a hereditary subalgebra of $\broe(\partial,\cF)$ so that  \[\Phi(\broe[d,\cE])\subset \broe^r(\partial,\cF)\cup \cL(F)_r^\complement.\]   If  $(\N,\partial)$  has property A, then   
\[\delta=\inf_{n\in\N }\sup_{m,k\in\N}\|e_{m,m}\phi(e_{n,n})e_{k,k}\|>0.\]
\end{corollary}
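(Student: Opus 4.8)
The plan is to argue by contradiction, following the proof of Corollary \ref{CorPickDelta2} but replacing the hypothesis on ghost idempotents by a regularity argument. So suppose $\delta=0$. Since $\Phi$ is an embedding onto a hereditary subalgebra, Lemma \ref{LemmaPhiIsStrongContAndRankPres} supplies a surjective isomorphism $U\colon E\to\mathrm{Im}(\Phi(1))$ with $\Phi(a)=UaU^{-1}\Phi(1)$; in particular $\Phi$ is strongly continuous and rank preserving. With $\delta=0$, Lemma \ref{LemmaPickDelta} then produces an infinite $\M_0\subset\N$ such that, for every infinite $\M\subset\M_0$, the operator $\Phi\big(\sum_{n\in\M}e_{n,n}\big)$ is a ghost that does not lie in $\mathrm{M}_\infty(\cF)$.

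First I would verify that the family $(\Phi(e_{n,n}))_{n\in\N}$ satisfies the hypotheses of Proposition \ref{PropSOTConvSeriesReg}. Each $\Phi(e_{n,n})$ is rank one because $\Phi$ is rank preserving; since $\Phi$ is an isomorphism onto its image we have $\|\Phi(e_{n,n})\|\geq\|\Phi^{-1}\|^{-1}>0$, so $\inf_n\|\Phi(e_{n,n})\|>0$; and for every $\M\subset\N$ the diagonal idempotent $\chi_\M=\sum_{n\in\M}e_{n,n}$ lies in $\broe(d,\cE)$ (it is a propagation-zero element of $\ell_\infty(\cE)$, using $1$-unconditionality), so strong continuity of $\Phi$ gives $\mathrm{SOT}\text{-}\sum_{n\in\M}\Phi(e_{n,n})=\Phi(\chi_\M)\in\broe(\partial,\cF)$. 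Applying Proposition \ref{PropSOTConvSeriesReg} to $(\Phi(e_{n,n}))_n$ and the set $\M_0$, I obtain an infinite $\M\subset\M_0$ for which $p:=\Phi(\chi_\M)$ is regular.

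The main obstacle is precisely this step: unlike in Corollary \ref{CorPickDelta2}, property A alone only controls ghosts that already lie in $\broe^r(\partial,\cF)$ (via Theorem \ref{ThmPropertyAGhostsAreComp}), so everything hinges on forcing the ghost constructed in Lemma \ref{LemmaPickDelta} into the regular uniform Roe algebra — and that is exactly what the regular-subsequence extraction of Proposition \ref{PropSOTConvSeriesReg} achieves. To finish, note that $\chi_\M$ has propagation zero, hence $\chi_\M\in\broe[d,\cE]$, and $p=\Phi(\chi_\M)$ is regular; the standing hypothesis $\Phi(\broe[d,\cE])\subset\broe^r(\partial,\cF)\cup\cL(F)_r^\complement$ then forces $p\in\broe^r(\partial,\cF)$, since $p$ is not in the complement $\cL(F)_r^\complement$. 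Because $\M\subset\M_0$, Lemma \ref{LemmaPickDelta} guarantees that $p$ is a ghost, so property A of $(\N,\partial)$ together with Theorem \ref{ThmPropertyAGhostsAreComp} yields $p\in\mathrm{M}_\infty(\cF)$. This contradicts the other conclusion of Lemma \ref{LemmaPickDelta}, namely that $p\notin\mathrm{M}_\infty(\cF)$ for every infinite $\M\subset\M_0$. Hence $\delta>0$, as claimed.
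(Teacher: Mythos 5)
Your proof is correct and fills in exactly the argument the paper leaves as a one-line citation: assume $\delta=0$, use Lemma \ref{LemmaPhiIsStrongContAndRankPres} and Lemma \ref{LemmaPickDelta} to produce the bad set $\M_0$, apply the regular-subsequence extraction of Proposition \ref{PropSOTConvSeriesReg} to force a ghost $\Phi(\chi_\M)$ into $\cL(F)_r$ and hence (by the standing hypothesis) into $\broe^r(\partial,\cF)$, and then invoke Theorem \ref{ThmPropertyAGhostsAreComp} to place it in $\mathrm{M}_\infty(\cF)$, contradicting Lemma \ref{LemmaPickDelta}. Note that the paper's proof cites Proposition \ref{PropSOTConvSeriesRegNew}, which appears to be a typo for Proposition \ref{PropSOTConvSeriesReg}; your use of the latter is clearly the intended argument, since it is introduced immediately before this corollary for precisely this purpose.
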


\begin{proof}
This follows straightforwardly from Lemma \ref{LemmaPickDelta}, Lemma \ref{LemmaPhiIsStrongContAndRankPres}, Proposition \ref{PropSOTConvSeriesRegNew} and  Theorem \ref{ThmPropertyAGhostsAreComp}. 
\end{proof}

We finish this section stating the consequences of Corollary \ref{CorPickDelta2new} and the proofs of Theorem  \ref{ThmEmbHer} and Theorem \ref{ThmIsomorBanAlgMetricSym}.

\begin{theorem}\label{ThmEmbHerPropA}
Let $ d$ and $\partial$ be u.l.f.  metrics on $\N$,  and $E$ and $F$ be Banach spaces with $1$-unconditional bases $\cE$ and $\cF$ which are  $d$-symmetric and   $\partial$-symmetric, respectively. If $(\N, \partial)$ has property A and there is a Banach algebra embedding $\Phi:\broe(d,\N)\to  \broe(\partial, \N)$ onto a hereditary subalgebra of $\broe(\partial, \N)$ so that 
 \[\Phi(\broe[d,\cE])\subset \broe^r(\partial,\cF)\cup \cL(F)_r^\complement,\]
 then $(\N,d)$ coarsely embeds into   $(\N,\partial)$.\qed
\end{theorem}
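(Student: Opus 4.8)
The plan is to derive this theorem from the proof of Theorem \ref{ThmEmbHer}, substituting Corollary \ref{CorPickDelta2new} for Corollary \ref{CorPickDelta2}. The single place where the hypothesis ``all ghost idempotents belong to $\mathrm M_\infty(\cF)$'' is used in the proof of Theorem \ref{ThmEmbHer} is to invoke Corollary \ref{CorPickDelta2} and conclude that
\[\delta=\inf_{n\in\N}\sup_{m,k\in\N}\|e_{m,m}\Phi(e_{n,n})e_{k,k}\|>0.\]
So the first and only nontrivial step is to verify that the present hypotheses meet those of Corollary \ref{CorPickDelta2new}: the bases $\cE,\cF$ are $1$-unconditional, $\Phi$ is a Banach algebra embedding onto a hereditary subalgebra of $\broe(\partial,\cF)$, one has the containment $\Phi(\broe[d,\cE])\subset\broe^r(\partial,\cF)\cup\cL(F)_r^\complement$, and $(\N,\partial)$ has property A. Corollary \ref{CorPickDelta2new} then yields $\delta>0$ directly.

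It is worth recalling why Corollary \ref{CorPickDelta2new} holds, as this is where the real content sits (though it is already established above). The dichotomy of Lemma \ref{LemmaPickDelta} says that if $\delta=0$ then, after passing to an infinite $\M_0\subset\N$, the idempotent $\Phi(\sum_{n\in\M}e_{n,n})$ is a ghost sitting at distance at least $\|\Phi^{-1}\|^{-1}$ from $\mathrm M_\infty(\cF)$ for every infinite $\M\subset\M_0$. Proposition \ref{PropSOTConvSeriesReg} then lets one refine $\M_0$ so that this ghost is in addition regular, whence it lies in $\broe^r(\partial,\cF)$; property A, through Theorem \ref{ThmPropertyAGhostsAreComp}, forces such a regular ghost to be compact and therefore to lie in $\mathrm M_\infty(\cF)$, a contradiction. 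The weakened containment $\Phi(\broe[d,\cE])\subset\broe^r(\partial,\cF)\cup\cL(F)_r^\complement$ is exactly what guarantees that once the idempotent is known to be regular it belongs to $\broe^r(\partial,\cF)$, which is all the argument needs.

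With $\delta>0$ in hand, the rest transcribes the proof of Theorem \ref{ThmEmbHer} without change. By Lemma \ref{LemmaPhiIsStrongContAndRankPres} the map $\Phi$ is strongly continuous and rank preserving, so Theorem \ref{ThmCoarseLike} (using $d$-symmetry of $\cE$) shows $\Phi$ is coarse-like. The positivity of $\delta$ produces maps $f,g:\N\to\N$ with $\|e_{g(n),g(n)}\Phi(e_{n,n})e_{f(n),f(n)}\|\geq\delta$, and Lemma \ref{LemmaCloseMapsAndUnfFiniteToOne} shows $f$ and $g$ are close. Coarse-likeness together with the separation estimate \eqref{EqClaim313} then gives that $f$ is coarse, while the hereditary hypothesis combined with $\partial$-symmetry of $\cF$---forming $\mathrm{SOT}\text{-}\sum_n e_{f(x^n_1),f(x^n_2)}$ from a would-be non-expanding pair $(x^n_1),(x^n_2)$ and pulling it back through $\Phi$---shows $f$ is expanding. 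Hence $f:(\N,d)\to(\N,\partial)$ is a coarse embedding, as desired.

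The only genuinely delicate point is the regularization inside Proposition \ref{PropSOTConvSeriesReg}---simultaneously approximating both the vectors and the functionals defining the rank-one summands by block sequences so that the limiting idempotent is regular with controlled modulus---but this is already absorbed into Corollary \ref{CorPickDelta2new}. For the present theorem the task is therefore only to confirm that property A enters solely through that corollary, and that the coarse and expanding arguments of Theorem \ref{ThmEmbHer} are insensitive to the replacement of Corollary \ref{CorPickDelta2} by Corollary \ref{CorPickDelta2new}.
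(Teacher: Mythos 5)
Your proposal is correct and is essentially identical to the paper's own proof: the paper proves Theorem \ref{ThmEmbHerPropA} precisely by invoking Corollary \ref{CorPickDelta2new} to get $\delta>0$ and then rerunning the proof of Theorem \ref{ThmEmbHer} verbatim, which is exactly your plan, including the observation that the ghost-idempotent hypothesis enters that proof only through the corollary. Your reconstruction of why Corollary \ref{CorPickDelta2new} holds (Lemma \ref{LemmaPickDelta}, then Proposition \ref{PropSOTConvSeriesReg} to make the ghost regular, then Theorem \ref{ThmPropertyAGhostsAreComp} via property A) also matches the paper's argument.
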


\begin{theorem}\label{ThmIsomorBanAlgPropA}
Let $ d$ and $\partial$ be u.l.f.  metrics on $\N$,  and $E$ and $F$ be Banach spaces with $1$-unconditional bases $\cE$ and $\cF$ which are $d$-symmetric and $\partial$-symmetric, respectively. If $(\N, \partial)$ has property A and there is a Banach algebra isomorphism  so that 
 \[\Phi(\broe[d,\cE])\subset \broe^r(\partial,\cF)\cup \cL(F)_r^\complement\text{ and }\Phi^{-1}(\broe[\partial,\cF])\subset \broe^r(d,\cE)\cup \cL(E)_r^\complement,\]
 then $(\N,d)$ and   $(\N,\partial)$ are coarsely equivalent.\qed
\end{theorem}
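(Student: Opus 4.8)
The plan is to deduce Theorem \ref{ThmIsomorBanAlgPropA} from the apparatus already assembled for Theorem \ref{ThmEmbHer} and Theorem \ref{ThmIsomorBanAlgMetricSym}, with Corollary \ref{CorPickDelta2new} supplying the positive constant $\delta$ that, in those theorems, came from the compactness of ghost idempotents. First I would observe that, as $(\N,d)$ and $(\N,\partial)$ are u.l.f., both uniform Roe algebras are unital and any Banach algebra isomorphism $\Phi$ between them satisfies $\Phi(1)=1$; hence $\Phi$ and $\Phi^{-1}$ are Banach algebra embeddings onto (all of, and in particular hereditary) subalgebras. By Lemma \ref{LemmaPhiIsStrongContAndRankPres} both are strongly continuous and rank preserving, and since $\cE$ is $d$-symmetric and $\cF$ is $\partial$-symmetric, Theorem \ref{ThmCoarseLike} makes both $\Phi$ and $\Phi^{-1}$ coarse-like. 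Applying Corollary \ref{CorPickDelta2new} to $\Phi$ (using property A of $(\N,\partial)$ and the forward hypothesis $\Phi(\broe[d,\cE])\subset\broe^r(\partial,\cF)\cup\cL(F)_r^\complement$) gives $\delta>0$, and then the argument of Theorem \ref{ThmEmbHer} (this is exactly the content of Theorem \ref{ThmEmbHerPropA}) produces a coarse embedding $f\colon(\N,d)\to(\N,\partial)$, together with a close partner $g$ satisfying $\|e_{g(n),g(n)}\Phi(e_{n,n})e_{f(n),f(n)}\|\geq\delta$.

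The next step is the key one: because $f$ realizes $(\N,d)$ as coarsely equivalent to the subspace $f(\N)\subseteq(\N,\partial)$, and property A is inherited by subspaces and is a coarse invariant, it follows that $(\N,d)$ itself has property A. With property A now available on \emph{both} spaces, I can apply Corollary \ref{CorPickDelta2new} to $\Phi^{-1}$ as well — the reverse hypothesis $\Phi^{-1}(\broe[\partial,\cF])\subset\broe^r(d,\cE)\cup\cL(E)_r^\complement$ is precisely its ``$d$-side'' input — obtaining $\delta'>0$ and, via the proof of Theorem \ref{ThmEmbHer} run in the other direction, a coarse map $h\colon(\N,\partial)\to(\N,d)$ with the analogous lower bound. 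Finally, exactly as in the closing paragraph of the proof of Theorem \ref{ThmIsomorBanAlgMetricSym}, the homomorphism identities for $\Phi$ and $\Phi^{-1}$ (e.g.\ $\Phi(e_{m,m})\Phi(e_{n,m})=\Phi(e_{n,m})\Phi(e_{n,n})$) combined with the multiplicative estimate \eqref{EqClaim313} and coarse-likeness force $h\circ f$ and $f\circ h$ to be close to $\mathrm{Id}_{(\N,d)}$ and $\mathrm{Id}_{(\N,\partial)}$, respectively; this is exactly the assertion that $f$ and $h$ witness a coarse equivalence.

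The main obstacle, and the point I would scrutinize most, is establishing the reverse bound $\delta'>0$: the clean application of Corollary \ref{CorPickDelta2new} to $\Phi^{-1}$ invokes Theorem \ref{ThmPropertyAGhostsAreComp} for $(\N,d)$, which a priori needs property A of $(\N,d)$, whereas the hypotheses only grant it for $(\N,\partial)$. The whole argument therefore hinges on the self-improvement described above, namely that the forward coarse embedding propagates property A from $\partial$ to $d$; without it one is stuck, since property A cannot be read off on the $\partial$-side alone (the diagonal idempotents $\sum_{n\in\M}e_{n,n}$ are not ghosts, so they produce no contradiction there). Once this is in place, the remaining care is purely technical: one must verify the $\mathrm{SOT}$-convergence of the series $\mathrm{SOT}\text{-}\sum_{n\in\M}\Phi^{-1}(e_{n,n})$, the semi-normalization $\inf_n\|\Phi^{-1}(e_{n,n})\|\geq\|\Phi\|^{-1}>0$, and that the thinning of $\M$ furnished by Proposition \ref{PropSOTConvSeriesReg} — together with the reverse regularity hypothesis — places the relevant idempotent inside $\broe^r(d,\cE)$, so that a would-be non-compact ghost (arising from the $\delta'=0$ branch of Lemma \ref{LemmaPickDelta}) contradicts Theorem \ref{ThmPropertyAGhostsAreComp}. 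Everything else is a transcription of the $\ell_2$-style computations already cited from \cite{BragaVignati2019,BragaFarahVignati2019}, with $f$ and $g$ shown close by Lemma \ref{LemmaCloseMapsAndUnfFiniteToOne}.
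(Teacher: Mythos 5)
Your proposal is correct, and it follows the route the paper itself intends: the paper disposes of Theorem \ref{ThmIsomorBanAlgPropA} in a single sentence, declaring it a consequence of Corollary \ref{CorPickDelta2new} together with the proofs of Theorem \ref{ThmEmbHer} and Theorem \ref{ThmIsomorBanAlgMetricSym}, which is exactly the skeleton you reproduce (unitality giving $\Phi(1)=1$ and hereditariness, Lemma \ref{LemmaPhiIsStrongContAndRankPres}, Theorem \ref{ThmCoarseLike}, the two $\delta$-bounds, and the closeness argument via \eqref{EqClaim313}). Where you genuinely add something is the bootstrapping step, and it is not cosmetic. The paper's one-line argument silently applies Corollary \ref{CorPickDelta2new} in both directions, but applied to $\Phi^{-1}\colon\broe(\partial,\cF)\to\broe(d,\cE)$ that corollary requires property A of the \emph{target} space $(\N,d)$: the would-be ghost idempotent produced by Lemma \ref{LemmaPickDelta} lives in $\broe^r(d,\cE)$ (after the thinning of Proposition \ref{PropSOTConvSeriesReg} and the reverse regularity hypothesis), and one needs Theorem \ref{ThmPropertyAGhostsAreComp} on that side to contradict its non-compactness. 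The theorem's hypotheses only grant property A to $(\N,\partial)$, so some additional argument is needed, and yours is the natural one: first run the forward direction (Theorem \ref{ThmEmbHerPropA}) to obtain a coarse embedding $f\colon(\N,d)\to(\N,\partial)$, then use that property A passes to subspaces of u.l.f. spaces and is a coarse invariant to conclude that $(\N,d)$ has property A, and only then run the reverse direction to get $\delta'>0$ and the coarse map $h$. Those permanence properties of property A are not proved in the paper — you are importing standard facts (available in \cite{Willett2009}, which the paper cites) — but the appeal is legitimate, and without some such step the reverse bound does not follow from what the paper establishes. In this respect your write-up is more careful and complete than the paper's own treatment of this theorem; everything else in your proposal matches the paper's argument.
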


\section{Open questions}\label{SectionOpenProb}
We now list a couple of natural questions which this paper leaves open. For instance, Theorem \ref{ThmBanachIsometryOrderIso} gives that, for $1$-symmetric basis, the existence of an order Banach space isomorphism which is also an isometry between uniform Roe algebras gives bijective coarsely equivalence between the base metric spaces. We ask whether Banach space isometry  would be already enough for that.

\begin{problem}\label{Prob2}
Let $(\N,d)$ and $(\N,\partial)$ be u.l.f. metric spaces, and $E$ and $F$ be strictly  convex Banach spaces with $1$-symmetric  bases $\cE$ and $\cF$, respectively. If  $\broe(d,\cE)$ and $\broe(\partial,\cF)$ are isometric,  does it follow that  $(\N,d)$ and $(\N,\partial)$ are coarsely equivalent?  
\end{problem}

We do not know the answer to Problem \ref{Prob2} even for $X=\ell_2$ and $\cE$ the standard basis of $\ell_2$. In fact, we do not even have an answer for the following:

\begin{problem}\label{ProblemIsometryRig}
Let $X$ and $Y$ be u.l.f. metric spaces and suppose that 
\begin{enumerate}
    \item $\cstu(X)$ and $\cstu(Y)$ are isometric  as Banach spaces, and
    \item $\cstu(X)$ and $\cstu(Y)$ are isomorphic as ordered Banach spaces.
\end{enumerate}
 Does it follow that $X$ and $Y$ are coarsely equivalent?
\end{problem}

Let $\cE$ be a shrinking basis for the Banach space complexification of Argyros-Haydon space. By Proposition \ref{PropArgyrosHaydon}, the set \[\{\broe(d,\cE)\mid d\text{ is a metric on }\N\}\] contains only $1$ element. Can we obtain similar results for an arbitrary $n\in\N$? 

\begin{problem}
What are the possible cardinals   $\kappa$ so that  there is a Banach space $E$ with a basis $\cE$  so that there are exactly $\kappa$ possibilities for $\cB(d,\cE)$? 
\end{problem}

Proposition \ref{PropAlvaroFarmer} shows that nonisomorphic Banach spaces can have Banach space isomorphic uniform Roe algebras. However, we do not have an example of such Banach spaces with Banach algebra isomorphic uniform Roe algebras:

\begin{problem}\label{Prob4}
Are there nonisomorphic Banach spaces $E$ and $F$ with unconditional basis $\cE$ and $\cF$, respectively, so that   $\broe(d,\cE)$ and $\broe(d,\cF)$ are Banach algebra isomorphic? 
\end{problem}

At last,  the condition $\Phi(\broe^r(d,\cE))\subset \broe^r(\partial,\cF)\cup \cL(F)_r^\complement$ in Theorem \ref{ThmEmbHerPropA} and Theorem \ref{ThmIsomorBanAlgPropA} is rather technical, and we would like to get rid of it. We actually do not know when a regular operator $\Phi(a)$ must belong to the regular uniform Roe algebra $\broe^r(\partial,\cF)$ --- notice that it would  enough to assume that $a$ is idempotent for our goals.

 \begin{problem}\label{ProbAutomatic}
 Let $(\N,d)$ and $(\N,\partial)$ be u.l.f. metric spaces and let $E$ and $F$ be Banach spaces with $1$-symmetric bases $\cE$ and $\cF$, respectively. Let $\Phi:\broe(d,\cE)\to \broe(\partial, \cF)$ be a Banach algebra isomorphism  and    $a\in \broe^r(d,\cE)$ be so that $\Phi(a)$ is regular. When can we say that  $\Phi(a)\in \broe^r(\partial, \cF)$?
 \end{problem}

\section{Appendix}\label{SectionApp}

In this section, we review the basics of real Banach lattices. We recall the definition and basic properties of complex Banach lattices in Section \ref{SectionRegularURA}. For a detailed treatment of Banach lattices,   we refer to the monographs \cite{Meyer-NiebergBook1991} and \cite{SchaeferBook1974}.

\begin{definition}\label{DefiBanachLattice}
A real Banach space $E$ with a partial order $\leq$ is an \emph{ordered real Banach space} if 
\begin{enumerate}
\item $x\leq y$ implies $x+z\leq y+z$ for all $z\in X$,
\item $x\geq 0$ implies $\lambda x\geq 0$ for all $\lambda\geq 0$, 
\end{enumerate}
The set $E_+=\{x\in X\mid x\geq 0\}$ is the set of \emph{positive} elements of $E$ and the set $E_r=\{x-y\in E\mid x,y\in E_+\}$ is the set of \emph{regular} elements of $E$.

An ordered real Banach space $E$ is a \emph{real Banach lattice} if \begin{enumerate}\setcounter{enumi}{2}
\item for all  $x,y\in E$, there exist a greatest lower bound $x\wedge y$ and a least upper bound $x\vee y$, and
\item $|x|\leq |y|$ implies $\|x\|\leq\|y\|$ for all $x,y\in E$, where $|x|\coloneqq x\vee (-x)$.
\end{enumerate}
\end{definition}

We notice that the lattice operations are all norm continuous in any real Banach lattice \cite[Proposition 1.1.6]{Meyer-NiebergBook1991}.  Every real Banach space $E$ with an $1$-unconditional basis $\cE=(e_n)_n$ is a real Banach lattice with the partial order given coordinate-wise order, i.e.,  $\sum_n\lambda_ne_n\leq \sum_n \theta_ne_n$ if and only if $\lambda_n\leq \theta_n$ for all $n\in\N$. Given $x=\sum_n\lambda_ne_n\in E$, this partial order gives that $|x|=\sum_n|\lambda_n|e_n$.

 Given a real Banach lattice $E$,   $x_1,\ldots, x_n\in E$, and $p,q\in (1,\infty)$ with $1/p+1/q=1$, define\footnote{For Dedekind complete Banach lattices this supremum is clearly well defined. For the general case see  \cite[Page 25]{JohnsonLindenstrauss2001Handbook}.} 
\[\Big(\sum_{i}^n|x_i|^p\Big)^{1/p}=\sup\Big\{\sum_{i=1}^n\alpha_ix_i\mid (\alpha_i)_{i=1}^n\in \R^n,\ \sum_{i=1}^n|\alpha_i|^q\leq 1\Big\}.\]

\begin{proposition}\cite[Proposition 1.d.2(iii)]{LindenstraussTzafririVol2}
Let $E$ be a Banach lattice, $x_1,\ldots,x_n\in E$ and $x_1^*,\ldots, x_n^*\in E^*$. Then, given $p,q\in (1,\infty)$ with $1/p+1/q=1$, we have\footnote{This formula represents the evaluation of the functional $(\sum_{j=1}^n|x_j^*|^p )^{1/p}$ at the vector $(\sum_{j=1}^n|x_j|^q)^{1/q}$.}
\[\sum_{j=1}^nx_j^*x_j\leq \Big(\sum_{j=1}^n|x_j^*|^p\Big)^{1/p}\Big(\sum_{j=1}^n|x_j|^p\Big)^{1/p}.\]\label{PropHolderForLattices}
\end{proposition}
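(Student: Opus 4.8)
The plan is to reduce the stated inequality to the classical pointwise Hölder inequality by means of Krivine's functional calculus, which is precisely the device that forces the homogeneous expressions $(\sum_j|x_j|^q)^{1/q}$ (and their duals) to behave exactly as the corresponding pointwise expressions in a $C(K)$- or $L^r$-space. Set $u=(\sum_{j=1}^n|x_j|^q)^{1/q}\in E_+$ and $u^*=(\sum_{j=1}^n|x_j^*|^p)^{1/p}\in(E^*)_+$, where, as the footnote indicates, the two exponents are the conjugate pair $1/p+1/q=1$; the right-hand side of the inequality is then the scalar $u^*(u)$, and the goal is $\sum_{j=1}^n x_j^*(x_j)\le u^*(u)$.

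First I would record the elementary consequences of the supremum defining the functional calculus. Taking $\alpha$ to be a signed unit coordinate vector gives $|x_k|\le u$ and dually $|x_k^*|\le u^*$ for every $k$; more usefully, for every tuple $(\beta_j)$ with $\sum_j|\beta_j|^p\le 1$ one has $\sum_j\beta_jx_j\le u$ in $E$, and for every $(\gamma_j)$ with $\sum_j|\gamma_j|^q\le 1$ one has $\sum_j\gamma_jx_j^*\le u^*$ in $E^*$. These are the lattice analogues of the scalar duality $\|a\|_r=\sup\{\sum_j\beta_ja_j:\|\beta\|_{r'}\le 1\}$, and they pin down $u$ and $u^*$ as the correct ``vector $\ell^q$/$\ell^p$-norms.''

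The heart of the argument is the representation step. By Krivine's theorem \cite[Theorem 1.d.1]{LindenstraussTzafririVol2} the sublattice of $E$ generated by $x_1,\dots,x_n$ is lattice-isometric to a sublattice of some $C(K)$, under which each $x_j$ becomes a continuous function $\widehat{x_j}$ and, crucially, the functional-calculus element $u$ becomes the pointwise function $t\mapsto(\sum_j|\widehat{x_j}(t)|^q)^{1/q}$. Restricting the functionals $x_j^*$ to this copy realizes them as signed Radon measures $\mu_j$ on $K$; fixing a common dominating measure $\nu$ and writing $h_j=d\mu_j/d\nu$, one checks that $u^*$ is represented by $(\sum_j|h_j|^p)^{1/p}\,d\nu$. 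Granting this, the inequality follows at once:
\begin{align*}
\sum_{j=1}^n x_j^*(x_j)=\int_K\sum_{j=1}^n h_j(t)\,\widehat{x_j}(t)\,d\nu(t)
&\le\int_K\Big(\sum_j|h_j(t)|^p\Big)^{1/p}\Big(\sum_j|\widehat{x_j}(t)|^q\Big)^{1/q}d\nu(t)\\
&=u^*(u),
\end{align*}
the middle step being the classical pointwise Hölder inequality for each fixed $t$ and the final equality being the pairing of $u^*$ with $u$ in the representation.

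The step I expect to be the main obstacle is this joint representation: producing a single compact (or measure) space on which both the vectors $x_j\in E$ and the functionals $x_j^*\in E^*$ are simultaneously realized, and verifying that the \emph{dual} functional calculus defining $u^*$ really corresponds to the pointwise expression $(\sum_j|h_j|^p)^{1/p}$. Matching the two functional calculi across the duality is exactly the content concealed in \cite[Proposition 1.d.2]{LindenstraussTzafririVol2}. A fully self-contained route would instead establish the estimate first on the dense class of finitely-valued (simple) elements, where it is literally a finite-dimensional $\ell^p$–$\ell^q$ Hölder inequality, and then pass to the general case using the norm-continuity of the lattice operations.
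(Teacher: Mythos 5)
The paper never proves this proposition: it is imported verbatim from Lindenstrauss--Tzafriri \cite[Proposition 1.d.2(iii)]{LindenstraussTzafririVol2} (note also that the displayed formula in the paper carries a typo --- both factors appear with exponent $p$ --- and you correctly read it with the conjugate pair, as the footnote prescribes). So the only meaningful comparison is with the standard textbook argument, and your proposal is essentially a reconstruction of that argument: represent the finitely many vectors inside a $C(K)$ via Krivine/Kakutani, realize the functionals as measures, and invoke pointwise H\"older. The strategy is sound and does prove the statement.

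The genuine gap is the step you yourself flag: the assertion that ``$u^*$ is represented by $(\sum_j|h_j|^p)^{1/p}\,d\nu$'' is never verified, and your closing remark that this matching is ``exactly the content concealed in \cite[Proposition 1.d.2]{LindenstraussTzafririVol2}'' makes the argument circular as written, since that is precisely the statement being proved. Two adjustments close it. First, run the representation on the \emph{ideal} $E_w$ generated by $w=\sum_j|x_j|$ rather than on the sublattice generated by the $x_j$: by Kakutani's theorem $(E_w,\|\cdot\|_w)$ is an AM-space with unit, lattice isometric to a $C(K)$; it contains every $x_j$ and the element $u$ (note $0\le u\le w$), the functional calculus becomes pointwise there, and since the inclusion $(E_w,\|\cdot\|_w)\hookrightarrow E$ is bounded, each $x_j^*$ restricts to an honest measure $\mu_j\in M(K)$; then set $\nu=\sum_j|\mu_j|$ and $h_j=d\mu_j/d\nu$. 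Second --- and this is the simplification you missed --- you do not need the equality you claim, only the inequality $(\sum_j|h_j|^p)^{1/p}\,d\nu\le u^*|_{E_w}$, and that direction is nearly free: for every $\beta$ in the unit ball of $\ell_q^n$ one has $\sum_j\beta_jx_j^*\le u^*$ in $E^*$ by the very definition of $u^*$ as a supremum; restriction to $E_w$ is a positive map, so $\sum_j\beta_jh_j\,d\nu\le u^*|_{E_w}$ in $M(K)$; taking the supremum over a countable dense set of such $\beta$ --- a supremum computed $\nu$-a.e.\ pointwise inside the band $L^1(\nu)\subset M(K)$ --- yields $(\sum_j|h_j|^p)^{1/p}\,d\nu\le u^*|_{E_w}$. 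Evaluating both sides at $u\ge 0$ and combining with your pointwise H\"older estimate finishes the proof. (The equality you wanted is in fact true in this setting, because $E_w$ is an ideal and the Riesz--Kantorovich formulas for lattice operations in $E^*$ only involve elements dominated by elements of $E_w$; but it is not needed.) Finally, your fallback ``self-contained route'' via finitely-valued elements does not get off the ground: an abstract Banach lattice has no notion of simple element until a representation is chosen, which is exactly the step at issue.
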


Let $E$  and $F$ be real Banach lattices. A linear map $a:E\to F$ is called \emph{positive} if $a(E_+)\subset F_+$ and we write $a\geq 0$. This  defines a canonical order on  $\cL(E,F)$. A bounded linear map $E\to F$ is \emph{regular} if it is the difference between positive linear maps.  If $F$ is \emph{Dedekind complete} (i.e., if every order bounded subset of it has a supremum and an infimum), then the space of all regular operators $E\to F$  is a Dedekind complete vector lattice \cite[Theorem 1.3.2]{Meyer-NiebergBook1991}. We denote this vector lattice  by $\cL(E,F)_r$  and the subset of all positive operators by $\cL(E,F)_+$. Notice that this is consistent with both the $+$-index and the  $r$-index notation in Definition \ref{DefiBanachLattice} since $(\cL(E,F)_r)_r=\cL(E,F)_r$ and $(\cL(E,F)_r)_+=\cL(E,F)_+$. Given  $a\in \cL(E,F)_r$,  the \emph{$r$-norm} of $a$ is defined by 
    \[\|a\|_r=\inf\Big\{\|b\|\mid b\in \cL(E,F)_+, |a\xi|\leq b|\xi|\  \text{ for all }\ \xi\in E_+\Big\}.\] 
Dedekind completeness of $F$ implies that $(L(E,F)_r,\|\cdot\|_r)$ is a Banach lattice and that $\|a\|_r=\||a|\|$ for all $a\in \cL(E,F)_r$ \cite[Proposition 1.3.6]{Meyer-NiebergBook1991}. 

\begin{example}
Although $\cL(E)_r$ may equal $\cL(E)$ --- e.g., if $E$ is a Dedekind complete AM-space with unit (see \cite[Theorem 1.5]{SchaeferBook1974} for details) ---, this is a rare phenomenon. In  fact, one can easily find $a\in \cstu(X)\setminus \cL(\ell_2(X))_r$ for any   u.l.f. metric space $X$ with infinitely many points. Indeed, write $X=\bigsqcup_{n\in\N}X_n$, with $|X_n|=2^n$ for each $n\in\N$,  and for each $n\in\N$ let $a_n\in \cL(\ell_2(X_n))$ be a norm $1$ operator so that $\|a_n\|_r=2^{n/2}$. The sum $a=\sum_{n\in\N}2^ {-n/3}a_n$ defines  a compact operator (hence, $a\in \cstu(X)$) which is not regular. We refer the reader to   \cite[Chapter IV, \S 1, Example 2]{SchaeferBook1974} for details.
\end{example}

\begin{proposition}\cite[Proposition 1.3.5]{Meyer-NiebergBook1991}
Let $E$ be a Banach lattice. Then every positive linear map  $a: E\to E$ is bounded.\label{PropPositiveAreBounded}
\end{proposition}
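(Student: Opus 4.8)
The plan is to argue by contradiction, exploiting completeness of $E$ together with the monotonicity of the lattice norm. Suppose $a\colon E\to E$ is positive but unbounded. First I would reduce to positive vectors: for any $x\in E$ we have $-|x|\le x\le |x|$, so positivity of $a$ gives $-a|x|\le ax\le a|x|$, whence $|ax|\le a|x|$ and therefore $\|ax\|\le\|a|x|\|$ by axiom (4) of Definition \ref{DefiBanachLattice}. Consequently, if $a$ is unbounded then its restriction to the positive cone is already unbounded, so for each $n\in\N$ I may choose $x_n\in E_+$ with $\|x_n\|=1$ and $\|ax_n\|\ge n^3$.

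Next I would form the element $x=\sum_{n\in\N} x_n/n^2$. Since $\|x_n/n^2\|=1/n^2$ and $\sum_n 1/n^2<\infty$, this series converges absolutely, hence converges in the Banach space $E$. The key point is that $x$ dominates each summand: for fixed $N$, every partial sum of $\sum_{n\ne N} x_n/n^2$ is positive (a finite sum of positive vectors), and these partial sums converge to $x-x_N/N^2$; since the positive cone $E_+$ is norm-closed (which follows from the continuity of the lattice operations, \cite[Proposition 1.1.6]{Meyer-NiebergBook1991}), we conclude $x-x_N/N^2\ge 0$, i.e.\ $x\ge x_N/N^2\ge 0$.

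Finally I would derive the contradiction. Applying the positive map $a$ to the inequality $0\le x_N/N^2\le x$ yields $0\le a(x_N)/N^2\le ax$, and monotonicity of the lattice norm gives
\[
\|ax\|\ \ge\ \Big\|\frac{a x_N}{N^2}\Big\|\ =\ \frac{\|ax_N\|}{N^2}\ \ge\ \frac{N^3}{N^2}\ =\ N
\]
for every $N\in\N$. This is absurd, as $ax$ is a fixed element of $E$ with finite norm; hence $a$ must be bounded.

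I expect the only subtle point to be the domination step $x\ge x_N/N^2$, which rests on the norm-closedness of $E_+$; everything else is a direct application of positivity and of the monotonicity axiom $|u|\le|v|\Rightarrow\|u\|\le\|v\|$. The choice of the weights $1/n^2$ against the growth $n^3$ is what makes the contradiction work: the series must converge while the images blow up strictly faster than the weights decay.
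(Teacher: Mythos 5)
Your proof is correct. The paper does not prove this proposition at all --- it simply cites \cite[Proposition 1.3.5]{Meyer-NiebergBook1991} --- and your argument is essentially the standard one from that reference: reduce to the positive cone via $|ax|\le a|x|$, sum a suitably weighted sequence of positive unit vectors into a single element $x$ using completeness, and let positivity plus monotonicity of the norm force $\|ax\|=\infty$. You also correctly identified and justified the one genuinely subtle step, namely that the domination $x\ge x_N/N^2$ requires norm-closedness of the positive cone, which indeed follows from continuity of the lattice operations.
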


\begin{acknowledgments}
I would like to thank Bill Johnson for  bringing the papers   \cite{AlvaroFarmer1996,Eidelheit1940} to my attention. I also thank Vladimir  Troitsky for valuable conversations.
\end{acknowledgments}

\end{document}